\numberwithin{equation}{section}
\newtheorem{thm}{Theorem}[section]
\newtheorem{prp}[thm]{Proposition} 
\newtheorem{lmm}[thm]{Lemma}  
\newtheorem{crl}[thm]{Corollary}
\newtheorem*{prp*}{Proposition}
\theoremstyle{definition}
\newtheorem{dfn}[thm]{Definition}
\newtheorem{eg}[thm]{Example}
\newtheorem{exer}[thm]{Exercise}
\newtheorem{rmk}[thm]{Remark}
\def\BE#1{\begin{equation}\label{#1}}
\def\EE{\end{equation}}
\def\eref#1{(\ref{#1})}
\def\ov#1{\overline{#1}}
\def\ti#1{\widetilde{#1}}
\def\wh#1{\widehat{#1}}
\def\wt#1{\widetilde{#1}}
\def\sm#1{\begin{small}#1\end{small}}
\def\lr#1{\langle#1\rangle}
\def\blr#1{\big\langle#1\big\rangle}
\def\sf#1{\textsf{#1}}
\def\lra{\longrightarrow}
\def\Lra{\Longrightarrow}
\def\fa{\mathfrak a}
\def\C{\mathbb{C}}
\def\fc{\mathfrak c}
\def\ff{\mathfrak f}
\def\fI{\mathfrak i}
\def\fj{\mathfrak j}
\def\fm{\mathfrak m}
\def\cM{\mathcal M}
\def\fM{\mathfrak M}
\def\P{\mathbb{P}}
\def\cP{\mathcal P}
\def\Q{\mathbb Q}
\def\R{\mathbb R}
\def\cR{\mathcal R}
\def\bR{\mathbf R}
\def\bS{\mathbf S}
\def\bT{\mathbf T}
\def\cU{\mathcal U}
\def\cW{\mathcal W}
\def\fX{\mathfrak X}
\def\Z{{\mathbb Z}}
\def\al{\alpha}
\def\be{\beta}
\def\ga{\gamma}
\def\de{\delta}
\def\ep{\epsilon}
\def\la{\lambda}
\def\na{\nabla}
\def\om{\omega}
\def\si{\sigma}
\def\th{\theta}
\def\ve{\varepsilon}
\def\vph{\varphi}
\def\ze{\zeta}
\def\De{\Delta}
\def\Ga{\Gamma}
\def\La{\Lambda}
\def\Om{\Omega}
\def\Si{\Sigma}
\def\Th{\Theta}
\def\eset{\emptyset}
\def\i{\infty}
\def\prt{\partial}
\def\dbar{\bar\partial}
\def\w{\wedge}
\def\hb{\hbar}
\def\bu{\bullet}
\def\Aut{\textnormal{Aut}}
\def\diam{\textnormal{diam}}
\def\End{\textnormal{End}}
\def\ev{\textnormal{ev}}
\def\GL{\textnormal{GL}}
\def\id{\textnormal{id}}
\def\Id{\textnormal{Id}}
\def\Im{\textnormal{Im}}
\def\nd{\textnormal{d}}
\def\nD{\textnormal{D}}
\def\ne{\textnormal{e}}
\def\ord{\textnormal{ord}}
\def\PSL{\textnormal{PSL}}
\def\Sym{\textnormal{Sym}}
\def\vir{\textnormal{vir}}
\begin{document}

\title{Notes on $J$-Holomorphic Maps}
\author{Aleksey Zinger\thanks{Partially supported 
by NSF grants 0846978 and 1500875}}

\date{\today}

\maketitle

\begin{abstract}
\noindent
These notes present a systematic treatment of local properties of $J$-holomorphic maps
and of Gromov's convergence for sequences of such maps,
specifying the assumptions needed for all statements.
In particular, only one auxiliary statement depends on the manifold being symplectic. 
The content of these notes roughly corresponds to Chapters~2 and~4
of McDuff-Salamon's book on the subject.
\end{abstract}

\tableofcontents

\section{Introduction}
\label{intro_sec}

\noindent
Gromov's introduction~\cite{Gr} of pseudoholomorphic curves techniques into symplectic topology
has revolutionized this field and led to its numerous connections with algebraic geometry.
The ideas put forward in~\cite{Gr} have been further elucidated and developed
in \cite{Pansu,Ye,MS94,RT,RT2,LT} and in many other works.
The most comprehensive introduction to the subject of pseudoholomorphic curves
is without a doubt the monumental book~\cite{MS12}.
Chapters~2 and~4 of this book concern two of the three fundamental building blocks 
of this subject, the local structure of $J$-holomorphic maps and 
Gromov's convergence for sequences of $J$-holomorphic maps.
The present notes contain an alternative systematic exposition of these two topics
with generally sharper specification of the assumptions needed for each statement.
Chapter~3 and Sections~6.2 and~6.3 in~\cite{MS12} concern the third fundamental building block
of the subject, transversality for $J$-holomorphic maps.
A more streamlined and general treatment of this topic is the concern of~\cite{CmplRT}.\\

\noindent
The present notes build on the lecture notes on $J$-holomorphic maps
written for the class the author taught at Stony Brook University in Spring~2014.
The lectures themselves were based on the hand-written notes he made while
studying~\cite{MS94} back in graduate school and 
were also influenced by the more thorough exposition of the same topics in~\cite{MS12}.
The author would like to thank D.~McDuff and D.~Salamon for the time and care taken
in preparing and updating these books,
the students in the Spring~2014 class for their participation that
guided the preparation of the original version of the present notes, and
X.~Chen for thoughtful comments during the revision process.

\subsection{Stable maps}
\label{StabMaps_subs}

\noindent
A (smooth) \sf{Riemann surface} (without boundary) is a pair $(\Si,\fj)$ consisting
of a smooth two-dimensional manifold~$\Si$ (without boundary) and 
a complex structure~$\fj$ in the fibers of~$T\Si$.
A~\sf{nodal Riemann surface} is a pair $(\Si,\fj)$ obtained from 
a Riemann surface~$(\wt\Si,\fj)$ by identifying pairs of 
distinct points of~$\wt\Si$ in a discrete subset~$S_{\Si}$
(with no point identified with more than one other point);
see the left-hand sides of Figures~\ref{st_fig1} and~\ref{st_fig2}.
The pair $(\wt\Si,\fj)$ is called the \sf{normalization} of~$(\Si,\fj)$;
the images of the points of~$S_{\Si}$ in~$\Si$ are called the \sf{nodes} of~$\Si$.
We denote their complement  in~$\Si$ by~$\Si^*$.
An \sf{irreducible component} of~$(\Si,\fj)$ is the image of 
a topological component of~$\wt\Si$ in~$\Si$.
Let
$$\fa(\Si)= \frac{2-\chi(\wt\Si)+|S_{\Si}|}{2}\,,$$
where $\chi(\wt\Si)$ is the Euler characteristic of $\wt\Si$, be the 
(\sf{arithmetic}) \sf{genus} of~$\Si$.
An \sf{equivalence} between Riemann surfaces $(\Si,\fj)$ and $(\Si',\fj')$
is a homeomorphism $h\!:\Si\!\lra\!\Si'$ induced by a biholomorphic map~$\wt{h}$
from $(\wt\Si,\fj)$ to $(\wt\Si',\fj')$.
We denote by $\Aut(\Si,\fj)$ the group of automorphisms, 
i.e.~self-equivalences, of a Riemann surface $(\Si,\fj)$.\\ 

\noindent
Let $(X,J)$ be an almost complex manifold.
If $(\Si,\fj)$ is a Riemann surface, a smooth map $u\!:\Si\!\lra\!X$
is called \sf{$J$-holomorphic map} if
$$\nd u\!\circ\!\fj=J\!\circ\!\nd u\!:T\Si\lra u^*TX.$$
A \sf{$J$-holomorphic map} from a nodal Riemann surface $(\Si,\fj)$ 
is a tuple
$(\Si,\fj,u)$, where $u\!:\Si\!\lra\!X$ is a continuous map induced by 
a $J$-holomorphic map $\wt{u}\!:\wt\Si\!\lra\!X$;
see Figures~\ref{st_fig1} and~\ref{st_fig2}.
An \sf{equivalence} between $J$-holomorphic maps $(\Si,\fj,u)$ and $(\Si',\fj',u')$
is an equivalence 
$$h\!: (\Si,\fj)\lra(\Si',\fj')$$
between the underlying Riemann surfaces such that $u\!=\!u'\!\circ\!h$.
We denote by $\Aut(\Si,\fj,u)$ the group of automorphisms, 
i.e.~self-equivalences, of a $J$-holomorphic map $(\Si,\fj,u)$.
A $J$-holomorphic map $(\Si,\fj,u)$ is called \sf{stable} if 
$(\Si,\fj)$ is compact and  $\Aut(\Si,\fj,u)$ is a finite group.\\

\noindent
The Riemann surface $(\Si,\fj)$ on the left-hand side of Figure~\ref{st_fig1}
is obtained by identifying the marked points of two copies 
of a smooth elliptic curve $(\Si_0,\fj_0,z_1^*)$,
i.e.~a torus with a complex structure and a marked point.
The Riemann surface $(\Si_0,\fj_0)$ with the marked point~$z_1^*$ 
is biholomorphic to $\C/\La$ with the marked point~$0$  
for some lattice $\La\!\subset\!\C$
and thus has an automorphism of order~2 that preserves~$z_1^*$
(it is induced by the map $z\!\lra\!-z$ on~$\C$).
This is the only non-trivial automorphism of $(\Si_0,\fj_0)$ preserving~$z_1^*$ 
if~$\fj_0$ is generic;
in special cases, the group of such automorphisms is either $\Z_4$ or~$\Z_6$.
Each automorphism of $(\Si_0,\fj_0)$ preserving~$z_1^*$ gives rise to an automorphism
of $(\Si,\fj)$ fixing one of the irreducible components.
There is also an automorphism of $(\Si,\fj)$ which interchanges the two irreducible components
of~$\Si$.
Since it does not commute with the automorphisms preserving one of the components,
$\Aut(\Si,\fj)\!\approx\!D_4$ in most cases and contains~$D_4$ in the special cases.
If $u\!:\Si\!\lra\!\Si_0$ is the identity on each irreducible component,
$(\Si,\fj,u)$ is a stable $J$-holomorphic map; the interchange of the two 
irreducible components is then the only non-trivial automorphism of $(\Si,\fj,u)$.
The $J$-holomorphic maps  $u\!:\Si\!\lra\!\Si_0$ obtained by sending either or both
irreducible components of~$\Si$ to~$z_1^*$ instead are also stable, but have different
automorphism groups.
If $(\Si_0,\fj_0)$ were taken to be the Riemann sphere~$\P^1$, the  $J$-holomorphic map 
$u\!:\Si\!\lra\!\Si_0$ restricting to
the identity on each copy of~$\Si_0$ would still be stable.
However, a map $u\!:\Si\!\lra\!\Si_0$ sending either copy of~$\Si_0$ to~$z_1^*$
would not be stable, since the group of automorphisms of~$\P^1$
fixing a point is a complex two-dimensional submanifold of~$\PSL_2$.\\

\begin{figure}
\begin{pspicture}(-4.5,-5.5)(10,-2)
\psset{unit=.3cm}
\psellipse[linewidth=.08](0,-14)(5,2)
\pscircle*(5,-14){.25}
\psarc[linewidth=.08](-2.5,-17){3.16}{65}{115}
\psarc[linewidth=.08](-2.5,-11){3.16}{245}{295}
\psellipse[linewidth=.08](10,-14)(5,2)
\psarc[linewidth=.08](12.5,-17){3.16}{65}{115}
\psarc[linewidth=.08](12.5,-11){3.16}{245}{295}
\psellipse[linewidth=.08](27,-14)(5,2)
\psarc[linewidth=.08](24.5,-17){3.16}{65}{115}
\psarc[linewidth=.08](24.5,-11){3.16}{245}{295}
\psline[linewidth=.12]{->}(16.5,-14)(21.5,-14)
\rput(19,-13.4){$u$}
\rput(5,-17.5){$(\Si,\fj)\!=\!(\Si_0,\fj_0)\!\vee\!(\Si_0,\fj_0)$}
\rput(27,-17.5){$(\Si,\fj)\!=\!(\Si_0,\fj_0)$}
\rput(4,-14){\sm{$z_1^*$}}\rput(6,-14){\sm{$z_1^*$}}
\end{pspicture}
\caption{A stable $J$-holomorphic map}
\label{st_fig1}
\end{figure}
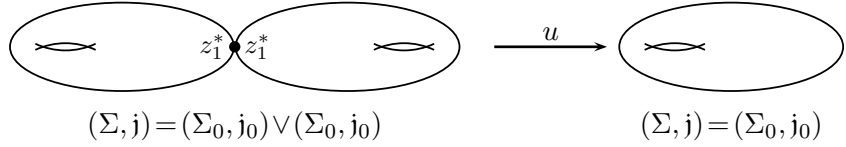

\noindent
Let $(\Si,\fj)$ be a compact connected Riemann surface of genus~$g$.
If $g\!\ge\!2$, then $\Aut(\Si,\fj)$ is a finite group.
If  $g\!=\!1$, then $\Aut(\Si,\fj)$ is an infinite group, but its subgroup
fixing any point is finite.
If  $g\!=\!0$, then the subgroup of  $\Aut(\Si,\fj)$ fixing any pair of points
is infinite, but the subgroup fixing any triple of points is trivial.
If in addition $(X,J)$ is an almost complex manifold
and $u\!:\Si\!\lra\!X$ is a non-constant $J$-holomorphic map, 
then the subgroup of $\Aut(\Si,\fj)$ consisting of the automorphisms
such that $u\!=\!u\!\circ\!h$ is finite;
this is an immediate consequence of Corollary~\ref{FHS_crl2}.
If $(\Si,\fj)$ is a compact nodal Riemann surface,
a $J$-holomorphic map $(\Si,\fj,u)$ is thus stable if and only~if
\begin{enumerate}[label=$\bullet$,leftmargin=*]

\item every  genus~1 topological component of the normalization~$\wt\Si$ 
of~$\Si$ such that $u$ restricts to a constant map on its image in~$\Si$
contains at least 1~element of~$S_{\Si}$ and

\item every genus~0 topological component of~$\wt\Si$ 
such that $u$ restricts to a constant map on its image in~$\Si$
contains at least 3 elements of~$S_{\Si}$.

\end{enumerate}

\subsection{Gromov's topology}
\label{GrTopol_subs}

\noindent
Given a Riemann surface $(\Si,\fj)$,
a Riemannian metric~$g$ on a smooth manifold~$X$ determines 
the \sf{energy} $E_g(f)$ for every smooth map $f\!:\Si\!\lra\!X$;
see \eref{Egfdfn_e} and~\eref{Egfdfn2_e}.
The fundamental insight in~\cite{Gr} that laid the foundations
for the pseudoholomorphic curves techniques in symplectic topology
and for the moduli spaces of stable maps and related curve-parametrizing objects
in algebraic geometry
is that a sequence of stable $J$-holomorphic maps $(\Si_i,\fj_i,u_i)$ 
into a compact almost complex manifold~$(X,J)$ with 
\BE{Ebndcond_e}
\liminf_{i\lra\i}\!\Big(\big|\pi_0(\Si_i)\big|\!+\!\fa(\Si_i)\!+\!E_g(u_i)\!\Big)<\i\EE
has a subsequence converging in a suitable sense to another 
stable $J$-holomorphic map.\\

\noindent
The notion of \sf{Gromov's convergence} of a sequence 
of stable $J$-holomorphic maps $(\Si_i,\fj_i,u_i)$
to another stable $J$-holomorphic map $(\Si_{\i},\fj_{\i},u_{\i})$ comes down~to
\begin{enumerate}[label=(GC\arabic*),leftmargin=*]

\item $|\pi_0(\Si_i)|\!=\!|\pi_0(\Si_{\i})|$ and $\fa(\Si_i)\!=\!\fa(\Si_{\i})$
for all~$i$ large,

\item $(\Si_{\i},\fj_{\i})$ is at least as singular as $(\Si_i,\fj_i)$
for all~$i$ large,

\item\label{GCenergy_it} 
the energy is preserved, i.e.~$E_g(u_i)\!\lra\!E_g(u_{\i})$ as $i\!\lra\!\i$, and

\item\label{GCmap_it} 
$u_i$ converges to~$u_{\i}$ uniformly in the $C^{\i}$-topology on compact 
subsets of~$\Si_{\i}^*$.

\end{enumerate}
Most applications of the pseudoholomorphic curves techniques in symplectic topology
involve \hbox{$J$-holomorphic} maps from the Riemann sphere~$\P^1$.
This is a special case of the situation when the complex structures~$\fj_i$ on
the domains~$\Si_i$ of~$u_i$ are fixed.
The condition~\ref{GCmap_it} can then be formally stated in a way  
clearly indicative of the rescaling procedure of~\cite{Gr}.

\begin{dfn}[Gromov's Compactness I]\label{GromConv_dfn1}
Let $(X,J)$ be an almost complex manifold with  Riemannian metric~$g$ and
$(\Si,\fj)$ be a compact Riemann surface. 
A sequence $(\Si,\fj,u_i)$ of stable \hbox{$J$-holomorphic} maps 
\sf{converges} to a stable $J$-holomorphic map $(\Si_{\i},\fj_{\i},u_{\i})$
if
\begin{enumerate}[label=(\arabic*),ref=\arabic*,leftmargin=*]

\item $(\Si_{\i},\fj_{\i})$ is obtained from~$(\Si,\fj)$ by identifying 
a point on each of $\ell$~trees of Riemann spheres~$\P^1$, for some $\ell\!\in\!\Z^{\ge0}$,
with distinct points $z_1^*,\ldots,z_{\ell}^*\!\in\!\Si$,

\item\label{GC1energy_it} $E_g(u_{\i})=\lim\limits_{i\lra\i}E_g(u_i)$,

\item there exist $h_i\!\in\!\Aut(\Si,\fj)$ with $i\!\in\!\Z^+$ such that 
$u_i\!\circ\!h_i$ converges to~$u_{\i}$ uniformly in the $C^{\i}$-topology on compact 
subsets of~$\Si\!-\!\{z_1^*,\ldots,z_{\ell}^*\}$,

\item\label{psi_it2} for each $z_1^*,\ldots,z_{\ell}^*\!\in\!\Si\!\subset\!\Si_{\i}$
and all $i\!\in\!\Z^+$ sufficiently large,
there exist a neighborhood $U_j\!\subset\!\Si$ of~$z_j^*$, 
an open subset $U_{j;i}\!\subset\!\C$, and
a biholomorphic map $\psi_{j;i}\!:U_{j;i}\!\lra\!U_j$   such~that 
\begin{enumerate}[label=(\ref{psi_it2}\alph*),leftmargin=*]

\item $U_i\!\subset\!U_{i+1}$ and 
$\C=\bigcup_{i=1}^{\i}U_{j;i}$ for every $j\!=\!1,\ldots,\ell$,

\item $u_i\!\circ\!h_i\!\circ\!\psi_{j;i}$ 
converges to~$u_{\i}$ uniformly in the $C^{\i}$-topology 
on compact subsets of the complement of the nodes $\i,w_{j;1}^*,\ldots,w_{j;k_j}^*$
in the sphere~$\P_j^1$ attached  at $z_j^*\!\in\!\Si$,

\item condition (\ref{psi_it2}) applies with
$\Si$, $(z_1^*,\ldots,z_{\ell}^*)$, and $u_i\!\circ\!h_i$
replaced by~$\P^1$, $(w_{j;1}^*,\ldots,w_{j;k_j}^*)$, and
$u_i\!\circ\!h_i\!\circ\!\psi_{j;i}$, respectively,
for each $j\!=\!1,\ldots,\ell$.

\end{enumerate}
\end{enumerate}
\end{dfn}

\begin{figure}
\begin{pspicture}(-6,-5.8)(-6,-2)
\psset{unit=.3cm}
\psellipse[linewidth=.08](0,-14)(5,2)\pscircle[linewidth=.08](7,-14){2} 
 \pscircle[linewidth=.08](0,-10){2}\pscircle*(0,-12){.25}
\pscircle[linewidth=.08](7,-10){2}\pscircle[linewidth=.08](7,-18){2}
\pscircle*(5,-14){.25}\pscircle*(7,-12){.25}\pscircle*(7,-16){.25}
\psarc[linewidth=.08](-2.5,-17){3.16}{65}{115}
\psarc[linewidth=.08](-2.5,-11){3.16}{245}{295}
\psline[linewidth=.12]{->}(12,-14)(18,-14)
\rput(-7.5,-14){$\Si_{\i}=$}\rput(20,-14){$X$}\rput(15,-13){$u_{\i}$}
\rput(0,-17){$\Si$}
\rput(0,-13){\sm{$z_2^*$}}\rput(0,-11){\sm{$\i$}}
\rput(4,-14){\sm{$z_1^*$}}\rput(6,-14){\sm{$\i$}}
\rput(7.5,-13){\sm{$w_{1;1}^*$}}\rput(7.1,-15){\sm{$w_{1;2}^*$}}
\rput(7,-11){\sm{$\i$}}\rput(7,-17){\sm{$\i$}}
\end{pspicture}
\caption{Gromov's limit of a sequence of $J$-holomorphic maps $u_i\!:\Si\!\lra\!X$}
\label{st_fig2}
\end{figure}

\vspace{.1in}

\noindent
An example of a possible limiting map with $\ell\!=\!2$ trees of spheres is shown
in Figure~\ref{st_fig2}.
The recursive condition~\eref{psi_it2} in Definition~\ref{GromConv_dfn1} 
is equivalent to the {\it Rescaling} axiom in \cite[Definition~5.2.1]{MS12}
on sequences of automorphisms~$\phi_{\al}^i$ of~$\P^1$; they correspond to compositions
of the maps~$\psi_{j;i}$ associated with different irreducible components
of~$\Si_{\i}$.
The single energy condition~\eref{GC1energy_it} in Definition~\ref{GromConv_dfn1} 
is replaced 
in \cite[Definition~5.2.1]{MS12} by multiple conditions of the {\it Energy} axiom.
These multiple conditions are equivalent to~\eref{GC1energy_it} 
if the other three axioms in \cite[Definition~5.2.1]{MS12} are satisfied.

\begin{thm}[Gromov's Compactness I]\label{GromConv_thm1}
Let $(X,J)$ be a compact almost complex manifold with Riemannian metric~$g$,
$(\Si,\fj)$ be a compact Riemann surface, and $u_i\!:\Si\!\lra\!X$ be a sequence
of non-constant $J$-holomorphic maps.
If $\liminf E_g(u_i)\!<\!\i$, then the sequence $(\Si,\fj,u_i)$ contains
a subsequence
converging to some stable $J$-holomorphic map $(\Si_{\i},\fj_{\i},u_{\i})$
in the sense of Definition~\ref{GromConv_dfn1}.
\end{thm}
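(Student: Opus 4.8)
The plan is to reduce Gromov's Compactness to two analytic pillars that are presumably established earlier in these notes: a uniform $\epsilon$-regularity/mean value estimate for $J$-holomorphic maps (bounding $C^1$ and higher derivatives on a disk in terms of the energy over a slightly larger disk, provided that energy is below a threshold $\hbar$), and a removal-of-singularities statement (a $J$-holomorphic map on a punctured disk with finite energy extends across the puncture). The induction is on the energy bound $E\ge\liminf E_g(u_i)$; after passing to a subsequence we may assume $E_g(u_i)\to E$. First I would pass to a subsequence along which the ``bubbling locus'' stabilizes: for each point $z\in\Si$, the amount of energy concentrating near $z$ is
\[
m(z)=\lim_{r\to0}\limsup_{i\to\i} E_g\big(u_i;B_r(z)\big),
\]
and a standard covering argument using $\epsilon$-regularity shows $m(z)\ge\hbar$ for only finitely many $z$, say $z_1^*,\dots,z_\ell^*$, and $\sum_j m(z_j^*)\le E$.

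Next, on compact subsets of $\Si^*\setminus\{z_1^*,\dots,z_\ell^*\}$ the energy densities are uniformly bounded, so the $\epsilon$-regularity estimate gives uniform $C^k$ bounds for all $k$; by Arzelà–Ascoli and a diagonal argument, a further subsequence of $u_i$ converges in $C^\i_{\mathrm{loc}}$ on $\Si^*\setminus\{z_1^*,\dots,z_\ell^*\}$ to a $J$-holomorphic map, which by removal of singularities extends to a $J$-holomorphic map $u_\i$ on $(\Si,\fj)$ (here $h_i=\id$, since $\fj$ is fixed). The energy captured by $u_\i$ on $\Si$ is then $E-\sum_j m(z_j^*)$; the ``lost'' energy $m(z_j^*)>0$ at each $z_j^*$ is recovered by rescaling. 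Around each $z_j^*$ I would choose holomorphic coordinates identifying a neighborhood with a disk in $\C$, pick rescaling maps $\psi_{j;i}(w)=z_j^*+\delta_{j;i}w$ with $\delta_{j;i}\to0$ chosen so that $u_i\circ\psi_{j;i}$ has exactly energy $\hbar/2$ (say) on the unit disk — the precise choice of center and scale is the delicate point, and the standard device is to pick the scale by a ``maximal concentration'' or center-of-mass normalization so that the rescaled sequence neither collapses nor escapes. The rescaled maps $u_i\circ\psi_{j;i}$ then have finite energy $\le m(z_j^*)\le E$ (in fact one arranges they don't exceed $E$, and after the first bubble strictly less), are defined on exhausting subsets $U_{j;i}\nearrow\C$, and are nonconstant; this is where the induction hypothesis applies — with $\C$ (compactified to $\P^1$) playing the role of $\Si$ — to produce a convergent subsequence with limit a stable $J$-holomorphic sphere, which becomes the first tree of spheres attached at $z_j^*$, with its own further bubble points $w_{j;1}^*,\dots,w_{j;k_j}^*$ handled recursively exactly as in condition~\eref{psi_it2}.

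I would then assemble $(\Si_\i,\fj_\i,u_\i)$ by gluing the bubble trees at $z_1^*,\dots,z_\ell^*$ to $(\Si,\fj)$ and verify stability: any bubble component on which $u_\i$ is constant arose as a genuine limit of nonconstant maps with energy $\ge\hbar$ at smaller scale, hence carries at least one further special point (a node to a child bubble) and, being a sphere, must carry at least three — but the combinatorics of the rescaling construction is precisely arranged so this holds, or such ghost components are contracted. The energy identity \eref{GC1energy_it}, namely $E_g(u_\i)=E$, follows from the ``no energy loss'' claim: the total energy over $\Si$ plus all bubbles equals $E$, which reduces to showing that no energy disappears ``in the neck'' between $\Si$ and a bubble (and between a parent and child bubble). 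This is the one real obstacle: the neck analysis, typically carried out via an isoperimetric/long-cylinder estimate (the standard ``annulus lemma'' bounding the energy of a $J$-holomorphic map on a long cylinder with small energy, forcing exponential decay), is what guarantees the limiting necks are constant and the energy is conserved. Everything else — the $\epsilon$-regularity bootstrapping, Arzelà–Ascoli, removal of singularities, and the bookkeeping of finitely many bubble points — is routine given the earlier local-structure results; the termination of the recursion is guaranteed because each new bubble level absorbs at least $\hbar$ of energy from a finite total.
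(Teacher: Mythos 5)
Your overall route coincides with the paper's: the concentration analysis via the Mean Value Inequality, Arzel\`a--Ascoli and Removal of Singularity away from finitely many points is Lemma~\ref{EnerPres3_lmm}, and the rescaling-plus-induction-on-energy with a long-cylinder estimate controlling the necks is Proposition~\ref{EnerPres_prp} together with Corollaries~\ref{EnerPres2_crl} and~\ref{CylEner_crl}. However, two concrete steps as you prescribe them would fail. The rescaling normalization is backwards: you fix the center at $z_j^*$ and choose $\de_{j;i}$ so that the rescaled map has energy exactly $\hbar/2$ on the unit disk. Then the annulus between the scale $\de_{j;i}$ and a fixed small $\de$ may carry energy $\fm_j-\hbar/2\ge\hbar/2$, so the long-cylinder estimate does not apply to the neck, and entire bubbles can escape to $w=\i$ in the rescaled coordinates: if $u_i$ concentrates energy $E_1\ge\hbar$ at scale $\ne^{-i}$ near $z_j^*+1/i$ and energy $E_2\ge\hbar$ at scale $\ne^{-i}$ near $z_j^*+1/\sqrt{i}$, your choice gives $\de_{j;i}\approx 1/i$, the second concentration sits at $|w|\approx\sqrt{i}\lra\i$, and $E_2$ is captured neither by the base limit nor at any finite bubble point $w_r^*$, so the recursion never sees it and the energy condition~(\ref{GC1energy_it}) fails. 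The paper instead centers at the maximum of $|\nd u_i|$ and chooses the scale so that the disk captures all but $\hbar/2$ of the concentrating energy, as in~\eref{dejidfn_e}; this is what makes the neck energy less than $\hbar$ so that Corollary~\ref{CylEner_crl} yields \eref{noEloss_e1}--\eref{noEloss_e2}, and it is also what shows that a ghost bubble has at least two children (no single point can absorb all of $\fm$, since at most $\fm-\hbar/2$ concentrates at any one rescaled bubble point). That two-children argument is the missing content behind both your assertion that ghost spheres ``must carry at least three'' special points (one extra special point plus the node to the parent gives only two) and your termination claim: a ghost level absorbs no energy, so the drop by $\hbar$ per level has to come from the energy splitting among at least two children.

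The second gap is the claim that $h_i=\id$ because $\fj$ is fixed. This is exactly why Definition~\ref{GromConv_dfn1} allows nontrivial $h_i\in\Aut(\Si,\fj)$: take $\Si=\P^1$ and $u_i(z)=f(iz)$ with $f$ a nonconstant $J$-holomorphic sphere; with $h_i=\id$ the limit is a constant (ghost) base $\P^1$ carrying a single node, which is not stable, and the ghost base cannot be ``contracted'' since the limit domain must consist of $\Si$ with trees attached. The paper's proof of Theorem~\ref{GromConv_thm1} ends with precisely this case ($\Si=\P^1$, constant limit map, $\ell\in\{1,2\}$) and repairs it by composing with $h_i(z)=z_i+z/M_i$ so that the reparametrized sequence has a nonconstant limit on the base; your proposal needs this step, or something equivalent, to deliver stability of $(\Si_{\i},\fj_{\i},u_{\i})$.
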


\noindent
This theorem is established in Section~\ref{Conver_subs2} by assembling together 
a number of geometric statements obtained earlier in these notes.
In Section~\ref{PnEg_subs}, we relate the convergence notion of 
Definition~\ref{GromConv_dfn1} in the case of holomorphic maps from $\C\P^1$
to~$\C\P^n$, which can always be represented by $(n\!+\!1)$-tuples 
of homogeneous polynomials in two variables, 
to the behavior of the linear factors of the associated polynomials.\\

\noindent
The convergence notion of Definition~\ref{GromConv_dfn1} can be equivalently  reformulated 
in terms of deformations of the limiting domain~$(\Si_{\i},\fj_{\i})$
so that  it readily extends to sequences of stable $J$-holomorphic maps 
with varying complex structures~$\fj_i$ on the domains~$\Si_i$.
This was formally done in the algebraic geometry category by~\cite{FPa},
several years after this perspective had been introduced into the field informally,
and adapted to the almost complex category by~\cite{LT}.
We summarize this perspective below.\\

\noindent
Let $(\Si,\fj)$ be a nodal Riemann surface. 
A \sf{flat family of deformations} of $(\Si,\fj)$ is a holomorphic map
$\pi\!:\cU\!\lra\!\De$, where $\cU$ is a complex manifold and $\De\!\subset\!\C^N$
is a neighborhood of~$0$, such~that
\begin{enumerate}[label=$\bullet$,leftmargin=*]

\item $\pi^{-1}(\la)$ is a nodal Riemann surface for each $\la\!\in\!\C^n$ and
$\pi^{-1}(0)\!=\!(\Si,\fj)$,

\item  $\pi$ is a submersion outside of the nodes of the fibers of~$\pi$,

\item for every $\la^*\!\equiv\!(\la_1^*,\ldots,\la_N^*)\!\in\!\De$ and 
every node $z^*\!\in\!\pi^{-1}(\la^*)$, 
there exist $i\!\in\!\{1,\ldots,N\}$ with $\la_i\!=\!0$,
neighborhoods $\De_{\la^*}$ of~$\la^*$ in~$\De$ and $\cU_{z^*}$ of~$z^*$ in~$\cU$, 
and a holomorphic~map
$$\Psi\!:\cU_{z^*}\lra \big\{\big((\la_1,\ldots,\la_N),x,y\big)\!\in\!\De_{\la^*}\!\times\!\C^2\!:
xy\!=\!\la_i\big\}$$
such that $\Psi$ is a homeomorphism onto a neighborhood of $(\la^*,0,0)$
and the composition of~$\Psi$ with the projection to~$\De_{\la^*}$ equals~$\pi|_{\cU_{z^*}}$.
\end{enumerate} 

\vspace{.1in}

\noindent
If $\pi\!:\cU\!\lra\!\De$ is a flat family of deformations of $(\Si,\fj)$ and $\Si$ is compact,
there exists a neighborhood $\cU^*\!\subset\!\cU$ of $\Si^*\!\subset\!\pi^{-1}(0)$
such~that 
$$\pi|_{\cU^*}\!:\cU^*\lra \De_0\!\equiv\!\pi(\cU^*)\subset\De$$
is a trivializable $\Si^*$-fiber bundle in the smooth category.
For each $\la\!\in\!\De_0$, let 
$$\psi_{\la}\!:\Si^*\lra \pi^{-1}(\la)\!\cap\!\cU^*$$
be the corresponding smooth identification.
If $\la_i\!\in\!\De$ is a sequence converging to $0\!\in\!\De$
and \hbox{$u_i\!:\pi^{-1}(\la_i)\!\lra\!X$} is a sequence of continuous maps
that are smooth on the complements of the nodes of~$\pi^{-1}(\la_i)$,
we say that the sequence~$u_i$ \sf{converges to} a smooth map $u\!:\!\Si^*\!\lra\!X$
{\sf u.c.s.} if the sequence of maps
$$u_i\!\circ\!\psi_{\la_i}\!:\Si^*\lra X$$
converges uniformly in the $C^{\i}$-topology on compact  subsets of~$\Si^*$.
This notion is independent of the choices of~$\cU^*$ and 
trivialization of~$\pi|_{\cU^*}$.

\begin{figure}
\begin{pspicture}(-7,-7)(-6,-2)
\psset{unit=.3cm}
\pscircle*(-10,-12){.25}\pscircle*(-10,-16){.25}
\psline[linewidth=.08](-10,-10)(-10,-18)
\psline[linewidth=.08](-11,-14)(-8,-8)\psline[linewidth=.08](-9,-14)(-12,-20)
\psline[linewidth=.08](-14,-8)(-14,-20)\psline[linewidth=.08](-17,-8)(-17,-20)
\psline[linewidth=.08](-6,-8)(-6,-20)\psline[linewidth=.08](-3,-8)(-3,-20)
\psline[linewidth=.05,linestyle=dashed](-18.5,-11)(-1.5,-11)
\psline[linewidth=.05,linestyle=dashed](-18.5,-13)(-1.5,-13)
\psline[linewidth=.05,linestyle=dashed](-18.5,-11)(-18.5,-13)
\psline[linewidth=.05,linestyle=dashed](-1.5,-11)(-1.5,-13)
\psline[linewidth=.08](-19,-22)(-1,-22)
\pscircle*(-10,-22){.25}\pscircle*(-6,-22){.25}\pscircle*(-3,-22){.25}
\rput(-10,-23){$0$}\rput(-5.8,-23){$\la_{20}$}\rput(-2.8,-23){$\la_{10}$}
\rput(-20,-22){$\De$}\rput(-20,-14){$\cU$}\rput(-19.3,-18){$\pi$}
\psline[linewidth=.05]{->}(-20,-15)(-20,-21)
\rput(0.6,-12){$B_{\de}(z^*)$}\rput(25.6,-12){$B_{\de}(z^*)$}
\rput(-8.9,-11.9){\sm{$z^*$}}\rput(10,-11.9){\sm{$z^*$}}
\psline[linewidth=.04]{->}(9.4,-12)(8.1,-12)
\pscircle[linewidth=.08](7,-14){2} 
\pscircle[linewidth=.08](7,-10){2}\pscircle[linewidth=.08](7,-18){2}
\pscircle*(7,-12){.25}\pscircle*(7,-16){.25}
\psline[linewidth=.05,linestyle=dashed](4.5,-11)(23.5,-11)
\psline[linewidth=.05,linestyle=dashed](4.5,-13)(23.5,-13)
\psline[linewidth=.05,linestyle=dashed](4.5,-11)(4.5,-13)
\rput(7.2,-21){$\Si_{\i}$}
\psarc[linewidth=.08](14,-14){2}{-60}{60}
\psarc[linewidth=.08](14,-14){2}{120}{240} 
\psarc[linewidth=.08](14,-10){2}{-60}{240}
\psarc[linewidth=.08](14,-18){2}{120}{60}
\psarc[linewidth=.08](15.15,-12){.31}{120}{240} 
\psarc[linewidth=.08](12.85,-12){.31}{-60}{60} 
\psarc[linewidth=.08](15.15,-16){.31}{120}{240} 
\psarc[linewidth=.08](12.85,-16){.31}{-60}{60} 
\rput(14.4,-21){$\Si_{\la_{20}}$}
\psarc[linewidth=.08](21,-14){2}{-45}{45}
\psarc[linewidth=.08](21,-14){2}{135}{225} 
\psarc[linewidth=.08](21,-10){2}{-45}{225}
\psarc[linewidth=.08](21,-18){2}{135}{45}
\psarc[linewidth=.08](23,-12){.82}{135}{225} 
\psarc[linewidth=.08](19,-12){.82}{-45}{45} 
\psarc[linewidth=.08](23,-16){.82}{135}{225} 
\psarc[linewidth=.08](19,-16){.82}{-45}{45} 
\psline[linewidth=.05,linestyle=dashed](23.5,-11)(23.5,-13)
\rput(21.4,-21){$\Si_{\la_{10}}$}
\end{pspicture}
\caption{A complex-geometric presentation of a flat family of deformations of 
$(\Si_{\i},\fj_{\i})\!=\!\pi^{-1}(0)$ and a differential-geometric presentation
of the domains of the maps~$u_i$ in Definition~\ref{GromConv_dfn2}.}
\label{st_fig3}
\end{figure}
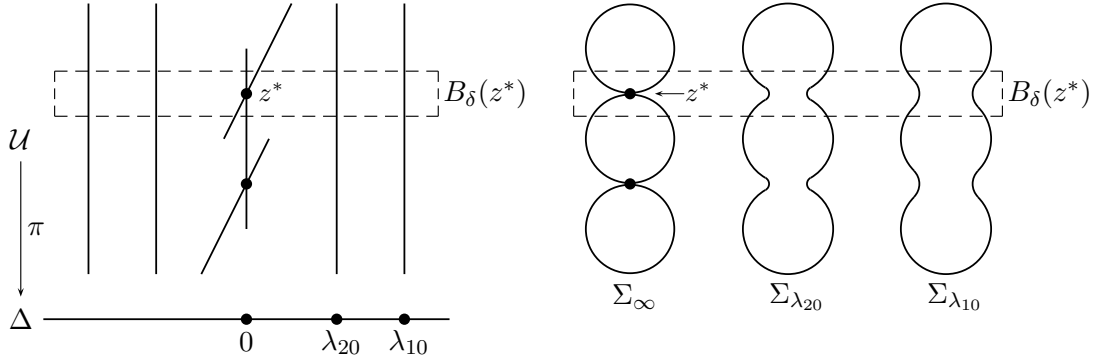

\begin{dfn}[Gromov's Convergence II]\label{GromConv_dfn2}
Let $(X,J)$ be an almost complex manifold with  Riemannian metric~$g$.
A sequence $(\Si_i,\fj_i,u_i)$ of stable \hbox{$J$-holomorphic} maps 
\sf{converges} to a stable $J$-holomorphic map $(\Si_{\i},\fj_{\i},u_{\i})$
if $E_g(u_{\i})\!\lra\!E_g(u_i)$ as $i\!\lra\!\i$ and there exist

\begin{enumerate}[label=(\alph*),leftmargin=*]

\item  a flat family of deformations $\pi\!:\cU\!\lra\!\De$ of $(\Si_{\i},\fj_{\i})$,

\item a sequence $\la_i\!\in\!\De$ converging to $0\!\in\!\De$, and

\item equivalences $h_i\!:\pi^{-1}(\la_i)\!\lra\!(\Si_i,\fj_i)$

\end{enumerate}
such that $u_i\!\circ\!h_i$ converges to~$u_{\i}|_{\Si_{\i}^*}$ u.c.s.
\end{dfn}

\noindent
By the compactness of~$\Si_{\i}$, the notion of convergence of Definition~\ref{GromConv_dfn2}
is independent of the choice of metric~$g$ on~$X$.
It is illustrated in Figure~\ref{st_fig3}.
If the Riemann surfaces $(\Si_i,\fj_i)$ are smooth,
the limiting Riemann surface $(\Si_{\i},\fj_{\i})$ is obtained by pinching
some disjoint embedded circles in the smooth two-dimensional manifold~$\Si$
underlying these Riemann surfaces.\\

\noindent
If \hbox{$(\Si_i,\fj_i)\!=\!(\Si,\fj)$} for all~$i$ as in Definition~\ref{GromConv_dfn1},
only contractible circles are pinched to produce~$\Si_{\i}$;
it then consists of~$\Si$ with trees of spheres attached.
The family $\pi\!:\cU\!\lra\!\De$ is obtained by starting with the family
$$\pi_0\!:\cU_0\!\equiv\!\C\!\times\!\Si\lra\C,$$
then blowing up~$\cU_0$ at a point of $\{0\}\!\times\!\Si$ 
to obtain a family $\pi_1\!:\cU_1\!\lra\!\C$
with the central fiber \hbox{$\Si_1\!\equiv\!\pi_1^{-1}(0)$} consisting of $\Si$ with~$\P^1$ attached,
then blowing up a smooth point of~$\Si_1$, and so~on.
The number of blowups involved is precisely the number of nodes of~$\Si_{\i}$,
i.e.~four in the case of Figure~\ref{st_fig2} and two in the case of Figure~\ref{st_fig3}.
The pinched annuli on the right-hand side of Figure~\ref{st_fig3} correspond 
to $\phi_{\al}(B_{\de}(z_{\al\be}))\!\cup\!\phi_{\be}(B_{\de}(z_{\be\al}))$
in the notation of \cite[Chapters~4,5]{MS12}.\\

\noindent
With the setup of Definition~\ref{GromConv_dfn2}, let $B_{\de}(z^*)\!\subset\!\cU$
denote the ball of radius $\de\!\in\!\R^+$ around a point $z^*\!\in\!\cU$
with respect to some metric on~$\cU$.
Then,
\BE{nobubbles_e}\lim_{\de\lra0}\lim_{i\lra\i}\diam_g\big(
u_i\big(h_i(\pi^{-1}(\la_i)\!\cap\!B_{\de}(z^*))\big)\!\big)=0
\qquad\forall~z^*\!\in\!\Si_{\i}\,.\EE
This is immediate from the last condition in Definition~\ref{GromConv_dfn2} 
if $z^*\!\in\!\Si_{\i}^*$.
If $z^*\!\in\!\Si_{\i}\!-\!\Si_{\i}^*$ is a node of~$\Si_{\i}$, 
\eref{nobubbles_e} is a consequence of both convergence
conditions of Definition~\ref{GromConv_dfn2}  and the maps~$u_i$ being $J$-holomorphic.
It is a reflection of the fact that \sf{bubbling} or any other kind of erratic $C^0$-behavior
of a sequence of $J$-holomorphic maps requires a nonzero amount of energy in the limit,
but the two convergence conditions of Definition~\ref{GromConv_dfn2} ensure that 
all limiting energy is absorbed by~$u|_{\Si_{\i}^*}$ and thus none is left for bubbling
around the nodes of~$\Si_{\i}$.
An immediate implication of~\eref{nobubbles_e} is that 
$u_i(h_i(\pi^{-1}(\la_i)\!\cap\!B_{\de}(z^*)))$ is contained in a geodesic ball
around $u_{\i}(z^*)$ in~$X$.
Thus,
$$u_{i*}\big[\Si_i\big]=u_{\i*}[\Si_{\i}]\in H_2(X;\Z)$$
for all $i\!\in\!\Z^+$ sufficiently large.
If $\Si_{\i}$ is a tree of spheres (and thus so is each~$\Si_i$),
then $u_i$ with $i$ sufficiently large lies in the equivalence class
in $\pi_2(X)$ determined by~$u_{\i}$ for the same reason.

\begin{thm}[Gromov's Compactness II]\label{GromConv_thm2}
Let $(X,J)$ be a compact almost complex manifold with Riemannian metric~$g$
and $(\Si_i,\fj_i,u_i)$ be a sequence of stable $J$-holomorphic maps 
into a compact almost complex manifold~$(X,J)$.
If it satisfies~\eref{Ebndcond_e},
then it contains a subsequence 
converging to some stable $J$-holomorphic map $(\Si_{\i},\fj_{\i},u_{\i})$
in the sense of Definition~\ref{GromConv_dfn2}.
\end{thm}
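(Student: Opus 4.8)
The plan is to reduce Theorem~\ref{GromConv_thm2} to Theorem~\ref{GromConv_thm1} by first normalizing, then gluing back the combinatorial data. First I would extract a subsequence along which the discrete invariants $|\pi_0(\Si_i)|$, $\fa(\Si_i)$, the number of nodes $|S_{\Si_i}|$, the topological type of the dual graph of $(\Si_i,\fj_i)$, the genera of its irreducible components, and the distribution of the marked/nodal points among the components are all constant; this is possible since \eref{Ebndcond_e} bounds these quantities. On this subsequence every domain has the same normalization type, so after choosing a smooth identification of the normalization $\wt\Si$ with a fixed nodal model we may regard the data as: a fixed finite set of topological components, a fixed complex structure $\fj_i$ on each (a priori varying), and $J$-holomorphic maps $\wt u_i$ from the components, subject to the node-matching conditions $\wt u_i(z')=\wt u_i(z'')$ for each pair in $S_{\Si_i}$. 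The energy bound passes to each component.

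Next I would handle the varying complex structures. For a component of genus $g$ with $n$ marked points (its nodal preimages), if $2g+n\ge 3$ the moduli space $\ov{\cM}_{g,n}$ is compact, so after a further subsequence $(\fj_i,\text{marked points})$ converges in $\ov{\cM}_{g,n}$; if $2g+n<3$ (a sphere with $\le 2$ special points or a torus with $\le 1$) I would use the automorphism group to normalize the special points to standard positions and absorb the ambient parameter, reducing to a \emph{fixed} complex structure on that component with at most the relevant automorphisms acting. In either case one arrives at a fixed (possibly more degenerate) model domain $(\Si_*,\fj_*)$ equipped with the limiting special points, together with $J$-holomorphic maps $u_i$ from the smooth locus whose energy is bounded, and one may now invoke Theorem~\ref{GromConv_thm1} on each component: after reparametrizing by automorphisms $h_{i}$ and rescaling near the points where energy concentrates, $u_i\circ h_i$ converges u.c.s.\ to a $J$-holomorphic map and produces trees of spheres carrying the escaped energy, with energy preserved in the limit. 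The node-matching conditions are preserved under the limit because of the no-bubbling estimate \eref{nobubbles_e}: the values of the maps at the (images of the) nodal points converge, so the limiting component maps still agree at the corresponding nodes, yielding a well-defined nodal $J$-holomorphic map $(\Si_{\i},\fj_{\i},u_{\i})$.

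It then remains to package the convergence into the language of Definition~\ref{GromConv_dfn2}: I would build the flat family $\pi\!:\cU\!\lra\!\De$ of deformations of $(\Si_{\i},\fj_{\i})$ by combining (i) a versal deformation of the fixed model $(\Si_*,\fj_*)$ in the direction of smoothing the nodes along which the domains $\Si_i$ degenerated — this is the classical plumbing/deformation construction, using the local model $xy=\la_i$ from the definition of a flat family — with (ii) the iterated-blowup construction described after Definition~\ref{GromConv_dfn2} for each attached tree of spheres, one blowup per new node. Choosing $\la_i\in\De$ to record both the complex-structure parameters of $(\Si_i,\fj_i)$ relative to the model and the necking parameters of the bubbling, one obtains equivalences $h_i\!:\pi^{-1}(\la_i)\!\lra\!(\Si_i,\fj_i)$; u.c.s.\ convergence of $u_i\circ h_i$ to $u_{\i}|_{\Si_{\i}^*}$ and the already-established energy equality $E_g(u_i)\!\to\!E_g(u_{\i})$ are exactly conditions of Definition~\ref{GromConv_dfn2}. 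Stability of the limit follows from the stability criterion in Section~\ref{StabMaps_subs}: any genus-$0$ or genus-$1$ component on which $u_{\i}$ is constant either already carried enough special points in $\Si_*$ or is a bubble tree component, and the bubbling procedure of Theorem~\ref{GromConv_thm1} only produces stable trees.

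The main obstacle I expect is bookkeeping the interaction between the two sources of degeneration — the degeneration of the domain complex structures $(\Si_i,\fj_i)$ themselves (nodes forming or moving in moduli) and the bubbling of the maps — and in particular assembling a \emph{single} flat family $\pi\!:\cU\!\lra\!\De$ whose parameter $\la_i$ simultaneously encodes both, so that the $h_i$ genuinely exist as equivalences of nodal Riemann surfaces and not merely diffeomorphisms on the smooth loci. Care is also needed with the low-genus unstable components (genus $0$ with $\le 2$ special points, genus $1$ with $\le 1$), where one must use automorphisms to compactify the relevant parameter space before applying Theorem~\ref{GromConv_thm1}; this is precisely where the recursive rescaling in Definition~\ref{GromConv_dfn1}\eref{psi_it2} and the finiteness statements about $\Aut(\Si,\fj)$ in Section~\ref{StabMaps_subs} do their work.
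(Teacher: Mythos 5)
Your overall architecture --- pass to a subsequence with constant topological type, normalize the domains and mark the node preimages, invoke Deligne--Mumford compactness for the components that are stable as marked curves, run the bubbling construction from the proof of Theorem~\ref{GromConv_thm1} componentwise, and reassemble a flat family as in Definition~\ref{GromConv_dfn2} --- is the same as the paper's. The genuine gap is in your treatment of the components with $2g+n<3$, specifically a genus-$1$ component with no special points, i.e.\ a connected smooth torus domain with no nodes (a case that does occur, and on which stability forces $u_i$ to be non-constant). Your plan to ``use the automorphism group to normalize the special points \dots reducing to a fixed complex structure'' cannot work there: automorphisms of a torus act transitively on points but not on complex structures, the moduli of tori is one-dimensional and non-compact, and the $\fj_i$ may degenerate ($j$-invariant going to infinity). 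There is no fixed model to which Theorem~\ref{GromConv_thm1} can be applied. The paper handles exactly this case by adding an artificial marked point and using the compactness of $\ov\cM_{1,1}$, so that the limiting domain is allowed to become a nodal elliptic curve.

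This omission propagates into your stability argument. When the genus-$1$ domains degenerate to the nodal cubic, the artificial marked point is not a special point of $\Si_{\i}$, so the resulting genus-$0$ component carries only the two node preimages; if the limit map is constant on it and no bubble forms at a smooth point, the limit is \emph{not} stable. Your claim that a constant component ``either already carried enough special points in $\Si_*$ or is a bubble tree component'' does not cover this, and the correct remedy is the one the paper states: reparametrize the identifications $h_i'$ so that either the principal limit map is non-constant or bubbling occurs at a smooth point. (A similar reparametrization is needed for genus-$0$ components with at most two node preimages when all the energy concentrates at those preimages --- this is the $\ell\in\{1,2\}$ step at the end of the proof of Theorem~\ref{GromConv_thm1}, which your sketch gestures at but does not carry out for nodal attaching points.) The remainder of your outline --- preservation of the nodal matching via the marked-point evaluations, and the plumbing-plus-blowup construction of the single flat family encoding both sources of degeneration --- is consistent with the paper's argument.
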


\noindent
This theorem is obtained by combining the compactness of the Deligne-Mumford moduli spaces
$\ov\cM_{1,1}$ of stable (possibly) nodal elliptic curves and $\ov\cM_g$ of stable nodal genus 
$g\!\ge\!2$ curves with the proof of Theorem~\ref{GromConv_thm1} in Section~\ref{Conver_subs2}.
One first establishes Theorem~\ref{GromConv_thm2} under the assumption that 
each $(\Si_i,\fj_i)$ is a smooth connected Riemann surface of genus $g\!\ge\!1$
(the $g\!=\!0$ case is treated by Theorem~\ref{GromConv_thm1}).
If $g\!=\!1$, we add a marked point to each domain $(\Si_i,\fj_i)$ and take 
a subsequence converging in $\ov\cM_{1,1}$ to the equivalence class
of some stable nodal elliptic curve $(\Si_{\i}',\fj_{\i}',z_{\i}')$.
If $g\!\ge\!2$, we take a subsequence of $(\Si_i,\fj_i)$ converging in $\ov\cM_g$
to the equivalence class
of some stable nodal genus~$g$  curve $(\Si_{\i}',\fj_{\i}')$.
This ensures the existence of a flat family of deformations 
$\pi'\!:\cU'\!\lra\!\De'$ of $(\Si_{\i}',\fj_{\i}')$,
of a sequence $\la_i'\!\in\!\De'$ converging to $0\!\in\!\De'$, and
of equivalences $h_i\!:\pi'^{-1}(\la_i')\!\lra\!(\Si_i,\fj_i)$.
The associated neighborhood $\cU'^*$ of $\Si_{\i}'^*$ in~$\cU'$ can be chosen
so that $\pi'^{-1}(\la)\!-\!\cU'^*$ consists of finitely many circles
for every $\la'\!\in\!\De'$ sufficiently small.
The complement of the image of the associated identifications
$$\psi_{\la}'\!:\Si_{\i}'^*\lra \pi'^{-1}(\la)\!\cap\!\cU'^*$$
in $\pi'^{-1}(\la)$ has the same property.\\

\noindent
One then applies the construction in the proof of Theorem~\ref{GromConv_thm1}
to the sequence of
$J$-holomorphic maps 
$$u_i\!\circ\!h_i'\!:\Si_{\i}'^*\lra X$$
to obtain a $J$-holomorphic map $\wt{u}_{\i}'$ from the normalization 
$\wt\Si_{\i}'$ of $\Si_{\i}$ and finitely $J$-holomorphic maps from
trees of~$\P^1$. 
Each of these trees will have one or two special points that are associated 
with points of~$\wt\Si_{\i}'$ (the latter happens if bubbling occurs at
a preimage of a node of~$\Si_{\i}'$ in~$\wt\Si_{\i}'$). 
Identifying these trees with the corresponding points of~$\wt\Si_{\i}'$ as 
in the proof of Theorem~\ref{GromConv_thm1}, 
we obtain a $J$-holomorphic map $(\Si_{\i},\fj_{\i},u_{\i})$
satisfying the requirements of Definition~\ref{GromConv_dfn2}.
It is necessarily stable if $g\!\ge\!2$, or $\Si_{\i}'$ is smooth, or
$\Si_{\i}$ contains a separating node.
Otherwise, the identifications~$h_i'$ may first need to be reparametrized to ensure
that either the limiting map $\wt{u}_{\i}'$ is not constant or the sequence
$u_i\!\circ\!h_i$ produces a bubble at least one smooth point of~$\wt\Si_{\i}'$.\\

\noindent
A \sf{$k$-marked Riemann surface} is a tuple $(\Si,\fj,z_1,\ldots,z_k)$
such that $(\Si,\fj)$ is a Riemann surface and $z_1,\ldots,z_k\!\in\!\Si^*$ 
are distinct points.
If $(X,J)$ is an almost complex manifold, 
a \sf{$k$-marked \hbox{$J$-holomorphic} map} into~$X$ is a tuple $(\Si,\fj,z_1,\ldots,z_k,u)$,
where $(\Si,\fj,z_1,\ldots,z_k)$ is $k$-marked Riemann surface
and  $(\Si,\fj,u)$ is a $J$-holomorphic map into~$X$.
The \sf{degree} of such a map is the homology~class
$$A=u_*[\Si]\in H_2(X;\Z)\,.$$
The notions of equivalence, stability, and convergence as in Definition~\ref{GromConv_dfn2}
and the above convergence argument for smooth domains $(\Si_i,\fj_i)$
readily extend to $k$-marked $J$-holomorphic maps.
The general case of Theorem~\ref{GromConv_thm2}, including its extension to stable marked maps, 
is then obtained by 
\begin{enumerate}[label=$\bullet$,leftmargin=*]

\item passing to a subsequence of $(\Si_i,\fj_i,u_i)$ with the same
topological structure of the domain,

\item viewing it as a sequence of tuples of $J$-holomorphic maps with smooth domains
with an additional marked point for each preimage of the nodes in the normalization, and 

\item applying the conclusion of the above argument to each component of the tuple.

\end{enumerate}

\subsection{Moduli spaces}
\label{ModSp_subs}

\noindent
The natural extension of Definition~\ref{GromConv_dfn2} to marked $J$-holomorphic maps
topologizes
the moduli space $\ov\fM_{g,k}(X,A;J)$ of equivalence classes
of stable  degree~$A$ $k$-marked genus~$g$ $J$-holomorphic maps into $X$ 
for each $A\!\in\!H_2(X;\Z)$.
The evaluation~maps
$$\ev_i\!:\ov\fM_{g,k}(X,A;J)\lra X, \quad
(\Si,\fj,z_1,\ldots,z_k,u)\lra u(z_i),$$
are continuous with respect to this topology.
If $2g\!+\!k\!\ge\!3$, there is a continuous map
$$\ff\!:\ov\fM_{g,k}(X,A;J)\lra \ov\cM_{g,k}$$
to the Deligne-Mumford moduli space of stable $k$-marked genus~$g$ nodal curves
obtained by forgetting the map~$u$ and then contracting the unstable components 
of the domain.\\

\noindent
There is a continuous map
\BE{ffkdfn_e}\ff_{k+1}\!:\ov\fM_{g,k+1}(X,A;J)\lra\ov\fM_{g,k}(X,A;J)\EE
obtained by forgetting the last marked point~$z_{k+1}$ and then contracting 
the components of the domain to stabilize the resulting $k$-marked
$J$-holomorphic map.
For each $i\!=\!1,\ldots,k$, this fibration has a natural continuous section
$$s_i\!: \ov\fM_{g,k}(X,A;J)\lra \ov\fM_{g,k+1}(X,A;J)$$
described as follows.
For a $k$-marked nodal Riemann surface $(\Si,\fj,z_1,\ldots,z_k)$, 
let $(\Si',\fj',z_1,\ldots,z_{k+1})$ be the $(k\!+\!1)$-marked nodal Riemann surface
so that $(\Si',\fj')$ consists of $(\Si,\fj)$ with $\P^1$ attached at~$z_i$,
$z_1',z_i'\!\in\!\P^1$, and $z_j'=\!z_j\!\in\!\Si$ for all $j\!=\!1,\ldots,k$
different from~$k$; see Figure~\ref{st_fig4}.
We define
$$s_i\big([\Si,\fj,z_1,\ldots,z_k,u]\big]=\big[\Si',\fj',z_1',\ldots,z_{k+1}',u'\big],$$
with $(\Si',\fj',z_1',\ldots,z_{k+1}')$ as described and $u'$ extending $u$ 
over the extra~$\P^1$ by  the constant map with value~$u(z_i)$.
The pullback
$$L_i\lra  \ov\fM_{g,k}(X,A;J)$$
of the vertical tangent line bundle of~\eref{ffkdfn_e} by $s_i$ is called
the \sf{universal tangent line bundle} at the $i$-th marked point.
Let $\psi_i\!=\!c_1(L_i^*)$ be the \sf{$i$-th descendant class}.\\

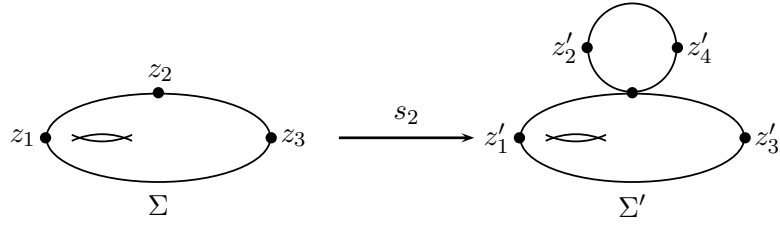
\begin{figure}
\begin{pspicture}(-5.5,-5.2)(0,-2)
\psset{unit=.3cm}
\psellipse[linewidth=.08](0,-14)(5,2)
\pscircle*(0,-12){.25}\pscircle*(5,-14){.25}\pscircle*(-5,-14){.25}
\psarc[linewidth=.08](-2.5,-17){3.16}{65}{115}
\psarc[linewidth=.08](-2.5,-11){3.16}{245}{295}
\psline[linewidth=.12]{->}(8,-14)(14,-14)
\rput(11,-13){$s_2$}\rput(0,-17){$\Si$}
\rput(0.1,-11.1){$z_2$}\rput(-6,-14){$z_1$}\rput(6,-14){$z_3$}
\psellipse[linewidth=.08](21,-14)(5,2)\pscircle[linewidth=.08](21,-10){2}
\pscircle*(21,-12){.25}\pscircle*(26,-14){.25}\pscircle*(16,-14){.25}
\pscircle*(19,-10){.25}\pscircle*(23,-10){.25}
\psarc[linewidth=.08](18.5,-17){3.16}{65}{115}
\psarc[linewidth=.08](18.5,-11){3.16}{245}{295}
\rput(15,-14){$z_1'$}\rput(27,-14){$z_3'$}
\rput(18,-10){$z_2'$}\rput(24,-10){$z_4'$}
\rput(21,-17){$\Si'$}
\end{pspicture}
\caption{Section $s_2$ of the fibration~\eref{ffkdfn_e} with $k\!=\!3$}
\label{st_fig4}
\end{figure}

\noindent
A  remarkable property of Gromov's topology which lies behind most of its applications
is that the moduli space $\ov\fM_{g,k}(X,A;J)$ is Hausdorff and has a particularly 
nice deformation-obstruction theory.
In the algebraic-geometry category, the latter is known as 
a perfect \sf{two-term deformation-obstruction theory}.
In the almost complex category, this is reflected in the existence of 
an \sf{atlas of finite-dimensional approximations} in the terminology of~\cite{LT}
or of  an \sf{atlas of Kuranishi charts} in the terminology of~\cite{LT}.\\

\noindent
If $(X,J)$ is an almost complex manifold and $J$ is tamed by a symplectic form~$\om$,
then the energy $E_g(u)$ of degree~$A$ $J$-holomorphic map~$u$ with respect to
the metric~$g$ determined by~$J$ and~$\om$ is $\om(A)$;
see~\eref{Egexp_e}.
In particular, it is the same for all elements of the moduli space $\ov\fM_{g,k}(X,A;J)$.
If in addition $X$ is compact, then Theorem~\ref{GromConv_thm2} implies 
that this moduli space is also compact.
Combining this with the remarkable property of the previous paragraph,
the constructions of \cite{BF,LT0,LT,FO} endow $\ov\fM_{g,k}(X,A;J)$
with a \sf{virtual fundamental class}.
It depends only on~$\om$, in a suitable sense, and not an almost complex structure~$J$
tamed by~$\om$.
This class in turn gives rise to \sf{Gromov-Witten invariants} of~$(X,\om)$:
$$\blr{\tau_{a_1}\!\al_1,\ldots,\tau_{a_k}\!\al_k}_{g,A}^X
\equiv \blr{\big(\psi_1^{a_1}\!\ev_1^*\al_1)\ldots\big(\psi_k^{a_k}\!\ev_k^*\al_k),
\big[\ov\fM_{g,k}(X,A;J)\big]^{\vir}}\in\Q$$
for all $a_i\!\in\!\Z^{\ge0}$  and $\al_i\!\in\!H^*(X;\Q)$.

\section{Preliminaries}
\label{prelim_sec}

\noindent
An outline of these notes with an informal description of the key statements 
appears in Section~\ref{Overview_subs};
Figure~\ref{connect_fig} indicates primary connections between these statements.
Sections~\ref{NotTerm_subs} introduces the most frequently used notation and terminology
and makes some basic observations.

\subsection{Overview of the main statements}
\label{Overview_subs}

\noindent
The main technical statement of Section~\ref{LocProp_sec} of these notes and 
of Chapter~2 in~\cite{MS12}
is the \sf{Carleman Similarity Principle}; see Proposition~\ref{FHS_prp}.
It yields a number of geometric conclusions about the local behavior
of a $J$-holomorphic map $u\!:\Si\!\lra\!X$ from a Riemann surface~$(\Si,\fj)$ into
an almost complex manifold~$(X,J)$.
For example, for every $z\!\in\!\Si$ contained in a component of~$\Si$ on which~$u$ is not constant,
the $\ell$-th derivative of~$u$ at~$z$ in a chart around~$u(z)$ does not vanish for
some $\ell\!\in\!\Z^+$; see Corollary~\ref{FHS_crl1}.
We denote by $\ord_zu\!\in\!\Z^+$ the minimum of such integers~$\ell$ and
call it the \sf{order of~$u$ at~$z$};
it is independent of the choice of a chart around~$u(z)$.
If $u$ is constant on the component of~$\Si$ on containing~$z$, we set $\ord_zu\!=\!0$.
A point $z\!\in\!u$ is \sf{singular}, i.e.~$\nd_zu\!=\!0$, if and only if 
$\ord_zu\!\neq\!1$.\\

\noindent
If $u$ is not constant on every connected component of~$\Si$,
the singular points of~$u$ and the preimages of a point $x\!\in\!X$ 
are discrete subsets of~$\Si$; see Corollary~\ref{FHS_crl2}. 
In the case $\Si$ is compact, the second statement of Corollary~\ref{FHS_crl2} implies~that 
\BE{ordxudfn_e}\ord_xu\!\equiv\!\sum_{z\in u^{-1}(x)}\!\!\!\!\!\!\ord_zu
~~\in\Z^{\ge0}
\qquad\forall\,x\!\in\!X;\EE
we call this number the \sf{order of~$u$ at~$x$}.
If $x\!\not\in\!\Im(u)$, then $\ord_xu\!=\!0$.
By Corollary~\ref{FHS_crl3}, the number~\eref{ordxudfn_e} is seen by the behavior of 
the energy~\eref{Egfdfn_e} of~$u$ and its restrictions to open subsets of~$\Si$.
This observation underpins the \sf{Monotonicity Lemma} for $J$-holomorphic maps,
which bounds below the energy required to ``escape" from a small ball in~$X$;
see Proposition~\ref{MonotLmm_prp}.\\

\noindent
The main technical statement of Section~\ref{EBnd_sec} of these notes and 
of Chapter~4 in~\cite{MS12} is the \sf{Mean Value Inequality}.
It bounds the pointwise differentials $\nd_zu$ of a $J$-holomorphic map~$u$ 
from~$(\Si,\fj)$ into~$(X,J)$ of sufficiently small energy~$E_g(u)$ by~$E_g(u)$,
i.e.~by the $L^2$-norm of~$\nd u$, from above and immediately yields a bound
on the energy of non-constant $J$-holomorphic maps from~$S^2$ into~$(X,J)$ from below; 
see Proposition~\ref{PtBnd_prp} and Corollary~\ref{LowEner_crl}, respectively.
The Mean Value Inequality also implies that the energy of a $J$-holomorphic map~$u$ 
from a cylinder $[-R,R]\!\times\!S^1$ carried by \hbox{$[-R\!+\!T,R\!-\!T]\!\times\!S^1$}
and the diameter of the image of this middle segment decay at least exponentially 
with~$T$, provided the overall energy of~$u$ is sufficiently small.
As shown in the proof of Proposition~\ref{EnerPres_prp}, 
this technical implication ensures that \sf{the energy is preserved} under Gromov's convergence
and the resulting \sf{bubbles connect}.\\

\begin{figure}
$$\xymatrix{\hbox{\ref{FHS_prp}} \ar[dd]\ar[dr]&&&&
\hbox{\ref{PtBnd_prp}}\ar[dl]\ar[ddl]\ar[dr]\ar[dd]\\
&\hbox{\ref{FHS_crl2}}\ar[r] &\hbox{\ref{GlobStr_subs}}& 
\hbox{\ref{JHolomreg_prp}} \ar[l]\ar[dl]& &\hbox{\ref{LowEner_crl}}\ar[dl]\ar[ddl]\\
\hbox{\ref{MonotLmm_prp}}\ar[rr]\ar@/_1pc/[rrr] &&   
\hbox{\ref{RemSing_prp}}\ar@/_1pc/[rr]|<<<{\hole} &\hbox{\ref{CylEner_crl}}\ar[r]
&\hbox{\ref{EnerPres1_lmm}-\ref{EnerPres3_lmm}}
\ar[d]\ar[dr]|<<<<<{\hole}\\
&&&&\hbox{\ref{EnerPres_prp}}\ar[r]& \hbox{\ref{GromConv_thm1}} 
}$$ 

\vspace{.2in}

\begin{center}\begin{tabular}{lll}
\ref{FHS_prp} Carleman Similarity Principle& \hspace{.5in}&
\ref{MonotLmm_prp} Monotonicity Lemma\\
\ref{PtBnd_prp} Mean Value Inequality&&
\ref{LowEner_crl} Lower Energy Bound\\
\ref{GlobStr_subs} Global structure of $J$-holomorphic maps&&
\ref{JHolomreg_prp} Regularity of $J$-holomorphic maps\\
\ref{CylEner_crl} Bounds on long cylinders&&
\ref{RemSing_prp} Removal of Singularity\\
\ref{EnerPres1_lmm}-\ref{EnerPres3_lmm} Bubbling&&
\ref{EnerPres_prp} Gromov's convergence
\end{tabular}\end{center}
\caption{Connections between the main statements leading to Theorem~\ref{GromConv_thm1}}
\label{connect_fig}
\end{figure}

\noindent
Another important implication of Proposition~\ref{PtBnd_prp} is that a continuous map
from a Riemann surface $(\Si,\fj)$ into an almost complex manifold~$(X,J)$
which is holomorphic outside of a discrete collection of points 
and has bounded energy is in fact holomorphic on all of~$\Si$;
see Proposition~\ref{JHolomreg_prp}.
This conclusion plays a central role in the proof of Lemma~\ref{EnerPres3_lmm}.
Theorem~\ref{GromConv_thm1} is deduced from Lemma~\ref{EnerPres3_lmm} and 
Proposition~\ref{EnerPres_prp} in Section~\ref{Conver_subs2}.\\

\noindent
Combined with Proposition~\ref{FHS_prp} and some of its corollaries, 
Proposition~\ref{PtBnd_prp}
 implies that every non-constant $J$-holomorphic map from a compact 
Riemann surface $(\Si,\fj)$ factors through 
a \sf{somewhere injective} $J$-holomorphic map from a compact 
Riemann surface $(\Si',\fj')$; see Proposition~\ref{SimplMap_prp}. 
The proof of this statement with $X$ compact
appears in Chapter~2 of~\cite{MS12},
but uses the Removal Singularities Theorem proved
in Chapter~4 of~\cite{MS12}.

\subsection{Notation and terminology}
\label{NotTerm_subs}

\noindent
Let $(\Si,\fj)$ be a Riemann surface, $V$ be a vector bundle over~$\Si$, and
$$\mu,\eta\in\Ga\big(\Si;T^*\Si\!\otimes_{\R}\!V\big)
\qquad\hbox{and}\qquad
g\in\Ga\big(\Si;T^*\Si^{\otimes2}\!\otimes_{\R}\!V\big).$$
For a local coordinate $z\!=\!s\!+\!\fI t$, define
\BE{muetadfn_e}\begin{split}
g(\mu\!\otimes_{\fj}\!\eta)&=
\big(g\big(\mu(\prt_s),\eta(\prt_s)\!\big)\!+\!g\big(\mu(\prt_t),\eta(\prt_t)\!\big)\!\big)
\nd s\!\w\!\nd t\,,\\
g(\mu\!\w_{\fj}\!\eta)&=\big(g\big(\mu(\prt_s),\eta(\prt_t)\!\big)\!-\!
g\big(\mu(\prt_t),\eta(\prt_s)\!\big)\!\big)
\nd s\!\w\!\nd t\,.
\end{split}\EE
By a direct computation, the 2-forms $g(\mu\!\otimes_{\fj}\!\eta)$ and $g(\mu\!\w_{\fj}\!\eta)$
are independent of the choice of local coordinate \hbox{$z\!=\!s\!+\!\fI t$}.
Thus, \eref{muetadfn_e} determines global 2-forms on~$\Si$
(which depend on the choice of~$\fj$).\\

\noindent
We denote by~$\fI$ the standard complex structure on~$\C$ and 
by~$J_{\C^n}$ the standard complex structures on~$\C^n$ and~$T\C^n$.
For an almost complex structure~$J$ and a 2-form~$\om$ on a manifold~$X$, we define
a 2-tensor and a 2-form on~$X$ by
\BE{omJdfn_e}\begin{split}
g_J(v,v')&=\frac12\big(\om(v,Jv')-\om(Jv,v')\big),\\
\om_J(v,v')&=\frac12\big(\om(Jv,Jv')-\om(v,v')\big)
\end{split}
\qquad\forall\,v,v'\!\in\!T_xX,~x\!\in\!X.\EE 
We note that 
\BE{omEner_e}g_J(v,v)+g_J(v',v')=2\om(v,v')+g_J(v\!+\!Jv',v\!+\!Jv')
+2\om_J(v,v')
\quad\forall\,v,v'\!\in\!T_xX,~x\!\in\!X.\EE
The 2-form $\om$ \sf{tames} $J$ if $g(v,v)\!>\!0$ for all $v\!\in\!TX$ nonzero;
in such a case,  $\om$ is nondegenerate and $g_J$ is a metric.
The almost complex structure~$J$ is \sf{$\om$-compatible} if $\om$ tames~$J$ and 
$\om_J\!=\!0$.\\

\noindent
Let $X$ be a manifold, $(\Si,\fj)$ be a Riemann surface, 
and $f\!:\Si\!\lra\!X$ be a smooth map.
We denote the pullbacks of a 2-tensor~$g$ and a 2-form~$\om$ on~$X$
to the vector bundle $f^*TX$ over~$\Si$ also by~$g$ and~$\om$.
If $g$ is a Riemannian metric on~$X$ and $U\!\subset\!\Si$
is an open subset, let 
\BE{Egfdfn_e}E_g(f)\equiv\frac12\int_{\Si}g(\nd f\!\otimes_{\fj}\!\nd f)\in[0,\i]
\qquad\hbox{and}\qquad
E_g(f;U)\equiv E_g\big(f|_U)\EE
be the \sf{energy} of~$f$ and of its restriction to~$U$.
By the first equation in~\eref{muetadfn_e}, 
\BE{Egfdfn2_e} E_g(f)=\frac12\int_{\Si}|\nd f|_{g_{\Si},g}^2\EE
is the square of the $L^2$-norm of $\nd f$ with respect to the metric~$g$ on~$X$
and a metric~$g_{\Si}$ compatible with~$\fj$.
In particular, the right-hand side of~\eref{Egfdfn2_e} depends on the metric~$g$ on~$X$
and on the complex structure~$\fj$ on~$\Si$, but {\it not} 
the metric~$g_{\Si}$ on~$\Si$ compatible with~$\fj$.\\

\noindent
Let $J$ be an almost complex structure on a manifold~$X$ and 
$(\Si,\fj)$ be a Riemann surface.
For a smooth map $f:\Si\!\lra\!X$, define
$$\dbar_Jf=\frac{1}{2}\big(\nd f\!+\!J\!\circ\!\nd f\!\circ\!\fj\big)
\in\Ga\big(\Si;(T^*\Si,\fj)^{0,1}\!\otimes_{\C}\!f^*(TX,J)\big)\,.$$
If $\om$ is a 2-form on~$X$ taming~$J$ and \hbox{$u\!:\Si\!\lra\!X$} is $J$-holomorphic,
then
\BE{Egexp_e}
E_{g_J}(f)=\int_{\Si}\big(f^*\om\!+\!2g_J(\dbar_Jf\!\otimes_{\fj}\!\dbar_Jf)\!+\!f^*\om_J\big)\EE
by~\eref{Egfdfn_e} and~\eref{omEner_e}.
If $J$ is $\om$-compatible, the last term above vanishes.
A smooth map $u\!:\Si\!\lra\!X$ is \sf{$J$-holomorphic} if $\dbar_Ju\!=\!0$.
For such a map, the last two terms in~\eref{Egexp_e} vanish.\\

\noindent
For each $R\!\in\!\R^+$, denote by $B_R\!\subset\!\C$ the open ball of 
radius~$R$ around the origin and let 
$$B_R^*=B_R\!-\!\{0\}.$$
If in addition $(X,g)$ is a Riemannian manifold and $x\!\in\!X$,
let $B_{\de}^g(x)\!\subset\!X$ be the ball of radius~$\de$ around~$x$ in~$X$
with respect to the metric~$g$.\\

\noindent
Let $(X,J)$ be an almost complex manifold and $(\Si,\fj)$ be a Riemann surface.
A smooth map \hbox{$u\!:\Si\!\lra\!X$} is called
\begin{enumerate}[label=$\bu$,leftmargin=*]

\item \sf{somewhere injective} if there exists $z\!\in\!\Si$
such that $u^{-1}(u(z))\!=\!\{z\}$ and $\nd_zu\!\neq\!0$,

\item \sf{multiply covered} if $u\!=\!u'\!\circ\!h$ 
for some smooth connected orientable surface~$\Si'$,
branched cover \hbox{$h\!:\!\Si\!\lra\!\Si'$} of degree different from $\pm1$,
and a smooth map $u'\!:\Si'\!\lra\!X$,

\item \sf{simple} if it is not multiply covered. 

\end{enumerate}
By Proposition~\ref{SimplMap_prp}, every $J$-holomorphic map from a compact Riemann
surface is simple if and only if it is somewhere injective
(the {\it if} implication is trivial).

\section{Local Properties}
\label{LocProp_sec}

\noindent
We begin by studying local properties of $J$-holomorphic maps~$u$
from Riemann surfaces~$(\Si,\fj)$ into almost complex manifolds~$(X,J)$
that resemble standard properties of holomorphic maps.
None of the statements in Section~\ref{LocProp_sec} depending on~$X$ being compact;
very few depend on~$\Si$ being compact.

\subsection{Carleman Similarity Principle}
\label{LocJMaps_subs}

\noindent
\sf{Carleman Similarity Principle}, i.e.~Proposition~\ref{FHS_prp} below, is a local description of 
solutions of a non-linear differential equation which generalizes the equation $\dbar_Ju\!=\!0$.
It states that such solutions look similar to holomorphic maps and implies
that they exhibit many local properties one would expect of holomorphic maps.

\begin{prp}[Carleman Similarity Principle, {{\cite[Theorem~2.2]{FHS}}}]\label{FHS_prp}
Suppose $n\!\in\!\Z^+$, \hbox{$p,\ep\!\in\!\R^+$} with $p\!>\!2$, 
$J\!\in\!L^p_1(B_{\ep};\End_{\R}\C^n)$, $C\!\in\!L^p(B_{\ep};\End_{\R}\C^n)$,
and $u\!\in\!L^p_1(B_{\ep};\C^n)$ are such~that 
\BE{FHScond_e} u(0)=0, \qquad
J(z)^2=-\Id_{\C^n}, ~~
u_s(z)+J(z)u_t(z)+C(z)u(z)=0 ~~\forall~z\!=\!s\!+\!\fI t\!\in\!B_{\ep}\,.\EE
Then, there exist $\de\!\in\!(0,\ep)$, 
$\Phi\!\in\!L_1^p(B_{\de};\GL_{2n}\R)$, and 
a $J_{\C^n}$-holomorphic map $\si\!:B_{\de}\!\lra\!\C^n$ such~that 
\BE{FHSconc_e} \si(0)=0, \qquad
J(z)\Phi(z)=\Phi(z)J_{\C^n},~~u(z)=\Phi(z)\si(z)
~~\forall~z\!\in\!B_{\de}\,.\EE
\end{prp}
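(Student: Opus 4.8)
The plan is to reduce the problem to a linear first-order elliptic system in one complex variable and then solve that system explicitly near the origin. The key observation is that the equation $u_s + Ju_t + Cu = 0$ is, for each fixed $z$, a perturbation of the Cauchy–Riemann operator by a zeroth-order term, and the only genuine nonlinearity — the quadratic constraint $J(z)^2 = -\Id$ — affects only the symbol, not the structure of the linearization in $u$. First I would fix, once and for all, a continuous family of matrices $\Phi_0(z)\in\GL_{2n}\R$ with $\Phi_0(0) = \Id$ and $J(z)\Phi_0(z) = \Phi_0(z)J_{\C^n}$; such a family exists and is as regular as $J$ (here one uses $p>2$, so $L^p_1(B_\ep)\hookrightarrow C^0$, to make sense of $J(z)^2=-\Id$ pointwise and to construct $\Phi_0$ by, say, averaging $\frac12(\Id - J_{\C^n}^{-1}J(z)J_{\C^n})$ or an analogous intertwining trick, shrinking $\ep$ if needed so the result stays invertible). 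Setting $v = \Phi_0^{-1}u$, the equation becomes $v_s + J_{\C^n}v_t + \widetilde C v = 0$ on a smaller ball, for a new coefficient $\widetilde C\in L^p$, so we may assume from the start that $J = J_{\C^n}$ is constant.

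With $J$ constant and equal to $J_{\C^n}$, identify $\C^n$-valued functions with $\C^n$-valued and rewrite the equation in the operator $\dbar = \frac12(\prt_s + \fI\prt_t)$: it takes the form $\dbar v = A v$ for some $A\in L^p(B_\delta;\End_{\R}\C^n)$ manufactured from $\widetilde C$ (one absorbs the antiholomorphic part of the zeroth-order term appropriately; this is where the real-linear, not complex-linear, nature of $C$ matters, and it is handled by splitting into complex-linear and conjugate-linear parts). The next step is the heart of the argument: on $v$ itself define a scalar function by the standard device $A(z) = C(z)v(z)/|v(z)|^2$ for $z$ where $v(z)\neq 0$ (viewing $v$ as a column vector and $\overline{v}^{\,t}$ as a row, this is $\overline{v}^{\,t}/|v|^2$ paired appropriately), extended by $0$ on the zero set; since $v\in C^0$ this $a\in L^p$, and $\dbar v = a\, v$ pointwise a.e. Then invoke the Cauchy transform: let $\phi(z) = \frac{1}{2\pi\fI}\int_{B_\delta}\frac{a(w)}{w-z}\,dw\wedge d\overline w$, which lies in $L^p_1\subset C^0$ and satisfies $\dbar\phi = a$. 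The function $\sigma := e^{-\phi}v$ then satisfies $\dbar\sigma = e^{-\phi}(\dbar v - (\dbar\phi)v) = 0$, so $\sigma$ is $J_{\C^n}$-holomorphic on $B_\delta$, and $v = e^{\phi}\sigma = \Phi_1\sigma$ with $\Phi_1 := e^{\phi}\Id\in L^p_1(B_\delta;\GL_{2n}\R)$; composing, $u = \Phi_0\Phi_1\sigma = \Phi\sigma$, and $J(z)\Phi(z) = \Phi(z)J_{\C^n}$ by construction, with $\sigma(0) = e^{-\phi(0)}v(0) = 0$ since $u(0)=0$. Finally, shrink $\delta$ to guarantee $\Phi_0\Phi_1$ stays in $\GL_{2n}\R$.

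The main obstacle I anticipate is the \emph{matrix-valued} nature of the coefficient: in the genuinely scalar ($n=1$) case the Carleman trick above is classical, but for $n>1$ one cannot simply divide by $v$, so the device $a(z) = C(z)v(z)/|v(z)|^2$ only produces a scalar for which $\dbar v = a v$ holds \emph{along the one-dimensional subspace spanned by $v(z)$}; this is exactly what is needed and it does work, but checking that $a\in L^p$ requires care at the zero set of $v$ (one argues that $|a|\le |C|$ pointwise, so the $L^p$ bound on $C$ transfers directly, and the zero set contributes nothing since $a$ is set to $0$ there). The other point needing attention is the regularity bookkeeping: one must verify at each stage that products like $\Phi_0^{-1}u$, $e^\phi$, and $\Phi_0\Phi_1$ remain in $L^p_1$, which relies on the Banach algebra / multiplication properties of $L^p_1(B_\delta)$ in real dimension two when $p>2$ (namely $L^p_1\cdot L^p_1\subset L^p_1$ and $L^p_1\hookrightarrow C^0$), and that the Cauchy transform maps $L^p$ into $L^p_1$ — both standard but worth citing precisely. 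I would expect the whole argument to occupy about a page once these elliptic-regularity lemmas are invoked by reference.
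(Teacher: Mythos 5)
Your reduction to $J=J_{\C^n}$ — an intertwining matrix $\Phi_0\!\in\!L^p_1$ with $J\Phi_0=\Phi_0J_{\C^n}$, then $v=\Phi_0^{-1}u$ solving $v_s+J_{\C^n}v_t+\ti{C}v=0$ with $\ti{C}\!\in\!L^p$ — is exactly the paper's step (1) (minor quibble: requiring $\Phi_0(0)=\Id$ forces $J(0)=J_{\C^n}$, which is not assumed; just drop that normalization). The gap is in what you call the heart of the argument: for $n\!\ge\!2$ there is in general \emph{no} scalar function $a$ with $\dbar v=av$ pointwise. A scalar built by pairing against $v$, such as $a(z)=\lr{\ti{C}(z)v(z),v(z)}/|v(z)|^2$, only reproduces the component of $\dbar v(z)$ along the complex line $\C v(z)$; since $\dbar v(z)$ is a general vector of the form $-\frac12\ti{C}(z)v(z)$ with $\ti{C}(z)$ merely real-linear, the vector identity $\dbar v=av$ fails off that line, so $\si=e^{-\phi}v$ with $\dbar\phi=a$ does \emph{not} satisfy $\dbar\si=0$. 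Your "obstacle" paragraph asserts that the identity holds "along the span of $v$" and that this suffices — it does not: the computation $\dbar\si=e^{-\phi}(\dbar v-(\dbar\phi)v)$ needs the full vector identity. Nor can you rescue the exponential trick by keeping a matrix-valued coefficient: $\dbar(e^{\phi})\neq(\dbar\phi)e^{\phi}$ unless $\phi$ and $\dbar\phi$ commute, so the Carleman device you quote is genuinely one-dimensional.

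What is needed instead, and what the paper does, is (i) a complex-linearization that stays matrix-valued: split $\ti{C}=\ti{C}^++\ti{C}^-$ into $\C$-linear and $\C$-antilinear parts and set $A=\ti{C}^++\ti{C}^-D$, where $D(z)w=|v(z)|^{-2}\lr{v(z),w}v(z)$ (and $D(z)=0$ where $v(z)=0$) is $\C$-antilinear with $Dv=v$, so that $A\in L^p(B_{\ep};\End_{\C}\C^n)$ and $Av=\ti{C}v$; and (ii) a construction of the similarity matrix itself as a solution of the resulting linear system: find $\Th\in L^p_1(B_{\de};\GL_n\C)$ with $\Th_s+J_{\C^n}\Th_t+A\Th=0$ and $\Th(0)=\Id$. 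The paper gets $\Th$ by extending $A$ by zero to $S^2$, noting that the augmented operator $\Th\lra(2\dbar\Th,\Th(0))$ is an isomorphism and remains one after the small perturbation by $A_{\de}$ for $\de$ small, so the solution $\Th_{\de}$ is $C^0$-close to $\Id$ and hence invertible on $B_{\de}$; a contraction argument with the Cauchy transform applied to the matrix equation $\Th=\Id+T(A\Th)$ would do the same job. Then $\si=\Th^{-1}v$ is $J_{\C^n}$-holomorphic and $\Phi=\Phi_0\Th$ gives \eref{FHSconc_e}. Your regularity bookkeeping ($L^p_1\hookrightarrow C^0$, products staying in $L^p_1$ for $p>2$, Cauchy transform $L^p\lra L^p_1$) is the right toolkit, but the scalar reduction must be replaced by this matrix-valued argument.
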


\vspace{.2in}

\noindent
By the Sobolev Embedding Theorem \cite[Corollary~4.3]{anal}, 
the assumption $p\!>\!2$ implies that $u$ is a continuous function.
In particular, all equations in~\eref{FHScond_e} and
in \eref{FHSconc_e} make sense.
This assumption also implies that the left-hand sides of the third equation
in~\eref{FHScond_e} and of the second equation in~\eref{FHSconc_e} 
and the right-hand side of the third equations in~\eref{FHSconc_e} 
lie in~$L_1^p$. 

\begin{eg}\label{FHS_eg}
Let $\fc\!:\C\!\lra\!\C$ denote the usual conjugation.
Define
\begin{gather*}
\wh{J}(z_1,z_2)=\left(\!\!\begin{array}{cc} \fI& 0\\ -2\fI s_1\fc& \fI\end{array}\!\!\right)
=\left(\begin{array}{cc} 1& 0\\ s_1\fc& 1\end{array}\!\!\right)
\!\!J_{\C^2}\!\!
\left(\!\!\begin{array}{cc} 1& 0\\ s_1\fc& 1\end{array}\!\!\right)^{\!\!-1}\!:\C^2\lra\C^2
\quad\forall\,z_i\!=\!s_i\!+\!\fI t_i, \\
u\!:\C\lra\C^2, \qquad u(s\!+\!\fI t)=\big(z,s^2).
\end{gather*}
Thus, $\wh{J}$ is an almost complex structure on~$\C^2$ and
$u$ is a $\wh{J}$-holomorphic map, i.e.~it satisfies the last condition
in~\eref{FHScond_e} with $J(z)\!=\!\wh{J}(u(z))$ and $C(z)\!=\!0$.
The functions
$$\si\!:\C\lra\C^2, \quad\si(z)=(z,0), \qquad
\Phi\!:\C\lra\GL_4\R, \quad 
\Phi(s\!+\!\fI t)=
\left(\!\!\begin{array}{cc} 1& 0\\ s\fc\!+\!\frac{\fI st}{z}& 1\end{array}\!\!\right),$$
satisfy~\eref{FHSconc_e}.
\end{eg}

\begin{crl}\label{FHS_crl1}
Let $n$, $p$, $\ep$, $J$, $C$, and $u$ be as in Proposition~\ref{FHS_prp}.
If in addition $J_0\!=\!J_{\C^n}$ and $u$ does not vanish to infinite order~0,
then there exist $\ell\!\in\!\Z^+$ and $\al\!\in\!\C^n\!-\!0$ such~that
$$\lim_{z\lra0}\frac{u(z)-\al z^{\ell}}{z^{\ell}}=0\,.$$  
\end{crl}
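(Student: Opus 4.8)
The plan is to reduce everything to the classical local structure of holomorphic maps via the factorization $u(z)\!=\!\Phi(z)\si(z)$ provided by Proposition~\ref{FHS_prp}. First I would invoke that proposition (with $J_0\!=\!J_{\C^n}$, so that $\Phi(0)\!=\!\Id_{\C^n}$ after composing with a constant element of $\GL_n\C$, since $J(0)\Phi(0)\!=\!\Phi(0)J_{\C^n}$ forces $\Phi(0)\!\in\!\GL_n\C$) to obtain $\de\!\in\!(0,\ep)$, $\Phi\!\in\!L_1^p(B_\de;\GL_{2n}\R)$ with $J(z)\Phi(z)\!=\!\Phi(z)J_{\C^n}$, and a genuine holomorphic $\si\!:B_\de\!\lra\!\C^n$ with $\si(0)\!=\!0$ and $u\!=\!\Phi\si$ on $B_\de$. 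By the Sobolev embedding $L_1^p\!\hookrightarrow\!C^0$ for $p\!>\!2$, $\Phi$ is continuous; normalizing $\Phi(0)\!=\!\Id$ we get $\Phi(z)\!=\!\Id_{\C^n}\!+\!o(1)$ as $z\!\lra\!0$.

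Next I would control $\si$. Since $u$ does not vanish to infinite order at $0$ and $\Phi$ is invertible near $0$, neither does $\si\!=\!\Phi^{-1}u$ (here one uses that $\Phi^{-1}$ is also bounded near $0$, being continuous with $\Phi(0)^{-1}\!=\!\Id$). Because $\si$ is honestly $J_{\C^n}$-holomorphic and not identically zero near $0$, write its Taylor expansion: there is a smallest $\ell\!\in\!\Z^+$ and a vector $\al\!\in\!\C^n\!-\!0$ with $\si(z)\!=\!\al z^\ell\!+\!o(|z|^\ell)$, i.e.\ $\si(z)/z^\ell\!\lra\!\al$ as $z\!\lra\!0$. (Concretely, $\ell$ is the minimal order of vanishing among the components $\si_1,\dots,\si_n$, and $\al$ collects the corresponding $\ell$-th Taylor coefficients.)

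Finally I would combine the two:
\BE{crl1_est_e}
\frac{u(z)-\al z^\ell}{z^\ell}
=\frac{\Phi(z)\si(z)-\al z^\ell}{z^\ell}
=\big(\Phi(z)-\Id_{\C^n}\big)\frac{\si(z)}{z^\ell}+\left(\frac{\si(z)}{z^\ell}-\al\right).
\EE
The second summand tends to $0$ by the choice of $\ell$ and $\al$; in the first summand $\si(z)/z^\ell\!\lra\!\al$ stays bounded while $\Phi(z)-\Id_{\C^n}\!\lra\!0$ by continuity of $\Phi$ with the normalization $\Phi(0)\!=\!\Id$. Hence the left-hand side of~\eref{crl1_est_e} tends to $0$, which is exactly the asserted limit.

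\textbf{Main obstacle.} The only real subtlety is bookkeeping at $z\!=\!0$: arranging $\Phi(0)\!=\!\Id_{\C^n}$ (rather than a general complex-linear isomorphism) and verifying that "u does not vanish to infinite order" transfers to $\si$; both follow from the continuity and invertibility of $\Phi$ near $0$, so the argument is essentially a clean translation of the Carleman factorization into the language of leading Taylor coefficients, with no hard estimate beyond what Proposition~\ref{FHS_prp} already supplies.
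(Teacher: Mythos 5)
Your proposal follows the same route as the paper's (intentionally terse) proof: apply Proposition~\ref{FHS_prp}, use the classical leading-Taylor-term statement for the genuinely holomorphic factor~$\si$, and transport it through~$\Phi$ via the continuity of~$\Phi$ (Sobolev embedding), the transfer of finite-order vanishing from~$u$ to~$\si$, and the observation that $J_0\!=\!J_{\C^n}$ forces $\Phi(0)\!\in\!\GL_n\C$, so one may normalize $\Phi(0)\!=\!\Id_{\C^n}$. One algebraic step needs repair, however: $\Phi(z)$ for $z\!\neq\!0$ is only real-linear -- it intertwines $J(z)$ with $J_{\C^n}$, not $J_{\C^n}$ with itself -- so it does not commute with complex scalar division by~$z^{\ell}$, and your displayed identity $(\Phi(z)\si(z)\!-\!\al z^{\ell})/z^{\ell}=(\Phi(z)\!-\!\Id_{\C^n})\frac{\si(z)}{z^{\ell}}+\big(\frac{\si(z)}{z^{\ell}}\!-\!\al\big)$ is not valid as written. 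The fix is immediate: group the terms as $\frac{(\Phi(z)-\Id_{\C^n})\si(z)}{z^{\ell}}+\big(\frac{\si(z)}{z^{\ell}}-\al\big)$, which is a correct identity, and bound the first summand in norm by $\|\Phi(z)\!-\!\Id_{\C^n}\|\,|\si(z)|/|z|^{\ell}\!\lra\!0$ -- exactly the estimate you describe in words. Alternatively, you can skip normalizing $\Phi$ altogether and take $\al\!=\!\Phi(0)\be$, where $\be\!\in\!\C^n\!-\!0$ is the leading coefficient of~$\si$, using only that $\Phi(0)$ commutes with~$J_{\C^n}$; with either repair the argument is complete and matches the paper's.
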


\begin{proof}
This follows from~\eref{FHSconc_e}
and from the existence of such~$\ell$ and~$\al$ for~$\si$.
\end{proof}

\begin{crl}\label{FHS_crl2}
Suppose $(X,J)$ is an almost complex manifold, $(\Si,\fj)$ is a Riemann surface,
and $u\!:\Si\!\lra\!X$ is a  $J$-holomorphic map.
If $u$ is not constant on every connected component of~$\Si$, then the~subset 
$$u^{-1}\big(\{u(z)\!:\,z\!\in\!\Si,~\nd_zu\!=\!0\}\big)\subset\Si$$
is discrete. If in addition $x\!\in\!X$, the subset $u^{-1}(x)\!\subset\!\Si$ 
is also discrete.
\end{crl}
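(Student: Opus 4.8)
The plan is to deduce both assertions from the local normal form for $u$ provided by the Carleman Similarity Principle (Proposition~\ref{FHS_prp}) and, above all, by Corollary~\ref{FHS_crl1}. First I would reduce to the case that $\Si$ is connected and $u$ is non-constant: a subset of~$\Si$ is discrete if and only if its intersection with each connected component is discrete, and the hypothesis says precisely that $u$ restricts to a non-constant map on every component. So assume $\Si$ connected and $u$ non-constant, and fix $z_0\!\in\!\Si$. Choose a holomorphic coordinate on~$\Si$ identifying a neighborhood of~$z_0$ with $B_\ep\!\subset\!\C$ and~$z_0$ with~$0$, and a chart on~$X$ near~$u(z_0)$ whose differential at~$u(z_0)$ is complex linear; then $u(z_0)$ corresponds to $0\!\in\!\C^n$ and the pulled-back almost complex structure $J(z)\!\equiv\!J(u(z))$ satisfies $J(z)^2\!=\!-\Id$ and $J(0)\!=\!J_{\C^n}$, while $\dbar_Ju\!=\!0$ becomes $u_s\!+\!J(z)u_t\!=\!0$, i.e.~\eref{FHScond_e} with $C\!\equiv\!0$. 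Before invoking Corollary~\ref{FHS_crl1} I must check that $u$ vanishes to infinite order nowhere. The set $W\!\subset\!\Si$ of points having a neighborhood on which $u$ is constant is open; it is also closed, because if $u$ vanished to infinite order at some $z_0$ then, writing $u(z)\!=\!\Phi(z)\si(z)$ as in~\eref{FHSconc_e} with $\Phi$ continuous and invertible and $\si$ holomorphic, the bound $|\si(z)|\!\le\!\|\Phi(z)^{-1}\|\,|u(z)|$ would force $\si\!=\!o(|z|^k)$ near~$0$ for every~$k$, hence $\si\!\equiv\!0$ near~$0$, hence $u$ constant near~$z_0$; running this on the closed set of points at which all positive-order derivatives of~$u$ vanish inside a connected coordinate neighborhood shows $W$ is clopen, hence empty since $u$ is non-constant.

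Therefore Corollary~\ref{FHS_crl1} applies at every~$z_0$: there exist $\ell(z_0)\!\in\!\Z^+$ and $\al(z_0)\!\in\!\C^n\!-\!0$ with $u(z)\!-\!u(z_0)\!=\!\al(z_0)z^{\ell(z_0)}\!+\!o(z^{\ell(z_0)})$, so that $u(z)\!\neq\!u(z_0)$ for $0\!<\!|z|\!<\!\de$; this already shows $u^{-1}(x)$ is discrete for every $x\!\in\!X$. For the singular locus $S\!=\!\{z\!\in\!\Si\!:\nd_zu\!=\!0\}$, which is closed in~$\Si$, I would run the same machinery on the first derivative of~$u$: differentiating $u_s\!+\!J(u(z))u_t\!=\!0$ in~$s$ and using $u_t\!=\!J(u(z))u_s$ shows that $\ze\!\equiv\!u_s$, a smooth $\C^n$-valued function on~$B_\ep$, satisfies $\ze_s\!+\!J(u(z))\ze_t\!+\!C(z)\ze\!=\!0$ for a smooth matrix-valued~$C$, i.e.~again an equation of the form~\eref{FHScond_e}. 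At a point~$z_0$ with $\nd_{z_0}u\!=\!0$ one has $\ze(0)\!=\!0$, and $\ze$ cannot vanish to infinite order at~$0$ (that would force all positive-order derivatives of~$u$ to vanish at~$z_0$, contradicting $W\!=\!\eset$), so Corollary~\ref{FHS_crl1} gives $\ze(z)\!=\!\be z^m\!+\!o(z^m)$ with $m\!\in\!\Z^+$ and $\be\!\in\!\C^n\!-\!0$; hence $u_s(z)\!\neq\!0$, so $\nd_zu\!\neq\!0$, for $0\!<\!|z|\!<\!\de$. Thus $S$ is discrete.

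To finish, I would deduce that $u^{-1}(u(S))$ is discrete. When $\Si$ is compact this is immediate: $S$ is then finite, so $u(S)$ is a finite set and $u^{-1}(u(S))$ is a finite union of closed discrete sets, hence discrete. For general~$\Si$ I would argue pointwise: if $z_i\!\to\!z_0$ with $z_i\!\neq\!z_0$ and $u(z_i)\!=\!u(p_i)$ for $p_i\!\in\!S$, then $u(p_i)\!\to\!u(z_0)$; since $S$ is closed and discrete, any subsequence of~$(p_i)$ converging in~$\Si$ is eventually equal to some $w\!\in\!S$, and along it $z_0$ becomes a non-isolated point of the single level set $u^{-1}(u(w))$, contradicting the discreteness of that level set established above.

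I expect the main obstacle to be the discreteness of the singular locus~$S$: it cannot be read off the expansion of~$u$ itself, which controls the level sets of~$u$ but says nothing about the rank of $\nd_zu$ at points~$z$ near~$z_0$, so one must bring the Similarity Principle to bear on the auxiliary linear equation satisfied by~$u_s$ (equivalently, on $\nd u$ as a solution of the linearized equation). A secondary subtlety is the global step for $u^{-1}(u(S))$ when $\Si$ is non-compact, where one has to preclude a sequence of distinct singular points escaping to infinity in~$\Si$ while their images converge; this does not arise in the compact case relevant to~\eref{ordxudfn_e}.
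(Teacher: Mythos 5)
Your proof is correct where the statement is provable, and at the key step it takes a different route from the paper. For the discreteness of the level sets you argue essentially as the paper does, reading the zeros off the factorization $u\!=\!\Phi\si$ of Proposition~\ref{FHS_prp} via Corollary~\ref{FHS_crl1}, but you also supply the step the paper leaves implicit: ruling out infinite-order vanishing by a clopen unique-continuation argument (the paper records this only later, as Corollary~\ref{FHS_crl2a}), which is indeed needed before Corollary~\ref{FHS_crl1} can be invoked at every point. For the discreteness of the critical set $S\!=\!\{z\!:\nd_zu\!=\!0\}$ you differentiate the equation and apply the Similarity Principle to $\ze\!=\!u_s$, which does satisfy $\ze_s\!+\!J(u(z))\ze_t\!+\!C(z)\ze\!=\!0$ with smooth $C$, and you correctly exclude infinite-order vanishing of~$\ze$. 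The paper instead deduces $\nd_zu\!\neq\!0$ near a critical point from Corollary~\ref{FHS_crl1} combined with Taylor's formula for the smooth map~$u$ (or from the refined Corollary~\ref{FHS_crl}), precisely because, as its proof notes, the factorization~\eref{FHSconc_e} alone does not control $\nd u$ ($\Phi$ is only~$L^p_1$). Your route (the one taken in McDuff--Salamon) avoids any discussion of jets of~$u$ at the cost of a second application of Proposition~\ref{FHS_prp}; the paper's is shorter once Corollary~\ref{FHS_crl1} is in hand. Both are sound.

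The only incomplete point is the one you flag yourself: for non-compact (or disconnected) $\Si$, your passage from ``$S$ and the level sets are discrete'' to ``$u^{-1}(u(S))$ is discrete'' only treats sequences of critical points $p_i$ possessing a convergent subsequence. This cannot be repaired, because the first claim is false in that generality; note it is not a component-wise statement, since critical values produced on one component may be hit on another. For instance, with $\Si\!=\!\bigsqcup_{k\ge0}B_1$, $X\!=\!\C$, $u(w)\!=\!w^2\!+\!1/k$ on the $k$-th disc for $k\!\ge\!1$ and $u(w)\!=\!w^2$ on the $0$-th disc, the point $w\!=\!0$ of the $0$-th disc lies in $u^{-1}(u(S))$ and is an accumulation point of the points $\pm k^{-1/2}$ ($k\!\ge\!2$) of that set, while $u$ is non-constant on every component. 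So the first claim should be read for compact~$\Si$, which is the only setting in which the paper uses it; there your finiteness argument ($S$ finite, so $u^{-1}(u(S))$ is a finite union of discrete closed level sets) is exactly what is needed, and the paper's own one-line proof is equally silent on the global assembly, in effect yielding only the discreteness of~$S$ and of the level sets.
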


\begin{proof}
The first and third equations in~\eref{FHSconc_e} immediately imply the second claim
(but not the first, since~$\Phi$ may not be in~$C^1$).
The first claim follows from Corollary~\ref{FHS_crl1} and Taylor's formula for~$u$
(as well as from Corollary~\ref{FHS_crl}).
\end{proof}

\noindent
Before establishing the full statement of Proposition~\ref{FHS_prp}, 
we consider a special case.

\begin{lmm}\label{FHS_lmm}
Suppose $n\!\in\!\Z^+$ and $p,\ep\!\in\!\R^+$ are as in Proposition~\ref{FHS_prp},
$A\!\in\!L^p(B_{\ep};\End_{\C}\C^n)$, and $u\!\in\!L^p_1(B_{\ep};\C^n)$
are such~that 
\BE{FHScond4_e} u(0)=0, \qquad 
u_s+J_{\C^n}u_t(z)+A(z)u(z)=0 ~~\forall~z\!=\!s\!+\!\fI t\!\in\!B_{\ep}\,.\EE
Then, there exist $\de\!\in\!(0,\ep)$,   $\Phi\!\in\!L_1^p(B_{\de};\GL_n\C)$, 
a $J_{\C^n}$-holomorphic map $\si\!:B_{\de}\!\lra\!\C^n$ such~that 
\BE{FHSconc4_e} \si(0)=0, \qquad
\Phi(0)=\Id_{\C^n}, \qquad u(z)=\Phi(z)\si(z)
~~\forall~z\!\in\!B_{\de}\,.\EE
\end{lmm}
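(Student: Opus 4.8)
The plan is to reduce the equation $u_s + J_{\C^n}u_t + Au = 0$ to a genuinely holomorphic one by constructing an $L_1^p$-bundle automorphism $\Phi$ that conjugates away the zeroth-order term $A$. The standard trick (as in \cite{FHS}) exploits the fact that $u$ \emph{vanishes at $0$}: writing the equation in terms of the Cauchy--Riemann operator $\dbar = \tfrac12(\prt_s + J_{\C^n}\prt_t)$, we have $\dbar u = -\tfrac12 A u$, and since $u(0)=0$, there is a matrix-valued function $\widehat{A}\in L^p(B_\ep;\End_{\C}\C^n)$, \emph{complex-linear}, with $Au = \widehat{A}u$ pointwise where $u\ne 0$ and, crucially, such that $\dbar u = -\tfrac12\widehat{A}u$ holds on all of $B_\ep$. (One builds $\widehat{A}$ by an averaging/integral formula: if $u$ were smooth one would write $A(z)u(z)=\big(\int_0^1 A(z)\,\nd\tau\big)u(z)$ trivially, but the point is to replace $A$, which is only $\R$-linear, by a $\C$-linear $\widehat{A}$ agreeing with it on the line $\C u(z)$; this is possible because $\dbar u$ is $\C$-linear in the relevant sense.) This step is the one place where the hypothesis $u(0)=0$ is essential and where some care with the measurability of $\widehat{A}$ is needed — I expect this to be the main obstacle, and I would handle it by the explicit construction in the proof of \cite[Theorem~2.2]{FHS}.

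Next, I would solve the linear $\dbar$-problem for $\Phi$. We seek $\Phi\!:B_\de\!\lra\!\GL_n\C$ with $\Phi(0)=\Id_{\C^n}$ and $\dbar\Phi = \tfrac12\widehat{A}\Phi$ on a possibly smaller ball $B_\de$. This is a linear elliptic system with $L^p$ coefficients; by the standard Cauchy transform / Calderón--Zygmund estimates, for $\de>0$ small enough the integral equation
\[
\Phi(z) = \Id_{\C^n} + \frac{1}{2\pi\fI}\int_{B_\de}\frac{\widehat{A}(w)\Phi(w)}{w-z}\,\nd w\w\nd\bar w
\]
has a unique solution $\Phi\in L_1^p(B_\de;\Mat_n\C)$ (contraction mapping, using $p>2$ so that the Cauchy transform maps $L^p\to L_1^p\hookrightarrow C^0$ with small norm on $B_\de$), and $\Phi(0)=\Id$ forces $\Phi$ to be invertible near $0$, hence on $B_\de$ after shrinking. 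Then $\dbar\Phi = \tfrac12\widehat{A}\Phi$ as distributions.

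Finally, I set $\si = \Phi^{-1}u$ and check it is $J_{\C^n}$-holomorphic. Since $\Phi\in L_1^p$ with $p>2$ and $\Phi$ is invertible, $\Phi^{-1}\in L_1^p$ as well (the entries are rational in the entries of $\Phi$), and the Leibniz rule holds in $L_1^p$ for $p>2$ because $L_1^p$ is a Banach algebra in this range; so $\si\in L_1^p(B_\de;\C^n)$. Then
\[
\dbar\si = \dbar(\Phi^{-1}u) = -\Phi^{-1}(\dbar\Phi)\Phi^{-1}u + \Phi^{-1}\dbar u
= -\Phi^{-1}\big(\tfrac12\widehat{A}\big)u + \Phi^{-1}\big(-\tfrac12\widehat{A}u\big) = 0,
\]
so $\si$ is holomorphic on $B_\de$, and $\si(0) = \Phi(0)^{-1}u(0) = 0$. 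The conclusion $u = \Phi\si$ with $\Phi(0)=\Id_{\C^n}$ then holds by construction, which is exactly \eref{FHSconc4_e}. (Elliptic regularity even upgrades $\si$ to be genuinely holomorphic and $\Phi$ to be continuous, though only the stated $L_1^p$ regularity is needed.)
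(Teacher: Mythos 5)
Your route is essentially correct but genuinely different from the paper's. The paper extends $A$ by zero to all of $S^2$, notes that $\Th\!\lra\!\big(2\dbar\Th,\Th(0)\big)$ is an isomorphism from $L_1^p(S^2;\End_{\C}\C^n)$ (surjectivity of $\dbar$ on the sphere), deduces that the perturbed operator $\Th\!\lra\!\big(\Th_s\!+\!J_{\C^n}\Th_t\!+\!A_{\de}\Th,\Th(0)\big)$ is still an isomorphism for $\de$ small because $\|A_\de\|_{L^p}\!\lra\!0$, and takes $\Th_\de$ to be the preimage of $(0,\Id_{\C^n})$; invertibility of $\Th_\de$ follows from its $C^0$-closeness to $\Id_{\C^n}$, and $\si\!=\!\Th_\de^{-1}u$. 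You instead solve for the frame locally by a Cauchy-transform contraction on a small disc; this is more elementary (no Fredholm/surjectivity input on $S^2$) at the cost of invoking Calder\'on--Zygmund bounds for the transform, and the smallness mechanism ($\|A\|_{L^p(B_\de)}\!\lra\!0$) is the same in both arguments. Three points to clean up. First, your opening paragraph about manufacturing a $\C$-linear $\widehat{A}$ is superfluous here: in the Lemma $A$ is already assumed to lie in $\End_{\C}\C^n$ (that reduction, via the projection $D(z)w=|v(z)|^{-2}\lr{v(z),w}v(z)$, is part of the proof of Proposition~\ref{FHS_prp}, and it does not use $u(0)\!=\!0$). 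Second, a sign: you need $\dbar\Phi=-\tfrac12A\Phi$ (equivalently $\Phi_s\!+\!J_{\C^n}\Phi_t\!+\!A\Phi=0$); with your sign the two terms in your final display add rather than cancel, giving $-\Phi^{-1}Au$ instead of $0$. Third, the solution of your integral equation does not satisfy $\Phi(0)=\Id_{\C^n}$ exactly; either modify the kernel to $\frac{1}{w-z}-\frac1w$ (which kills the value at $0$ without affecting $\dbar_z$) or replace $\Phi(z)$ by $\Phi(z)\Phi(0)^{-1}$, which still solves the same equation since right multiplication by a constant matrix commutes with $\dbar$ and with left multiplication by $A$. With these adjustments your argument is complete.
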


\begin{proof}
For each $\de\!\in\![0,\ep]$, we define 
\begin{gather*}
A_{\de}\in L^p(S^2;\End_{\C}\C^n)
\qquad\hbox{by}\quad
A_{\de}(z)=\begin{cases}A(z),&\hbox{if}~z\!\in\!B_{\de};\\
0,&\hbox{otherwise};\end{cases}\\
D_{\de}: L_1^p(S^2;\End_{\C}\C^n)\lra L^p(S^2;(T^*S^2)^{0,1}\!\otimes_{\C}\!\End_{\C}\C^n)
\qquad\hbox{by}\quad
D_{\de}\Th=\big(\Th_s\!+\!J_{\C^n}\Th_t\!+\!A_{\de}\Th\big)\nd\bar{z}\,.
\end{gather*}
Since the cokernel of $D_0\!=\!2\dbar$ is isomorphic $H^1(S^2;\C)\!\otimes_{\C}\!\End_{\C}\C^n$,
$D_0$ is surjective and the homomorphism 
$$\ti{D}_0\!:L_1^p(S^2;\End_{\C}\C^n)\lra L^p(S^2;(T^*S^2)^{0,1}\!\otimes_{\C}\!\End_{\C}\C^n)
\oplus \End_{\C}\C^n,
\qquad \Th\lra \big(D_0\Th,\Th(0)\big),$$
is an isomorphism. 
Since 
$$\big\|D_{\de}\Th-D_0\Th\big\|_{L^p} \le \|A_{\de}\|_{L^p}\|\Th\|_{C^0}
\le C\|A_{\de}\|_{L^p}\|\Th\|_{L^p_1}
\qquad\forall~\Th\in L_1^p(S^2;\End_{\C}\C^n)$$
and $\|A_{\de}\|_{L^p}\!\lra\!0$ as $\de\!\lra\!0$, the homomorphism 
$$\ti{D}_{\de}\!:L_1^p(S^2;\End_{\C}\C^n)\lra 
L^p(S^2;(T^*S^2)^{0,1}\!\otimes_{\C}\!\End_{\C}\C^n)\oplus \End_{\C}\C^n,
\qquad \Th\lra \big(D_{\de}\Th,\Th(0)\big),$$
is also an isomorphism for $\de\!>\!0$ sufficient small.
Let $\Th_{\de}\!=\!D_{\de}^{-1}(0,\Id_{\C^n})$.
Since $D_{\de}$ is an isomorphism,
$$\big\|\Th_{\de}\!-\!\Id_{\C^n}\big\|_{C^0}
\le C \big\|\Th_{\de}\!-\!\Id_{\C^n}\big\|_{L^p_1}
\le C'\big\|D_{\de}(\Th_{\de}\!-\!\Id_{\C^n})\big\|_{L^p}
= C'\big\|A_{\de}\big\|_{L^p}\,.$$
Since $\|A_{\de}\|_{L^p}\!\lra\!0$ as $\de\!\lra\!0$, 
$\Th_{\de}\!\in\!L_1^p(B_{\de};\GL_n\C)$.
By \eref{FHScond4_e} and $D_{\de}\Th_{\de}\!=\!0$, 
the function \hbox{$\si\!\equiv\!\Th_{\de}^{-1}u$} satisfies
$$\si(0)=0,\qquad \si_s\!+\!J_{\C^n}\si_t\!=\!0\quad\forall~z\in B_{\de},$$ 
i.e.~$\si$ is $J_{\C^n}$-holomorphic, as required.
\end{proof}

\begin{proof}[{\bf\emph{Proof of Proposition~\ref{FHS_prp}}}]
(1) Since $B_{\ep}$ is contractible, 
the complex vector bundles $u^*(T\C^n,J_{\C^n})$ and $u^*(T\C^n,J)$ over $B_{\ep}$
are isomorphic.
Thus, there exists
$$\Psi\in L_1^p(B_{\ep};\GL_{2n}\R) \qquad\hbox{s.t.}\quad
J(z)\Psi(z)=\Psi(z)J_{\C^n}~~~\forall~z\!\in\!B_{\ep}\,.$$
Let $v\!=\!\Psi^{-1}u$. By the assumptions on $u$, $v\!\in\!L^p_1(B_{\ep};\C^n)$ and
\begin{gather}\label{FHScond_e2} 
v(0)=0, \qquad
v_s(z)+J_{\C^n}v_t(z)+\ti{C}(z)v(z)=0 \quad\forall~z\!=\!s\!+\!\fI t\!\in\!B_{\ep},\\
\hbox{where}\quad
\ti{C}=\Psi^{-1}\cdot\big(\Psi_s+J\Psi_t+C\Psi)
\in L^p(B_{\ep};\End_{\R}\C^n)\,.\notag
\end{gather}
Thus, we have reduced the problem to the case $J\!=\!J_{\C^n}$.\\

\noindent
(2) Let $\ti{C}^{\pm}\!=\!\frac12(\ti{C}\mp J_{\C^n}\ti{C}J_{\C^n})$ be the $\C$-linear
and $\C$-antilinear parts of~$\ti{C}$, i.e.~$\ti{C}^{\pm}J_{\C^n}\!=\pm J_{\C^n}\ti{C}^{\pm}$.
With $\lr{\cdot,\cdot}$ denoting the Hermitian inner-product on~$\C^n$
which is $\C$-antilinear in the second input, define
$$D\in L^{\i}(B_{\ep};\End_{\R}\C^n), \quad
D(z)w=\begin{cases}|v(z)|^{-2}\lr{v(z),w}v(z),&\hbox{if}~v(z)\!\neq\!0;\\
0,&\hbox{otherwise};
\end{cases} \quad
A=\ti{C}^+ +\ti{C}^-D\,.$$
Since $DJ_{\C^n}\!=\!-J_{\C^n}D$ and $Dv\!=\!v$, $A\in L^p(B_{\ep};\End_{\C}\C^n)$ 
and $Av\!=\!\ti{C}v$.
Thus, by~\eref{FHScond_e2}, 
$$v_s+J_{\C^n}v_t+Av=0\,.$$
The claim now follows from Lemma~\ref{FHS_lmm}.
\end{proof}

\begin{crl}\label{FHS_crl}
Suppose $n\!\in\!\Z^+$, $\ep\!\in\!\R^+$, 
$J$ is a smooth almost complex structure on~$\C^n$ with $J_0\!=\!J_{\C^n}$, 
and $u\!:B_{\ep}\!\lra\!\C^n$ is a $J$-holomorphic map with $u(0)\!=\!0$.
Then, there exist $\de\!\in\!(0,\ep)$, $C\!\in\!\R^+$,
$\Phi\!\in\!C^0(B_{\de};\GL_{2n}\R)$, and 
a $J_{\C^n}$-holomorphic map $\si\!:B_{\de}\!\lra\!\C^n$ such~that 
$\Phi$ is smooth on~$B_{\de}^*$, 
$$ \si(0)=0, ~~ \Phi(0)=\Id_{\C^n}, ~~
J(u(z))\Phi(z)=\Phi(z)J_{\C^n},~~u(z)=\Phi(z)\si(z), ~~
\big|\nd_z\Phi\big|\le C~\forall\,z\!\in\!B_{\de}^*\,.$$
\end{crl}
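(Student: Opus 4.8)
\textbf{Proof plan for Corollary~\ref{FHS_crl}.}
The plan is to apply Proposition~\ref{FHS_prp} and Lemma~\ref{FHS_lmm} directly, but to track the regularity of the conjugating matrix~$\Phi$ carefully, which is the only new content here. Since $J$ is smooth with $J_0\!=\!J_{\C^n}$ and $u$ is $J$-holomorphic with $u(0)\!=\!0$, the standard elliptic bootstrapping for the $\dbar_J$-equation (regularity of $J$-holomorphic maps, Proposition~\ref{JHolomreg_prp}) shows that $u$ is smooth on a neighborhood of~$0$. In particular $u\!\in\!L_1^p(B_{\ep'};\C^n)$ for some $\ep'\!\in\!(0,\ep)$ and any $p\!>\!2$, and it satisfies the third equation in~\eref{FHScond_e} with $J(z)\!=\!J(u(z))$ (now a smooth, hence $L_1^p$, almost complex structure on the disk with $J(0)\!=\!J_{\C^n}$) and $C\!\equiv\!0$. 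So Proposition~\ref{FHS_prp} applies and produces $\de$, $\Phi\!\in\!L_1^p(B_{\de};\GL_{2n}\R)$, and a $J_{\C^n}$-holomorphic $\si\!:B_{\de}\!\lra\!\C^n$ with $\si(0)\!=\!0$, $J(u(z))\Phi(z)\!=\!\Phi(z)J_{\C^n}$, and $u(z)\!=\!\Phi(z)\si(z)$. By tracing through the proof of Proposition~\ref{FHS_prp}, one may arrange $\Phi(0)\!=\!\Id_{\C^n}$: in step~(1) the trivialization $\Psi$ can be chosen with $\Psi(0)\!=\!\Id_{\C^n}$ (since $J(0)\!=\!J_{\C^n}$), and in Lemma~\ref{FHS_lmm} the factor $\Th_{\de}$ already has $\Th_{\de}(0)\!=\!\Id_{\C^n}$; composing gives $\Phi\!=\!\Psi\Th_{\de}$ with the required normalization.

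The remaining points are the smoothness of~$\Phi$ on $B_{\de}^*$ and the gradient bound. For smoothness away from~$0$: on the open set where $\si$ is nonzero one has $\Phi\!=\!u\,\si^{\dagger}/|\si|^2$ on the one-complex-dimensional span, but more robustly, since $\si$ is a genuine holomorphic function vanishing only at~$0$ (after shrinking $\de$, its zero set in $B_{\de}$ is $\{0\}$ by Corollary~\ref{FHS_crl1} applied to~$\si$, unless $\si\!\equiv\!0$, in which case $u\!\equiv\!0$ and the statement is trivial with $\Phi\!\equiv\!\Id$), and $u$ is smooth, one can recover $\Phi$ on $B_{\de}^*$ as a smooth function: the equation $u(z)\!=\!\Phi(z)\si(z)$ together with $J(u(z))\Phi(z)\!=\!\Phi(z)J_{\C^n}$ pins down $\Phi$ uniquely on the $\C$-line generated by $\si(z)$ and on its $J(u(z))$-complex-orthogonal complement $\Phi$ is the unique $\C$-linear isomorphism matching the two trivializations, hence smooth in~$z$ for $z\!\neq\!0$. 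A cleaner route is to observe that in the proof, $\Th_{\de}$ solves a linear $\dbar$-type equation $D_{\de}\Th_{\de}\!=\!0$ with $L^p$ coefficient $A_{\de}$ that is \emph{smooth} on $B_{\de}^*$ (as $A$ is built from the smooth data $J\!\circ\!u$ and the smooth $u$, with the $|v|^{-2}\lr{v,\cdot}v$ projection smooth wherever $v\!=\!\Psi^{-1}u$ is nonzero, i.e.\ on $B_{\de}^*$ after shrinking), so elliptic regularity for $D_{\de}$ gives $\Th_{\de}\!\in\!C^{\i}(B_{\de}^*)$, and $\Psi$ is smooth everywhere, whence $\Phi\!=\!\Psi\Th_{\de}\!\in\!C^{\i}(B_{\de}^*)$.

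For the gradient bound $|\nd_z\Phi|\!\le\!C$ on $B_{\de}^*$: the factor $\Psi$ is smooth on the closed ball $\ov{B}_{\de}$ (after shrinking $\de$ slightly), so $|\nd\Psi|$ is bounded there; it remains to bound $|\nd\Th_{\de}|$. The only source of a possible blow-up as $z\!\to\!0$ is the projection term $D$ in $A\!=\!\ti C^{+}\!+\!\ti C^{-}D$, whose derivative $|\nd D|$ is $O(|v(z)|^{-1})\!=\!O(|z|^{-1})$ because $v$ vanishes to order exactly~$1$ at~$0$ (here $\ti C\!=\!0$ so $\ti C^{\pm}\!=\!0$; but even in general, $v(z)\!=\!\al z\!+\!o(z)$ with $\al\!\neq\!0$ by Corollary~\ref{FHS_crl1}, so $|v(z)|\!\ge\!c|z|$ near~$0$). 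Differentiating the equation $D_{\de}\Th_{\de}\!=\!0$, i.e.\ $\dbar\Th_{\de}\!=\!-\tfrac12 A_{\de}\Th_{\de}\,\nd\bar z$ (up to the constant), and using interior elliptic estimates on annuli $\{|z|\!\sim\!r\}$ rescaled to unit size shows $|\nd\Th_{\de}(z)|\!\le\!C(\|\Th_{\de}\|_{C^0}\!+\!r\,\|A_{\de}\|_{C^0(\{|z|\sim r\})})\!\cdot\!r^{-1}$, and since $\|A_{\de}\|_{C^0(\{|z|\sim r\})}\!=\!O(r^{-1})$ the two contributions balance to give $|\nd\Th_{\de}(z)|\!\le\!C/|z|$—which is \emph{not} yet the claimed uniform bound. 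The honest fix, and the step I expect to be the main obstacle, is to extract from $u(z)\!=\!\Phi(z)\si(z)$ with $u,\si$ both vanishing to order exactly~$1$ the fact that $\Phi$ extends \emph{continuously} to~$0$ with $\Phi(0)\!=\!\Id$, and then to improve the annulus estimate: writing $\Phi\!=\!\Id\!+\!E$ with $E(0)\!=\!0$ and $E$ solving an inhomogeneous $\dbar$-equation with right-hand side in $L^p$, Calderón–Zygmund / Sobolev estimates give $E\!\in\!L_1^p$ hence $E$ Hölder-continuous, and a Schauder-type estimate on the rescaled annuli (using that the coefficient $A_{\de}$, though singular like $1/|z|$, lies in $L^p$ with $p\!>\!2$ uniformly in scale, so its rescaled $L^p$-norm on a unit annulus is $O(r^{1-2/p})\!\to\!0$) upgrades $|\nd\Th_{\de}|$ from $O(1/|z|)$ to $O(1)$. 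I would carry this out by the standard scaling argument: if $|\nd\Phi|$ were unbounded, rescale at a sequence $z_k\!\to\!0$ realizing the sup, pass to a limit solving $\dbar$ of a constant matrix (since the rescaled $A_{\de}$ vanishes in the limit in $L^p$), conclude the limit is holomorphic with bounded gradient—contradicting, together with $\Phi(0)\!=\!\Id$ and the $O(|z|^{1-2/p})$ Hölder modulus, the normalization. The bookkeeping for this contradiction argument, and verifying the rescaled coefficients genuinely tend to zero, is the delicate part; everything else is a direct quotation of Proposition~\ref{FHS_prp}, Lemma~\ref{FHS_lmm}, and Corollary~\ref{FHS_crl1}.
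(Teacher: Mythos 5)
Your reduction to Proposition~\ref{FHS_prp} (with $J(z)\!=\!J(u(z))$, $C\!=\!0$, a smooth trivialization $\Psi$ with $\Psi(0)\!=\!\Id_{\C^n}$, and $\Th_{\de}(0)\!=\!\Id_{\C^n}$ from Lemma~\ref{FHS_lmm}) correctly yields everything except the last property, and your ``cleaner route'' to smoothness of $\Phi$ on~$B_{\de}^*$ is fine. The genuine gap is the uniform bound $|\nd_z\Phi|\!\le\!C$, which is precisely the new content of the corollary, and your sketch for it does not work. First, several of the quantitative claims feeding it are off: $v$ vanishes to order $\ell$, not necessarily~$1$ (Corollary~\ref{FHS_crl1} gives $v\!=\!\al z^{\ell}\!+\!o(z^{\ell})$); $C\!=\!0$ does not give $\ti{C}\!=\!0$ (it contains $\Psi_s\!+\!J\Psi_t$); and $A_{\de}$ is \emph{bounded} (it is $|\nd A_{\de}|$ that is $O(1/|z|)$), so the ``$O(r^{1-2/p})$ smallness of the rescaled coefficient'' is not the relevant issue. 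More seriously, the structural situation is: $\Th_{\de}$ solves $\prt_{\bar z}\Th_{\de}\!=\!-\tfrac12A_{\de}\Th_{\de}$ with $A_{\de}(z)\!=\!A_0(z/|z|)\!+\!O(|z|)$, i.e.\ a bounded coefficient with a purely angular discontinuity at~$0$, and for such data solutions need \emph{not} have bounded differential at the puncture: $w(z)\!=\!z\ln|z|$ solves $\prt_{\bar z}w\!=\!z/(2\bar z)$, is continuous with $w(0)\!=\!0$ and H\"older of every exponent $<\!1$, yet $|\nd w|\!\sim\!|\ln|z||$, and no holomorphic correction removes the logarithm. Your blow-up/contradiction scheme cannot exclude this: rescaling at points $z_k\!\lra\!0$ of large gradient and normalizing, the limit is an affine (holomorphic, bounded-gradient) map, which is perfectly consistent with $\Phi(0)\!=\!\Id_{\C^n}$ and the H\"older modulus, so no contradiction arises. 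To make your route work you would have to show that the specific $A$ arising here avoids the resonant angular mode, which you do not do.

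The paper avoids any PDE argument for $\Phi$. With $\Psi\!\in\!C^{\i}(\C^n;\GL_{2n}\R)$ satisfying $\Psi(0)\!=\!\Id_{\C^n}$ and $J(x)\Psi(x)\!=\!\Psi(x)J_{\C^n}$, the map $v(z)\!=\!\Psi(u(z))^{-1}u(z)$ is smooth; by Corollary~\ref{FHS_crl1} and smoothness one can choose complex coordinates so that $v(z)\!=\!(f(z),g(z))h(z)$ with $h$ holomorphic vanishing at~$0$ (essentially $z^{\ell}$) and $f,g$ continuous, $f(0)\!=\!1$, $g(0)\!=\!0$. The elementary Lemma~\ref{C1bnd_lmm} (Taylor's theorem applied to $fh$ smooth, $h$ holomorphic) then gives that $f$ and $g$ are differentiable on a punctured ball with \emph{bounded} differentials, and one sets
$\Phi(z)\!=\!\Psi(u(z))\left(\begin{smallmatrix} f(z)& 0\\ g(z)& 1\end{smallmatrix}\right)$, $\si(z)\!=\!(h(z),0)$.
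In particular, as Remark~\ref{FHS_rmk} emphasizes, the $\Phi$ of the corollary is a different object from the Carleman factor of Proposition~\ref{FHS_prp}, not an upgraded version of it; attempting the upgrade is where your proposal breaks down.
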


\begin{proof} We can assume that $u$ is not identically~0 on some neighborhood of $0\!\in\!B_{\ep}$.
Similarly to~(1) in the proof of Proposition~\ref{FHS_prp}, there exists
$$\Psi\in C^{\i}(\C^n;\GL_{2n}\R) \qquad\hbox{s.t.}\qquad
\Psi(0)=\Id_{\C^n}, \quad
J(x)\Psi(x)=\Psi(x)J_{\C^n}~~~\forall~x\!\in\!\C^n\,.$$
Let $v(z)\!=\!\Psi(u(z))^{-1}u(z)$.
By Corollary~\ref{FHS_crl1}, we can choose complex linear coordinates on~$\C^n$ so~that 
$$v(z)=\big(f(z),g(z)\big)h(z) \in \C\!\oplus\!\C^{n-1} \qquad\forall~z\!\in\!B_{\ep'}$$
for some $\ep'\!\in\!(0,\ep)$, holomorphic function $h$ on~$B_{\ep'}$ with $h(0)\!=\!0$, 
and continuous functions $f$ and $g$ on~$B_{\ep'}$ with $f(0)\!=\!1$ and $g(0)\!=\!0$.
By Lemma~\ref{C1bnd_lmm} below applied with~$f$ above and with each component of~$g$ separately, 
there exists $\de\!\in\!(0,\ep')$ so that 
the function 
$$\Phi\!:B_{\de}\lra\GL_{2n}\R, \qquad 
\Phi(z)=\Psi\big(u(z)\big)
\left(\!\!\begin{array}{cc} f(z)& 0\\ g(z)& 1\end{array}\!\!\right),$$
is continuous on $B_{\de}$ and smooth on $B_{\de}\!-\!0$ with
$|\nd_z\Phi|$  uniformly bounded  on $B_{\de}\!-\!0$.
Taking \hbox{$\si(z)\!=\!(h(z),0)$}, we conclude the proof.
\end{proof}

\begin{lmm}\label{C1bnd_lmm}
Suppose $\ep\!\in\!\R^+$, and 
$f,h\!:B_{\ep}\!\lra\!\C$ are continuous functions such~that
$h$ is holomorphic, $h(z)\!\!\neq\!0$ for some $z\!\in\!B_{\ep}$, and
the function
\BE{C1bnd_e}B_{\ep}\lra\C, \qquad z\lra f(z)h(z),\EE
is smooth. 
Then there exist $\de\!\in\!(0,\ep)$ and $C\!\in\!\R^+$ such that 
$f$ is differentiable on $B_{\ep}\!-\!0$ and
\BE{C1bnd_e2}\big|\nd_zf\big|\le C \qquad\forall~z\!\in\!B_{\de}\!-\!0\,.\EE
\end{lmm}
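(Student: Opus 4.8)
The plan is to reduce the statement to a single division computation near the origin. Away from the zeros of~$h$ — a discrete set, since $h$ is holomorphic and not identically zero — the differentiability claim is immediate: there $f\!=\!(fh)/h$ is the quotient of a smooth function by a smooth nowhere-vanishing one, hence smooth. Shrinking~$\ep$, I may assume $0$ is the only possible zero of~$h$ in~$B_\ep$, so $f$ is already smooth on $B_\ep\!-\!\{0\}$, and the whole content is the uniform bound on~$\nd_zf$ on a small punctured disk. Write $k\!=\!\ord_0h\!\in\!\Z^{\ge0}$ and $h(z)\!=\!z^ku_0(z)$ with $u_0$ holomorphic and nowhere zero on~$B_\ep$; dividing the hypothesis ``$fh$ smooth'' by~$u_0$ shows that $\phi(z)\!\equiv\!z^kf(z)$ is smooth on~$B_\ep$, while $f\!=\!\phi/z^k$ is continuous by hypothesis. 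On $B_\ep\!-\!\{0\}$ one has
$$f_{\bar z}=\frac{\phi_{\bar z}}{z^k}\,,\qquad
f_z=\frac{z\phi_z-k\phi}{z^{k+1}}\,,$$
so I need that $\phi_{\bar z}$ vanishes to order at least~$k$ at~$0$ and $z\phi_z\!-\!k\phi$ to order at least~$k\!+\!1$.

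The key input, which I would isolate as an elementary fact, is that a polynomial $\psi$ in $z,\bar z$ of degree at most~$k$ for which $\psi(z)/z^k$ has a finite limit as $z\!\to\!0$ must equal $cz^k$, with $c$ that limit. This follows by passing to polar coordinates $z\!=\!re^{i\theta}$ and splitting~$\psi$ into homogeneous parts: each part of degree $m\!<\!k$ contributes $r^{m-k}$ times a trigonometric polynomial in~$\theta$ that is not identically zero unless the part vanishes, which would make $\psi/z^k$ unbounded along some direction; and the degree-$k$ part, divided by~$z^k$, is itself a trigonometric polynomial in~$\theta$, so the limit exists only if it is constant, i.e. equal to~$cz^k$. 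Since $f\!=\!\phi/z^k$ is continuous at~$0$, the same applies to the Taylor polynomial of~$\phi$ at~$0$ of degree at most~$k$, which is therefore $f(0)z^k$. Expanding $\phi$ to order $k\!+\!1$ as $\phi(z)\!=\!f(0)z^k+Q(z,\bar z)+\rho(z)$, with $Q$ homogeneous of degree $k\!+\!1$ and $\rho(z)\!=\!O(|z|^{k+2})$, $\nd\rho(z)\!=\!O(|z|^{k+1})$, I get $\phi_{\bar z}\!=\!\partial_{\bar z}Q+\rho_{\bar z}\!=\!O(|z|^k)$ and $z\phi_z\!-\!k\phi\!=\!(z\partial_zQ\!-\!kQ)+(z\rho_z\!-\!k\rho)\!=\!O(|z|^{k+1})$, the first bracket being homogeneous of degree $k\!+\!1$. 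Dividing by $z^k$ and $z^{k+1}$ respectively bounds $|f_{\bar z}|$ and $|f_z|$ on some $B_{\de_0}\!-\!\{0\}$; together with smoothness of~$f$ on the compact annulus $\de_0\!\le\!|z|\!\le\!\de$ for a chosen $\de\!\in\!(\de_0,\ep)$, this yields one constant~$C$ valid on all of $B_\de\!-\!\{0\}$.

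The delicate point — and the only place the hypothesis is used non-trivially — is the extraction of these orders of vanishing. It is tempting but false to expect $\phi/z^k$ continuous to make~$\phi$ ``divisible by~$z^k$'' in a differentiable sense: for example $\phi(z)\!=\!\bar z^2$ is smooth with $\phi(z)/z$ continuous, yet $\bar z^2/z$ is not differentiable at the origin even though its gradient stays bounded off~$0$, which is exactly the behaviour the lemma is designed to accommodate. So the heart of the argument is the degree bookkeeping above: the part of the Taylor expansion of~$\phi$ of degree at most~$k$ is forced to be the single monomial $f(0)z^k$, and the surviving pieces — homogeneous of degree $k\!+\!1$, and the $C^\infty$ remainder — withstand division by $z^k$ and~$z^{k+1}$ with bounded quotient. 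The remaining steps are routine.
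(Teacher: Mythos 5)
Your proof is correct and follows essentially the same route as the paper's: you reduce to $h(z)=z^k$ by dividing out the nonvanishing holomorphic factor $u_0$ (the paper instead makes a holomorphic change of coordinate on a small disk), then subtract $f(0)z^k$ from the smooth function $z^kf$ and use Taylor's theorem together with the continuity of~$f$ to see that the difference vanishes to order $k\!+\!1$ with first derivatives of order~$k$, and finally divide by~$z^k$. Your polar-coordinate argument identifying the degree-$\le k$ Taylor polynomial of $z^kf$ as $f(0)z^k$, and the explicit estimates on $f_z$ and $f_{\bar z}$, simply spell out the details the paper compresses into ``By Taylor's Theorem and the smoothness of the function~\eref{C1bnd_e}''.
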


\begin{proof} After a holomorphic change of coordinate on $B_{2\de}\!\subset\!B_{\ep}$,
we can assume that $h(z)\!=\!z^{\ell}$ for some $\ell\!\in\!\Z^{\ge0}$.
Define
$$g\!: B_{2\de}\lra\C, \qquad g(z)=f(z)z^{\ell}-f(0)z^{\ell}\,.$$
By Taylor's Theorem and the smoothness of the function~\eref{C1bnd_e}, 
there exists $C\!>\!0$ such that the smooth function~$g$ satisfies
$$\big|g(z)\big|\le C|z|^{\ell+1} \qquad \forall~z\!\in\!B_{\de}\,.$$
Dividing~$g$ by $z^{\ell}$, we thus obtain~\eref{C1bnd_e2}.
\end{proof}

\begin{rmk}\label{FHS_rmk}
Corollary~\ref{FHS_crl} refines the conclusion of Proposition~\ref{FHS_prp}
for $J$-holomorphic maps.
In contrast to the output $(\Phi,\si)$ of Proposition~\ref{FHS_prp},
the output of Corollary~\ref{FHS_crl} does not depend continuously 
on the input~$u$ with respect to the $L^p_1$-norms.
This makes Corollary~\ref{FHS_crl} less suitable for applications in settings 
involving families of $J$-holomorphic maps.
\end{rmk}

\subsection{Local structure of $J$-holomorphic maps}
\label{LocJMaps_subs2}

\noindent
We now obtain three corollaries from Proposition~\ref{FHS_prp}.
They underpin important geometric statements established later
in these notes, such as Propositions~\ref{MonotLmm_prp} and~\ref{SimplMap_prp}
and Lemma~\ref{EnerPres3_lmm}.

\begin{crl}[Unique Continuation]\label{FHS_crl2a}
Suppose $(X,J)$ is an almost complex manifold, $(\Si,\fj)$ 
is a connected Riemann surface, and
$$u,u'\!:(\Si,\fj)\lra (X,J)$$
are $J$-holomorphic maps.
If $u_0$ and $u_0'$ agree to infinite order at $z_0\!\in\!\Si$, then $u'\!=\!u'$.
\end{crl}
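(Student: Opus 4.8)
The plan is to deduce the global statement from the local one, namely Corollary~\ref{FHS_crl1} (or more directly the vanishing consequence of Proposition~\ref{FHS_prp}), via a standard connectedness argument. First I would set up the ``coincidence to infinite order'' locus
$$Z=\big\{z\!\in\!\Si:\ u\hbox{ and }u'\hbox{ agree to infinite order at }z\big\}.$$
This set is closed because the conditions ``all derivatives of $u-u'$ in some chart vanish at $z$'' are closed conditions and depend continuously on $z$; it is nonempty by hypothesis since $z_0\!\in\!Z$. Since $(\Si,\fj)$ is connected, it then suffices to prove that $Z$ is open, for then $Z=\Si$, which forces $u=u'$ everywhere (agreeing to infinite order at a point in particular means agreeing at that point, so $u$ and $u'$ coincide on the open set $Z=\Si$; actually they coincide as soon as $Z$ is nonempty and open, since then $u(z)=u'(z)$ for all $z$).

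To prove openness, fix $z_1\!\in\!Z$ and work in a holomorphic coordinate chart $B_\ep\!\subset\!\C$ around $z_1$ and a smooth chart around $u(z_1)=u'(z_1)$ in $X$, identifying both maps with maps $B_\ep\!\to\!\C^n$ sending $0$ to $0$ and agreeing to infinite order at $0$. The key point is that the difference $w=u-u'$ satisfies a linear Cauchy--Riemann-type equation of the form $w_s+J(z)w_t+C(z)w=0$ to which Proposition~\ref{FHS_prp} applies: writing the $J$-holomorphicity of $u$ and $u'$ as first-order PDEs with coefficients depending on the maps themselves, one subtracts and uses the mean value theorem on the $J$-dependence to produce a zeroth-order term $C(z)w(z)$ with $C\!\in\!L^p$ (indeed $C^0$, since $u,u'$ are smooth). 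Here $J(z)$ is $J$ evaluated along one of the maps, so $J(0)=J_{\C^n}$ after a linear change of coordinates. By Proposition~\ref{FHS_prp} there are $\Phi\!\in\!L^p_1(B_\de;\GL_{2n}\R)$ and a holomorphic $\si\!:B_\de\!\to\!\C^n$ with $w=\Phi\si$ and $\si(0)=0$. Since $\Phi$ is invertible and bounded near $0$ (and its inverse is bounded), $w$ vanishing to infinite order at $0$ forces $\si$ to vanish to infinite order at $0$; but a holomorphic function vanishing to infinite order at a point is identically zero on a neighborhood, hence $\si\equiv0$ on $B_\de$, hence $w\equiv0$ on $B_\de$. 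Thus $u=u'$ on $B_\de$, so in particular they agree to infinite order at every point of $B_\de$, i.e.\ $B_\de\!\subset\!Z$, proving $Z$ is open.

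The main obstacle, and the only genuinely nonroutine point, is the derivation of the linear equation for $w=u-u'$ with an $L^p$ (or continuous) zeroth-order coefficient: one must express $\dbar_Ju=0$ and $\dbar_Ju'=0$ in local coordinates, subtract, and absorb the discrepancy coming from $J\!\circ\!\nd u\!\circ\!\fj$ versus $J\!\circ\!\nd u'\!\circ\!\fj$ (where $J$ is evaluated at the two different points $u(z)$ and $u'(z)$) into a term linear in $w$, using the smoothness of $J$ and a Taylor-with-integral-remainder estimate along the segment joining $u(z)$ to $u'(z)$ in a chart. Once this equation is in the hypothesized shape of Proposition~\ref{FHS_prp}, the rest is the soft connectedness argument above together with the elementary fact that a holomorphic germ vanishing to infinite order vanishes.
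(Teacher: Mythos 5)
Your proposal is correct and follows essentially the same route as the paper: reduce to a local statement via the closed/open connectedness argument, write $w=u'-u$, use the integral form of Taylor's theorem on $J$ to produce the zeroth-order term $C(z)w(z)$ so that Proposition~\ref{FHS_prp} applies, and conclude from $w=\Phi\si$ with $\si$ holomorphic that infinite-order vanishing of $w$ forces $\si\equiv0$, hence $w\equiv0$ locally. The paper phrases the last step as a contradiction with Corollary~\ref{FHS_crl1} rather than via boundedness of $\Phi^{-1}$, but this is an immaterial difference.
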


\begin{proof}
Since the subset of the points of~$\Si$ at which $u$ and $u'$ agree is closed to infinite order, 
it is enough to show that $u\!=\!u'$ on some neighborhood of~$z_0$.
By the continuity of~$u$, we can assume that $X\!=\!\C^n$, 
$\Si\!=\!B_1$, $z_0\!=\!0$, and $u(0),u'(0)\!=\!0$.
Let
$$w\!=\!u'\!-\!u:B_{\ep}\lra \C^n\,.$$
Since $J$ is $C^1$,
\BE{FHS2a_e3}J(x\!+\!y)=J(x)+\!\!\int_0^1\frac{\nd J(x\!+\!ty)}{\nd t}\nd t
=J(x)+\sum_{i=1}^ny_i\!\!
\int_0^1\frac{\prt J}{\prt y_i}\bigg|_{x+ty}\nd t\,.\EE
Since $u$ and $u'$ are $J$-holomorphic, \eref{FHS2a_e3} implies that 
\begin{gather*}
\prt_s w + J\big(u(z)\big)\prt_t w+C(z)w(z)=0, \qquad\hbox{where}\quad
C\in L^p\big(B_1;\End_{\R}\C^n\big),\\
C(z)y=\sum_{i=1}^ny_i
\bigg(\int_0^1\frac{\prt J}{\prt y_i}\bigg|_{v(z)+tw(z)}\nd t\bigg)\prt_tw|_z\,.
\end{gather*}
By Proposition~\ref{FHS_prp}, 
there thus exist $\de\!\in\!(0,1)$, \hbox{$\Phi\!\in\!L^p_1(B_{\de};\GL_{2n}\R)$},
and holomorphic map \hbox{$\wt{w}\!:B_{\de}\!\lra\!\C^n$} such~that 
$$w(z)=\Phi(z)\wt{w}(z) \qquad\forall~z\in B_{\de}\,.$$
Since $w$ vanishes to infinite order at~$0$, it follows that
$\wt{w}(z)\!=\!0$ for all $z\!\in\!B_{\de}$
(otherwise, $w$ would satisfy the conclusion of Corollary~\ref{FHS_crl1})
and thus $w(z)\!=\!0$ for all $z\!\in\!B_{\de}$.
\end{proof}

\begin{crl}\label{FHS_crl2b}
Suppose $(X,J)$ is an almost complex manifold, 
$$u,u'\!:(\Si,\fj),(\Si',\fj')\lra (X,J)$$
are $J$-holomorphic maps,
$z_0\!\in\!\Si$ is such that $\nd_{z_0}u\!\neq\!0$,
and $z_0'\!\in\!\Si'$ is such that $u'(z_0')\!=\!u(z_0)$.
If there exist sequences $z_i\!\in\!\Si\!-\!z_0$ and $z_i'\!\in\!\Si'\!-\!z_0'$
such~that 
$$\lim_{i\lra\i}z_i=z_0\,, \qquad \lim_{i\lra\i}z_i'=z_0'\,, 
\quad\hbox{and}\quad u(z_i)=u'(z_i)~~\forall~i\!\in\!\Z^+\,,$$
then there exists a holomorphic map $\si\!:U'\!\lra\!\Si$ from a neighborhood
of~$z_0'$ in $\Si'$ such that $\si(z_0')\!=\!z_0$ and $u'|_{U'}\!=\!u\!\circ\!\si$.
\end{crl}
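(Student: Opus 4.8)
The plan is to reduce everything to a local statement near $z_0$ and $z_0'$ and then apply the Carleman Similarity Principle to a suitable ``difference'' map. Working in charts, I may assume $X\!=\!\C^n$, that $z_0$ and $z_0'$ are the origin, and that $u(0)\!=\!u'(0)\!=\!0$. Since $\nd_0u\!\neq\!0$, by Corollary~\ref{FHS_crl} (or directly by the implicit function theorem, since here $u$ is an immersion at the origin) I can change coordinates on the source so that, after shrinking, $u$ is an embedding onto a submanifold $\Sigma\!\subset\!\C^n$ passing through the origin; concretely I may take $u(w)\!=\!(w,\varphi(w))$ for a map $\varphi$ with $\varphi(0)\!=\!0$, $\nd_0\varphi\!=\!0$, after a complex-linear change of coordinates on $\C^n$. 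The goal is then to produce a holomorphic $\sigma$ near $0\!\in\!\Sigma'$ with $\sigma(0)\!=\!0$ and $u'\!=\!u\circ\sigma$, i.e.\ to show that $u'$ maps a neighborhood of $0$ into the image of $u$, holomorphically in the induced coordinate.

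The key construction is as follows. Consider the first coordinate of $u'$, call it $\sigma\!=\!\pi_1\circ u'\!:\Sigma'\!\lra\!\C$, where $\pi_1\!:\C^n\!\lra\!\C$ is projection onto the first factor (the ``$w$'' coordinate). This $\sigma$ is $\fj'$-holomorphic as a composition of a $J$-holomorphic map with a $J_{\C^n}$-holomorphic... wait --- $\pi_1$ need not be $J$-holomorphic, so instead I will set up the difference equation directly. Define, near $0\!\in\!\Sigma'$, the candidate $\sigma(z')\!=\!\big(\text{first coordinate of }u'(z')\big)$ and let $w(z')\!=\!u'(z')-u(\sigma(z'))$. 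By construction the first coordinate of $w$ vanishes identically, so $w$ takes values in $0\!\oplus\!\C^{n-1}$. The plan is to show $w\!\equiv\!0$ near $0$ by deriving a Carleman-type equation $w_s+J(u'(z'))w_t+C(z')w(z')=0$ for an appropriate $L^p$ matrix $C$, exactly as in the proof of Corollary~\ref{FHS_crl2a}, using that $u'$ and $u\circ\sigma$ are both $J$-holomorphic (the latter because $\sigma$ is $\fj'$-holomorphic and $u$ is $J$-holomorphic, so their composition is $J$-holomorphic) together with the integral form~\eref{FHS2a_e3} of the $C^1$ almost complex structure $J$. Here the point where $\sigma$ is holomorphic is itself delicate: since $\nd_0u\!\neq\!0$ I can invert $u$ on its image and define $\sigma\!=\!u^{-1}\circ(\text{projection of }u'\text{ onto }\Sigma)$ once I know $u'$ lands near $\Sigma$; to avoid circularity I instead first establish that $w\!\equiv\!0$ using the hypothesis about the sequences.

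The sequences $z_i\!\to\!z_0$, $z_i'\!\to\!z_0'$ with $u(z_i)\!=\!u'(z_i')$ are what force the nontriviality. Applying the Carleman Similarity Principle to $w$ gives $\Phi\!\in\!L^p_1(B_\delta;\GL_{2n}\R)$ with $\Phi(0)\!=\!\Id$ and a holomorphic $\wt w\!:B_\delta\!\lra\!\C^n$ with $w(z')\!=\!\Phi(z')\wt w(z')$. Along the sequence $z_i'$, we have $u'(z_i')\!=\!u(z_i)$, which lies in the image of $u$, i.e.\ in $\Sigma$; on the other hand $u(\sigma(z_i'))$ also lies in $\Sigma$, and since $u$ is injective on a neighborhood of $0$ and the first coordinates agree by construction of $\sigma$, one gets $w(z_i')\!=\!0$ for all large $i$. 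Since $\Phi(z_i')$ is invertible, $\wt w(z_i')\!=\!0$ for a sequence $z_i'\!\to\!0$; because $\wt w$ is holomorphic and vanishes on a sequence accumulating at $0$, either $\wt w\!\equiv\!0$ near $0$ (in the single-variable case, by the identity theorem, after noting each component is a holomorphic function of one complex variable) or it vanishes to finite order, contradicting Corollary~\ref{FHS_crl1} applied to $w$ in the direction of that sequence. Hence $\wt w\!\equiv\!0$, so $w\!\equiv\!0$ near $0$, which says precisely $u'\!=\!u\circ\sigma$ near $z_0'$ with $\sigma$ holomorphic and $\sigma(z_0')\!=\!z_0$.

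The main obstacle I anticipate is making the definition of $\sigma$ clean and genuinely holomorphic before knowing $w\!\equiv\!0$: one must either (a) set up $\sigma$ as $\pi_1\circ u'$ and carefully verify that $u\circ\sigma$ is $J$-holomorphic despite $\pi_1$ not being $J$-holomorphic — which works only once we know $u'$ lands in $\Sigma$ and the induced coordinate is holomorphic — so the argument has a slightly circular flavor that must be unwound by first proving, via Carleman applied to $w\!=\!u'-u\circ\sigma$ with $\sigma$ merely continuous, that $w$ vanishes to infinite order is impossible unless it vanishes identically; or (b) bootstrap: use the sequences to show the image of $u'$ near $z_0'$ lies in $\Sigma$, then pull back the complex structure. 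Option (a), mirroring the proof of Corollary~\ref{FHS_crl2a}, is the cleaner route, and the technical heart is checking that $C(z')\!\in\!L^p$ and that the first-coordinate-vanishing of $w$ together with injectivity of $u$ propagates from the sequence to a neighborhood.
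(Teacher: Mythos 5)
Your overall strategy --- reduce to a local model, form a ``transverse difference'' $w$, put it into the Carleman framework of Proposition~\ref{FHS_prp}, and use the sequence $z_i'\!\lra\!z_0'$ to force the holomorphic factor $\wt{w}$ to vanish identically --- is the right one and is the same idea as the paper's proof. But there is a genuine gap at the step you yourself flag: you never actually establish that $w\!=\!u'\!-\!u\!\circ\!\si$ satisfies an equation of the form $w_s+J(z)w_t+C(z)w=0$ with $C\!\in\!L^p$, which is the hypothesis Proposition~\ref{FHS_prp} needs. Your proposed derivation imitates Corollary~\ref{FHS_crl2a}, whose computation uses that \emph{both} maps being subtracted are $J$-holomorphic; for $u\!\circ\!\si$ this requires $\si=\pi_1\!\circ\!u'$ to be holomorphic, which is precisely (part of) the conclusion. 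Declaring that the circularity ``must be unwound'' by applying Carleman to $w$ with $\si$ merely continuous does not resolve it: with $\si$ only smooth, $u\!\circ\!\si$ is not $J$-holomorphic and no Carleman-type equation for the difference is available; the issue is not whether $C(z')$ lies in $L^p$ but whether the equation exists at all. Keeping $u$ as a graph $(w,\vph(w))$ after only a complex-linear change of coordinates is what creates this problem.

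The paper's proof removes the obstruction by a stronger normalization: since $\nd_{z_0}u\!\neq\!0$, the image of $u$ is an embedded $J$-holomorphic disk, and one chooses a smooth (not merely complex-linear) coordinate chart on $X$ near $u(z_0)$ in which $u(z)=(z,0)\!\in\!\C\!\times\!\C^{n-1}$, i.e.\ $\vph\!\equiv\!0$. Writing $u'=(v,w)$, the quantity to kill is then literally the second component of $u'$, and no auxiliary map $u\!\circ\!\si$ is needed: the $J$-holomorphicity of $u'$ \emph{alone}, combined with the structural identities $J_{21}(x,0)=0$ and $J_{22}(x,0)^2=-\Id$ of \eref{J0cond_e} (which encode that the slice is the image of the $J$-holomorphic map $u$) and the integral expansion \eref{Jexpand_e}, yields $\prt_sw+J_{22}(v(z),0)\prt_tw+C(z)w=0$ with $C\!\in\!L^p$. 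Proposition~\ref{FHS_prp} then gives $w=\Phi\wt{w}$ with $\wt{w}$ holomorphic; since $u'(z_i')=u(z_i)$ lies on the slice, $w(z_i')=0$, so $\wt{w}\!\equiv\!0$ and $w\!\equiv\!0$. Only after this does holomorphicity of $\si$ appear, as an output: with $w\!\equiv\!0$ the first component of the $J$-holomorphicity equation reduces to $v_s+J_{11}(v(z),0)v_t=0$, and $J_{11}(x,0)=\fI$ because $u(z)=(z,0)$ is $J$-holomorphic, so $\si=v$ is holomorphic and $u'=u\!\circ\!\si$. (A small separate point: once $\wt{w}$ is holomorphic in one variable and vanishes on a sequence accumulating at $0$, it vanishes identically near $0$ by the identity theorem; no appeal to Corollary~\ref{FHS_crl1} is needed there.) To repair your write-up, replace the graph normalization by this straightening of $u$ to the standard slice and derive the equation for $w$ from $u'$ alone.
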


\begin{proof}
It can be assumed that $(\Si,\fj,z_0),(\Si',\fj',z_0')\!=\!(B_1,\fj_0,0)$,
where $B_1\!\subset\!\C$ is the unit ball with the standard complex structure.
Since $\nd_{z_0}u\!\neq\!0$ and $u$ is $J$-holomorphic, 
$u$ is an embedding near $0\!\in\!B_1$ and so is a slice in a coordinate system.
Thus, we can assume that 
$$u,u'\!\equiv\!(v,w)\!: (B_1,0)\lra (\C\!\times\!\C^{n-1},0),
\qquad u(z)=(z,0)\in\C\!\times\!\C^{n-1}\,,$$
and $u,u'$ are $J$-holomorphic with respect to some almost complex structure
$$J(x,y)=\left(\begin{array}{cc}J_{11}(x,y)& J_{12}(x,y)\\ 
J_{21}(x,y)& J_{22}(x,y)\end{array}\right)\!: \C\!\times\!\C^{n-1}\lra\C\!\times\!\C^{n-1}\,,
\qquad (x,y)\in\C\!\times\!\C^{n-1}\,.$$
Since $J$ is $C^1$,
\BE{Jexpand_e}J_{ij}(x,y)=J_{ij}(x,0)+\!\!\int_0^1\frac{\nd J_{ij}(x,ty)}{\nd t}\nd t
=J_{ij}(x,0)+\sum_{i=1}^{n-1}y_i\!\!
\int_0^1\frac{\prt J_{ij}}{\prt y_i}\bigg|_{(x,ty)}\nd t\,.\EE
Since $u$ is $J$-holomorphic,
\BE{J0cond_e} J_{21}(x,0)=0,\quad J_{22}(x,0)^2=-\Id \qquad\forall~x\in B_1\subset\C.\EE
Since $u'$ is $J$-holomorphic,
$$ \prt_s w + J_{22}\big(v(z),w(z)\big)\prt_t w
+J_{21}\big(v(z),w(z)\big)\prt_t v=0.$$
Combining this with~\eref{Jexpand_e} and the first equation in~\eref{J0cond_e},
we find that 
\begin{gather*}
\prt_s w + J_{22}\big(v(z),0\big)\prt_t w+C(z)w(z)=0, \qquad\hbox{where}\quad
C\in L^p\big(B_1;\End_{\R}\C^{n-1}\big),\\
C(z)y=\sum_{i=1}^{n-1}y_i\Bigg(
\bigg(\int_0^1\frac{\prt J_{22}}{\prt y_i}\bigg|_{(v(z),tw(z))}\nd t\bigg)\prt_tw|_z
+\bigg(\int_0^1\frac{\prt J_{21}}{\prt y_i}\bigg|_{(v(z),tw(z))}\nd t\bigg)\prt_tv|_z\Bigg)\,.
\end{gather*}
By Proposition~\ref{FHS_prp} and the second identity in~\eref{J0cond_e}, 
there thus exist $\de\!\in\!(0,1)$, \hbox{$\Phi\!\in\!L^p_1(B_{\de};\GL_{2n-2}\R)$},
and holomorphic map $\wt{w}\!:B_{\de}\!\lra\!\C^{n-1}$ such~that 
$$w(z)=\Phi(z)\wt{w}(z) \qquad\forall~z\in B_{\de}\,.$$
Since \hbox{$u'(z_i')\!=\!u(z_i)$}, $\wt{w}(z_i')\!=\!0$ for all $i\!\in\!\Z^+$.
Since $z_i'\!\lra\!0$ and $z_i'\!\neq\!0$, it follows that $w\!=\!0$.
This implies the claim with $U'\!=\!B_{\de}$ and $\si\!=\!v$.
\end{proof}

\begin{crl}\label{FHS_crl3}
Let $(X,J)$ be an almost complex manifold with a Riemannian metric~$g$
and $x\!\in\!X$ be such that $g$ is compatible with~$J$ at~$x$.
If $u\!:\Si\!\lra\!X$ is a $J$-holomorphic map 
from a compact Riemann surface with boundary, then
$$\lim_{\de\lra0}\frac{E_g(u;u^{-1}(B_{\de}^g(x)))}{\pi\de^2}=\ord_xu\,.$$
\end{crl}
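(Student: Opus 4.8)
The plan is to localize the energy near each preimage $z\!\in\!u^{-1}(x)$ and compare the pullback of the Riemannian volume with the Euclidean one at scale $\delta$, using Corollary~\ref{FHS_crl} to put $u$ into the normal form $u(z)\!=\!\Phi(z)\sigma(z)$ with $\sigma$ genuinely $J_{\C^n}$-holomorphic. First I would reduce to a purely local computation: since $u^{-1}(x)$ is discrete by Corollary~\ref{FHS_crl2} and $\Sigma$ is compact, for $\delta$ small the set $u^{-1}(B_\delta^g(x))$ is a disjoint union of small neighborhoods, one around each $z\in u^{-1}(x)$, plus we must note that points $z$ with $u(z)$ merely \emph{close} to $x$ but $u(z)\!\neq\!x$ contribute $E_g(u;u^{-1}(B_\delta^g(x)))\to 0$ as $\delta\to0$ — this follows because the complement of those small neighborhoods is a compact set on which $u$ avoids $x$, so for $\delta$ small its preimage in that region is empty. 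Hence $E_g(u;u^{-1}(B_\delta^g(x)))=\sum_{z\in u^{-1}(x)} E_g(u; V_z\cap u^{-1}(B_\delta^g(x)))$ for suitable fixed neighborhoods $V_z$, and it suffices to show each summand is asymptotic to $\pi\delta^2\,\ord_z u$.

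Next, fix $z\in u^{-1}(x)$ and work in a $g$-normal coordinate chart around $x$ in which $g(x)$ is the standard metric and, since $g$ is $J$-compatible at $x$, $J(x)\!=\!J_{\C^n}$; after a further linear change we may take $J(x)=J_{\C^n}$ and $x=0$. Apply Corollary~\ref{FHS_crl} (with the chart's almost complex structure, which has $J_0=J_{\C^n}$): there are $\delta_0>0$, $C$, a continuous $\Phi$ on $B_{\delta_0}$, smooth on $B_{\delta_0}^*$ with $|\nd\Phi|\le C$, $\Phi(0)=\Id$, and a $J_{\C^n}$-holomorphic $\sigma:B_{\delta_0}\to\C^n$ with $\sigma(0)=0$ and $u=\Phi\sigma$. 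By Corollary~\ref{FHS_crl1}, $\sigma(w)=\alpha w^\ell+o(w^\ell)$ with $\ell=\ord_z u$ and $\alpha\neq0$; since $J(0)=J_{\C^n}$ and $\Phi(0)=\Id$, $|\alpha|$ equals the norm of the leading coefficient of $u$ in these coordinates. Now $\nd u=(\nd\Phi)\sigma+\Phi(\nd\sigma)$; because $|\nd\Phi|$ is bounded and $|\sigma(w)|=O(|w|^\ell)$ while $|\nd\sigma(w)|$ is of order $|w|^{\ell-1}$, the term $(\nd\Phi)\sigma$ is of strictly lower order and contributes negligibly to the energy integral. Also $\Phi\to\Id$ and $g\to g(0)=$ standard as $w\to0$, and $J_{\C^n}$-holomorphicity of $\sigma$ gives $g_{\mathrm{std}}(\nd\sigma\otimes_{\fj}\nd\sigma)=2|\sigma_s|^2\,\nd s\wedge\nd t=2|\sigma'|^2\,\nd s\wedge\nd t$ (Wirtinger), so $E_g(u;u^{-1}(B_\delta^g(x))\cap V_z)=(1+o(1))\int_{\sigma^{-1}(B_\delta)}|\sigma'|^2$, which is $(1+o(1))$ times the Euclidean area of $\sigma(\sigma^{-1}(B_\delta))$ counted with multiplicity — and that area, for the holomorphic map $\sigma$ with a zero of order $\ell$ at $0$, is $(\ell+o(1))\pi\delta^2$ as $\delta\to0$ by the standard argument-principle/degree computation for $w\mapsto\alpha w^\ell+o(w^\ell)$. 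Summing over $z\in u^{-1}(x)$ gives $\lim_{\delta\to0} E_g(u;u^{-1}(B_\delta^g(x)))/(\pi\delta^2)=\sum_z \ord_z u=\ord_x u$.

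The main obstacle is controlling the discrepancy between the region $u^{-1}(B_\delta^g(x))$ cut out by the \emph{ambient} $g$-ball and the region $\sigma^{-1}(B_\delta)$ cut out by a coordinate ball, uniformly as $\delta\to0$: one must show that the nonlinearity of the chart, the non-$C^1$ factor $\Phi$ (only $\Phi\to\Id$ with bounded derivative is available, \emph{not} $\Phi\in C^1$), and the difference between $g$ and $g(0)$ all perturb the sublevel set and the integrand only by factors $1+o(1)$. This is where the hypothesis that $g$ is $J$-compatible \emph{at $x$} is used — it forces $J(0)=J_{\C^n}$ so that the holomorphic model has the right leading behavior and Wirtinger's identity applies in the limit — and it is the only genuinely delicate estimate; the rest is bookkeeping. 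I would handle it by sandwiching: for any $\eta>0$ and $\delta$ small, $B_{(1-\eta)\delta}\subset$ (chart image of $B_\delta^g(x)$) $\subset B_{(1+\eta)\delta}$, hence $\sigma^{-1}(B_{(1-\eta)\delta})\subset u^{-1}(B_\delta^g(x))\cap V_z\subset \sigma^{-1}(B_{(1+\eta)\delta})$, and since $\mathrm{Area}(\sigma(\sigma^{-1}(B_r)))\sim \ell\pi r^2$ the bounds close up as $\eta\to0$.
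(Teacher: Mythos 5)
Your proposal is correct and follows essentially the same route as the paper: reduce to a local computation at each point of $u^{-1}(x)$ (finitely many relevant ones by Corollary~\ref{FHS_crl2}), use the Carleman-type local normal form to get the leading-order behavior $\al z^{\ell}$ of $u$ \emph{and} of $\nd u$, sandwich $u^{-1}(B_{\de}^g(x))$ between coordinate regions of radii $\big(\de/((1\pm r)|\al|)\big)^{1/\ell}$, and evaluate the model integral to get $\ell\pi\de^2(1+o(1))$. The only cosmetic difference is that you justify the derivative asymptotics by factoring $u=\Phi\si$ via Corollary~\ref{FHS_crl}, which is in effect how the paper's estimate \eref{FHScrl3_e2} is obtained from \eref{FHScrl3_e1} anyway.
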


\begin{proof} 
By the continuity of~$u$, we can assume that $X\!=\!\C^n$, 
$J$ agrees with the standard complex structure~$J_{\C^n}$ at the origin,
$g$ agrees with the standard metric~$g_{\C^n}$ at the origin, 
$\Si\!=\!\ov{B_R}$ for some $R\!\in\!\R^+$,
and $u(0)\!=\!0$.
In particular, there exists $C\!\ge\!1$ such that 
\BE{FHScrl3_e0} \big|J_x-J_{\C^n}\big|\le C|x|,  \quad \big|g_x-g_{\C^n}\big|\le C|x|
\qquad\forall~x\!\in\!\C^n~\hbox{s.t.}~|x|\le1,\EE
where $|\cdot|$ denotes the usual norm of~$x$ 
(i.e.~the distance to the origin with respect to~$g_{\C^n}$).\\

\noindent
Let $\ell\!\equiv\!\ord_0u$ and $\al\!\in\!\C^{n-1}\!-\!0$ be
as in Corollary~\ref{FHS_crl1}, where $0\!\in\!B_R$ is 
the origin in the domain of~$u$.
Thus, there exist $\ep\!\in\!(0,1)$ and $C\!\in\!\R^+$ such~that 
\BE{FHScrl3_e1} u(z)=\al\cdot\big(z^{\ell}\!+\!f(z)\big),~~
\big|f(z)\big|\le C|z|^{\ell+1} \qquad\forall~z\!\in\!B_{\ep}\,.\EE
Let $z\!=\!s\!+\!\fI t$ as before.
By \eref{FHScrl3_e1}, there exists $C\!\in\!\R^+$ such~that
\BE{FHScrl3_e2} u_s(z)=\al\cdot\big(\ell z^{\ell-1}\!+\!f_s(z)\big),~~
u_s(z)=\al\cdot\big(\ell \fI z^{\ell-1}\!+\!f_t(z)\big),~~
\big|f_s(z)\big|,\big|f_t(z)\big|\le C|z|^{\ell} \quad\forall~z\!\in\!B_{\ep}\,.\EE
We can also assume that the three constants~$C$ in~\eref{FHScrl3_e0}, \eref{FHScrl3_e1}, 
and~\eref{FHScrl3_e2} are the same, $C\!\ge\!1$, 
$$C_{\al}\ep \equiv (C\!+\!C|\al|\!+\!C^2|\al|\big)\ep\le 1\,,$$
and $|u(z)|\!\le\!1$ for all $z\!\in\!B_{\ep}$.
By~\eref{FHScrl3_e0}-\eref{FHScrl3_e2}, 
\BE{FHScrl3_e5}\begin{split}
\bigg|\frac{|u(z)|_g}{|\al||z|^{\ell}}-1\bigg|,
\bigg|\frac{|u_s(z)|_g}{|\al|\ell|z|^{\ell-1}}-1\bigg|,
\bigg|\frac{|u_t(z)|_g}{|\al|\ell|z|^{\ell-1}}-1\bigg|
&\le C|z|+C|\al||z|^{\ell}+C^2|\al||z|^{\ell+1}\\
&\le C_{\al}|z|\quad\forall~z\in B_{\ep},
\end{split}\EE
where $|\cdot|_g$ denotes the distance to the origin in~$\C^n$ with respect to
the metric~$g$ and the corresponding norm on~$T\C^n$.\\

\noindent
Given $r\!\in\!(0,1)$, let $\de_r\!\in\!(0,\ep)$ be such that 
\BE{FHScrl3_e7}
C_{\al}\bigg(\frac{2\de_r}{(1\!-\!r)|\al|}\bigg)^{1/\ell}\le r\,.\EE
For any $\de\!\in\![0,\de_r]$, \eref{FHScrl3_e5} and~\eref{FHScrl3_e7} give
\begin{alignat*}{2}  
|z|&\le \bigg(\frac{\de}{(1\!+\!r)|\al|}\bigg)^{1/\ell} &\qquad&\Lra\qquad
u(z)\in B_{\de}^g(0)\,,\\
u(z)&\in B_{\de}^g(0) &\qquad&\Lra\qquad 
|z|\le \bigg(\frac{\de}{(1\!-\!r)|\al|}\bigg)^{1/\ell}\,,\\
|z|&\le \bigg(\frac{\de}{(1\!-\!r)|\al|}\bigg)^{1/\ell} &\qquad&\Lra\qquad
1\!-\!r \le \frac{|u_s(z)|_g}{|\al|\ell|z|^{\ell-1}},\frac{|u_t(z)|_g}{|\al|\ell|z|^{\ell-1}}
\le 1\!+\!r.
\end{alignat*}  
Combining these, we obtain
\begin{equation*}\begin{split}
\int_{|z|\le\left(\frac{\de}{(1+r)|\al|}\right)^{\!\!\frac1\ell}}
(1\!-\!r)^2\big(|\al|\ell|z|^{\ell-1}\big)^2    
&\le \frac12\int_{u^{-1}(B_{\de}^g(0))}\!\!\big(|u_s|_g^2\!+\!|u_t|_g^2\big)\\
&\le \int_{|z|\le\left(\frac{\de}{(1-r)|\al|}\right)^{\!\!\frac1\ell}}
(1\!+\!r)^2\big(|\al|\ell|z|^{\ell-1}\big)^2  \,.
\end{split}\end{equation*}
Evaluating the outer integrals, we find that 
$$\bigg(\frac{1\!-\!r}{1\!+\!r}\bigg)^{\!2}\ell\pi\de^2
\le E_g\big(u;u^{-1}(B_{\de}^g(0))\big)
\le \bigg(\frac{1\!+\!r}{1\!-\!r}\bigg)^{\!2}\ell\pi\de^2\,.$$
These inequalities hold for all $r\!\in\!(0,1)$ and $\de\!\in\!(0,\de_r)$;
the claim is obtained by sending $r\!\lra\!0$.
\end{proof}

\subsection{The Monotonicity Lemma}
\label{MonotLmm_subs}

\noindent
Proposition~\ref{MonotLmm_prp} below is a key step in the continuity part 
of the proof of the \sf{Removal of Singularity} Proposition~\ref{RemSing_prp}.
The precise nature of the lower energy bound on the right hand-side of~\eref{MonThm_e} 
does not matter, as long as it is positive for~$\de\!>\!0$.

\begin{prp}[Monotonicity Lemma]\label{MonotLmm_prp}
If $(X,J)$ is an almost complex manifold and $g$ is a Riemannian metric on~$X$ compatible with~$J$, 
there exists a continuous function $C_{g,J}\!:X\!\lra\!\R^+$
with the following property.
If $(\Si,\fj)$ is a compact Riemann surface with boundary,
\hbox{$u\!:\Si\!\lra\!X$} is a $J$-holomorphic map, 
$x\!\in\!X$, and $\de\!\in\!\R^+$ are such that 
$u(\prt\Si)\!\cap\!B_{\de}^g(x)\!=\!\eset$, then 
\BE{MonThm_e} E_g(u)\ge \big(\ord_xu\big)\frac{\pi\de^2}{{(1\!+\!C_{g,J}(x)\de)^4}}\,.\EE
If $\om(\cdot,\cdot)\equiv\!g(J\cdot,\cdot)$ is a symplectic form on~$X$, 
then the above fraction can be replaced by the product $\pi\de^2\ne^{-C_{g,J}(x)\de^2}$.
\end{prp}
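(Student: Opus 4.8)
The plan is to reduce everything to a differential inequality for the energy of $u$ restricted to preimages of small balls, which is the standard ``monotonicity'' argument adapted to the almost complex setting. First I would localize: by the continuity of $u$ and compactness considerations, it suffices to prove the bound in a fixed coordinate chart around $x$ in which $J$ agrees with $J_{\C^n}$ at the origin $x=0$, $g$ agrees with $g_{\C^n}$ at the origin, and the estimates $|J_y-J_{\C^n}|\le C_0|y|$, $|g_y-g_{\C^n}|\le C_0|y|$ hold for $|y|\le r_0$, with $C_0$ and $r_0$ depending continuously on $x$; this produces the continuous function $C_{g,J}$ eventually. I may assume $x\in\Im(u)$, since otherwise $\ord_xu=0$ and there is nothing to prove, and I may assume $\de\le r_0$ after absorbing large $\de$ into the constant (the right-hand side of~\eref{MonThm_e} is bounded for $\de$ bounded away from $0$, so the inequality is trivial there for $C_{g,J}$ large). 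Set $\rho(z)=|u(z)|_g$, the $g$-distance from $u(z)$ to the origin, and for $t\in(0,\de]$ let
$$
e(t)=E_g\big(u;u^{-1}(B_t^g(0))\big),\qquad
L(t)=\int_{u^{-1}(\prt B_t^g(0))}|\nd u|_g\,,
$$
the energy carried over the preimage of the ball of radius $t$ and the length of the image of the level set, respectively. By the coarea formula, $e'(t)\ge \tfrac12\int_{u^{-1}(\prt B_t^g(0))}|\nd u|_g$ up to the distortion factor $(1+C_0 t)$ coming from comparing $g$ with $g_{\C^n}$, i.e.\ $e'(t)\ge c\,L(t)$ for an explicit $c$ near $1/2$.

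The heart of the argument is the isoperimetric-type inequality $e(t)\le C\,L(t)^2$ with $C$ close to $1/(4\pi)$, valid for small $t$. This is where the $J$-holomorphicity and the symplectic structure enter. By~\eref{Egexp_e} applied to the $J$-holomorphic map $u$, the energy over $u^{-1}(B_t^g(0))$ equals $\int_{u^{-1}(B_t^g(0))}u^*\om$ (the other two terms in~\eref{Egexp_e} vanish because $u$ is $J$-holomorphic and, in the symplectic case, $\om_J=0$). Now $\om$ is a closed $2$-form near the origin, hence exact: $\om=\nd\la$ on $B_{r_0}^g(0)$ for a $1$-form $\la$ with $|\la_y|\le C_1|y|$ (primitive with a linear bound, obtainable by the Poincar\'e homotopy operator). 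By Stokes' theorem,
$$
e(t)=\int_{u^{-1}(B_t^g(0))}\!\!\!u^*\nd\la
=\int_{u^{-1}(\prt B_t^g(0))}\!\!\!u^*\la
\le C_1 t\,L(t),
$$
since on $u^{-1}(\prt B_t^g(0))$ we have $|u|_g=t$ so $|u^*\la|\le C_1 t\,|\nd u|_g$. (In the general non-symplectic case one instead uses a primitive for $g_J(\cdot,J\cdot)=\tfrac12(\om+\cdots)$, or more precisely bounds $|u^*\om|$ directly by $|\nd u|_g^2$ and runs the differential inequality with the resulting powers; this is the source of the weaker $(1+C\de)^4$ form.) Combining $e'(t)\ge c\,L(t)$ with $e(t)\le C_1 t\,L(t)$ gives the differential inequality
$$
\frac{e'(t)}{e(t)}\ \ge\ \frac{c}{C_1 t},
$$
valid for $t\in(0,\de]$ on the set where $e(t)>0$.

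Integrating this from a small $\eta>0$ up to $\de$ and analyzing the behavior as $\eta\to0$ is the final step, and it is also where I expect the main technical care to be needed. By Corollary~\ref{FHS_crl3}, $e(t)/(\pi t^2)\to\ord_xu$ as $t\to0$, so for small $t$ we have $e(t)\ge(\ord_xu)(1-o(1))\pi t^2$; feeding this into $\int_\eta^\de\frac{e'}{e}\ge\int_\eta^\de\frac{c}{C_1 t}$ and exponentiating yields $e(\de)\ge (\ord_xu)\,\pi\de^2\cdot(\text{distortion})$ where the distortion factor is a product of terms like $(1-C_0 t)$, $(1+C_0 t)$, $(1-C_1 t)$ over the range, which combines to something of the form $(1+C_{g,J}(x)\de)^{-4}$ after bounding the constants cleanly (the exponent $4$ reflecting the two metric comparisons on the length side and the two on the ``area'' side). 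In the compatible-symplectic case, the closedness of $\om$ removes one layer of error and the more careful bookkeeping with $\int_\eta^\de$ of the corrected inequality $e'(t)/e(t)\ge (c/C_1 t)(1-O(t))$ integrates to $e(\de)\ge \pi\de^2\,\ne^{-C_{g,J}(x)\de^2}\,\ord_xu$, giving the sharper exponential form claimed. The one subtlety to handle with care is the possibility that $e(t)$ or $L(t)$ vanishes on some subinterval, or that $u^{-1}(\prt B_t^g(0))$ fails to be a nice submanifold for some $t$; this is dealt with by Sard's theorem (generic $t$ are regular values of $\rho$), by noting $e$ is monotone and continuous so the logarithmic derivative argument can be run on the absolutely continuous function $\log e$, and by discarding the at-most-countable set of bad radii, which does not affect the integral. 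The hypothesis $u(\prt\Si)\cap B_\de^g(x)=\eset$ is exactly what guarantees no boundary terms intrude: $u^{-1}(B_t^g(0))$ is a compact surface-with-boundary whose boundary lies entirely in the interior level sets $u^{-1}(\prt B_t^g(0))$, so Stokes' theorem applies with no contribution from $\prt\Si$.
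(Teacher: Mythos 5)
Your overall route is the classical one (coarea formula plus an isoperimetric inequality via a local primitive of a symplectic form, then integration of a logarithmic differential inequality with Corollary~\ref{FHS_crl3} pinning down the limit as the radius goes to~$0$), which is genuinely different from the paper's proof: the paper never touches level sets or Sard-type regularity, but instead runs a first-variation argument (Lemma~\ref{CritEner_lmm}) against the cut-off radial vector field $\xi_{u,x,\de}=-\eta_{u,x,\de}\,\ze_x(u)$, with Gauss-lemma and Jacobi-field estimates (Lemma~\ref{distder_lmm}, Corollary~\ref{JacDer_crl}) supplying the error terms, and only then feeds the resulting differential inequality into Corollary~\ref{FHS_crl3}. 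Your route is viable in principle, but as written it has a genuine quantitative gap at its core.

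The gap is in the constants of the two inequalities you combine. With $L(t)=\int_{u^{-1}(\prt B_t^g(x))}|\nd u|_g$, the coarea bound you state is $e'(t)\ge\tfrac12L(t)$, and the Stokes bound with the optimal linear primitive ($|\la_y|\le\tfrac12|y|$, and one cannot do better) is $e(t)\le\tfrac12\,t\,L(t)$, up to $1+O(t)$ factors. These combine to $e'(t)/e(t)\ge 1/t$, not $2/t$; integrating from $\eta$ to $\de$ then gives $e(\de)\ge e(\eta)\,\de/\eta\approx(\ord_xu)\pi\eta\de$, which tends to $0$ as $\eta\lra0$, so the argument closes to nothing. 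Since $e(t)\sim(\ord_xu)\pi t^2$ by Corollary~\ref{FHS_crl3}, you must produce the exponent $2$ exactly, and for that you have to use the conformality of $J$-holomorphic maps with respect to the $J$-compatible metric: along a regular level set, $|\nd u(\nu)|_g=|\nd u(\tau)|_g$, so the coarea step sharpens to $e'(t)\ge\ell(t)(1-O(t))$ with $\ell(t)=\int_{u^{-1}(\prt B_t^g(x))}|\nd u(\tau)|_g$ the length of the image boundary curve, while the Stokes step must be stated with the \emph{same} quantity, $e(t)\le\tfrac12\,t\,\ell(t)(1+O(t))$ (only the tangential derivative enters $u^*\la$); then $e'/e\ge(2/t)(1-O(t))$ and your integration does give $(\ord_xu)\pi\de^2$ up to the $(1+C\de)^{4}$-type distortion. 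Separately, your treatment of the general (non-closed $\om=g(J\cdot,\cdot)$) case is not right as stated: a non-closed $2$-form has no primitive, so "a primitive for $g_J(\cdot,J\cdot)$" is not available; the correct fix, consistent with your coordinate normalization, is to run the Stokes argument with a nearby \emph{closed} form -- the constant-coefficient symplectic form in your chart, or the locally defined taming form $\om_x$ the paper constructs -- and absorb the $O(\de)$ discrepancy between its $u$-integral and $E_g$ into the distortion factor.
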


\noindent
According to this proposition, ``completely getting out" of the ball $B_{\de}(x)$ 
via a $J$-holomorphic map requires an energy bounded below by a little less than~$\pi\de^2$.
Thus, the $L^2_1$-norm of a $J$-holomorphic map~$u$ exerts some control over 
the $C^0$-norm of~$u$. 
If $p\!>\!2$, the $L^p_1$-norm of any smooth map~$f$ from a two-dimensional manifold controls 
the $C^0$-norm of~$f$. 
However, this is not the case of the $L^2_1$-norm, as 
illustrated by the example of \cite[Lemma~10.4.1]{MS12}:
the function
$$f_{\ep}\!:\R^2\lra[0,1], \qquad
f_{\ep}(z)=\begin{cases}1,&\hbox{if}~|z|\le\ep;\\
\frac{\ln|z|}{\ln\ep},&\hbox{if}~\ep\le|z|\le1;\\
0,&\hbox{if}~|z|\ge1;
\end{cases}$$
with any $\ep\!\in\!(0,1)$ is continuous and satisfies
$$\int_{\R^2}|\nd f_{\ep}|_g^2=-\frac{2\pi}{\ln\ep}\,.$$ 
It is arbitrarily close in the $L^2_1$-norm to a smooth function~$\ti{f}_{\ep}$.
Thus, it is possible to ``completely get out" of $B_{\de}^g(x)$   
using a smooth function with arbitrarily small energy
($\ti{f}_{\de}$ does this for the ball $B_1(1)$ in~$\R$).\\

\noindent
By~\eref{Egexp_e}, the holomorphic maps are the local minima of the functional
$$C^{\i}(\Si;X)\lra\R, \qquad f\lra E_g(f)-\int_{\Si}f^*\om_J\,,$$
for every compact Riemann surface $(\Si,\fj)$ without boundary.
This fact underlines  Lemma~\ref{CritEner_lmm},
the key ingredient in the proof of the Monotonicity Lemma.
Lemma~\ref{CritEner_lmm} implies that the ratio of $E_g(u;u^{-1}(B_{\de}^g(x)))$
and the fraction on the right-hand side~\eref{MonThm_e} 
is a non-decreasing function of~$\de$, as long as \hbox{$u(\prt\Si)\!\cap\!B_{\de}^g(x)\!=\!\eset$}.
By Corollary~\ref{FHS_crl3}, this ratio approaches $\ord_xu$ as~$\de$ approaches~0.
These two statements imply Proposition~\ref{MonotLmm_prp}.\\

\noindent
We first make some general Riemannian geometry observations.
Let $(X,g)$ be a Riemannian manifold. Denote by $\exp\!:\cW_g\!\lra\!X$,
the exponential map from a neighborhood of~$X$ in~$TX$ with respect to
the Levi-Civita connection~$\na$ of~$g$.
For each $v\!\in\!TX$, we denote~by
$$\ga_v\!:[0,1]\lra X, \qquad \ga_v(\tau)=\exp_x(\tau v),$$  
the geodesic with $\ga_v'(0)\!=\!v$.
Let
$$r_g\!:X\lra\R^+ \qquad\hbox{and}\qquad d_g\!:X\!\times\!X\lra\R^{\ge0}$$
be the injectivity radius of~$\exp$ and the distance function.
For each $x\!\in\!X$, define
$$\ze_x\in\Ga\big(B_{r_g(x)}^g(x);TX\big) \quad\hbox{by}\quad
\exp_y\!\big(\ze_x(y)\!\big)=x, ~g\big(\!\ze_x(y),\ze_x(y)\!\big)<r_g(x)^2
~~\forall\,y\!\in\!B_{r_g(x)}^g(x).$$
 
\begin{lmm}\label{distder_lmm}
Let $(X,g)$ be a Riemannian manifold and $x\!\in\!X$.
If  $\al\!:(-\ep,\ep)\!\lra\!X$ is a smooth curve such that 
$\al(0)\!\in\!B_{r_g(x)}^g(x)$, 
then 
$$\frac12\frac{\nd}{\nd\tau}d_g\big(x,\al(\tau)\big)^2\bigg|_{\tau=0}
=-g\big(\al'(0),\ze_x(\al(0))\big)\,.$$
\end{lmm}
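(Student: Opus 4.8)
The statement is the standard first-variation formula for the squared distance function, and I would derive it from the classical Gauss lemma together with the symmetry of the exponential map in the two endpoints. The plan is to set up a two-parameter variation by geodesics and differentiate. First I would fix the point $x$ and, for $\tau$ near $0$, let $\ze_x(\al(\tau))\in T_{\al(\tau)}X$ be the vector (of $g$-length less than $r_g(x)$) with $\exp_{\al(\tau)}(\ze_x(\al(\tau)))=x$; this makes sense because $\al(0)$, and hence $\al(\tau)$ for small $\tau$, lies in $B^g_{r_g(x)}(x)$ and $\exp$ restricts to a diffeomorphism there. Then $d_g(x,\al(\tau))^2 = g(\ze_x(\al(\tau)),\ze_x(\al(\tau)))$, so the quantity to be differentiated is the energy (equivalently, the squared length) of the radial geodesic from $\al(\tau)$ to $x$.

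The key step is to recognize this as the first variation of arc-length (or energy) for a family of geodesics with one endpoint moving along $\al$ and the other endpoint fixed at $x$. Concretely, define $F(\tau,\sigma) = \exp_{\al(\tau)}\big(\sigma\,\ze_x(\al(\tau))\big)$ for $(\tau,\sigma)\in(-\ep,\ep)\times[0,1]$, so that for each $\tau$ the curve $\sigma\mapsto F(\tau,\sigma)$ is the geodesic from $\al(\tau)$ (at $\sigma=0$) to $x$ (at $\sigma=1$), with initial velocity $\ze_x(\al(\tau))$ of constant length $L(\tau)\equiv d_g(x,\al(\tau))$. The energy of this geodesic is exactly $\tfrac12\int_0^1 |\partial_\sigma F|^2\,\nd\sigma = \tfrac12 L(\tau)^2 = \tfrac12 d_g(x,\al(\tau))^2$. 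The first-variation formula for energy then gives $\tfrac12\frac{\nd}{\nd\tau}L(\tau)^2 = g(\partial_\tau F,\partial_\sigma F)\big|_{\sigma=0}^{\sigma=1}$, since the interior term involves $\na_{\partial_\sigma}\partial_\sigma F$, which vanishes as each $\sigma$-curve is a geodesic. At $\sigma=1$ the variation field $\partial_\tau F(\tau,1)$ vanishes because $F(\tau,1)=x$ is constant in $\tau$; at $\sigma=0$ we have $\partial_\tau F(\tau,0)=\al'(\tau)$ and $\partial_\sigma F(\tau,0)=\ze_x(\al(\tau))$. Evaluating at $\tau=0$ yields $\tfrac12\frac{\nd}{\nd\tau}d_g(x,\al(\tau))^2\big|_{\tau=0} = 0 - g\big(\al'(0),\ze_x(\al(0))\big)$, which is the claim.

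I expect the only genuine subtlety to be justifying the smoothness of $\tau\mapsto\ze_x(\al(\tau))$ and the interchange of differentiation needed in the first-variation computation; both follow from the fact that $\exp_x$ is a diffeomorphism from a ball in $T_xX$ onto $B^g_{r_g(x)}(x)$, applied at the point $x$ (so that $y\mapsto\ze_x(y) = -(\nd\exp_x)(\exp_x^{-1}(y))^{-1}\cdots$ — more cleanly, $\ze_x$ is smooth because $\exp$ is a smooth submersion onto a neighborhood of the diagonal in $X\times X$ and the equation $\exp_{y}(\ze_x(y))=x$ defines $\ze_x(y)$ implicitly with invertible derivative). Alternatively, one can avoid the energy functional altogether and differentiate $g(\ze_x(\al(\tau)),\ze_x(\al(\tau)))$ directly, using the Gauss lemma in the form $g\big(v,(\nd\exp_x)_w v\big) = g\big(w, v\big)$ for the relevant vectors; this gives the same answer but is less transparent. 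I would present the energy/first-variation version, as it is the shortest and makes the sign manifest.
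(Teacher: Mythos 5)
Your argument is correct, but it takes a different route from the paper. The paper's proof is a two-line computation in the tangent space at $x$: setting $\be(\tau)=\exp_x^{-1}\al(\tau)$, it writes $d_g(x,\al(\tau))^2=|\be(\tau)|^2$, differentiates to get $g(\be'(0),\be(0))$, and then invokes Gauss's Lemma to transport this inner product back to $T_{\al(0)}X$, identifying $\{\nd_{\be(0)}\exp_x\}(\be'(0))=\al'(0)$ and $\{\nd_{\be(0)}\exp_x\}(\be(0))=-\ze_x(\al(0))$. You instead run the first variation of energy for the two-parameter family $F(\tau,\si)=\exp_{\al(\tau)}(\si\,\ze_x(\al(\tau)))$ of geodesics with the endpoint at $\si=1$ pinned at $x$: the interior term dies because each $\si$-curve is a geodesic, the boundary term at $\si=1$ dies because that endpoint is fixed, and the boundary term at $\si=0$ is exactly $-g(\al'(0),\ze_x(\al(0)))$. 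Both are standard; the paper's version is shorter once Gauss's Lemma is taken as known, while yours avoids Gauss's Lemma entirely (needing only torsion-freeness and metric-compatibility of $\na$) at the cost of the two side points you correctly flag — smooth dependence of $\ze_x(\al(\tau))$ on $\tau$ and the identity $|\ze_x(y)|_g=d_g(x,y)$ inside the injectivity radius, the latter being implicitly used in the paper's proof as well. Your variation-through-geodesics computation is in fact the same device the paper uses later for Corollary~\ref{JacDer_crl}, so it fits the surrounding development naturally; your parenthetical formula for $\ze_x$ is garbled, but the implicit-function justification you give in its place (or the cleaner expression $\ze_x(y)=-\{\nd_{\exp_x^{-1}(y)}\exp_x\}(\exp_x^{-1}(y))$) settles the smoothness point.
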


\begin{proof}
If $\be(\tau)\!=\!\exp_x^{-1}\al(\tau)$, then
$$\frac12\frac{\nd}{\nd\tau}d_g\big(x,\al(\tau)\big)^2\bigg|_{\tau=0}
=\frac12\frac{\nd}{\nd\tau}|\be(\tau)|^2\bigg|_{\tau=0}
=g\big(\be'(0),\be(0)\big)\,.$$
By Gauss's Lemma,
$$g\big(\be'(0),\be(0)\big)
=g\big(\{\nd_{\be(0)}\exp_x\}(\be'(0)),\{\nd_{\be(0)}\exp_x\}(\be(0))\!\big)
=g\big(\al'(0),-\ze_x(\al(0))\big)\,.$$
This establishes the claim.
\end{proof}

\begin{lmm}\label{JacDer_lmm}
If $(X,g)$ is a Riemannian manifold, there exists 
a continuous function \hbox{$C_g\!:X\!\lra\!\R^+$}
with the following property.
If $x\!\in\!X$, $v\!\in\!T_xX$ with $|v|_g\!<\!\frac12r_g(x)$, and 
$\tau\!\lra\!J(\tau)$ is a Jacobi vector field along the geodesic~$\ga_v$
with $J(0)\!=\!0$, then 
$$\big|J'(1)-J(1)\big|_g\le C_g(x)|v|_g^2\big|J(1)\big|_g\,.$$
\end{lmm}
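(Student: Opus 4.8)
The plan is to establish the estimate by comparing the Jacobi field $J$ along $\ga_v$ with the ``flat'' model in which the curvature is turned off. First I would rescale so that the geodesic has unit speed: writing $|v|_g\!=\!\rho$ and $\hat\ga(\tau)\!=\!\ga_v(\tau/\rho)$ for $\tau\!\in\![0,\rho]$, a Jacobi field $J$ along $\ga_v$ corresponds to a Jacobi field $\wt{J}(\tau)\!=\!J(\tau/\rho)$ along $\hat\ga$, satisfying $\wt{J}''+R(\wt{J},\hat\ga')\hat\ga'\!=\!0$, with $\wt{J}(0)\!=\!0$. Since $\hat\ga$ has unit speed and stays in the compact-up-to-the-basepoint region $\{|v|_g\!\le\!\tfrac12 r_g(x)\}$, the sectional curvature along it is bounded in terms of a continuous function of~$x$; this is where the constant $C_g(x)$ will come from.

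Next I would convert the Jacobi equation into an integral equation. With $\wt{J}(0)\!=\!0$ and $\wt{J}'(0)\!=\!w$ (so $|w|_g\!=\!|J'(0)|_g$ and by the standard comparison $\tfrac12|w|\rho\le|\wt J(\rho)|\le 2|w|\rho$ for $\rho$ small, hence $|w|\rho$ is comparable to $|J(1)|_g$), parallel-transporting everything back to $T_xX$ gives
\BE{Jac_int_e}
\wt{J}(\tau)=\tau w-\int_0^\tau(\tau-s)\,\cR(s)\wt{J}(s)\,\nd s\,,
\EE
where $\cR(s)$ denotes the (parallel-transported) curvature operator $R(\cdot,\hat\ga'(s))\hat\ga'(s)$, which is bounded in operator norm by $K_g(x)\!\equiv\!$ a continuous function of~$x$ on the region in question. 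Differentiating,
\BE{Jac_intder_e}
\wt{J}'(\tau)=w-\int_0^\tau\cR(s)\wt{J}(s)\,\nd s\,.
\EE
A Gronwall estimate applied to \eref{Jac_int_e} yields $|\wt{J}(s)|_g\le|w|s\,\ne^{K_g(x)s^2/2}\le 2|w|s$ for $\rho\!\le\!\tfrac12 r_g(x)$ after possibly shrinking via $K_g$; then from \eref{Jac_int_e} and \eref{Jac_intder_e},
$$\big|\wt{J}'(\tau)-\wt{J}(\tau)\big|_g
=\Big|(1-\tau)w-\int_0^\tau\!\big(1-(\tau-s)\big)\cR(s)\wt{J}(s)\,\nd s\Big|_g\,,$$
which at $\tau\!=\!\rho$ gives $|\wt{J}'(\rho)-\wt{J}(\rho)|_g\le (1-\rho)|w|_g+C\,K_g(x)|w|_g\rho^2+|1-\rho|\cdot(\text{error})$. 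The linear-in-$\rho$ term is not yet quadratic, so I would instead expand more carefully: the point is that $\wt{J}'(\tau)\!-\!\wt{J}(\tau)\!=\!w\!-\!\tau w\!-\!\int_0^\tau(1-\tau+s)\cR\wt J$, and since we want the bound at $\tau\!=\!1$ in the \emph{un-rescaled} variable — that is, $J'(1)\!=\!\rho\,\wt J'(\rho)$ and $J(1)\!=\!\wt J(\rho)$ — the combination $J'(1)-J(1)=\rho\wt J'(\rho)-\wt J(\rho)$ must be re-examined; here the leading terms $\rho w\cdot\rho^{-1}$ bookkeeping shows the $O(\rho)$ piece cancels and only the curvature integral, of size $O(K_g(x)|w|\rho^3)=O(K_g(x)|J(1)|_g\rho^2)$, survives.

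The main obstacle, and the step I would be most careful about, is precisely this bookkeeping of the rescaling: the statement asserts a purely quadratic gain $|v|_g^2$, so every linear-order term must be shown to cancel, and this requires using that $J$ is a \emph{Jacobi} field (not merely that it solves a perturbed ODE) so that the flat model $J(1)\!=\!J'(1)$ holds exactly when curvature vanishes. Concretely I would verify that with $\cR\!\equiv\!0$ equation \eref{Jac_int_e} gives $\wt J(\tau)\!=\!\tau w$, hence $J(\tau/\rho)\!=\!(\tau/\rho)\cdot\rho\cdot(\text{initial data})$, so $J(1)\!=\!J'(1)$ identically, and then the difference $J'(1)-J(1)$ is \emph{entirely} the contribution of the curvature integral, which Gronwall bounds by $C_g(x)|v|_g^2|J(1)|_g$ as claimed. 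Finally I would note that $C_g$ is continuous because $r_g$ is continuous (indeed lower semicontinuous and locally bounded below away from $0$) and the curvature bound $K_g$ over the compact tube $\{\exp_x(\tau v):|v|_g\le\tfrac12 r_g(x),\ \tau\in[0,1]\}$ depends continuously on~$x$; taking $C_g(x)$ to be a suitable continuous majorant of the constants produced along the way completes the proof.
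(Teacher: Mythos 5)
Your argument is correct, and once unwound it rests on the same identity as the paper's proof: both come down to the fact that $\tau J'(\tau)\!-\!J(\tau)$ vanishes at $\tau\!=\!0$ and has derivative $\tau J''(\tau)\!=\!-\tau R(\ga_v'(\tau),J(\tau))\ga_v'(\tau)$, so that $J'(1)\!-\!J(1)$ is a curvature integral whose integrand is $O(|v|_g^2|J(\tau)|_g)$; in your notation this is exactly the cancellation $\rho\wt{J}'(\rho)-\wt{J}(\rho)=-\int_0^\rho s\,\cR(s)\wt{J}(s)\,\nd s$. The difference is packaging: the paper differentiates $f(\tau)=|\tau J'(\tau)-J(\tau)|_g$ directly and integrates the resulting differential inequality, while you pass to the unit-speed parametrization, rewrite the Jacobi equation as a Volterra integral equation via parallel transport, and apply Gronwall. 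Your route is longer but makes the vanishing of the flat-model (linear-in-$\rho$) terms completely explicit; the paper's is shorter but uses the same two inputs you do, namely a curvature bound on a compact region determined by~$x$ and a comparison estimate relating $\sup_{[0,1]}|J|_g$ (equivalently $|w|_g\rho$) to $|J(1)|_g$.

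One repair is needed in your comparison step: the two-sided bound $\tfrac12|w|_g\rho\le|\wt{J}(\rho)|_g\le2|w|_g\rho$ with the absolute constants $\tfrac12$ and $2$ is not justified for all $\rho<\tfrac12 r_g(x)$, since nothing makes $K_g(x)\,r_g(x)^2$ small and $\rho$ is given, not shrinkable. This does not affect the statement, because only constants depending continuously on~$x$ are required: the Gronwall bound $|\wt{J}(s)|_g\le|w|_g\,s\,\ne^{K_g(x)s^2/2}$ already has such a constant, and for the lower bound it suffices that $|J(1)|_g=|\{\nd_v\exp_x\}(w)|_g\ge c(x)|w|_g$ for a continuous $c(x)\!>\!0$, which follows from the nondegeneracy of the differential of~$\exp_x$ on the compact ball of radius $\tfrac12 r_g(x)$ in $T_xX$ (no conjugate points within the injectivity radius) — the same comparison the paper invokes without proof as $|J(\tau)|_g\le C_g(x)|J(1)|_g$. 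With these $x$-dependent constants your final estimate $|J'(1)-J(1)|_g\le C\,K_g(x)|w|_g\rho^3\le C_g(x)|v|_g^2|J(1)|_g$ goes through as written.
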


\begin{proof}
Let $R_g$ be the Riemann curvature tensor of~$g$ and
$f(\tau)\!=\!|\tau J'(\tau)\!-\!J(\tau)|_g$.
Then, $f(0)\!=\!0$ and
\begin{equation*}\begin{split}
f(\tau)f'(\tau)=\frac12\frac{\nd}{\nd\tau}f(\tau)^2
=g\big(\tau J''(\tau),\tau J'(\tau)\!-\!J(\tau)\!\big)
&=\tau g\big(R(\ga'(\tau),J(\tau))\ga'(\tau),\tau J'(\tau)\!-\!J(\tau)\big)\\
&\le C_g(x)|v|_g^2|J(\tau)|_g\tau f(\tau).
\end{split}\end{equation*}
If $C_g$ is sufficiently large, then $|J(\tau)|_g\!\le\!C_g(x)|J(1)|_g$.
Thus, 
$$f(\tau)f'(\tau)\le C_g(x)|v|_g^2|J_v(\tau)|_g\tau f(\tau)
\le C_g(x)^2|v|_g^2|J(1)|_g\tau f(\tau), \quad
f'(\tau)\le C_g(x)^2|v|_g^2|J(1)|_g\tau.$$
The claim follows from the last inequality.
\end{proof}

\begin{crl}\label{JacDer_crl}
If $(X,g)$ is a Riemannian manifold, there exists a continuous function \hbox{$C_g\!:X\!\lra\!\R^+$}
with the following property.
If $x\!\in\!X$, then
$$\big|\na_w\ze_x|_y+w\big|_g\le C_g(x)d_g(x,y)^2|w|_g \qquad 
\forall~w\!\in\!T_yX,~y\!\in\!B_{r_g(x)/2}^g(x).$$
\end{crl}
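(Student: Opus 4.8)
The plan is to deduce the estimate from Lemma~\ref{JacDer_lmm} by recognizing the covariant derivative $\na_w\ze_x$ as a Jacobi field along a radial geodesic emanating from~$x$. Fix $y\!\in\!B_{r_g(x)/2}^g(x)$ and set $v\!=\!-\ze_x(y)\!\in\!T_yX$, so that $\ga_v(1)\!=\!x$ and $|v|_g\!=\!d_g(x,y)\!<\!\frac12 r_g(x)$. First I would, for a fixed $w\!\in\!T_yX$, choose a curve $\al\!:(-\ep,\ep)\!\lra\!B_{r_g(x)}^g(x)$ with $\al(0)\!=\!y$ and $\al'(0)\!=\!w$, and consider the two-parameter family of geodesics $(\tau,\mu)\!\mapsto\!\exp_{\al(\mu)}\!\big(\tau\,\ze_x(\al(\mu))\big)$, each of which runs from $\al(\mu)$ to~$x$. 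The variation field $J(\tau)\!=\!\frac{\prt}{\prt\mu}\big|_{\mu=0}\exp_{\al(\mu)}(\tau\,\ze_x(\al(\mu)))$ is a Jacobi field along $\ga_{\ze_x(y)}$ with $J(0)\!=\!w$ and $J(1)\!=\!0$ (the right endpoint is pinned at~$x$). Reparametrizing the geodesic in the opposite direction (i.e.\ running from $x$ to~$y$) turns this into a Jacobi field $\wt J$ along $\ga_v$ with $\wt J(0)\!=\!0$ and $\wt J(1)\!=\!w$, to which Lemma~\ref{JacDer_lmm} applies and gives $|\wt J'(1)-\wt J(1)|_g\!\le\!C_g(x)|v|_g^2|w|_g$.

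The next step is to identify $\wt J'(1)$ with $-\na_w\ze_x|_y$ (up to the sign from the orientation reversal). The point is that $\ze_x(\al(\mu))$ is the initial velocity at $\al(\mu)$ of the geodesic to~$x$, so differentiating in $\mu$ and using that covariant differentiation commutes with the variation (the standard Jacobi-field computation, $\frac{D}{\prt\tau}\frac{\prt}{\prt\mu}=\frac{D}{\prt\mu}\frac{\prt}{\prt\tau}$ for a geodesic variation) shows $\frac{D}{\prt\mu}\big|_{\mu=0}\ze_x(\al(\mu))=\frac{D\wt J}{\prt\tau}$ evaluated at the endpoint corresponding to~$y$. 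By definition $\frac{D}{\prt\mu}\big|_{\mu=0}\ze_x(\al(\mu))=\na_w\ze_x|_y$. Assembling these identifications with the estimate from Lemma~\ref{JacDer_lmm} yields $|\na_w\ze_x|_y+w|_g\le C_g(x)d_g(x,y)^2|w|_g$, as desired; enlarging $C_g$ if necessary (and using continuity of $r_g$ and of the curvature tensor) keeps it a continuous positive function of~$x$.

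The main obstacle I anticipate is purely bookkeeping: getting the endpoints, the parametrization direction, and the sign of $\wt J'(1)$ versus $\na_w\ze_x|_y$ consistent, since $\ze_x$ points from $y$ toward~$x$ while Lemma~\ref{JacDer_lmm} is stated for Jacobi fields vanishing at $\tau\!=\!0$ (the base point) along $\ga_v$ with $v\!\in\!T_xX$. One must be careful that the Jacobi field natural to the family above vanishes at the $x$-end, not the $y$-end, so a reflection $\tau\mapsto 1-\tau$ is needed before invoking the lemma; this reflection sends $\wt J'(1)$ to $-J'(0)$-type data and must be tracked. There is no analytic difficulty beyond this — once the geometric picture is set up correctly, the estimate is an immediate consequence of Lemma~\ref{JacDer_lmm} together with Gauss's Lemma (to handle any discrepancy between the radial reparametrization and arclength, which is irrelevant here since $|v|_g\!=\!d_g(x,y)$ already).
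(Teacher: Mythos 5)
Your proposal is correct and follows essentially the same route as the paper: the paper directly takes a variation of geodesics emanating from $x$ with endpoint velocity $w$ at $y$, uses torsion-freeness to identify the resulting Jacobi field's data as $J(0)\!=\!0$, $J(1)\!=\!w$, $J'(1)\!=\!-\na_w\ze_x|_y$, and applies Lemma~\ref{JacDer_lmm}; your version merely runs the geodesics from $\al(\mu)$ to $x$ and then reflects $\tau\!\mapsto\!1\!-\!\tau$, which is the same argument up to bookkeeping. The only blemish is the initial sign slip ($v$ should be $\ze_x(y)$, not $-\ze_x(y)$, for $\ga_v(1)\!=\!x$), which you implicitly correct later and which does not affect the conclusion.
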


\begin{proof}
Let $\tau\!\lra\!u(s,\tau)$ be a family of geodesics such that 
$$u(s,0)=x,\qquad u(0,1)=y, \qquad \frac\nd{\nd s}u(s,1)\bigg|_{s=0}=w.$$
Since $\tau\!\lra\!u(s,\tau)$ is a geodesic,
\begin{gather*}
\frac{\nd}{\nd\tau}u(s,\tau)\bigg|_{\tau=1}
=\big\{\!\nd_{u_{\tau}(s,0)}\exp_x\!\big\}\big(u_{\tau}(s,0)\big)
=-\ze_x\big(u(s,1)\big),\\
\frac{\nD}{\nd\tau}\frac{\nd u(s,\tau)}{\nd s}\bigg|_{(s,\tau)=(0,1)}
=\frac{\nD}{\nd s}\frac{\nd u(s,\tau)}{\nd\tau}\bigg|_{(s,\tau)=(0,1)}
=-\na_w\ze_x|_y\,.
\end{gather*}
Furthermore, $J(\tau)\!\equiv\!\frac{\nd}{\nd s}u(s,\tau)\big|_{s=0}$ is a Jacobi vector field 
along the geodesic $\tau\!\lra\!u(0,\tau)$ with 
$$J(0)=\!0,\quad J(1)=w, \quad 
J'(1)=\frac{\nD}{\nd\tau}\frac{\nd u(s,\tau)}{\nd s}\bigg|_{(s,\tau)=(0,1)}=-
\na_w\ze_x|_y\,.$$
Thus, the claim follows from Lemma~\ref{JacDer_lmm}.
\end{proof}

\begin{lmm}\label{CritEner_lmm}
Suppose $(X,\om)$ is a symplectic manifold, 
$J$ is an almost complex structure on~$X$ tamed by~$\om$,
and $\na$ is the Levi-Civita connection of the metric~$g_J$.
If $(\Si,\fj)$ is a compact Riemann surface with boundary and 
$u\!:\Si\!\lra\!X$ is a $J$-holomorphic map, then
$$\int_{\Si}g_J\big(\nd u\!\otimes_{\fj}\!\na\xi\big)=
\int_{\Si}\big(u^*\{\na_{\xi}\om_J\}\!+\!
\om_J(\nd u\!\w_{\fj}\!\na\xi)\big)
\qquad\forall~\xi\!\in\!\Ga(\Si;u^*TX)~\hbox{s.t.}~\xi|_{\prt\Si}\!=\!0.$$
\end{lmm}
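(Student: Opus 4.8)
The plan is to compute the integral $\int_\Si g_J(\nd u \otimes_\fj \na\xi)$ directly in a local conformal coordinate $z = s + \fI t$ on $\Si$, using the $J$-holomorphicity of $u$ to rewrite the integrand as a sum of an exact term (which will produce a boundary integral vanishing by $\xi|_{\prt\Si}=0$) plus the terms appearing on the right-hand side. First I would expand, using \eref{muetadfn_e},
\[
g_J(\nd u \otimes_\fj \na\xi) = \big(g_J(\na_{\prt_s}\xi, u_s) + g_J(\na_{\prt_t}\xi, u_t)\big)\,\nd s\w\nd t,
\]
and similarly for $\om_J(\nd u \w_\fj \na\xi)$. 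Here $u_s = \nd u(\prt_s)$, $u_t = \nd u(\prt_t)$, and $\na_{\prt_s}\xi$, $\na_{\prt_t}\xi$ denote covariant derivatives of the section $\xi \in \Ga(\Si;u^*TX)$ along the coordinate directions, computed with the pullback connection. The $J$-holomorphicity condition $\dbar_J u = 0$ reads $u_t = J(u)u_s$ in this coordinate.

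The key manipulation is to integrate by parts: $g_J(\na_{\prt_s}\xi, u_s) + g_J(\na_{\prt_t}\xi, u_t)$ should be rewritten as $\prt_s\big(g_J(\xi,u_s)\big) + \prt_t\big(g_J(\xi,u_t)\big)$ minus correction terms. Since $g_J$ is $\na$-parallel (it is the metric of its own Levi-Civita connection), the product rule gives
\[
\prt_s g_J(\xi,u_s) = g_J(\na_{\prt_s}\xi, u_s) + g_J(\xi, \na_{\prt_s}u_s),
\]
and similarly in $t$. Summing, the left-hand integrand becomes $\prt_s g_J(\xi,u_s) + \prt_t g_J(\xi,u_t) - g_J(\xi, \na_{\prt_s}u_s + \na_{\prt_t}u_t)$. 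The first two terms assemble into $\nd\big(g_J(\xi,u_t)\nd s - g_J(\xi,u_s)\nd t\big)$ wait — more carefully, $\big(\prt_s f_1 + \prt_t f_2\big)\nd s\w\nd t = \nd\big(-f_2\,\nd s + f_1\,\nd t\big)$, so this piece is exact with a primitive linear in $\xi$, hence Stokes' theorem kills it because $\xi$ vanishes on $\prt\Si$. It remains to identify the "tension field" term $-g_J(\xi, \na_{\prt_s}u_s + \na_{\prt_t}u_t)$ with the right-hand side. Here I would use $u_t = J(u)u_s$ together with $\na J$: differentiating, $\na_{\prt_t}u_t = \na_{\prt_t}(J u_s) = (\na_{\prt_t}J)u_s + J\,\na_{\prt_t}u_s = (\na_{\prt_t}J)u_s + J\,\na_{\prt_s}u_t = (\na_{\prt_t}J)u_s + J\,\na_{\prt_s}(J u_s) = (\na_{\prt_t}J)u_s + J(\na_{\prt_s}J)u_s - \na_{\prt_s}u_s$, using that mixed covariant derivatives $\na_{\prt_s}u_t = \na_{\prt_t}u_s$ (torsion-freeness of $\na$ applied to the map $u$) and $J^2 = -\Id$. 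Therefore $\na_{\prt_s}u_s + \na_{\prt_t}u_t = \big((\na_{\prt_t}J) + J(\na_{\prt_s}J)\big)u_s$, a zeroth-order (in $u$) expression involving only $\na J$ and $u_s$.

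The main obstacle, and the bulk of the remaining work, is the algebraic identity relating $\na J$ to $\na\om_J$ and to the $\om_J(\nd u \w_\fj \na\xi)$ term: one must show $g_J\big(\xi, ((\na_{\prt_t}J) + J(\na_{\prt_s}J))u_s\big)$, after using $u_t = Ju_s$ again to symmetrize, equals precisely $u^*\{\na_\xi \om_J\}(\prt_s,\prt_t) + \om_J(\nd u \w_\fj \na\xi)(\prt_s,\prt_t)$ evaluated against $\nd s\w\nd t$. The relevant input is that $\om_J(v,v') = \tfrac12(\om(Jv,Jv') - \om(v,v'))$ and $g_J(v,v') = \tfrac12(\om(v,Jv') - \om(Jv,v'))$, so $g_J(v,Jv') = -\tfrac12(\om(Jv,Jv') + \om(v,v')) \cdot(\pm) $ — the point being that $g_J(v, Jv')$ and $\om_J(v,v')$ are both built from $\om$, $J$, and their combinations, and that $\na\om = 0$ is \emph{not} available (only $\na g_J = 0$), so $\na_\xi\om_J$ genuinely contributes and must be tracked. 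I would handle this by writing everything in terms of $\om$ and $J$, using $\na(g_J) = 0$ to move $\na$ past $g_J$, and then $\om_J(v,v') = \om(Jv, \cdot)$-type rearrangements plus the identity $\om(a,b) = g_J(a,b) - $ (correction) implicit in \eref{omJdfn_e}; the bookkeeping of which $\na_\xi$ hits $J$ versus $\om$ versus the metric is the delicate part, but it is a finite tensorial computation at a point and the coordinate-independence already guaranteed by \eref{muetadfn_e} means it suffices to verify it in one conformal chart.
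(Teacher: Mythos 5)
Your reduction of the left-hand side is fine: using $\na g_J\!=\!0$ and Stokes' theorem (legitimate since $\xi|_{\prt\Si}\!=\!0$), and then $u_t\!=\!Ju_s$ together with torsion-freeness to get $\na_su_s\!+\!\na_tu_t=\big((\na_tJ)\!+\!J(\na_sJ)\big)u_s$, is all correct. The gap is in the final step, which is where the entire content of the lemma sits. As stated, your target identity cannot hold pointwise: after your integration by parts the left integrand $-g_J\big(\xi,\na_su_s\!+\!\na_tu_t\big)$ is tensorial (zeroth order) in~$\xi$, while the right integrand still contains $\om_J(\nd u\!\w_{\fj}\!\na\xi)$, which is first order in~$\xi$; at a point where $\xi$ vanishes but $\na\xi$ does not, the left side vanishes and the right side generically does not whenever $\om_J\!\neq\!0$ along the image, i.e.~precisely in the tamed non-compatible case the lemma is aimed at. So you must integrate by parts on the right-hand side as well, and the pointwise identity actually to be verified is a different one, involving $\na\om_J$ evaluated on $(u_s,u_t)$ and terms of the form $\om_J(\na_tu_s,\xi)$.

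More seriously, the ingredients you list ($\na g_J\!=\!0$, torsion-freeness, $u_t\!=\!Ju_s$, and the algebraic definitions~\eref{omJdfn_e}) never use that $\om$ is \emph{closed}, and without closedness the lemma is false, so no amount of pointwise bookkeeping with those inputs alone can close the argument. Indeed, applying~\eref{Egexp_e} to $u_\tau(z)\!=\!\exp_{u(z)}(\tau\xi(z))$ gives
$$\frac12\int_{\Si}g_J\big(\nd u_\tau\!\otimes_{\fj}\!\nd u_\tau\big)
-\int_{\Si}u_\tau^*\om_J
-2\int_{\Si}g_J\big(\dbar_Ju_\tau\!\otimes_{\fj}\!\dbar_Ju_\tau\big)
=\int_{\Si}u_\tau^*\om\,;$$
the $\tau$-derivative at $\tau\!=\!0$ of the left-hand side is exactly the difference of the two sides of the lemma (the $\dbar_J$-term is quadratic in $\dbar_Ju_\tau$, which vanishes at $\tau\!=\!0$), whereas the derivative of the right-hand side is $\int_{\Si}u^*(\iota_\xi\nd\om)$ after discarding an exact term via $\xi|_{\prt\Si}\!=\!0$. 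Since $\om(v,v')\!=\!g_J(Jv,v')\!-\!\om_J(v,v')$ is recovered from $(g_J,J,\om_J)$, the condition $\nd\om\!=\!0$ is an independent constraint, and for a merely taming, non-closed~$\om$ this discrepancy is generically nonzero. This is exactly what the paper's proof exploits in the opposite direction: \eref{Egexp_e} plus closedness of~$\om$ (via the doubled surface, equivalently the homotopy invariance of $\int u_\tau^*\om$ rel boundary) shows that $\tau\!\lra\!E_{g_J}(u_\tau)\!-\!\int_{\Si}u_\tau^*\om_J$ is minimized at $\tau\!=\!0$, and the displayed identity is just $E'(0)\!=\!0$, with no tensorial identity for $\na J$ needed. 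To salvage your route you would have to perform the second integration by parts and then feed in $\nd\om\!=\!0$, e.g.~as the vanishing of the antisymmetrization of~$\na\om$, at the pointwise stage.
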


\begin{proof} For $\tau\!\in\!\R$ sufficiently close to~0, define
$$u_{\tau}\!:\Si\lra X, \qquad u_{\tau}(z)\!=\!\exp_{u(z)}(\tau\xi(z)).$$
Since $\xi|_{\prt\Si}\!=\!0$, $u_{\tau}|_{\prt\Si}\!=\!u|_{\prt\Si}$. 
Denote by $\wh\Si$ the closed oriented surface obtained by gluing two copies of~$\Si$
along the common boundary and reversing the orientation on the second copy.
Let
$$\wh{u}_{\tau}\!: \wh\Si\lra X$$
be the map restricting to~$u_{\tau}$ on the first copy of~$\Si$
and to~$u$ on the second.\\

\noindent
By~\eref{Egexp_e}, 
$$E(\tau)\equiv E_{g_J}(u_{\tau})-\int_{\Si}u_{\tau}^*\om_J-E_{g_J}(u)
=\int_{\wh\Si}\wh{u}_{\tau}^*\om+
2\int_{\Si}g_J\big(\dbar u_{\tau}\!\otimes_{\fj}\!\dbar u_{\tau}\big)
\ge0\quad\forall\tau.$$
Since $\om$ is closed and $\wh{u}_*$ represents the zero class in $H_2(X;\Z)$,
the first integral on the right-hand side above vanishes.
Thus, the function \hbox{$\tau\!\lra\!E(\tau)$}
is minimized at $\tau\!=\!0$ (when it equals 0) and~so  
\BE{Ediff_e2}\begin{split}
0=E'(0)
&=\frac{\nd}{\nd\tau}\bigg(E_{g_J}(u_{\tau})
-\int_{\Si}u_{\tau}^*\om_J\bigg)\bigg|_{\tau=0}\\
&=\frac{\nd}{\nd\tau}\bigg(
\frac12\int_{\Si}\!\!g_J(\nd u_{\tau}\!\otimes_{\fj}\!\nd u_{\tau})
-\int_{\Si}u_{\tau}^*\om_J\bigg)\bigg|_{\tau=0}\,;
\end{split}\EE
the last equality above uses the definition of $E(u_{\tau})$ in~\eref{Egfdfn_e}.\\

\noindent
Let $z\!=\!s\!+\!\fI t$ be a local coordinate on~$(\Si,\fj)$.
Since $\na$ is torsion-free, 
$$\frac{D}{\nd\tau}(u_{\tau})_s\Big|_{\tau=0}
\equiv\frac{D}{\nd\tau}\frac{\nd u_{\tau}}{\nd s}\bigg|_{\tau=0}
=\frac{D}{\nd s}\frac{\nd u_{\tau}}{\nd\tau}\bigg|_{\tau=0}
=\frac{D}{\nd s}\xi\equiv\na_s\xi, \qquad
\frac{D}{\nd\tau}(u_{\tau})_t\Big|_{\tau=0}=\na_t\xi\,.$$
Since $\na$ is also $g$-compatible, 
\begin{gather*}\begin{split}
\frac12\frac{\nd}{\nd\tau}g_J(\nd u_{\tau}\!\otimes_{\fj}\!\nd u_{\tau})
\bigg|_{\tau=0}
&=\bigg(g_J\bigg(u_s,\frac{D}{\nd\tau}(u_{\tau})_s\Big|_{\tau=0}\bigg)
+g_J\bigg(u_t,\frac{D}{\nd\tau}(u_{\tau})_t\Big|_{\tau=0}\bigg)\!\bigg)
\nd s\!\w\!\nd t\\
&=g_J(u_s,\na_s\xi)+g_J(u_t,\na_t\xi)
=g_J\big(\nd u\!\otimes_{\fj}\!\na\xi\big)\,,
\end{split}\\
\begin{split}
\frac{\nd}{\nd\tau}u_{\tau}^*\om_J\bigg|_{\tau=0}
&=\bigg(\!\!\big\{\na_{\xi}\om_J\big\}(u_s,u_t)+
\om_J\bigg(\frac{D}{\nd\tau}(u_{\tau})_s\Big|_{\tau=0},u_t\!\!\bigg)
+\om_J\bigg(\!\!u_s,\frac{D}{\nd\tau}(u_{\tau})_t\Big|_{\tau=0}\bigg)\!\bigg)
\nd s\!\w\!\nd t\\
&=u^*\{\na_{\xi}\om_J\}\!+\!\om_J\big(\nd u\!\w_{\fj}\!\na\xi\big)\,.
\end{split}\end{gather*}
Combining this with~\eref{Ediff_e2}, we obtain the claim.
\end{proof}

\begin{proof}[{\bf\emph{Proof of Proposition~\ref{MonotLmm_prp}}}]
Let $\de_g\!:X\!\lra\!\R^+$ be a continuous function such that for every $x\!\in\!X$
there exists a symplectic form~$\om_x$ on $B_{2\de_g(x)}^g(x)$
so that~$J$ is tamed by~$\om_x$ on $B_{2\de_g(x)}^g(x)$  
and compatible with~$\om_x$ at~$x$.
We assume that $2\de_g(x)\!\le\!r_g(x)$ for every $x\!\in\!X$.
It is sufficient to establish the proposition for each $x\!\in\!X$
and each $\de\!\le\!\de_g(x)$ under the assumption that 
the metric~$g$ is determined by~$J$ and~$\om_x$ on $B_{\de_g(x)}^g(x)$.\\

\noindent
Choose a $C^{\i}$-function $\eta\!:\R\!\lra\![0,1]$ such that 
\BE{etadfn_e}\eta(\tau)=\begin{cases}1,&\hbox{if}~\tau\le\frac12;\\
0,&\hbox{if}~\tau\ge1; \end{cases} \qquad
\eta'(\tau)\le0.\EE
For a compact Riemann surface with boundary $(\Si,\fj)$,
a smooth map \hbox{$u\!:\Si\!\lra\!X$},
$x\!\in\!X$, and $\de\!\in\!\R^+$, define
\begin{gather*}
\eta_{u,x,\de}\in C^{\i}(\Si;\R), \qquad
\eta_{u,x,\de}(z)=\eta\bigg(\frac{d_g(x,u(z))}{\de}\bigg),\\
E_{u,x,\eta}(\de)=\frac12\int_{\Si}
\eta_{u,x,\de}(z)g\big(\nd u\!\otimes_{\fj}\!\nd u\big)\,,\quad
E_{u,x}(\de)=E_g\big(u;u^{-1}(B_{\de}^g(x))\big).
\end{gather*}

\vspace{.2in}

\noindent
We show in the remainder of this proof that there exists a continuous function 
\hbox{$C_{g,J}\!:X\!\lra\!\R^+$} such~that 
\BE{Aetabd_e}
-\de E_{u,x,\eta}'(\de)+2E_{u,x,\eta}(\de)\le 
2C_{g,J}(x)\de E_{u,x,\eta}(\de)+C_{g,J}(x)\de^2 E_{u,x,\eta}'(\de)\EE
for every compact Riemann surface with boundary $(\Si,\fj)$,
$J$-holomorphic map \hbox{$u\!:\Si\!\lra\!X$}, and \hbox{$\de\!\in\!(0,\de_g(x))$}
such that $u(\prt\Si)\!\cap\!B_{\de}^g(x)\!=\!\eset$.
This inequality is equivalent~to
$$\bigg(E_{u,x,\eta}(\de)\bigg/\frac{\de^2}{(1\!+\!C_{g,J}(x)\de)^4}\bigg)'\ge 0.$$
By Lebesgue's Dominated Convergence Theorem, $E_{u,x,\eta}(\de)$ approaches~$E_{u,x}(\de)$
from below as~$\eta$ approaches the characteristic function~$\chi_{(-\i,1)}$ of~$(-\i,1)$.
Thus, the function
$$\de\lra E_{u,x}(\de)\bigg/\frac{\de^2}{(1\!+\!C_{g,J}(x)\de)^4}$$
is non-decreasing  as long as $u(\prt\Si)\!\cap\!B_{\de}^g(x)\!=\!\eset$.
By Corollary~\ref{FHS_crl3},
$$\lim_{\de\lra0}\bigg(E_{u,x}(\de)\!\!\bigg/\!\!\frac{\de^2}{(1\!+\!C_{g,J}(x)\de)^4}\bigg)
=\lim_{\de\lra0}\frac{E_{u,x}(\de)}{\de^2}=\big(\ord_xu\big)\pi.$$
This implies the first claim.\\

\noindent
Fix $x\!\in\!X$. We note~that
\BE{MonLem_e6}
E_{u,x,\eta}'(\de)=-\frac12\int_{\Si}\eta'\!\bigg(\!\frac{d_g(x,u(z))}{\de}\!\bigg)
\frac{d_g(x,u(z))}{\de^2}
g\big(\nd u\!\otimes_{\fj}\!\nd u\big).\EE
For a compact Riemann surface with boundary $(\Si,\fj)$,
a smooth map \hbox{$u\!:\Si\!\lra\!X$}, and \hbox{$\de\!\in\!(0,\de_g(x))$}, 
let
$$\xi_{u,x,\de}\in \Ga(\Si;u^*TX), \qquad
\xi_{u,x,\de}(z)=-\eta_{u,x,\de}(z)\ze_x\big(u(z)\big);$$
the vanishing assumption in~\eref{etadfn_e} implies that $\xi_{u,x,\de}$
is well-defined.
If $u(\prt\Si)\!\cap\!B_{\de}^g(x)\!=\!\eset$, then $\xi_{u,x,\de}|_{\prt\Si}\!=\!0$.
By Lemma~\ref{distder_lmm},
\BE{MonLem_e0}\begin{split}
\na\xi_{u,x,\de}|_z
=\eta'\bigg(\frac{d_g(x,u(z))}{\de}\bigg)\frac{1}{\de\,d_g(x,u(z))}
g\big(\nd_zu,\ze_x(u(z))\!\big)\ze_x(u(z))
-\eta_{u,x,\de}(z)\na\ze_x\!\circ\!\nd_zu.
\end{split}\EE
Along with Corollary~\ref{JacDer_crl}, \eref{MonLem_e6}, 
and the last assumption in~\eref{etadfn_e}, this implies~that 
\BE{MonLem_e8}
\int_{\Si}\!d_g(x,u(z))\big|g(\nd u\!\otimes_{\fj}\!\na\xi_{u,x,\de})\big|
\le 2\de^2 E_{u,x,\eta}'(\de)+2\big(1\!+\!C_g(x)\de^2\big)\de E_{u,x,\eta}(\de).\EE

\vspace{.1in}

\noindent
By the $\om_x$-compatibility assumption on~$J$ at~$x$, there exists 
a continuous function \hbox{$C\!:X\!\lra\!\R^+$} such~that 
$$\int_{\Si}\big|(\om_x)_J(\nd u\!\w_{\fj}\!\na\xi_{u,x,\de})\big|
\le C(x)\!\!\int_{\Si}d_g\big(x,u(z)\!\big)\big|g(\nd u\!\otimes_{\fj}\!\na\xi_{u,x,\de})\big|$$
for all $u$ and $\de$ as above.
Along with this, Lemma~\ref{CritEner_lmm} implies that 
there exists a continuous function \hbox{$C\!:X\!\lra\!\R^+$} such~that 
$$\bigg|\int_{\Si}g\big(\nd u\!\otimes_{\fj}\!\na\xi_{u,x,\de}\big)\bigg|
\le C(x)\!\!\int_{\Si}\!\big(g\big(\nd u\!\otimes_{\fj}\!\nd u\big)|\xi_{u,x,\de}|
\!+\!d_g(x,u(z))\big|g(\nd u\!\otimes_{\fj}\!\na\xi_{u,x,\de})\big|\big)$$
for every compact Riemann surface with boundary $(\Si,\fj)$,
$J$-holomorphic map \hbox{$u\!:\Si\!\lra\!X$}, and $\de\!\in\!(0,\de_g(x))$ 
such that $u(\prt\Si)\!\cap\!B_{\de}^g(x)\!=\!\eset$.
Combining this with~\eref{MonLem_e8}, we conclude that 
there exists a continuous function \hbox{$C\!:X\!\lra\!\R^+$} such~that
\BE{MonLem_e0b}
\bigg|\int_{\Si}g\big(\nd u\!\otimes_{\fj}\!\na\xi_{u,x,\de}\big)\bigg|
\le C(x)\big(\de E_{u,x,\eta}(\de)\!+\!\de^2E_{u,x,\eta}'(\de)\big)\EE
for all $u$ and $\de$ as above.\\

\noindent
Suppose $(\Si,\fj)$ is a compact Riemann surface with boundary,
$u\!:\Si\!\lra\!X$ is a smooth map, and $\de\!\in\!(0,\de_g(x))$. 
Let $z\!=\!s\!+\!\fI t$ be a coordinate on~$(\Si,\fj)$.
By~\eref{MonLem_e0},
\BE{MonLem_e1}\begin{split}
g\big(u_s,\na_s\xi_{u,x,\de}\big)
=\eta'\bigg(\!\frac{d_g(x,u(z))}{\de}\!\bigg)\frac{1}{\de\,d_g(x,u(z))}g\big(u_s,\ze_x(u(z))\big)^2&\\
+\eta_{u,x,\de}(z)g\big(u_s,\na_s(-\ze_x)|_z\big)&.
\end{split}\EE
By Corollary~\ref{JacDer_crl}, 
\BE{MonLem_e2}
|u_s|^2\le g\big(u_s,\na_s(-\ze_x)|_z\big) +C_g(x)d_g(x,u(z))^2|u_s|^2
\quad\forall\,z\!\in\!u^{-1}\big(B_{\de_g(x)}^g(x)\big)\,.\EE
If $u$ is $J$-holomorphic, then $|u_s|\!=\!|u_t|$, $\lr{u_s,u_t}\!=\!0$, and
\BE{MonLem_e3}
\frac12\big(|u_s|^2\!+\!|u_t|^2\big)d_g(x,u(z))^2=|u_s|^2|\ze_x(u(z))|^2
\ge g\big(u_s,\ze_x(u(z))\big)^2+g\big(u_t,\ze_x(u(z))\big)^2\,.\EE
Since $\eta'\!\le\!0$,  \eref{MonLem_e1}-\eref{MonLem_e3} give
\BE{MonLem_e4}\begin{split}
&\frac12\eta'\bigg(\!\frac{d_g(x,u(z))}{\de}\!\bigg)\frac{d_g(x,u(z))}{\de}\big(|u_s|^2\!+\!|u_t|^2\big)
+\eta_{u,x,\de}(z)\big(|u_s|^2\!+\!|u_t|^2\big)\\
&\qquad\le g\big(u_s,\na_s\xi_{u,x,\de}\big)+g\big(u_t,\na_t\xi_{u,x,\de}\big)+
C_g(x)\eta_{u,x,\de}(z)d_g(x,u(z))^2\big(|u_s|^2\!+\!|u_t|^2\big).
\end{split}\EE
Along with~\eref{MonLem_e6}, this implies that 
\BE{MonLem_e9}-\de E_{u,x,\eta}'(\de)+2E_{u,x,\eta}(\de)\le 
\int_{\Si}g\big(\nd u\!\otimes_{\fj}\!\na\xi_{u,x,\de}\big)
+2C_g(x)\de^2E_{u,x,\eta}(\de)\EE
for every compact Riemann surface with boundary $(\Si,\fj)$,
$J$-holomorphic map \hbox{$u\!:\Si\!\lra\!X$}, and $\de\!\in\!(0,\de_g(x))$.
Combining this inequality with~\eref{MonLem_e0b}, we obtain~\eref{Aetabd_e}.\\

\noindent
Suppose $\om\!\equiv\!g(J\cdot,\cdot)$ is a symplectic form on~$X$.
By Lemma~\ref{CritEner_lmm}, the left-hand side of~\eref{MonLem_e0b} then vanishes.
From~\eref{MonLem_e9}, we thus obtain
$$-\de E_{u,x,\eta}'(\de)+2E_{u,x,\eta}(\de)\le 2C_{g,J}(x)\de^2 E_{u,x,\eta}(\de)\,.$$
The reasoning below~\eref{Aetabd_e} now yields the second claim.
\end{proof}

\section{Mean Value Inequality and applications}
\label{EBnd_sec}

\noindent
We now move to properties of $J$-holomorphic maps~$u$
from Riemann surfaces~$(\Si,\fj)$ into almost complex manifolds~$(X,J)$
that are of a more global nature.
They generally concern the distribution of the energy of such a map over its domain
and are consequences of the \sf{Mean Value Inequality} for $J$-holomorphic maps.
These fairly technical properties lead to geometric conclusions
such as Propositions~\ref{GlobStr_subs} and~\ref{RemSing_prp}.

\subsection{Statement and proof}
\label{MeanValIn_subs}

\noindent
According to Cauchy's Integral Formula, a holomorphic map $u\!:B_R\!\lra\!\C^n$ satisfies
$$u'(0)=\frac{1}{2\pi\fI}\oint_{|z|=r}\frac{u(z)}{z^2}\nd z
\qquad\forall\,r\!\in\!(0,R).$$
This immediately implies that a bounded holomorphic function defined on all of~$\C$ is constant.
The Mean Value Inequality of Proposition~\ref{PtBnd_prp}
bounds the norms of the differentials of 
$J$-holomorphic maps of sufficiently small energy away from the boundary of the domain ``uniformly"
by their $L^2$-norms.
In general, one would not expect the value of a function to be bounded 
by its integral.
The Mean Value Inequality implies that a $J$-holomorphic map
which is defined on all of~$\C$ and has sufficiently small energy is in fact constant;
see Corollary~\ref{LowEner_crl}.

\begin{prp}[Mean Value Inequality]\label{PtBnd_prp}
If $(X,J)$ is an almost complex manifold and $g$ is a Riemannian metric on~$X$
compatible with~$J$,  there exists a continuous function 
\hbox{$\hb_{J,g}\!:X\!\times\!\R\!\lra\!\R^+$} with the following property. 
If \hbox{$u\!:B_R\!\lra\!X$} is a $J$-holomorphic map such that 
$$u(B_R)\subset B_r^g(x) \qquad\hbox{and}\qquad
E_g(u)<\hb_{J,g}(x,r)$$
for some $x\!\in\!X$ and $r\!\in\!\R$, then
\BE{PtBnd_e} \big|\nd_0u\big|_g^2<\frac{16}{\pi R^2} E_g(u)\,.\EE
\end{prp}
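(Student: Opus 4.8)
The plan is to follow the classical mean-value (sub-mean-value / elliptic bootstrap) strategy adapted to $J$-holomorphic maps. First I would set up the energy density function. Write $e(z)=\tfrac12|\nd_zu|_g^2$, so that $E_g(u)=\int_{B_R}e$. The key analytic input is a \emph{differential inequality} of the form
$$\De e \ge -c(x)\,e^2 \qquad\text{on } B_R,$$
where $\De$ is the (nonnegative) Laplacian on $B_R\subset\C$ and $c(x)$ depends continuously on $x$ and on the $C^1$-size of $J$ on the geodesic ball $B_r^g(x)$. This is the Bochner-type estimate for $J$-holomorphic maps: one differentiates the equation $\dbar_Ju=0$, commutes derivatives, and uses that the curvature of $g$ and the first derivatives of $J$ enter only quadratically in $\nd u$; the term linear in $\nd u$ vanishes because $u$ is $J$-holomorphic. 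Here the compatibility of $g$ with $J$ and the confinement $u(B_R)\subset B_r^g(x)$ are what make $c$ depend only on $(x,r)$. I would state this as a lemma and prove it by a local coordinate computation, being careful that the constant is uniform over the ball.

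Second, I would invoke the standard consequence of such a differential inequality together with a small-energy hypothesis: if $w\ge0$ satisfies $\De w\ge -c\,w^2$ on a disk, $\int w<\ep_0$ with $\ep_0$ small (depending on $c$ and the disk radius), then a pointwise bound $w(0)\le \frac{C}{\pi\rho^2}\int_{B_\rho}w$ holds. The mechanism is: first show $w$ is bounded (this is where the smallness of the energy is used — a Moser-type iteration or a Heinz-type argument shows that the $L^1$-smallness forces $\sup_{B_{R/2}}w$ finite and in fact small), and then, once $\De w\ge -c\,w^2\ge -(\text{const})\,w$, the function $w$ is \emph{subsolution-like}, so the classical sub-mean-value inequality for subharmonic-up-to-lower-order-terms functions applies. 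I would define $\hb_{J,g}(x,r)$ to be precisely the threshold $\ep_0$ produced here (shrunk if necessary), and track that it is continuous in $(x,r)$ because $c(x)$ and $r_g(x)$ are.

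Third, I would assemble the pieces: apply the pointwise bound with $w=e$ and radius $\rho=R$ (after the usual rescaling $z\mapsto Rz$ which normalizes the disk to $B_1$, so no loss of generality in the constant), obtaining $e(0)\le \frac{C}{\pi R^2}E_g(u)$, i.e. $|\nd_0u|_g^2\le \frac{2C}{\pi R^2}E_g(u)$. The remaining task is to show one can take $2C<16$, i.e. the clean constant $16$ claimed in \eqref{PtBnd_e}. For the \emph{strict} inequality with constant $16$ one typically argues: after rescaling, either $\nd_0u=0$ (done trivially) or one runs the iteration/estimate carefully on the unit disk keeping the constant in the sub-mean-value step as close to the sharp value $4$ (from $e(0)\le\frac{4}{\pi}\int_{B_1}e$ for genuinely subharmonic $e$ on $B_1$, optimizing over subdisks) as the lower-order term allows, which gives something strictly below $16$ once $\hb_{J,g}$ is chosen small enough that the error term is absorbed. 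The strictness then comes for free since either $E_g(u)=0$ (then $u$ is constant and $\nd_0u=0$, and the inequality is $0<0$—so actually one must also note $E_g(u)>0$ is forced when $\nd_0u\neq0$, making both sides positive and the inequality strict by the slack in the constant).

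\textbf{Main obstacle.} The hard part is the first step: proving the Bochner-type inequality $\De e\ge -c(x)e^2$ with a constant that is genuinely \emph{uniform} over the ball $B_r^g(x)$ and \emph{continuous} in $(x,r)$, since this requires controlling the full curvature of $g$ and the first covariant derivatives of $J$ (and the torsion of $J$) simultaneously, and organizing the coordinate computation so that the dangerous term linear in $\nd u$ really does cancel using $\dbar_Ju=0$. The second obstacle, more bookkeeping than conceptual, is squeezing the universal constant down to $16$ with a \emph{strict} inequality; this forces the sub-mean-value step to be done with its sharp constant and the small-energy threshold $\hb_{J,g}$ to be chosen explicitly small enough to swallow the curvature error.
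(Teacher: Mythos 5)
Your plan is essentially the paper's proof: its Lemma~\ref{LapEng_lmm} is exactly your Bochner-type inequality $\De\phi\ge-A_{J,g}(x,r)\phi^2$ for $\phi=\frac12|\nd u|_g^2$ (with uniformity over $B_r^g(x)$ coming from the $J$-compatibility of $g$ and the confinement hypothesis), and its Proposition~\ref{Lapl_prp} is the Heinz-type small-energy argument you sketch, yielding $\phi(0)\le\frac{8}{\pi R^2}\int_{B_R}\phi$ with threshold $\hb_{J,g}=\pi/8A_{J,g}$. Your only real deviation is the worry about squeezing the constant: since $|\nd_0u|_g^2=2\phi(0)$, the factor $8$ from the Heinz dichotomy already gives the $16$ with room to spare, so no sharp sub-mean-value constant or extra absorption of error terms is needed.
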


\begin{proof}
Let $\phi(z)\!=\!\frac12|\nd_zu|_g^2$. 
By Lemma~\ref{LapEng_lmm} below, $\De\phi\!\ge\!-A_{J,g}\phi^2$ with 
\hbox{$A_{J,g}\!:X\!\times\!\R\!\lra\!\R^+$} determined by $(X,J,g)$. 
The claim with $\hb_{J,g}\!=\!\pi/8A_{J,g}$ thus follows from Proposition~\ref{Lapl_prp}.
\end{proof}

\begin{crl}[Lower Energy Bound]\label{LowEner_crl}
If $(X,J)$ is a compact almost complex manifold and $g$ is a Riemannian metric on~$X$, 
then there exists $\hbar_{J,g}\!\in\!\R^+$ such that 
$E_g(u)\!\ge\!\hbar_{J,g}$ for every non-constant $J$-holomorphic map
$u\!:S^2\!\lra\!X$.
\end{crl}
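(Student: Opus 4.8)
The plan is to deduce this from the Mean Value Inequality (Proposition~\ref{PtBnd_prp}) by a rescaling argument, using the compactness of~$X$ to replace the position-dependent threshold $\hb_{J,g}(x,r)$ by a uniform positive constant and the compactness of the domain~$S^2$ to normalize the derivative of~$u$. First I would reduce to the case that $g$ is compatible with~$J$: for an arbitrary Riemannian metric~$g$ on the compact manifold~$X$, the metric $g'(\cdot,\cdot)\!=\!\frac12\big(g(\cdot,\cdot)\!+\!g(J\cdot,J\cdot)\big)$ is compatible with~$J$ and satisfies $c^{-1}g\!\le\!g'\!\le\!cg$ for some $c\!\ge\!1$, so $c^{-1}E_g(u)\!\le\!E_{g'}(u)\!\le\!cE_g(u)$ for every smooth map~$u$ from a surface, and a lower bound for~$E_{g'}$ yields one for~$E_g$. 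So assume $g$ is compatible with~$J$, let $\hb_{J,g}$ be as in Proposition~\ref{PtBnd_prp}, and put $\hbar_0\!=\!\min_{x\in X}\hb_{J,g}(x,1)$, which is positive since $\hb_{J,g}$ is continuous and~$X$ is compact. The claim will be that $\hbar_{J,g}\!=\!\min(\hbar_0,\pi/16)$ works.

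Let $u\!:S^2\!\lra\!X$ be a non-constant $J$-holomorphic map. If $E_g(u)\!\ge\!\hbar_0$ there is nothing to show, so assume $E_g(u)\!<\!\hbar_0$ and let us prove $E_g(u)\!>\!\pi/16$. Writing $S^2\!=\!\C\!\cup\!\{\i\}$ and viewing~$u$ as a $J$-holomorphic map on~$\C$, the function $z\!\mapsto\!|\nd_zu|_g$ is continuous on~$\C$ and tends to~$0$ as $z\!\lra\!\i$ (in the coordinate $w\!=\!1/z$ it acquires the conformal factor $|z|^{-2}$), so, $u$ being non-constant, it attains a positive maximum $M\!=\!|\nd_{z^*}u|_g$ at some $z^*\!\in\!\C$. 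Set $v(z)\!=\!u(z^*\!+\!z/M)$; this is a $J$-holomorphic map $v\!:\C\!\lra\!X$ with $|\nd_zv|_g\!=\!M^{-1}\big|\nd_{z^*+z/M}u\big|_g\!\le\!1$ for all~$z$, with $|\nd_0v|_g\!=\!1$, and with $E_g(v;B_R)\!\le\!E_g(v)\!=\!E_g(u)$ for every $R\!>\!0$ by the conformal invariance of the energy.

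Since $|\nd v|_g\!\le\!1$, integrating along the segment from~$0$ to~$z$ gives $d_g\big(v(0),v(z)\big)\!<\!1$ for every $z\!\in\!B_1$, so $v(B_1)\!\subset\!B_1^g(v(0))$; and $E_g(v;B_1)\!\le\!E_g(u)\!<\!\hbar_0\!\le\!\hb_{J,g}(v(0),1)$. Hence Proposition~\ref{PtBnd_prp} applies to~$v|_{B_1}$ with $x\!=\!v(0)$, $r\!=\!1$, and~$R\!=\!1$, giving $1\!=\!|\nd_0v|_g^2\!<\!\frac{16}{\pi}E_g(v;B_1)\!\le\!\frac{16}{\pi}E_g(u)$, i.e.~$E_g(u)\!>\!\pi/16$, which completes the argument. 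The one non-formal point is the normalization producing~$v$: it is the compactness of the domain~$S^2$ that guarantees $\sup_z|\nd_zu|_g$ to be finite and attained at an interior point, and this normalization is also exactly what forces the image of a small domain-ball into a small metric ball, i.e.~it supplies the only hypothesis of Proposition~\ref{PtBnd_prp} that is a priori unclear, since in general the $L^2$-norm of a map does not bound its $C^0$-norm. The radius $R\!=\!1$ is arbitrary: any~$R$ with $E_g(u)\!<\!\hb_{J,g}(v(0),R)$ would give the stronger bound $\pi R^2/16$.
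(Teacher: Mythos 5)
Your proof is correct. It rests on the same key ingredient as the paper's argument, Proposition~\ref{PtBnd_prp} together with the compactness of~$X$ to make the threshold uniform, but the mechanics differ. The paper takes $\hbar_{J,g}$ to be the minimum of the threshold function on $X\!\times\![0,\diam_g(X)]$ and applies the Mean Value Inequality at an arbitrary point $z\!\in\!\C$ with target radius $r\!=\!\diam_g(X)$, so that the containment hypothesis $u(B_R(z))\!\subset\!B_r^g(x)$ is automatic; it then lets $R\!\lra\!\i$ (possible because the domain is all of~$\C$) to force $\nd_zu\!=\!0$ everywhere, contradicting non-constancy. You instead verify the containment hypothesis at the fixed scale $r\!=\!R\!=\!1$ by the rescaling $v(z)\!=\!u(z^*\!+\!z/M)$ at a maximum point of $|\nd u|_g$ (the existence of which you correctly justify via smoothness at $\i$ and the conformal factor), and you obtain the quantitative bound $E_g(u)\!>\!\pi/16$ in the low-energy regime, hence the constant $\min(\hbar_0,\pi/16)$. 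Your route is slightly longer and yields a different (explicit) constant, but it only needs the threshold function at $r\!=\!1$ rather than on all of $[0,\diam_g(X)]$, and the normalization trick you use is exactly the one the paper deploys later in the proof of Corollary~\ref{EnerPres2_crl}. Your preliminary reduction to a $J$-compatible metric via $g'\!=\!\frac12(g(\cdot,\cdot)\!+\!g(J\cdot,J\cdot))$ and uniform equivalence of metrics on the compact~$X$ is also fine, and in fact spells out a step the paper leaves implicit.
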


\begin{proof}
By the compactness of $X$, we can assume that $g$ is compatible with~$J$.
Let $\hbar_{J,g}\!>\!0$ be the minimal value of the function~$\hbar_{J,g}$
in the statement of Proposition~\ref{PtBnd_prp} on the compact space 
$X\!\times\![0,\diam_g(X)]$.
If $u\!:S^2\!\lra\!X$ is $J$-holomorphic map with $E_g(u)\!<\!\hbar_{J,g}$,
$$\big|\nd_zu\big|_g^2<\frac{16}{\pi R^2} E_g\big(u;B_R(z)\big)
\le \frac{16}{\pi R^2}E_g(u)\qquad\forall~z\!\in\!\C,~R\!\in\!\R^+$$
by Proposition~\ref{PtBnd_prp}, since $B_R(z)\!\subset\!\C$ as Riemann surfaces.
Thus, $\nd_zu\!=\!0$ for all $z\!\in\!\C$, and so $u$ is constant.
\end{proof}

\noindent
If $\phi\!:U\!\lra\!\R$ is a $C^2$-function on an open subset of~$\R^2$, let
$$\De\phi=\frac{\prt^2\phi}{\prt s^2}+\frac{\prt^2\phi}{\prt t^2}
\equiv \phi_{ss}+\phi_{tt}$$
denote the Laplacian of~$\phi$.

\begin{exer}\label{LaplPol_exer}
Show that in the polar coordinates $(r,\th)$ on $\R^2$,
\BE{LaplPol_e}\De\phi=\phi_{rr}+r^{-1}\phi_r+r^{-2}\phi_{\th\th}\,.\EE
\end{exer}

\begin{lmm}\label{Lapl_lmm1}
If $\phi\!:\ov{B_R}\!\lra\!\R$ is $C^2$, then
\BE{Lapl_e1}
2\pi R\,\phi(0)=-R\int_{(r,\th)\in B_R}\!\!(\ln R\!-\!\ln r)\De\phi
+\int_{\prt B_R}\phi\,.\EE 
\end{lmm}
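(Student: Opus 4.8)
The statement is a classical mean-value identity for the Laplacian on a disk, so the natural route is Green's second identity with the fundamental solution of the Laplacian in the plane adapted to the disk $B_R$. The plan is to compare $\phi$ against the function $\Psi(z) = \ln R - \ln r$ (where $r = |z|$), which is harmonic on the punctured disk, vanishes on $\partial B_R$, and has a logarithmic singularity at the origin. First I would integrate by parts on the annulus $B_R \setminus \overline{B_\rho}$ for small $\rho > 0$, writing
$$
\int_{B_R\setminus\overline{B_\rho}} \big(\Psi\,\De\phi - \phi\,\De\Psi\big)
= \int_{\prt B_R}\Big(\Psi\,\prt_n\phi - \phi\,\prt_n\Psi\Big)
- \int_{\prt B_\rho}\Big(\Psi\,\prt_n\phi - \phi\,\prt_n\Psi\Big),
$$
with $\prt_n$ the outward normal derivative (outward from the annulus). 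Since $\De\Psi = 0$ on the annulus, the second term on the left drops out.

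Next I would evaluate the boundary contributions. On $\prt B_R$ we have $\Psi \equiv 0$, so only the $-\phi\,\prt_n\Psi$ term survives; since $\prt_n\Psi = \prt_r\Psi = -1/r$ evaluated at $r = R$, this gives $\tfrac1R\int_{\prt B_R}\phi$. On $\prt B_\rho$, the outward normal (from the annulus) points toward the origin, so $\prt_n = -\prt_r$; one computes $\prt_n\Psi = 1/\rho$ on $\prt B_\rho$, and $\Psi = \ln R - \ln\rho$ there. As $\rho \to 0$: the term $\int_{\prt B_\rho}\phi\,\prt_n\Psi = \tfrac1\rho\int_{\prt B_\rho}\phi \to 2\pi\phi(0)$ by continuity of $\phi$; the term $\int_{\prt B_\rho}\Psi\,\prt_n\phi$ is bounded by $(\ln R - \ln\rho)\cdot 2\pi\rho \cdot \sup|\nd\phi| \to 0$ since $\rho\ln\rho \to 0$. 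Meanwhile the left-hand integral converges to $\int_{B_R}\Psi\,\De\phi$ by dominated convergence, since $\Psi$ is integrable on $B_R$ (the logarithm is locally integrable in the plane) and $\De\phi$ is bounded. Collecting terms and multiplying through by $R$ yields exactly \eref{Lapl_e1}.

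The only mildly delicate point — and the step I would be most careful with — is the bookkeeping of signs on the inner boundary $\prt B_\rho$, i.e.\ getting the orientation of the normal right so that the singular contribution comes out as $+2\pi R\,\phi(0)$ rather than with the wrong sign or a spurious factor; using the polar-coordinate form \eref{LaplPol_e} of $\De$ from Exercise~\ref{LaplPol_exer} to confirm $\De\Psi = 0$ on $r > 0$ and to compute $\prt_r\Psi$ makes this routine. Everything else is a direct application of Green's identity together with the local integrability of $\ln r$ and the estimate $\rho\ln\rho\to0$, so no essential obstacle remains.
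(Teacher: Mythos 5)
Your route---Green's second identity on the annulus $B_R\!-\!\ov{B_\rho}$ against the kernel $\Psi(r)=\ln R\!-\!\ln r$, which is harmonic for $r\!>\!0$ and vanishes on $\prt B_R$---is correct in substance and is the standard Green's-representation derivation; the limiting ingredients you cite (local integrability of $\ln r$, $\rho\ln\rho\!\lra\!0$, $\frac1\rho\int_{\prt B_\rho}\phi\!\lra\!2\pi\phi(0)$) are exactly what is needed. The paper's own proof is a more hands-on version of the same idea: it applies Stokes' theorem to the $1$-form $\phi\,\nd\th$ on $\ov{B_R}\!-\!B_\de$ and then does a single integration by parts in $r$ against $\ln r\!-\!\ln R$, so the angular term $r^{-2}\phi_{\th\th}$ of \eref{LaplPol_e} drops out by $\th$-periodicity rather than through $\De\Psi\!=\!0$, and Green's second identity is never quoted; your version trades that elementary computation for a standard identity. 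One concrete correction, precisely at the spot you flagged: your displayed identity and your normal convention are incompatible. Writing the boundary contribution as $\int_{\prt B_R}(\cdots)-\int_{\prt B_\rho}(\cdots)$ presupposes $\prt_n\!=\!\prt_r$ on \emph{both} circles; if instead $\prt_n$ is outward from the annulus, so $\prt_n\!=\!-\prt_r$ on $\prt B_\rho$ and $\prt_n\Psi\!=\!+1/\rho$ there as you compute, then the two boundary integrals must be \emph{added}. Keeping both the minus sign and $\prt_n\Psi\!=\!+1/\rho$ flips the sign of the inner-circle term, and collecting terms literally as written yields $2\pi R\,\phi(0)=R\int_{B_R}(\ln R\!-\!\ln r)\De\phi-\int_{\prt B_R}\phi$, the negative of \eref{Lapl_e1}. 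With one consistent convention---say $\prt_n\!=\!\prt_r$ on both circles and the difference of boundary integrals---the inner circle contributes $\int_{\prt B_\rho}\phi\,\prt_r\Psi=-\frac1\rho\int_{\prt B_\rho}\phi\lra-2\pi\phi(0)$, the limit identity reads $\int_{B_R}(\ln R\!-\!\ln r)\De\phi=\frac1R\int_{\prt B_R}\phi-2\pi\phi(0)$, and multiplying by $R$ gives exactly \eref{Lapl_e1}; the rest of your argument then goes through verbatim.
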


\begin{proof} By Stokes' Theorem applied to $\phi\nd\th$ on $\ov{B_R}\!-\!B_{\de}$,
\begin{equation*}\begin{split}
\oint_{\prt B_R}\!\!\!\phi\,\nd\th-\oint_{\prt B_{\de}}\!\!\!\phi\,\nd\th
&=\int_{\ov{B_R-B_{\de}}}\!\phi_r\,\nd r\!\w\!\nd\th
=\int_0^{2\pi}\!\!\!\!\int_{\de}^R(r\phi_r)r^{-1}\,\nd r\nd\th\\
&=\int_0^{2\pi}\!\!(\ln R\!-\!\ln\de)\de\,\phi_r(\de,\th)\nd\th
+\int_0^{2\pi}\!\!\!\!\int_{\de}^R\!\!(\ln R\!-\!\ln r)(\phi_{rr}+r^{-1}\phi_r)r\,\nd r\nd\th\,;
\end{split}\end{equation*}
the last equality above is obtained by applying integration by parts to the functions
$\ln r\!-\!\ln R$ and~$r\phi_r$.
Sending $\de\!\lra\!0$ and using~\eref{LaplPol_e}, we obtain
$$\frac1R\int_{\prt B_R}\phi - 2\pi\,\phi(0)=0+ 
\int_{(r,\th)\in B_R}\!\!(\ln R\!-\!\ln r)\De\phi\,,$$
which is equivalent to~\eref{Lapl_e1}.
\end{proof}

\begin{crl}\label{Lapl_crl}
If $\phi\!:\ov{B_R}\!\lra\!\R$ is $C^2$ and $\De\phi\!\ge\!-C$ for some $C\!\in\!\R^+$, 
then
\BE{Lapl_e2}
\phi(0)\le \frac18 CR^2+\frac1{\pi R^2}\int_{B_R}\phi\,.\EE 
\end{crl}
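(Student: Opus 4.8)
The plan is to deduce \eref{Lapl_e2} from Lemma~\ref{Lapl_lmm1} by applying the identity \eref{Lapl_e1} on every sub-disk $B_\rho$ with $\rho\!\in\!(0,R]$ and then averaging the resulting boundary identities over~$\rho$. The point of \eref{Lapl_e1} is that it writes $2\pi\rho\,\phi(0)$ as a positive multiple of the boundary integral $\int_{\partial B_\rho}\phi$ minus the contribution of $\De\phi$ weighted against the kernel $\ln\rho\!-\!\ln r$, which is nonnegative on~$B_\rho$; since $\De\phi\!\ge\!-C$, this weighted term is bounded above by $C$ times the mass of the kernel, and integrating the boundary integrals $\int_{\partial B_\rho}\phi$ against $\nd\rho$ reconstitutes the solid integral $\int_{B_R}\phi$.

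First I would apply Lemma~\ref{Lapl_lmm1} with $R$ replaced by an arbitrary $\rho\!\in\!(0,R]$ and use $\ln\rho\!-\!\ln r\!\ge\!0$ on $B_\rho$ together with $\De\phi\!\ge\!-C$ to obtain
$$2\pi\rho\,\phi(0)\le C\rho\int_{(r,\th)\in B_\rho}(\ln\rho-\ln r)+\int_{\partial B_\rho}\phi.$$
Second I would evaluate the kernel mass: in polar coordinates, integration by parts (using that $r^2/2$ is a primitive of~$r$, and that $r^2\ln r\!\to\!0$) gives $\int_{(r,\th)\in B_\rho}(\ln\rho\!-\!\ln r)=2\pi\int_0^\rho(\ln\rho\!-\!\ln r)\,r\,\nd r=\pi\rho^2/2$, so the displayed inequality becomes $2\pi\rho\,\phi(0)\le C\pi\rho^3/2+\int_{\partial B_\rho}\phi$ for every $\rho\!\in\!(0,R)$. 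Third I would integrate this over $\rho\!\in\!(0,R)$: the left-hand side yields $\pi R^2\phi(0)$, the cubic term yields $C\pi R^4/8$, and $\int_0^R\big(\int_{\partial B_\rho}\phi\big)\,\nd\rho=\int_{B_R}\phi$ by Fubini, so dividing by $\pi R^2$ gives exactly \eref{Lapl_e2}.

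There is no serious obstacle here; the only points requiring care are the normalization of $\int_{\partial B_\rho}\phi$ as the arc-length integral (so that integrating it against $\nd\rho$ genuinely produces the area integral over $B_R$, consistent with the way $\int_{\partial B_R}\phi$ enters the proof of Lemma~\ref{Lapl_lmm1}) and the elementary computation of the kernel mass $\pi\rho^2/2$. As an alternative worth recording, one may instead observe that $\psi\!\equiv\!\phi+\frac{C}{4}|z|^2$ satisfies $\De\psi\!\ge\!0$, apply the same averaging argument to $\psi$ to obtain the plain sub-mean-value inequality $\psi(0)\le\frac{1}{\pi R^2}\int_{B_R}\psi$, and then substitute back using $\int_{B_R}|z|^2=\pi R^4/2$.
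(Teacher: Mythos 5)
Your proof is correct and follows essentially the same route as the paper: apply the identity of Lemma~\ref{Lapl_lmm1} at each radius $\rho\in(0,R)$, use $\De\phi\ge-C$ together with the kernel mass $\int_{B_\rho}(\ln\rho-\ln r)=\pi\rho^2/2$ to get $2\pi\rho\,\phi(0)\le C\pi\rho^3/2+\int_{\prt B_\rho}\phi$, and integrate in $\rho$ to recover the area integral. The subharmonic-comparison variant with $\psi=\phi+\tfrac{C}{4}|z|^2$ is a nice extra observation but not needed.
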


\begin{proof}
By \eref{Lapl_e1}, 
$$2\pi r\,\phi(0)\le Cr\int_0^{2\pi}\!\!\!\int_0^r(\ln r\!-\!\ln\rho)
\rho\,\nd\rho\,\nd\th+\int_{\prt B_r}\!\!\phi
=Cr\cdot 2\pi\cdot\frac{r^2}{4}+\int_{\prt B_r}\!\!\phi
\qquad\forall\,r\!\in\!(0,R).$$
Integrating the above in $r\!\in\!(0,R)$, we obtain
$$2\pi\phi(0)\cdot\frac{R^2}{2} \le 2\pi C\cdot\frac{R^4}{16}
+\int_{B_R}\phi.$$
This inequality is equivalent to~\eref{Lapl_e2}.
\end{proof}

\begin{prp}\label{Lapl_prp}
If $\phi\!:B_R\!\lra\!\R^{\ge0}$ is $C^2$ and 
there exists $A\!\in\!\R^+$ such that 
$\De\phi\!\ge\!-A\phi^2$ and $\displaystyle\int_{B_R}\phi<\frac{\pi}{8A}$, then
\BE{Lapl_e3}
\phi(0)\le \frac8{\pi R^2}\int_{B_R}\phi\,.\EE 
\end{prp}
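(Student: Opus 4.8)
The plan is to run a standard continuity/scaling argument on the radius, combining Corollary~\ref{Lapl_crl} with the small-energy hypothesis. For $\rho\!\in\![0,R]$ set
$$f(\rho)=(R-\rho)^2\sup_{B_\rho}\phi.$$
This is a continuous function on $[0,R]$ with $f(0)\!=\!R^2\phi(0)$ and $f(R)\!=\!0$, so it attains its maximum at some $\rho_0\!\in\![0,R)$. Pick $z_0\!\in\!\ov{B_{\rho_0}}$ with $\phi(z_0)\!=\!\sup_{B_{\rho_0}}\phi\!\equiv\!c$; thus $f(\rho_0)\!=\!(R-\rho_0)^2c$. Let $r=\tfrac12(R-\rho_0)$, so the ball $B_r(z_0)$ is contained in $B_{(R+\rho_0)/2}\subset B_R$, and on $B_r(z_0)$ we have, by the maximality of $\rho_0$,
$$\phi\le \sup_{B_{(R+\rho_0)/2}}\phi=\frac{f\big((R+\rho_0)/2\big)}{\big(R-(R+\rho_0)/2\big)^2}\le \frac{f(\rho_0)}{(R-\rho_0)^2/4}=4c.$$
In particular $\De\phi\ge -A\phi^2\ge -16A c^2$ on $B_r(z_0)$.

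Now apply Corollary~\ref{Lapl_crl} on the ball $B_r(z_0)$ (recentering; the statement is translation-invariant) with the constant $C=16Ac^2$:
$$c=\phi(z_0)\le \frac18(16Ac^2)r^2+\frac{1}{\pi r^2}\int_{B_r(z_0)}\phi\le 2Ac^2r^2+\frac{1}{\pi r^2}\int_{B_R}\phi.$$
Write $\int_{B_R}\phi=I<\pi/(8A)$. Suppose toward a contradiction that the conclusion fails at the origin in the scaled sense we will need; more precisely we aim to show $c\,r^2\le \tfrac{1}{\pi}\cdot\frac{4I}{\,\cdot\,}$, but the clean route is: from the displayed inequality, either $2Ac^2r^2\ge \tfrac12 c$, i.e.\ $c r^2\ge \tfrac{1}{4A}$, or $\tfrac{1}{\pi r^2}I\ge \tfrac12 c$, i.e.\ $c r^2\le \tfrac{2I}{\pi}$. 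The first alternative is impossible: it would give $f(\rho_0)=4cr^2\ge \tfrac1A$, hence for \emph{every} $\rho$, $f(\rho)\le f(\rho_0)$ forces $\sup_{B_\rho}\phi\le f(\rho_0)/(R-\rho)^2$, but applying Corollary~\ref{Lapl_crl} crudely on, say, $B_{r}(z_0)$ again together with $I<\pi/(8A)$ bounds $cr^2$ strictly below $\tfrac{1}{4A}$ once $c r^2$ is large enough that the $\De\phi\ge -16Ac^2$ term is controlled — so only the second alternative can hold. [Here one has to be a little careful; the cleanest formulation is: set $x=cr^2$; the inequality reads $x\le 2Ax^2/r^2\cdot r^4\cdot\ldots$ — better to keep it as $c\le 2Ac^2r^2+I/(\pi r^2)$ and note $2Ac^2r^2 = 2A c\cdot cr^2 = 2Ac\cdot\tfrac14 f(\rho_0)\le \tfrac12 c$ precisely when $f(\rho_0)\le 1/(4A)$, which will follow once we know the conclusion holds, so we instead argue by contradiction assuming $\phi(0)> \tfrac{8}{\pi R^2}I$ and deduce $f(\rho_0)> \ldots$.] Assuming $\phi(0)>\tfrac{8}{\pi R^2}I$ gives $f(\rho_0)\ge f(0)=R^2\phi(0)>\tfrac{8I}{\pi}$, and since $I<\pi/(8A)$ this is \emph{not yet} a contradiction — so one iterates: feeding $c r^2=\tfrac14 f(\rho_0)>\tfrac{2I}{\pi}$ back shows the term $2Ac^2r^2=2Ac\cdot cr^2$ dominates, forcing $c\le 2Ac^2r^2+\tfrac12\cdot\tfrac{1}{\pi r^2}I$ — and then $cr^2\le 2A(cr^2)^2\cdot 1 + \ldots$ which, since $2A\cdot cr^2 > 2A\cdot\tfrac{2I}{\pi}$ can be made $<1$ only if $I<\pi/(4A)$, yields a contradiction with the assumed lower bound on $f(\rho_0)$ once $I<\pi/(8A)$.

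The main obstacle is making this last paragraph genuinely clean rather than the circular-looking mess above: the right way is the standard trick. Assume $\phi(0)>\tfrac{8}{\pi R^2}\int_{B_R}\phi$; then with $f$ as above, $f(\rho_0)\ge R^2\phi(0)>\tfrac{8}{\pi}\int_{B_R}\phi$. Set $c,z_0,r$ as above so $f(\rho_0)=4cr^2$, hence $cr^2>\tfrac{2}{\pi}\int_{B_R}\phi$. Corollary~\ref{Lapl_crl} on $B_r(z_0)$ with $\De\phi\ge -16Ac^2$ there gives $c\le 2Ac^2r^2+\tfrac{1}{\pi r^2}\int_{B_R}\phi<2Ac^2r^2+\tfrac{c}{2}$, so $c<4Ac^2r^2$, i.e.\ $1<4Acr^2=A f(\rho_0)$. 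But $\De\phi\ge -16Ac^2$ and Corollary~\ref{Lapl_crl} on the \emph{full} $B_r(z_0)$ also give $cr^2\le 2Ac^2r^4+\tfrac{1}{\pi}\int_{B_R}\phi$; combined with $2Acr^2<\tfrac12$ would be false — instead, observe directly that $1<Af(\rho_0)=4Acr^2$ and $cr^2>\tfrac2\pi\int_{B_R}\phi$ together with $\int_{B_R}\phi<\tfrac{\pi}{8A}$ force $cr^2<\tfrac2\pi\cdot\tfrac{\pi}{8A}=\tfrac1{4A}$, so $4Acr^2<1$, contradicting $4Acr^2>1$. This contradiction proves $\phi(0)\le \tfrac{8}{\pi R^2}\int_{B_R}\phi$, which is~\eref{Lapl_e3}. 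The one technical point to verify carefully is that Corollary~\ref{Lapl_crl} applies on $\ov{B_r(z_0)}$ since $\phi$ is $C^2$ there (it is $C^2$ on all of $B_R$ and $\ov{B_r(z_0)}\subset B_R$ because $\rho_0<R$ and $r=\tfrac12(R-\rho_0)$ gives $\rho_0+r=\tfrac12(R+\rho_0)<R$), together with the bound $\phi\le 4c$ on $B_r(z_0)$ established in the first paragraph; the rest is bookkeeping with the hypothesis $\int_{B_R}\phi<\pi/(8A)$.
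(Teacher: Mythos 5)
Your setup is exactly the paper's: the function $f(\rho)=(R-\rho)^2\sup_{B_\rho}\phi$, the maximizing radius $\rho_0$ and point $z_0$, the bound $\phi\le 4c$ on $B_r(z_0)$ with $r=\tfrac12(R-\rho_0)$, and the application of Corollary~\ref{Lapl_crl} with $C=16Ac^2$. The gap is in the endgame. Under the contradiction hypothesis you correctly obtain $cr^2>\tfrac2\pi\int_{B_R}\phi$, hence $\tfrac1{\pi r^2}\int_{B_R}\phi<\tfrac c2$ and then $4Acr^2>1$. But the step ``$cr^2>\tfrac2\pi\int_{B_R}\phi$ together with $\int_{B_R}\phi<\tfrac{\pi}{8A}$ force $cr^2<\tfrac1{4A}$'' reverses the inequality: a lower bound on $cr^2$ combined with an upper bound on $\int_{B_R}\phi$ cannot yield an upper bound on $cr^2$. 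For instance $A=1$, $c=10^6$, $r=10^{-2}$, $\int_{B_R}\phi$ tiny satisfies every constraint you have derived at that point while violating $cr^2<\tfrac14$. So no contradiction has been reached, and the preceding paragraph, which you yourself flag as circular, does not supply one either; as written the proof does not close.

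The missing idea is the one extra step in the paper's proof: Corollary~\ref{Lapl_crl} applies on $B_\rho(z_0)$ for \emph{every} $\rho\in(0,r]$, giving $c\le 2Ac^2\rho^2+\tfrac1{\pi\rho^2}\int_{B_R}\phi$ for all such $\rho$, and in the bad case $4Acr^2\ge1$ one chooses the smaller radius $\rho=(4Ac)^{-1/2}\le r$. With this choice the first term equals $\tfrac c2$, so $\tfrac c2\le\tfrac{4Ac}{\pi}\int_{B_R}\phi$, i.e.\ $\int_{B_R}\phi\ge\tfrac{\pi}{8A}$, contradicting the hypothesis. In the complementary case $4Acr^2\le1$, the $\rho=r$ inequality gives $\tfrac c2\le\tfrac1{\pi r^2}\int_{B_R}\phi$, hence $\phi(0)=f(0)/R^2\le f(\rho_0)/R^2=4cr^2/R^2\le\tfrac8{\pi R^2}\int_{B_R}\phi$, which is \eref{Lapl_e3} directly, with no need for a contradiction argument. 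Substituting this dichotomy for your last two paragraphs yields precisely the paper's proof.
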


\begin{proof}
Replacing $A$ by $\ti{A}\!=\!R^2A$ and  $\phi$ by 
$$\ti\phi\!: B_1\lra\R, \qquad\ti\phi(z)=\phi(Rz),$$
we can assume that $R\!=\!1$, as well as that $\phi$ is defined on~$\ov{B_1}$.\\

\noindent
(1) Define
$$f\!:[0,1)\lra\R^{\ge0} \qquad\hbox{by}\qquad 
f(r)=(1\!-\!r)^2\max_{\ov{B_r}}\phi\,.$$
In particular, $f(0)\!=\!\phi(0)$ and $f(1)\!=\!0$.
Choose $r^*\!\in\![0,1)$ and $z^*\!\in\!B_{r^*}$ such~that 
$$f(r^*)=\sup f \qquad\hbox{and}\qquad 
\phi(z^*)=\sup_{B_{r^*}}\phi\equiv c^*\,.$$
Let $\de\!=\!\frac12(1\!-\!r^*)\!>\!0$; see Figure~\ref{Lapl_fig}. Thus,
$$\sup_{B_{\de}(z^*)}\phi \le  \sup_{B_{r^*+\de}}\phi
=\frac{f(r^*\!+\!\de)}{(1\!-\!(r^*\!+\!\de))^2}
\le \frac{f(r^*)}{\frac14(1\!-\!r^*)^2}
=4\phi(z^*)=4c^*\,.$$
In particular, $\De\phi\ge -A\phi^2\ge -16Ac^{*2}$ on $B_{\de}(z^*)$.\\

\noindent
(2) Using Corollary~\ref{Lapl_crl}, we thus find that 
\BE{cbnd_e} c^*=\phi(z^*)\le\frac18\cdot 16Ac^{*2}\cdot\rho^2
+\frac{1}{\pi\rho^2}\int_{B_{\rho}(z^*)}\!\!\!\phi
\le 2Ac^{*2}\rho^2+\frac{1}{\pi\rho^2}\int_{B_1}\!\!\!\phi
\qquad\forall~\rho\!\in\![0,\de]\,.\EE
If $2Ac^*\de^2\le\frac12$, the $\rho\!=\!\de$ case of the above inequality gives
$$\frac12 c^*\le \frac{1}{\pi\de^2}\int_{B_1}\phi \,,
\qquad
\phi(0)=f(0)\le f(r^*)=4c^*\cdot \de^2
\le \frac8\pi \int_{B_1}\phi\,,$$
as claimed.
If $2Ac^*\de^2\ge\frac12$, $\rho\!\equiv\!(4Ac^*)^{-\frac12}\le\de$ and 
\eref{cbnd_e} gives
$$c^*\le 2Ac^{*2}\cdot\frac{1}{4Ac^*}+\frac{4Ac^*}{\pi}\int_{B_1}\phi\,.$$
Thus, $\displaystyle\frac{\pi}{8A}\le\int_{B_1}\!\!\phi$,
contrary to the assumption.
\end{proof}

\begin{figure}
\begin{pspicture}(38,-.3)(9,2.7)
\psset{unit=.3cm}
\pscircle[linewidth=.04](56,4){3}\pscircle[linewidth=.06](56,4){5}
\pscircle*(61,4){.2}\rput(61.7,4){1}
\pscircle*(59,4){.2}\rput(58.2,4.2){$r^*$}
\psline[linewidth=.03,linestyle=dashed](61,4)(59,4)
\rput(60,4.7){\sm{$2\de$}}
\pscircle*(56,1){.2}\rput(55.7,2.7){\sm{$B_{\de}(z^*)$}}\pscircle(56,1){1}
\end{pspicture}
\caption{Setup for the proof of Proposition~\ref{Lapl_prp}}
\label{Lapl_fig}
\end{figure}

\begin{lmm}\label{LapEng_lmm}
If $(X,J)$ is an almost complex manifold and $g$ is a Riemannian metric on~$X$
compatible with~$J$,  there exists a continuous function 
\hbox{$A_{J,g}\!:X\!\times\!\R\!\lra\!\R^+$} with the following property. 
If $\Om\!\subset\!\C$ is an open subset, $u\!:\Om\!\lra\!X$ is a $J$-holomorphic map,
and $u(\Om)\!\subset\!B_r^g(x)$ for some $x\!\in\!X$ and $r\!\in\!\R$, then
the function $\phi(z)\!\equiv\!\frac12|\nd_zu|^2_g$ satisfies
$\De\phi\ge-A_{J,g}(x,r)\phi^2$.
\end{lmm}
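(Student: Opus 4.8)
The plan is to reduce the statement to a local coordinate computation, using $J$-holomorphicity of~$u$ only to control $\De u$. Since the claim is local and $u$ is smooth, I would fix a coordinate chart on~$X$ identifying a neighborhood of the image of~$u$ with an open subset of~$\C^n$, so that $u$ becomes a smooth map into~$\C^n$ satisfying the perturbed Cauchy--Riemann equation $u_s\!+\!J(u)u_t\!=\!0$, where now $J(\cdot)$ is a matrix-valued function with $J^2\!=\!-\Id$ and $g$ is a Riemannian metric $g_{ij}(\cdot)$ uniformly comparable to the Euclidean one on the relevant region. Differentiating this equation once in~$s$ and once in~$t$ and adding, the terms containing second derivatives of~$u$ other than $\De u$ cancel, leaving
\[\De u=\big(J(u)\,\nd J(u)[u_t]-\nd J(u)[u_s]\big)u_t\,,\]
a quadratic expression in~$\nd u$ whose coefficients are bounded in terms of $\sup|\nd J|$ over the image. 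This is the only use of the equation; in particular $\De u$, $\prt_s\De u$, and $\prt_t\De u$ are controlled pointwise by $C|\nd u|^2$ and by $C(|\nd u|^3\!+\!|\nd^2u|\,|\nd u|)$, where $|\nd^2u|$ denotes the Euclidean norm of the collection $\{u_{ss},u_{st},u_{tt}\}$.

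Next I would expand $\De\phi$ directly from $\phi=\frac12 g_{ij}(u)(u_s^iu_s^j\!+\!u_t^iu_t^j)$ by the Leibniz rule. The crucial point is that the third-order terms appearing in $\prt_s^2\phi\!+\!\prt_t^2\phi$ assemble into $2g_{ij}(u)\big((\prt_s\De u)^iu_s^j\!+\!(\prt_t\De u)^iu_t^j\big)$, since $u_{sss}\!+\!u_{tts}\!=\!\prt_s\De u$ and $u_{sst}\!+\!u_{ttt}\!=\!\prt_t\De u$; by the displayed identity for~$\De u$ this is bounded in absolute value by $C(|\nd u|^4\!+\!|\nd^2u|\,|\nd u|^2)$. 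The terms quadratic in the second derivatives of~$u$ combine into $2\big(g(u)(u_{ss},u_{ss})\!+\!2g(u)(u_{st},u_{st})\!+\!g(u)(u_{tt},u_{tt})\big)\ge c_0|\nd^2u|^2$ with $c_0\!>\!0$, since $g(u)$ is uniformly positive definite. Every remaining term involves at most two factors of~$\nd u$ together with first or second derivatives of~$g$, and hence is also bounded by $C(|\nd u|^4\!+\!|\nd^2u|\,|\nd u|^2)$. Absorbing $|\nd^2u|\,|\nd u|^2$ into $c_0|\nd^2u|^2$ plus a multiple of $|\nd u|^4$ by Young's inequality gives $\De\phi\ge -C'|\nd u|^4$, and since $|\nd u|^2$ is uniformly comparable to $|\nd u|_g^2\!=\!2\phi$ this is exactly $\De\phi\ge -A_{J,g}(x,r)\phi^2$ for a suitable constant.

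To turn this constant into a continuous function $A_{J,g}\!:\!X\!\times\!\R\!\lra\!\R^+$, I would note that all the constants above are bounded in terms of the $\sup$-norms of~$J$, $\na J$, $\na^2 J$, of the curvature of~$g$, and of~$g$ and~$g^{-1}$ over $B_r^g(x)$ --- equivalently, over finitely many fixed charts covering $\ov{B_r^g(x)}$ --- and then take $A_{J,g}$ to be any continuous positive function dominating the resulting (upper semicontinuous in $(x,r)$) bound. Alternatively one may run the computation invariantly with the Levi-Civita connection~$\na$ of~$g$: there the identity replacing the one above is $\na_s u_s\!+\!\na_t u_t\!=\!(\na_{u_t}J)u_s\!+\!J(\na_{u_s}J)u_s$, the commutator $[\na_s,\na_t]$ acting on sections of $u^*TX$ produces an explicit Riemann-curvature term, and the $|\nd^2u|^2$ term is replaced by $\sum_{a,b}|\na_a u_b|^2$. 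The main obstacle is not conceptual but bookkeeping: one must verify that the genuinely dangerous cubic-in-$\nd u$ contributions always occur paired with one more factor of~$\nd u$ --- so that they are quartic, matching~$\phi^2$ --- and that the positive second-derivative term really dominates the mixed terms via Young's inequality. The one-line cancellation producing the displayed identity for~$\De u$ is what makes this work; without it the bound would degrade to $\De\phi\ge -A(\phi+\phi^2)$.
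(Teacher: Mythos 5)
Your argument is correct, but it takes a genuinely different (and more coordinate-based) route than the paper. The paper works invariantly: using $|u_s|_g=|u_t|_g$ it computes $\tfrac12\prt_t^2|u_s|_g^2$ and $\tfrac12\prt_s^2|u_t|_g^2$ with the Levi-Civita connection, uses $u_s=-Ju_t$ and torsion-freeness to trade the mixed third-order covariant terms for $\na J$-terms, picks up the curvature contribution $\lr{R_g(u_t,u_s)u_t,u_s}_g$ from the commutator, and lets the positive squares $|\na_tu_s|_g^2+|\na_su_t|_g^2$ absorb the mixed terms, yielding $\tfrac12\De\phi\ge-3C\phi^2$; this is exactly the role your $c_0|\nd^2u|^2$ term plays. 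You instead compute in charts: your identity $\De u=\big(J\,\nd J(u)[u_t]-\nd J(u)[u_s]\big)u_t$ is correct (it equals $\nd J(u)[u_t]\,u_s-\nd J(u)[u_s]\,u_t$ via $J\,\nd J=-\nd J\,J$ and $Ju_t=-u_s$), the third-order terms in $\De\phi$ do assemble into $2g\big((\prt_s\De u)\!\cdot\!u_s+(\prt_t\De u)\!\cdot\!u_t\big)$, and Young's inequality absorbs the $|\nd^2u|\,|\nd u|^2$ contributions, so the quartic bound and hence $\De\phi\ge-A_{J,g}(x,r)\phi^2$ follow; your sketched invariant alternative, with $\na_su_s+\na_tu_t=(\na_{u_t}J)u_s+J(\na_{u_s}J)u_s$, is essentially the paper's proof. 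What each buys: the invariant computation avoids chart bookkeeping (second derivatives of $g$ are packaged into $R_g$) and makes the dependence of $A_{J,g}$ on $\sup|R_g|$, $\sup|\na J|$, $\sup|\na^2J|$ over $B_r^g(x)$ transparent, while your coordinate version is more elementary and closer to the McDuff--Salamon treatment, at the cost of constants depending on chart choices and on $C^2$-bounds for $g$ and $J$ in those charts. One point to phrase carefully in a final write-up (a caveat the paper shares rather than a gap in your argument): the image of $u$ need not lie in a single chart, so the pointwise estimate must be run chart-by-chart with constants uniform over $B_r^g(x)$; your appeal to finitely many charts covering $\ov{B_r^g(x)}$ implicitly uses the same boundedness over the ball that the paper assumes when it writes $C_g(x,r)$ and $C_{J,g}(x,r)$ as continuous functions.
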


\begin{proof}
Let $z\!=\!s\!+\!\fI t$ be the standard coordinate on~$\C$. 
Denote by~$u_s$ and~$u_t$ the $s$ and $t$-partials of~$u$, respectively.
Since $u$ is $J$-holomorphic, i.e.~$u_s\!=\!-Ju_t$, and $g$ is $J$-compatible,
i.e.~$g(J\cdot,J\cdot)\!=\!g(\cdot,\cdot)$, $|u_s|_g^2\!=\!|u_t|_g^2$.
Since the Levi-Civita connection~$\na$ of~$g$ is $g$-compatible and torsion-free,
\BE{LapEng_e1}
\frac12 \frac{\nd^2}{\nd^2 t}|u_s|_g^2= |\na_tu_s|_g^2+\blr{\na_t\na_tu_s,u_t}_{\!g}
=|\na_tu_s|_g^2+\blr{\na_t\na_su_t,u_s}_{\!g}\,.\EE
Similarly,
\BE{LapEng_e2}
\frac12 \frac{\nd^2}{\nd^2 s}|u_t|_g^2
=\big|\na_su_t\big|_g^2+\blr{\na_s\na_t u_s,u_t}_{\!g}.\EE
Since $u_s\!=\!-Ju_t$, 
\BE{LapEng_e3}\begin{split}
\lr{\na_s\na_t u_s,u_t}_{\!g}&=-\blr{\na_s\na_t (Ju_t),u_t}_{\!g}\\
&=-\blr{J\na_s\na_t u_t,u_t}_{\!g} -\blr{(\na_sJ)\na_t u_t,u_t}_{\!g}
-\blr{\na_s((\na_tJ)u_t),u_t}_{\!g}\\
&=-\blr{\na_s\na_t u_t,u_s}_{\!g} -\blr{(\na_sJ)\na_t u_t,u_t}_{\!g}
-\blr{\na_s((\na_tJ)u_t),u_t}_{\!g}\,.
\end{split}\EE
Putting \eref{LapEng_e1}-\eref{LapEng_e3}, we find that 
\BE{LapEng_e4}\begin{split}
\frac12\De\phi = \big|\na_tu_s\big|_g^2+\big|\na_su_t\big|_g^2+ 
\blr{R_g(u_t,u_s)u_t,u_s}_{\!g}
-\blr{(\na_sJ)\na_t u_t,u_t}_{\!g} -\blr{\na_s((\na_tJ)u_t),u_t}_{\!g}\,,
\end{split}\EE
where $R_g$ is the curvature tensor of the connection~$\na$.
Since $u(\Om)\!\subset\!B_r^g(x)$,
\BE{LapEng_e5}\begin{aligned}
\big|\lr{R_g(u_t,u_s)u_t,u_s}_g\big|&\le C_g(x,r)|u_s|_g^2|u_t|_g^2\,,\\
\big|\lr{(\na_sJ)\na_t u_t,u_t}_g\big|&\le  C_{J,g}(x,r)|u_s|_g|u_t|_g\big|\na_t(Ju_s)\big|_g 
\le C_{J,g}(x,r)|u_s|_g|u_t|_g\big(|u_s|_g|u_t|_g\!+\!|\na_tu_s|_g\big)\\
&\le (C_{J,g}(x,r)\!+\!C_{J,g}(x,r)^2)|u_s|_g^2|u_t|_g^2+|\na_tu_s|_g^2\,,\\
\big|\lr{\na_s((\na_tJ)u_t),u_t}_g\big|
&\le C_{J,g}(x,r)|u_t|_g^2\big(|u_s|_g|u_t|_g\!+\!|\na_su_t|_g\big)\\
&\le C_{J,g}(x,r)|u_s|_g|u_t|_g^3+C_{J,g}(x,r)^2|u_t|_g^4+|\na_su_t|_g^2.
\end{aligned}\EE
Combining~\eref{LapEng_e4} and~\eref{LapEng_e5}, we find that 
$$\frac12\De\phi \ge -C(x,r)\big(|u_s|_g^2|u_t|_g^2\!+\!|u_s|_g|u_t|_g^3\!+\!|u_t|_g^4\big)
\ge -3C(x,r)\phi^2,$$
as claimed.
\end{proof}

\subsection{Regularity of $J$-holomorphic maps}
\label{Regul_subs}

\noindent
By Cauchy's Integral Formula, a continuous extension of
a holomorphic map \hbox{$u\!:B_{\R}^*\!\lra\!\C^n$} over the origin
is necessarily holomorphic.
By Proposition~\ref{JHolomreg_prp} below, the same is the case
for a \hbox{$J$-holomorphic} map $u\!:B_{\R}^*\!\lra\!X$
of bounded energy.

\begin{prp}\label{JHolomreg_prp}
Let $(X,J)$ be an almost complex manifold and $g$ be a Riemannian metric on~$X$.
If $R\!\in\!\R^+$ and $u\!:B_R\!\lra\!X$ is a continuous map such that 
$u|_{B_R^*}$ is a $J$-holomorphic map and $E_g(u;B_R^*)\!<\!\i$, 
then $u$ is smooth and $J$-holomorphic on~$B_R$.
\end{prp}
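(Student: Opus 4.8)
The plan is to exploit the Mean Value Inequality of Proposition~\ref{PtBnd_prp} to first upgrade continuity at the puncture to a genuine $C^0$ bound with quantitative control near $0$, and then to bootstrap to smoothness. First I would work locally: since $u$ is continuous at $0$, after composing with a chart around $u(0)$ I may assume $X=\C^n$, $u(0)=0$, and $g$ is compatible with $J$ near $0$ (shrinking $R$). Fix a radius $r$ small enough that $\hb_{J,g}(x,r)$ from Proposition~\ref{PtBnd_prp} is bounded below on the relevant compact set, and choose $\rho\in(0,R)$ so small that $u(B_\rho)\subset B_r^g(0)$ (possible by continuity) and, crucially, $E_g(u;B_\rho^*)<\hb_{J,g}$; this is where the hypothesis $E_g(u;B_R^*)<\i$ enters, since the energy of the restrictions $E_g(u;B_\rho^*)$ tends to $0$ as $\rho\to0$ by absolute continuity of the integral. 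Now for any $z\in B_{\rho/2}^*$ the ball $B_{|z|/2}(z)$ lies in $B_\rho^*$, so $u|_{B_{|z|/2}(z)}$ is $J$-holomorphic with small energy and image in $B_r^g(0)$; applying Proposition~\ref{PtBnd_prp} on that ball gives
\BE{reg_grad_e}
\big|\nd_zu\big|_g^2<\frac{16}{\pi(|z|/2)^2}E_g\big(u;B_{|z|/2}(z)\big)
\le\frac{64}{\pi|z|^2}E_g(u;B_\rho^*)
\qquad\forall\,z\in B_{\rho/2}^*\,.
\EE

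Once \eref{reg_grad_e} is in hand, the finiteness of $E_g(u;B_\rho^*)$ is used again more carefully: by the same absolute-continuity argument, $E_g\big(u;B_{|z|/2}(z)\big)\to0$ as $z\to0$ (these balls shrink toward the origin), so in fact $|z|\cdot|\nd_zu|_g\to0$ as $z\to0$. This bound is strong enough to conclude that the distributional $\dbar$ of $u$ on all of $B_\rho$ — computed against a test function supported near $0$ — picks up no contribution from the puncture: integrating $\int u\,\dbar\vph$ against a bump function, the boundary terms on $\{|z|=\ep\}$ are controlled by $\ep\cdot\sup_{|z|=\ep}|\nd u|_g\cdot\sup|\vph|\to0$, while $u$ itself is continuous hence bounded. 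Therefore $u$ satisfies $\dbar_Ju=0$ weakly on $B_\rho$ in the sense that $u_s+J(u)u_t=0$ holds distributionally; equivalently $u$ is a continuous $L^\i_1$ (indeed $W^{1,p}_{loc}$ for every $p<\i$, by \eref{reg_grad_e} which gives $|\nd u|\in L^q$ near $0$ for $q<2$, and elliptic estimates away from $0$) solution of the Cauchy--Riemann type equation.

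With $u$ a $W^{1,p}_{loc}$ solution of $u_s+J(u)u_t=0$ for some $p>2$, standard elliptic regularity for the nonlinear $\dbar$-operator (as used throughout Chapter~2 of~\cite{MS12}) bootstraps $u$ to $C^\i$: one rewrites the equation as $\dbar u = (\fI-J(u))u_t/2$, notes the right side lies in $W^{1,p}$ once $u\in W^{1,p}$, applies the Calderón--Zygmund estimate to gain a derivative, and iterates. The one point requiring care is that \eref{reg_grad_e} only directly gives $\nd u\in L^q_{loc}$ for $q<2$, which is not yet $p>2$; so the first bootstrap step must be done by hand — using that $u$ is continuous and that the cutoff boundary terms vanish, one shows $u\in W^{1,p}_{loc}$ near $0$ for all $p$, after which the standard machinery applies. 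The main obstacle, then, is precisely this removal-of-singularity step for the weak equation: verifying that the a priori gradient blow-up rate $|\nd_zu|_g=o(|z|^{-1})$ supplied by the Mean Value Inequality is enough to kill the distributional boundary contribution at the puncture, so that $u$ is a bona fide weak (hence, by bootstrapping, smooth) solution on the full disk. Everything after that is routine elliptic regularity.
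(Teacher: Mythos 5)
There is a genuine gap, and it is exactly at the point you flag and then wave away: passing from the Mean Value Inequality bound to $\nd u\!\in\!L^p_{loc}$ for some $p\!>\!2$. Your estimate \eref{reg_grad_e} together with $E_g(u;B_\rho^*)\!\lra\!0$ gives only $|\nd_zu|_g\!=\!o(|z|^{-1})$, and this, even combined with finiteness of the energy (i.e.\ $\nd u\!\in\!L^2$) and continuity of~$u$, does not yield $\nd u\!\in\!L^p$ for any $p\!>\!2$: a gradient behaving like $|z|^{-1}\big(\ln(1/|z|)\big)^{-1}$ satisfies both $o(|z|^{-1})$ decay and square-integrability but lies in no $L^p$ with $p\!>\!2$ near the origin. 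So the assertion that ``one shows $u\in W^{1,p}_{loc}$ near $0$ for all $p$'' by hand from continuity and vanishing boundary terms is not a proof of anything — it is precisely the missing theorem. (The preliminary step you do carry out, that $u$ is a distributional solution across the puncture, is fine — a point is removable for bounded $W^{1,2}$ functions — but it is not the crux; the crux is breaking the critical exponent $p\!=\!2$, since the elliptic bootstrap for $\dbar u=\tfrac12(\fI-J(u))u_t$ does not start from $W^{1,2}$ data with merely continuous coefficients, and cycling the equation through Calder\'on--Zygmund at an exponent $q\!<\!2$ or $q\!=\!2$ gains nothing.)

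The paper closes this gap with a quantitative decay rate for the local energy, which is what your argument lacks. Its Isoperimetric Inequality (Lemma~\ref{JHolomreg_lmm}) bounds $E_g(u;B_r^*)\le C\ell_g(\ga_r)^2$ by capping the loops $\ga_r$ with small disks and using a local taming symplectic form and Stokes' theorem; combined with the Mean Value Inequality bound on $|\ga_r'|$ this produces the differential inequality $E(r)\le 2C\pi\,rE'(r)$, hence $E(r)\le C'r^{2\mu}$ and the power-law gradient bound $|\nd_{r\ne^{\fI\th}}u|_g\le Cr^{\mu-1}$ of Corollary~\ref{JHolomreg_crl}. Only then does one choose $p\!>\!2$ with $(1\!-\!\mu)p\!<\!2$, conclude $u\!\in\!L^p_1$, and invoke elliptic regularity. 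To repair your proposal you would need to prove some analogue of this energy-decay step (an isoperimetric or annulus-type estimate, e.g.\ via Proposition~\ref{CylEner_prp}); absolute continuity of the integral gives $E(r)\!\lra\!0$ but with no rate, and no rate means no exponent $p\!>\!2$ and no bootstrap.
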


\noindent
For a smooth loop $\ga\!:S^1\!\lra\!X$, define
$$\ga'(\th)=\frac{\nd}{\nd\th}\ga\big(\ne^{\fI\th}\big)\in T_{\ga(\ne^{\fI\th})}X
\qquad\hbox{and}\qquad
\ell_g(\ga)=\int_0^{2\pi}\!\big|\ga'(\th)\big|_g \nd\th\in\R^{\ge0}$$
to be the \sf{velocity} of~$\ga$ and the \sf{length} of~$\ga$, respectively.

\begin{lmm}[Isoperimetric Inequality]\label{JHolomreg_lmm}
Let $(X,J,g)$, $R$, and $u$ be as in Proposition~\ref{JHolomreg_prp} and 
$$\ga_r\!:S^1\lra X,\quad \ga_r\big(\ne^{\fI\th}\big)=u\big(r\ne^{\fI\th}\big)
\qquad\forall\,r\!\in\!(0,R).$$
There exist $\de\!\in\!(0,R)$ and $C\!\in\!\R^+$ such that 
\BE{JHolomreg1_e}E_g\big(u;B_r^*\big)\le C\ell_g(\ga_r)^2 \qquad\forall\,r\!\in\!(0,\de).\EE
\end{lmm}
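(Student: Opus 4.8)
The plan is to prove this as a standard isoperimetric estimate: for a $J$-holomorphic map the energy $E_g(u;B_r^*)$ equals, up to metric-comparison factors, the area of the image of the cap $B_r^*$, and this area is bounded by the square of the length of its boundary loop~$\ga_r$.

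First I would localize. Since $u$ is continuous on~$B_R$, for $\de_0\!\in\!(0,R)$ small enough $u(\ov{B_{\de_0}})$ lies in a single coordinate chart, which I identify with a convex Euclidean ball $B'\!\subset\!\C^n$ centered at $u(0)\!=\!0$. Replacing $g$ on~$B'$ by a metric compatible with~$J$ there changes $E_g$ and $\ell_g$ only by bounded factors, so I may assume $g$ itself is $J$-compatible on~$B'$. Then $\om\!\equiv\!g(J\cdot,\cdot)$ is a smooth $2$-form on~$B'$ with $u^*\om\!=\!|u_s|_g^2\,\nd s\!\w\!\nd t$ on~$B_{\de_0}^*$ (using $u_t\!=\!Ju_s$ and $J$-compatibility), hence $\int_{B_r^*}u^*\om\!=\!E_g(u;B_r^*)$ for $r\!<\!\de_0$.

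Next I would split $\om\!=\!\om_0\!+\!\be$ on~$B'$, where $\om_0$ is the constant part of~$\om$ at the origin, so that $\om_0\!=\!\nd\la_0$ for a linear $1$-form $\la_0$ with $|\la_0(x)|\!\le\!C_1|x|$, while $\be$ vanishes at the origin, so $\|\be_x\|\!\le\!C_1|x|$. Applying Stokes' Theorem on the cap~$B_r$ --- the singularity at~$0$ is removable, since the Mean Value Inequality of Proposition~\ref{PtBnd_prp} (applied on the balls $B_{|z|/2}(z)$) gives $|\nd_zu|_g\!=\!o(|z|^{-1})$ and hence $\ell_g(\ga_\rho)\!\to\!0$ as $\rho\!\to\!0$, killing the boundary term over~$\ga_\rho$; alternatively $\liminf_{\rho\to0}\ell_g(\ga_\rho)\!=\!0$ follows at once from $E_g(u;B_{\de_0}^*)\!<\!\i$ --- one obtains
$$E_g(u;B_r^*)=\int_{B_r^*}u^*\om_0+\int_{B_r^*}u^*\be=\oint_{\ga_r}\la_0+\int_{B_r^*}u^*\be\,.$$

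Finally I would estimate the two terms. The error term is at most $C_1\eta\,E_g(u;B_r^*)$, where $\eta\!=\!\sup_{\ov{B_{\de_0}}}|u\!-\!u(0)|$, because $\|\be_{u(z)}\|\!\le\!C_1\eta$ and $|u_s|_g\!=\!|u_t|_g$. For the main term I fill $\ga_r$ by the cone $f_r(\rho\ne^{\fI\th})\!=\!c_r\!+\!\rho(u(r\ne^{\fI\th})\!-\!c_r)$ over the centroid $c_r\!=\!\tfrac{1}{2\pi}\int_0^{2\pi}u(r\ne^{\fI\th})\,\nd\th\!\in\!B'$; this cone stays in the convex ball~$B'$, so $\oint_{\ga_r}\la_0\!=\!\int_{\ov{B_1}}f_r^*\om_0$ by Stokes, and estimating the cone integral using $|u(r\ne^{\fI\th})\!-\!c_r|\!\le\!\ell(\ga_r)$ yields $\big|\oint_{\ga_r}\la_0\big|\!\le\!C_2\,\ell_g(\ga_r)^2$. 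Combining the three bounds gives $E_g(u;B_r^*)\!\le\!C_2\ell_g(\ga_r)^2\!+\!C_1\eta\,E_g(u;B_r^*)$; shrinking $\de_0$ so that $C_1\eta\!\le\!\tfrac12$ (possible since $\eta\!\to\!0$ as $\de_0\!\to\!0$, by continuity of~$u$) and absorbing the last term yields $E_g(u;B_r^*)\!\le\!2C_2\,\ell_g(\ga_r)^2$ for every $r\!\in\!(0,\de_0)$, which is the claim with $\de\!=\!\de_0$. I expect the main obstacle to be the justification of Stokes' Theorem across the removable singularity at $z\!=\!0$ (i.e.\ verifying $\oint_{\ga_\rho}\la_0\!\to\!0$) and, to a lesser degree, the uniform bookkeeping that forces $C_1\eta\!<\!1$ as the chart shrinks; both hinge on the finite-energy hypothesis $E_g(u;B_R^*)\!<\!\i$.
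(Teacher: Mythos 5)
Your proposal is correct, and it reaches \eref{JHolomreg1_e} by a noticeably different mechanism than the paper, even though the isoperimetric core is the same. The paper works intrinsically near $x_0=u(0)$: it takes a local symplectic form $\om_0$ taming $J$ with $g$ the associated metric (as in the proof of Proposition~\ref{MonotLmm_prp}), fills \emph{both} boundary circles of an annulus $B_r\!-\!B_\rho$ by geodesic cones $f_{\ga}$ through the exponential map, glues these to $u|_{B_r-B_\rho}$ to get a null-homotopic sphere, and uses closedness of $\om_0$ to equate the energy of the annulus with the two cone integrals, each bounded by $C\ell_g(\ga)^2$; the inner contribution is killed as $\rho\to0$ via the Mean Value Inequality bound $\ell_g(\ga_\rho)^2\le C E(2\rho)$. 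You instead flatten the problem: you pass to a Euclidean chart, freeze the compatible form at the origin so that $\om_0=\nd\la_0$ with a linear primitive, apply Stokes directly on the punctured disk (the inner boundary term vanishes since $\liminf_{\rho\to0}\ell_g(\ga_\rho)=0$ by finite energy, or by the MVI gradient decay), fill only the outer loop by the Euclidean cone over its centroid to get the $C\ell_g(\ga_r)^2$ bound, and absorb the variable part $\be=\om-\om_0$ as an error $C_1\eta\,E_g(u;B_r^*)$ with $\eta$ small by continuity of~$u$ --- legitimate because $E_g(u;B_r^*)$ is finite by hypothesis. What the paper's route buys is coordinate-freeness and no absorption step (the closed local form does all the work); what your route buys is elementarity: no exponential map, no sphere gluing, and no need for a closed taming form, only the frozen-coefficient form and the smallness of the image near $z=0$. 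Both hinge, as you note, on the finite-energy hypothesis to control the puncture.
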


\begin{proof} Let $\exp$ be as above the statement of Lemma~\ref{distder_lmm},
$\de_g$ and $\om_x$
be as in the first two sentences in the proof of Proposition~\ref{MonotLmm_prp},
$$x_0=u(0), \quad \de_0=\de_g(x_0), \quad \om_0=\om_{x_0}, \qquad
E\!:(0,R)\lra \R, ~~ E(r)\!=\!E_g(u;B_r^*).$$
We can assume that the metric~$g$ is determined by~$J$ and~$\om_0$ 
on $B_{\de_0}^g(x_0)$.\\

\noindent
For a smooth loop $\ga\!:S^1\!\lra\!B_{\de_0}^g(x_0)$, define
\begin{gather*}
\xi_{\ga}\!: S^1\lra T_{x_0}X \qquad\hbox{by}\quad
\exp_{x_0}\xi_{\ga}\!\big(\ne^{\fI\th}\big)=\ga\big(\ne^{\fI\th}\big),
~~\big|\xi_{\ga}\!(\ne^{\fI\th})\big|<\de_0,\\
f_{\ga}\!: B_1\lra X, \qquad f_{\ga}\big(r\ne^{\fI\th}\big)=
\exp_{x_0}\!\!\big(r\xi_{\ga}(\ne^{\fI\th})\!\big).
\end{gather*}
In particular,
\begin{gather*}
\big|\prt_rf_{\ga}(\rho\ne^{\fI\th})\big|_g
=\big|\xi_{\ga}(\ne^{\fI\th})\big|_g\le\ell_g(\ga)/2,\quad
\big|r^{-1}\prt_{\th}f_{\ga}(r\ne^{\fI\th})\big|_g
=\big|\nd_{r\xi_{\ga}(\ne^{\fI\th})}(\xi_{\ga}'(\th))\big|_g\le C\big|\ga'(\th)\big|_g
\end{gather*}
for some $C\!\in\!\R^+$ determined by~$x_0$.
Thus,
\BE{JHolomreg1_e3}\begin{split}
\bigg|\int_{B_1}\!f_{\ga}^*\om_0\bigg|
&\le C\!\!\int_0^{2\pi}\!\!\!\!\int_0^1
\big|\prt_rf_{\ga}(\rho\ne^{\fI\th})\big|_g
\big|r^{-1}\prt_{\th}f_{\ga}(r\ne^{\fI\th})\big|_g r\,\nd r\nd\th\\
&\le C'\ell_g(\ga)\!\!\int_0^{2\pi}\!\!\!\!\int_0^r\!\!\big|\ga'(\th)\big|_gr\,\nd r\nd\th
=\frac12 C'\ell_g(\ga)^2
\end{split}\EE
for some $C,C'\!\in\!\R^+$ determined by~$x_0$ and~$\om_0$.\\

\begin{figure}
\begin{pspicture}(0,-.7)(9,3.2)
\psset{unit=.3cm}
\pscircle[linewidth=.06](11,4){4}\rput(15.8,4){\sm{$\ga_{\rho}$}}
\rput(11.2,4){$f_{\ga_{\rho}}$}
\pscircle[linewidth=.04](26,4){2}\pscircle[linewidth=.06](26,4){6}
\rput(26.4,0.4){$u|_{B_r-B_{\rho}}$}
\rput(28.8,4){\sm{$\ga_{\rho}$}}\rput(32.8,4){\sm{$\ga_r$}}
\pscircle[linewidth=.06](41,4){4}\rput(45.8,4){\sm{$\ga_r$}}
\rput(41.2,4){$f_{\ga_r}$}
\end{pspicture}
\caption{The maps from an annulus and two  disks 
glued together to form the map \hbox{$F_{\rho;r}\!:S^2\!\lra\!X$}
in the proof of Lemma~\ref{JHolomreg_lmm}}
\label{JHolomreg_fig}
\end{figure}
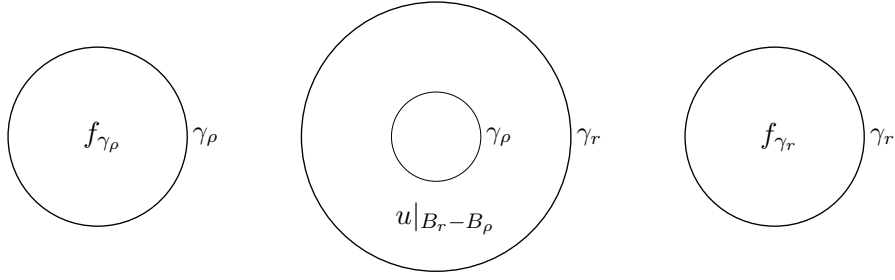

\noindent
By Proposition~\ref{PtBnd_prp} and the finiteness assumption on $E(u;B_R^*)$, 
there exists $\de\!\in\!(0,R/2)$ such~that
\begin{gather}
\label{JHolomreg1_e6}
\big|\ga_r'(\th)\big|_g^2\equiv 
\big|\prt_{\th}u(r\ne^{\fI\th})\big|_g^2=r^2\big|\prt_ru(\ne^{\fI\th})\big|_g^2
\le \frac{32}{\pi}E(2r) \qquad\forall~r\!\in\!(0,\de),\\
\label{JHolomreg1_e7}
\ell_g(\ga_r)^2=128\pi E(2r)
\qquad\forall~r\!\in\!(0,\de).
\end{gather}
By the continuity of~$u$, we can assume that  $u(B_{2\de})\!\subset\!B_{\de_0}^g(x_0)$.
For $r\!\in\!(0,\de)$ and $\rho\!\in\!(0,r)$, define
$$F_{\rho;r}\!:S^2\lra X$$ 
to be the map obtained from $u|_{B_r-B_{\rho}}$ by attaching disks to the boundary components 
$\prt B_r$ and $\prt B_{\rho}$ and letting $F_{\rho;r}$ be given by 
$f_{\ga_r}$ and~$f_{\ga_{\rho}}$ on these two disks, respectively;
see Figure~\ref{JHolomreg_fig}.
Since $F_{\rho;r}$ is  homotopic to a constant~map and $\om_0$ is closed,
$$0=\int_{S^2}\!F_{\rho;r}^{\,*}\om_0
=E_g\big(u;B_r\!-\!B_{\rho}\big)+\int_{B_1}\!\!\!f_{\ga_{\rho}}^{\,*}\om_0
-\int_{B_1}\!\!\!f_{\ga_r}^{\,*}\om_0\,.$$
Combining this with~\eref{JHolomreg1_e3} and~\eref{JHolomreg1_e7}, we obtain
\BE{JHolomreg1_e11}E_g\big(u;B_r\!-\!B_{\rho}\big)\le C\ell_g(\ga_r)^2+CE(2\rho)\EE
for some $C\!\in\!\R^+$ independent of $r$ and $\rho$ as above.
Since $E_g(u;B_R^*)\!<\!0$, $E(2\rho)\!\lra\!0$ as \hbox{$\rho\!\lra\!0$}.
Taking the limit of~\eref{JHolomreg1_e11} as $\rho\!\lra\!0$, 
we thus obtain~\eref{JHolomreg1_e}.
\end{proof}

\begin{crl}\label{JHolomreg_crl}
If $(X,J,g)$, $R$, and $u$ are as in Proposition~\ref{JHolomreg_prp},
there exist $\de\!\in\!(0,R)$ and $\mu,C\!\in\!\R^+$ such~that 
\BE{JHolomreg2_e}
\big|\nd_{r\ne^{\fI\th}}u\big|_g\le Cr^{\mu-1} 
\qquad\forall\,r\!\in\!(0,\de).\EE
\end{crl}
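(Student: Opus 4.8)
The plan is to bootstrap from the energy to the pointwise differential in two stages: first derive polynomial decay of $E_g(u;B_r^*)$ in~$r$ from the Isoperimetric Inequality of Lemma~\ref{JHolomreg_lmm} via an elementary ODE comparison, and then convert this decay into \eref{JHolomreg2_e} using the Mean Value Inequality of Proposition~\ref{PtBnd_prp}.

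First I would dispose of a degenerate case. If $u$ is constant on $B_{\rho}^*$ for some $\rho\!\in\!(0,R)$, then, since $B_R^*$ is a connected Riemann surface, Corollary~\ref{FHS_crl2a} applied with~$u$ and the relevant constant map forces $u$ to be constant on $B_R^*$, hence on~$B_R$ by continuity, and \eref{JHolomreg2_e} is trivial. So assume $E(r)\!\equiv\!E_g(u;B_r^*)\!>\!0$ for all $r\!\in\!(0,R)$. Working in polar coordinates $z\!=\!r\ne^{\fI\th}$ and using that~$u$ is $J$-holomorphic (so that $|\nd_zu|_g^2\!=\!2r^{-2}|\prt_{\th}u|_g^2$), one gets $E'(r)\!=\!r^{-1}\!\int_0^{2\pi}\!|\prt_{\th}u(r\ne^{\fI\th})|_g^2\,\nd\th$, and hence $\ell_g(\ga_r)^2\!\le\!2\pi r E'(r)$ by the Cauchy--Schwarz inequality. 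Feeding this into Lemma~\ref{JHolomreg_lmm} yields $E(r)\!\le\!C_1 r E'(r)$, i.e.\ $(\ln E(r))'\!\ge\!(2\mu\ln r)'$ with $2\mu\!=\!1/C_1$, on the interval $(0,\de)$ produced by that lemma; integrating from~$r$ to~$\de$ gives
\[
E(r)\le C_0\,r^{2\mu}\qquad\forall\,r\!\in\!(0,\de),
\]
with $C_0\!=\!E(\de)\de^{-2\mu}$ (and one may shrink~$\mu$ so that $\mu\!\in\!(0,1)$).

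Finally I would apply Proposition~\ref{PtBnd_prp} at each $z_0\!=\!r\ne^{\fI\th}$. For $2r\!<\!R$ the Euclidean ball $B_{r/2}(z_0)$ lies in $B_{2r}^*\!\subset\!B_R^*$, so~$u$ restricted to it is $J$-holomorphic with energy at most $E(2r)\!\le\!C_0(2r)^{2\mu}$, which tends to~$0$; moreover, by the continuity of~$u$ at~$0$, the image $u(B_{2r}^*)$ lies in a fixed geodesic ball $B_{\rho_0}^g(u(0))$ once~$r$ is small, and we may take~$\rho_0$ small enough and then~$r$ small enough that $E(2r)\!<\!\hb_{J,g}(u(0),\rho_0)$. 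Translating $z_0$ to the origin and invoking Proposition~\ref{PtBnd_prp} then gives, for all~$r$ below some $\de'\!\le\!\de$,
\[
\big|\nd_{z_0}u\big|_g^2<\frac{16}{\pi(r/2)^2}\,E_g\big(u;B_{r/2}(z_0)\big)\le\frac{64}{\pi r^2}\,C_0(2r)^{2\mu}=C'r^{2\mu-2},
\]
which is \eref{JHolomreg2_e} with constant $\sqrt{C'}$ after renaming~$\de'$ as~$\de$.

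The main obstacle is the middle step: getting the clean differential inequality $E(r)\!\le\!C_1 r E'(r)$ requires the polar-coordinate identities tying together $E'(r)$, $\ell_g(\ga_r)$, and $\int_0^{2\pi}|\prt_{\th}u|_g^2\,\nd\th$ for a $J$-holomorphic~$u$, and one must handle the possibility $E(r)\!=\!0$ before dividing by it, which is exactly why the unique-continuation reduction is carried out first.
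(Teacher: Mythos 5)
Your proposal is correct and follows essentially the same route as the paper: the Isoperimetric Inequality of Lemma~\ref{JHolomreg_lmm} combined with Cauchy--Schwarz in the angular variable gives $E(r)\le C_1rE'(r)$, which integrates to $E(r)\le C_0r^{2\mu}$, and the Mean Value Inequality of Proposition~\ref{PtBnd_prp} (which is exactly what produces~\eref{JHolomreg1_e6}, the estimate the paper reuses) converts this energy decay into the pointwise bound~\eref{JHolomreg2_e}. The only cosmetic difference is that you divide by $E(r)$ and therefore need the unique-continuation reduction to $E(r)\!>\!0$, whereas the paper writes the same differential inequality as $\big(r^{-1/2C\pi}E(r)\big)'\!\ge\!0$ and thus avoids that case altogether.
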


\begin{proof}
Let $\ga_r$, $\de$, $C$, and $E(r)$
be as in the statement and proof of Lemma~\ref{JHolomreg_lmm}.
Thus,
\begin{equation*}\begin{split}
E(r)\equiv\frac12\int_0^{2\pi}\!\!\!\!\int_0^r\!\!\!
\big|\nd_{\rho\ne^{\fI\th}}u\big|_g^2 \rho\nd\rho\nd\th
&\le C\ell_g(\ga_r)^2= \frac12 Cr^2
\bigg(\int_0^{2\pi}\!\!\!\big|\nd_{r\ne^{\fI\th}}u\big|_g\nd\th\!\!\bigg)^{\!\!2}\\
&\le C\pi r^2\!\!\!\int_0^{2\pi}\!\!\!\big|\nd_{r\ne^{\fI\th}}u\big|_g^2\nd\th
=2C\pi r E'(r) \qquad\forall\,r\!\in\!(0,\de).
\end{split}\end{equation*}
This implies that 
$$\big(r^{-1/2C\pi}E(r)\big)'\ge0, ~~ 
E(r)\le \de^{-1/2C\pi}E(\de)\cdot r^{1/2C\pi}\equiv C'r^{2\mu}
\qquad\forall\,r\!\in\!(0,\de).$$
Combining this with \eref{JHolomreg1_e6}, we obtain~\eref{JHolomreg2_e}
with $\de$ replaced by~$\de/2$.
\end{proof}

\begin{proof}[{\bf\emph{Proof of Proposition~\ref{JHolomreg_prp}}}]
With $\mu$ as in Corollary~\ref{JHolomreg_crl}, let $p\!\in\!\R^+$ be such that 
$p\!>\!2$ and $(1\!-\!\mu)p\!<\!2$.
In particular, 
$$u|_{B_{R/2}}\in L^p_1\big(B_{R/2};X\big), \qquad
\dbar_Ju|_{B_{R/2}}=0\in L^p\big(B_{R/2};X\big).$$
By elliptic regularity, this implies that $u$ is smooth; see 
\cite[Theorem~B.4.1]{MS12}.
By the continuity of $\dbar_Ju$, $u$ is then $J$-holomorphic on all of~$B_R$.
\end{proof}

\subsection{Global structure of $J$-holomorphic maps}
\label{GlobStr_subs}

\noindent
We next combine the local statement of Proposition~\ref{FHS_prp}
and some of its implications with the regularity statement of
Proposition~\ref{JHolomreg_prp} to obtain a global description of 
$J$-holomorphic maps.

\begin{prp}\label{SimplMap_prp}
Let $(X,J)$ be an almost complex manifold, $(\Si,\fj)$ 
be a compact Riemann surface, $u\!:\Si\!\lra\!X$ be a $J$-holomorphic map.
If $u$ is simple, then $u$ is somewhere injective and all limit points
of the~set 
\BE{SimplMap_e}\big\{z\!\in\!\Si\!:\,|u^{-1}(u(z))|\!>\!1\big\}\EE
are critical points of~$u$.
\end{prp}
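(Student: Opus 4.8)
The plan is to attach to $u$ one integer $k\!\ge\!1$, the generic number of preimages, and to show that $k\!>\!1$ would force $u$ to be multiply covered. We treat the case $\Si$ connected, which is the one of interest. If $u$ were constant it would factor as $u'\!\circ\!h$ through a nonconstant holomorphic branched cover $h\colon\Si\!\lra\!\P^1$ of degree $\!\ge\!2$ and a constant map $u'$, hence be multiply covered; so $u$ is nonconstant. By Corollary~\ref{FHS_crl2} the critical set $C\!=\!\{z:\nd_zu\!=\!0\}$ is discrete, hence finite, and every fiber of $u$ is finite. Put $\Si_0\!=\!\Si\!\setminus\!u^{-1}(u(C))$; it is open, dense and connected, and for $z\!\in\!\Si_0$ the whole fiber $u^{-1}(u(z))$ lies in $\Si_0$ and consists of points where $\nd u\!\neq\!0$.

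First I would prove that $m(z)\!=\!|u^{-1}(u(z))|$ is locally constant on $\Si_0$. Given $z\!\in\!\Si_0$ with $u^{-1}(u(z))\!=\!\{z_1,\dots,z_k\}$, choose disjoint neighborhoods $V_j\!\ni\!z_j$ on each of which $u$ is an embedding (a $J$-holomorphic map is an embedding near any point where its differential is nonzero, as used in the proof of Corollary~\ref{FHS_crl2b}). Every $z'$ with $u(z')$ in $\bigcap_j u(V_j)$ has a preimage in each $V_j$, so $m\!\ge\!k$ near $z$. Conversely, if $z_i\!\to\!z$ in $\Si_0$ with $m(z_i)\!>\!k$, each $z_i$ would have a surplus preimage $w_i\!\notin\!V_1\!\cup\!\dots\!\cup\!V_k$; a subsequence of the $w_i$ converges, by compactness of $\Si$, to some $w_*$ with $u(w_*)\!=\!u(z)$, hence $w_*\!\in\!V_j$ for some $j$, contradicting $w_i\!\notin\!V_j$ for $i$ large. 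So $m$ is locally constant, hence $m\!\equiv\!k$ on the connected set $\Si_0$.

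Next I would show $k\!\ge\!2$ is incompatible with simplicity. The relation $z\!\sim\!z'\!\iff\!u(z)\!=\!u(z')$ on $\Si_0$ has all classes of size $k$, and, since $u|_{\Si_0}$ is a local embedding, the quotient $\Si_0'\!=\!\Si_0/\!\!\sim$ is a Riemann surface for which the local inverses of $u$ are holomorphic charts, the projection $\pi\colon\Si_0\!\lra\!\Si_0'$ is a $k$-sheeted holomorphic covering, and $u|_{\Si_0}\!=\!v\!\circ\!\pi$ for the tautological $J$-holomorphic map $v\colon\Si_0'\!\lra\!X$. Since $\Si_0$ is obtained from the compact surface $\Si$ by deleting finitely many points, $\pi_1(\Si_0)$ is finitely generated; its image in $\pi_1(\Si_0')$ has finite index $k$, so $\pi_1(\Si_0')$ is finitely generated and $\Si_0'$ is a compact Riemann surface $\Si'$ with finitely many points removed. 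Each end of $\Si_0'$ is finitely covered by punctured-disk ends of $\Si_0$ through maps $w\!\mapsto\!w^d$, so it is itself a standard punctured disk, and $\pi$ extends to a degree-$k$ holomorphic branched cover $h\colon\Si\!\lra\!\Si'$ with $h^{-1}(\Si'\!\setminus\!\Si_0')\!=\!u^{-1}(u(C))$. Near a removed point of $\Si'$ the values of $v$ tend to $u$ evaluated at the corresponding points of $u^{-1}(u(C))$, which forces those finitely many values to coincide, so $v$ extends to a continuous $u'\colon\Si'\!\lra\!X$; this $u'$ is $J$-holomorphic off a finite set and has finite energy there (as $E_g(v)\!=\!\tfrac1k E_g(u|_{\Si_0})\!\le\!\tfrac1k E_g(u)\!<\!\i$), hence is $J$-holomorphic on all of $\Si'$ by Proposition~\ref{JHolomreg_prp}. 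Then $u\!=\!u'\!\circ\!h$ on the dense set $\Si_0$, so on $\Si$, with $h$ of degree $k\!\neq\!\pm1$; thus $u$ is multiply covered, contrary to hypothesis. Hence $k\!=\!1$.

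With $k\!=\!1$, the map $u|_{\Si_0}$ is injective with nonvanishing differential, so $u$ is somewhere injective; and $\{z\!\in\!\Si:|u^{-1}(u(z))|\!>\!1\}\!\subseteq\!u^{-1}(u(C))$ is finite, so it has no limit points at all, and in particular every limit point of it is (vacuously) a critical point of $u$. I expect the main obstacle to be the construction and compactification of the quotient $\Si_0'$ and the extension of the factorization $u\!=\!v\!\circ\!\pi$ across the critical-value locus, which is precisely where the removal-of-singularities statement (Proposition~\ref{JHolomreg_prp}) is needed; the semicontinuity argument for $m$ is routine.
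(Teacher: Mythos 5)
Your proposal breaks down at its first main step: the function $m(z)=|u^{-1}(u(z))|$ is \emph{not} locally constant on $\Si_0$, and the argument you give for the lower bound $m\!\ge\!k$ near $z$ does not work. For $z'$ near $z=z_1$ you know $u(z')\in u(V_1)$, but nothing places $u(z')$ in $u(V_j)$ for $j\neq1$: the images of the local embeddings $u|_{V_1}$ and $u|_{V_j}$ both pass through $u(z)$, yet they need not coincide near that point -- they may meet only in isolated points. A simple immersion with one transverse double point (e.g.\ a nodal plane cubic viewed as a holomorphic map $\P^1\!\lra\!\P^2$) has $m\equiv1$ away from the two node preimages $a\neq b$ and $m=2$ at $a,b$, all of which lie in $\Si_0$ since $\nd u$ never vanishes; so $m$ is only upper semicontinuous there (your compactness argument proves exactly that and no more). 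The same example shows that the statement you reduce the proposition to is strictly stronger than the proposition and is false: with $k=1$ the inclusion $\{z:|u^{-1}(u(z))|>1\}\subset u^{-1}(u(C))$ fails, since the double points are non-critical. It also shows that the quotient of $\Si_0$ by the relation ``same image'' need not be a surface and the projection need not be a covering, so the factorization $u=u'\circ h$ cannot be built from fiber cardinality alone.

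The missing ingredient is the step that upgrades coincidence of \emph{values} to coincidence of \emph{germs}. The paper works with the relation $\cR_u^*$ defined by local reparametrizations, $u|_{U_z}=u\circ\vph_{z'z}$, rather than by equality of values; for this relation the covering property does hold (Lemmas~\ref{SimplMap_lmm2}--\ref{SimplMap_lmm4}), the quotient $\Si'=\Si/\!\!\sim_u$ acquires a Riemann surface structure (Lemma~\ref{SimplMap_lmm3} and Corollary~\ref{SimplMap_crl}), and $u$ factors as $u'\circ h_u$ with $u'$ $J$-holomorphic via Proposition~\ref{JHolomreg_prp} -- so the compactification/regularity part of your outline is in the right spirit, but applied to the wrong equivalence relation. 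The proposition itself is then deduced from Corollary~\ref{FHS_crl2b}: if a non-critical point $z$ is a limit of points $z_i$ with $u(z_i)=u(z_i')$ and $z_i\neq z_i'$, then after extracting a limit $z'$ of the $z_i'$ that corollary produces a local reparametrization identifying the germ of $u$ at $z'$ with a pullback of the germ at $z$, whence $h_u(z)=h_u(z')$ with $z\neq z'$, $h_u$ has degree different from $\pm1$, and $u$ is multiply covered. Your proposal never invokes this unique-continuation-type input, and without it one cannot pass from ``equal values along a sequence accumulating at a non-critical point'' to any covering or branched-cover structure.
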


\noindent
Suppose $(X,J)$ is an almost complex manifold, 
$(\Si,\fj)$ is a Riemann surface, and $u\!:\Si\!\lra\!X$ is a $J$-holomorphic map.
Let 
\BE{Siudfn_e}\Si_u^*=\Si-u^{-1}\big(u\big(\{z\!\in\!\Si:\nd_zu\!=\!0\}\big)\!\big)\EE
be the preimage of the regular values of~$u$ and 
$$\cR_u^*\subset \Si_u^*\!\times\!\Si_u^*$$
be the subset of pairs $(z,z')$ such that there exists a diffeomorphism
$\vph_{z'z}\!:U_z\!\lra\!U_{z'}$ between neighborhoods of~$z$ and~$z'$ in~$\Si$
satisfying 
\BE{vphcond_e}\vph_{z'z}(z)=z' \qquad\hbox{and}\qquad u|_{U_z}=u\!\circ\!\vph_{z'z}.\EE
Denote by $\cR_u\!\subset\!\Si\!\times\!\Si$ the closure of $\cR_u^*$.\\

\noindent
It is immediate that $\cR_u^*$ is an equivalence relation on~$\Si$ and 
$u(z)\!=\!u(z')$ whenever $(z,z')\!\in\!\cR_u^*$.
Thus, $\cR_u$ is also a reflexive and symmetric relation and
 $u(z)\!=\!u(z')$ whenever $(z,z')\!\in\!\cR_u$.
By Lemma~\ref{SimplMap_lmm4} below, $\cR_u$ is transitive as well.
We denote this equivalence relation by~$\sim_u$.
Let
\BE{SimplMap_e2}h_u\!:\Si\lra\Si'\!\equiv\!\Si/\!\!\sim_u \qquad\hbox{and}\qquad
u'\!:\Si'\lra X\EE
be the quotient map and the continuous map induced by $u$, respectively.
In particular,
$$u\!=\!u'\!\circ\!h_u\!:\Si\lra X.$$
In the case $\Si$ is compact, we will show that $\Si'$ inherits a Riemann surface structure~$\fj'$
from~$\fj$ so that the maps~$h_u$ and~$u'$ are $\fj'$- and $J$-holomorphic, respectively.
If the degree of~$h$ is~1, we will show that all limit points of 
the set~\eref{SimplMap_e} are critical points of~$u$.

\begin{lmm}\label{SimplMap_lmm2}
Suppose $(X,J)$ is an almost complex manifold, $R\!\in\!\R^+$, 
and $u\!:B_R\!\lra\!X$ is a non-constant $J$-holomorphic map
such that $\nd_zu\!\neq\!0$ for all $z\!\in\!B_R^*$.
Then there exist $m\!\in\!\Z^+$ and a neighborhood~$U_0$ of~$0$ in~$B_R$
such~that 
\BE{SimplMap2_e}h_u\!: U_0\!\cap\!B_R^*\lra h_u\big(U_0\!\cap\!B_R^*\big)\subset B_R'\EE 
is a covering projection of degree~$m$.
\end{lmm}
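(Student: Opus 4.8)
The plan is to use the local structure theory already developed—specifically Corollary~\ref{FHS_crl1} (or the sharper Corollary~\ref{FHS_crl}) and Corollary~\ref{FHS_crl2b}—to show that near $0$ the map $u$ looks, up to reparametrization, like the holomorphic branched cover $z\mapsto z^m$ followed by an embedding. First I would choose coordinates on $X$ near $u(0)$ so that $X=\C^n$, $u(0)=0$, and $J(0)=J_{\C^n}$. Since $u$ is non-constant, it does not vanish to infinite order at $0$ by Corollary~\ref{FHS_crl1}, so there exist $m\!\in\!\Z^+$ and $\al\!\in\!\C^n\!-\!0$ with $u(z)=\al z^m+o(|z|^m)$. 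Shrinking $R$, I would arrange coordinates on $\C^n$ so that $\al=(1,0,\ldots,0)$, and write $u=(v,w):B_R\!\lra\!\C\!\times\!\C^{n-1}$ with $v(z)=z^m+o(|z|^m)$ and $w(z)=o(|z|^m)$.

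The key step is to understand when $u(z_1)=u(z_2)$ for $z_1,z_2\!\in\!B_R^*$ near $0$. The first component forces $v(z_1)=v(z_2)$. Because $v(z)=z^m(1+o(1))$ and $v$ is $J'$-holomorphic for the induced almost complex structure on the $\C$-factor (which agrees with $J_{\C}$ at $0$ since $u$ is an immersion off $0$), the equation $v_s+J_{22}(\cdot)v_t=0$ together with Corollary~\ref{FHS_crl} shows that on a punctured neighborhood $v$ factors as $\Phi\circ(\text{holomorphic})$; more concretely, after a continuous change of coordinate near $0$ on the target $\C$-factor (smooth away from $0$), I can assume $v(z)=z^m$ exactly on a smaller $B_\de$. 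Then $v(z_1)=v(z_2)$ with $z_1,z_2\!\in\!B_\de^*$ holds precisely when $z_2=\zeta z_1$ for some $m$-th root of unity $\zeta$. I would then invoke Corollary~\ref{FHS_crl2b}: for each such $\zeta$, if the coincidence set $\{z: u(z)=u(\zeta z)\}$ accumulates at $0$, then $w(z)$ and $w(\zeta z)$ agree to infinite order along a sequence, hence (since $\nd u\!\neq\!0$ off $0$ and $u$ is an embedding there, so $u|_{B_\de^*}$ is injective on each sheet) the two maps $z\mapsto u(z)$ and $z\mapsto u(\zeta z)$ coincide on all of $B_\de^*$. Let $H\!\subset\!\Z/m\Z$ be the subgroup of those $\zeta$ for which this holds; it is a subgroup because $\cR_u^*$ is an equivalence relation. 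Setting $m'=|H|$ and passing to the quotient $B_\de/H\cong B_{\de^{m'}}$, the induced map is injective off $0$, so $u^{-1}(u(z))\cap B_\de$ consists of exactly one $H$-orbit for each $z\!\in\!B_\de^*$. This exhibits $h_u\colon U_0\cap B_R^*\to h_u(U_0\cap B_R^*)$ as an $m'$-to-one covering: the fibers are exactly the $H$-orbits, which all have size $m'$, and local triviality is clear since the group $H$ acts freely and properly on $B_\de^*$.

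I expect the main obstacle to be the bookkeeping around normalizing $v$ to the model $z^m$ while keeping track of the regularity of the coordinate change (it is only continuous at $0$, smooth elsewhere, per Corollary~\ref{FHS_crl}) and making sure this normalization is compatible with the equivalence relation $\sim_u$ defined via the diffeomorphisms $\vph_{z'z}$—i.e., that two points lie in the same $\cR_u^*$-class iff they lie in the same $H$-orbit near $0$. The subtle direction is showing that $(z,\zeta z)\in\cR_u^*$ whenever $u(z)=u(\zeta z)$ on $B_\de^*$: since both $z$ and $\zeta z$ are regular points with the same image and $u$ is a local embedding there, the required diffeomorphism $\vph$ is simply multiplication by $\zeta$, so this is immediate once the coincidence on all of $B_\de^*$ is established. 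The accumulation-vs-coincidence dichotomy from Corollary~\ref{FHS_crl2b} is exactly what converts "the coincidence set accumulates at $0$" into "the coincidence set is everything," and that is the crux; once it is in hand, the covering statement, with $m$ taken to be $m'=|H|$, follows formally.
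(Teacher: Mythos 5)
Your plan hinges on the normalization ``after a continuous change of coordinate near $0$ on the target $\C$-factor I can assume $v(z)=z^m$ exactly,'' and that step is a genuine gap: it is essentially the local normal form for $J$-holomorphic maps near a critical point, which is not available at this stage (in the paper it is the content of Lemma~\ref{SimplMap_lmm3}, proved \emph{after} and \emph{from} Lemma~\ref{SimplMap_lmm2}; in general it is a Micallef--White--type statement). What Corollaries~\ref{FHS_crl1} and~\ref{FHS_crl} actually give is only $v(z)=z^m+o(|z|^m)$, $w(z)=o(|z|^m)$ after a linear change of target coordinates, so $u(z_1)=u(z_2)$ yields $z_2^m\approx z_1^m$, not $z_2=\zeta z_1$ for an exact $m$-th root of unity; without exactness there is no group $H$, no identification of $h_u$-fibers with $H$-orbits, and the covering structure does not follow. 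The intermediate justification you offer is also incorrect: the first component $v$ does not satisfy an equation of the form $v_s+J_{22}(\cdot)v_t=0$ on its own, because the components of a $J$-holomorphic map are coupled through the off-diagonal blocks of~$J$; the decoupled equation in the proof of Corollary~\ref{FHS_crl2b} arises only after adapting coordinates to an \emph{embedded} $J$-holomorphic disk, which is precisely what you do not have when $u$ itself carries the critical point. Arranging $v(z)=z^m$ by a target-coordinate change presupposes that the fibers of $v$ are the fibers of $z\mapsto z^m$, i.e.\ it assumes what you are trying to prove.

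A secondary, repairable issue: you invoke Corollary~\ref{FHS_crl2b} to upgrade ``the coincidence set of $u(z)$ and $u(\zeta z)$ accumulates at $0$'' to coincidence on all of $B_{\de}^*$, but that corollary requires the accumulation point to satisfy $\nd_{z_0}u\neq0$, and here the accumulation point is $0$, the one point where $\nd u$ may vanish; ``agree to infinite order along a sequence'' is not a hypothesis of any unique-continuation statement in the paper. This particular step can be fixed by applying Proposition~\ref{FHS_prp} to the difference $u(\zeta z)-u(z)$ (as in the proof of Corollary~\ref{FHS_crl2a}): its zeros are zeros of a holomorphic function, and their accumulation at the interior point $0$ forces identical vanishing near $0$, after which unique continuation spreads the coincidence. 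But this does not repair the main gap. For comparison, the paper avoids any normal form: it takes $U_0=u^{-1}(u(B_{\de}))\cap B_{\ep}\subset B_{2\de}$ using the estimates from the proof of Corollary~\ref{FHS_crl3}, gets finiteness of the $h_u$-fibers from Proposition~\ref{FHS_prp} plus compactness, proves the fiber count is locally constant by applying Corollary~\ref{FHS_crl2b} at accumulation points in $B_{\de}^*$ (where $\nd u\neq0$) and using connectedness of $B_{\de}^*$, and then builds evenly covered neighborhoods directly from the diffeomorphisms $\vph_i$ defining $\cR_u^*$.
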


\begin{proof}
By the continuity of $u$, we can assume that $X\!=\!\C^n$, $u(0)\!=\!0$,
and $J_0\!=\!J_{\C^n}$. 
As shown in the proof of Corollary~\ref{FHS_crl3}, 
there exist $\ep\!\in\!(0,R)$ and $\de\!\in\!(0,\ep/2)$
such~that
$$U_0\equiv u^{-1}\big(u(B_{\de})\big)\!\cap\!B_{\ep}\subset B_{2\de}.$$
By Proposition~\ref{FHS_prp} and the compactness of $\ov{B_{2\de}}\!\subset\!B_R$, 
the number 
$$m(z)\equiv \big|h_u^{-1}(h_u(z))\!\cap\!U_0\big|$$
is finite for every $z\!\in\!U_0\!\cap\!B_R^*$.\\

\noindent
Suppose $z_i\!\in\!B_{\de}^*$ and $z_i'\!\in\!U_0$
are sequences such that~$z_i$ converges to some $z_0\!\in\!B_{\de}^*$
with $z_i\!\neq\!z_0$ for all~$i$
and \hbox{$h_u(z_i)\!=\!h_u(z_i')$} for all~$i$.
Passing to a subsequence, we can assume that $z_i'$ converges to some 
\hbox{$z_0'\!\in\!\ov{B_{2\de}}$}.
By the continuity of~$u$, $u(z_0')\!=\!u(z_0)$ and so $z_0'\!\in\!U_0$.
Corollary~\ref{FHS_crl2b} then implies that $h_u(z_0')\!=\!h_u(z_0)$.
Since $B_{\de}^*$ is connected, this implies that 
the number~$m(z)$ is independent of $z\!\in\!U_0\!\cap\!B_R^*$;
we denote it by~$m$.\\

\noindent
Suppose $z\!\in\!U_0\!\cap\!B_R^*$ and
$$h_u^{-1}\big(h_u(z)\big)\cap U_0=\big\{z_1,\ldots,z_m\big\}\,.$$
Let $\vph_i\!:U_1\!\lra\!U_i$ for $i\!=\!1,\ldots,m$ be diffeomorphisms between 
neighborhoods of~$z_1$ and~$z_i$ in~$U_0\!\cap\!B_R^*$ such that 
$$\vph_i(z_1)=z_i,~~ u=u\!\circ\!\vph_i~~\forall\,i,\qquad
U_i\cap U_j=\eset~~\forall\,i\!\neq\!j,$$
and $u\!:U_1\!\lra\!X$ is injective.
Then $h_u(U_1)\!\subset\!B_R'$ is an open neighborhood of $h_u(z)$, 
$$h_u^{-1}\!\big(h_u(U_1)\!\big)\cap U_0=\bigsqcup_{i=1}^mU_i\,,$$
and $h_u\!:U_i\!\lra\!h_u(U_1)$ is a homeomorphism.
Thus, \eref{SimplMap2_e} is a covering projection of degree~$m$.
\end{proof}

\begin{lmm}\label{SimplMap_lmm3}
Suppose $(X,J)$, $R$, and $u$ are as in Lemma~\ref{SimplMap_lmm2}.
Then there exists a neighborhood~$U_0$ of~$0$ in~$B_R$ such~that 
\BE{SimplMap3_e}\Psi_0\!: h_u(U_0)\lra\C, \qquad
h_u(z)=\prod_{z'\in h_u^{-1}(h_u(z))\cap U_0}\hspace{-.2in}z',\EE 
is a homeomorphism from an open neighborhood of $h_u(0)$ in $B_R'$
to an open neighborhood of~0 in~$\C$ and $\Psi_0\!\circ\!h_u|_{U_0}$ is a holomorphic map.
\end{lmm}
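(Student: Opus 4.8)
The plan is to work in a chart around $u(0)$ — so that $X\!=\!\C^n$, $u(0)\!=\!0$, $J_0\!=\!J_{\C^n}$ — and to recognize $\Psi_0$ as the coordinate of a holomorphic branched cover. First I would take $U_0$ to be the neighborhood furnished by Lemma~\ref{SimplMap_lmm2}, shrunk (as in the proof of Lemma~\ref{SimplMap_lmm2}, using the classification of coverings of a punctured disk) so that $\ov{U_0}\cap u^{-1}(0)\!=\!\{0\}$, so that $U_0\!\cap\!B_R^*$ is a punctured disk, and so that $h_u$ restricts to a degree-$m$ covering $U_0\!\cap\!B_R^*\!\lra\!V^*\!\equiv\!h_u(U_0\!\cap\!B_R^*)$ (with $m$ as in Lemma~\ref{SimplMap_lmm2}). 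The observation that drives the whole argument is that, since $\nd_zu\!\neq\!0$ for every $z\!\in\!B_R^*$ and $u$ is $J$-holomorphic, $u$ is a local biholomorphism near each point of $B_R^*$; hence any two points $z,z'\!\in\!U_0\!\cap\!B_R^*$ with $u(z)\!=\!u(z')$ are $\cR_u^*$-related, through the local biholomorphism $\big(u|_{\textnormal{near }z'}\big)^{-1}\!\circ\!\big(u|_{\textnormal{near }z}\big)$. Combined with $u^{-1}(0)\!\cap\!U_0\!=\!\{0\}$, this gives
$$h_u^{-1}\big(h_u(z)\big)\cap U_0=u^{-1}\big(u(z)\big)\cap U_0 \qquad\forall\,z\!\in\!U_0\,,$$
a finite set by Corollary~\ref{FHS_crl2}. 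In particular $u'$ is injective on $h_u(U_0)$, and $\Psi_0\!\circ\!h_u|_{U_0}$ is literally the map $F\!:U_0\!\lra\!\C$ sending $z$ to the product of the points of $u^{-1}(u(z))\!\cap\!U_0$.

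Next I would check that $F$ is holomorphic. Near a point of $W\!\equiv\!U_0\!\cap\!B_R^*$ the set $u^{-1}(u(z))\!\cap\!U_0$ is cut out by finitely many of the local biholomorphisms above, so $F$ is a finite product of holomorphic functions there; and $F$ is continuous at $0$ with $F(0)\!=\!0$, since every point of $u^{-1}(u(z))\!\cap\!U_0$ tends to $0$ as $z\!\lra\!0$ (because $\ov{U_0}\cap u^{-1}(0)\!=\!\{0\}$), while $F$ is nonzero on $W$. Hence $F$ extends holomorphically across $0$; this already yields the second assertion of the lemma, since $\Psi_0\!\circ\!h_u|_{U_0}\!=\!F$. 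As $F$ has an isolated zero of some order $k\!\ge\!1$ at $0$, a further shrinking makes $F\!:U_0\!\lra\!B_\rho$ a proper holomorphic branched cover of degree $k$ ramified only over $0$, so that $F|_W\!:W\!\lra\!B_\rho^*$ is a degree-$k$ covering.

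The heart of the proof, which I expect to be the main obstacle, is the identity $k\!=\!m$. Here I would use that $V^*$, being a connected orientable surface (orientable because the reparametrizations gluing its charts are holomorphic) that is $m$-covered by a punctured disk, is itself a punctured disk; hence $\pi_1(V^*)\!\cong\!\Z$ is abelian, so the monodromy representation of the covering $h_u|_W$ sends the whole image subgroup $h_{u*}\pi_1(W)$ to the identity. Consequently the $m$ points of $u^{-1}(u(z))\!\cap\!U_0$ can be labelled by $m$ single-valued holomorphic functions $\vph_1,\ldots,\vph_m\!:W\!\lra\!W$ (the deck transformations of this now necessarily regular cover), each of which extends to a biholomorphism of a neighborhood of $0$ fixing $0$. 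Therefore $F\!=\!\vph_1\cdots\vph_m$ vanishes to order exactly $m$ at $0$, i.e.\ $k\!=\!m$. Once this is known, the remaining assertions are immediate: for $z\!\in\!W$ the fibers of $F$ and of $h_u|_{U_0}$ through $z$ both have $m$ elements and the latter is contained in the former, so they coincide, whence $\Psi_0$ is injective; being also continuous and open ($F$ and $h_u$ are open maps), $\Psi_0$ is a homeomorphism from the open neighborhood $h_u(U_0)$ of $h_u(0)$ onto $B_\rho$, which is the required neighborhood of $0\!\in\!\C$.
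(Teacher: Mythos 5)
You follow the paper's skeleton---realize $\Psi_0\!\circ\!h_u$ as a product of fiber points that is holomorphic on $U_0\!\cap\!B_R^*$, extend it over~$0$, show the vanishing order is exactly~$m$, and get injectivity from the fiber count---but two of your steps have genuine gaps. The most serious is the claim that any $z,z'\!\in\!U_0\!\cap\!B_R^*$ with $u(z)\!=\!u(z')$ are $\cR_u^*$-related, hence that $h_u^{-1}(h_u(z))\!\cap\!U_0\!=\!u^{-1}(u(z))\!\cap\!U_0$ and that $u'$ is injective on $h_u(U_0)$. A single coincidence of values at two immersion points does not allow you to form $(u|_{\textnormal{near }z'})^{-1}\!\circ\!(u|_{\textnormal{near }z})$: that composition exists only when the two embedded local images coincide as germs, whereas in general they may meet at an isolated point (two immersed branches crossing), in which case $(z,z')\!\notin\!\cR_u^*$. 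Corollary~\ref{FHS_crl2b} produces a local reparametrization only from a \emph{sequence} of coincidences accumulating at an immersion point, and excluding isolated double points of~$u$ accumulating at the critical point~$0$ is precisely the Micallef--White-type local statement that this paper never proves or invokes. If such double points were present, your $F$ (the product over $u$-fibers) would not even be continuous, since the $u$-fiber cardinality jumps at a crossing, and in any case it need not equal $\Psi_0\!\circ\!h_u$, which is a product over $h_u$-fibers. The argument must be run on the $h_u$-fibers throughout, as the paper's is; the reparametrizations $\vph_{z'z}$ then exist by the definition of~$\cR_u^*$ and are holomorphic because $u$ is a $J$-holomorphic immersion on~$B_R^*$, which is all that is needed for the holomorphy and bounded extension of $\Psi_0\!\circ\!h_u$.

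Second, your computation of the vanishing order via deck transformations rests on topological assertions that are stated but not established and are not harmless: that $U_0$ can be shrunk so that $U_0\!\cap\!B_R^*$ is a punctured disk while the degree-$m$ covering of Lemma~\ref{SimplMap_lmm2} survives with the same fibers (shrinking $U_0$ changes the sets $h_u^{-1}(h_u(z))\!\cap\!U_0$ that define~$\Psi_0$, and nothing proved up to this point guarantees that the $m$ points of a fiber lie at comparable distance from~$0$, so a fiber can leak out of a smaller disk and the covering property can be lost), and that $V^*\!=\!h_u(U_0\!\cap\!B_R^*)$ is a punctured disk with abelian fundamental group, so that the covering is regular and the fiber through~$z$ is the orbit of globally defined single-valued maps $\vph_1,\ldots,\vph_m$ (once regularity is granted, your extension of each deck transformation over~$0$ with $\vph_i(0)\!=\!0$ is fine). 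The paper sidesteps all of this with an elementary estimate: the bounded extension $\wt\Psi_0$ satisfies $C^{-k}|z|^k\!\le\!|\wt\Psi_0(z)|\!\le\!C^k|z|^k$ for its vanishing order~$k$ as in~\eref{SimplMap3_e3}, and since $\wt\Psi_0$ is constant on fibers this forces $C^{-2}|z|\!\le\!|z'|\!\le\!C^2|z|$ for every fiber point~$z'$, hence $|\wt\Psi_0(z)|$ is comparable to $|z|^m$ and $k\!=\!m$; the $m\!:\!1$ property, injectivity, and the homeomorphism conclusion then follow. Your covering-theoretic route might be repairable, but the shrinking and regularity steps are where the real content lies, and they are missing.
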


\begin{proof}
By Lemma~\ref{SimplMap_lmm2}, there exists a neighborhood~$U_0$ of~$0$ in~$B_R$
so that  \eref{SimplMap2_e} is a covering projection of some degree~$m\!\in\!\Z^+$.
Since the restriction of~$u$ to $B_R^*$ is a $J$-holomorphic immersion,
the diffeomorphisms~$\vph_i$ as in the proof of Lemma~\ref{SimplMap_lmm2}
are holomorphic.
Thus, the~map
$$\Psi_0\!\circ\!h_u|_{U_0\cap B_R^*}\!:U_0\!\cap\!B_R^*\lra\C, \qquad
z\lra\prod_{z'\in h_u^{-1}(h_u(z))\cap U_0}\hspace{-.2in}z'$$
is holomorphic.
Since it is also bounded, it extends to a holomorphic map
$$\wt\Psi_0\!: U_0\lra\C.$$
This extension is non-constant and vanishes at~0.\\

\noindent
After possibly shrinking~$U_0$, we can assume that there exist $k\!\in\!\Z^+$
and $C\!\in\!\R^+$ such~that 
\BE{SimplMap3_e3}  C^{-k}|z|^k\le \big|\wt\Psi_0(z)\big|\le C^k|z|^k
\qquad\forall\,z\!\in\!U_0.\EE
Since $\wt\Psi_0(z')\!=\!\wt\Psi_0(z)$ for all $z'\!\in\!h_u^{-1}(h_u(z))\!\cap\!U_0$,
it follows~that 
\begin{gather*}
C^{-2}|z|\le |z'|\le C^2|z|
\qquad\forall~z'\!\in\!h_u^{-1}(h_u(z))\!\cap\!U_0,~z\!\in\!U_0,\\
C^{-2m}|z|^m\le \big|\wt\Psi_0(z)\big|\le C^{2m}|z|^m
\qquad\forall\,z\!\in\!U_0.
\end{gather*}
Along with~\eref{SimplMap3_e3}, the last estimate implies that $k\!=\!m$
and that $\wt\Phi_0$ has a zero of order precisely~$m$ at $z\!=\!0$.
Thus, shrinking $\de$ in the proof of Lemma~\ref{SimplMap_lmm2} if necessary,
we can assume that~$\wt\Phi_0$ is $m\!:\!1$ over $\ov{U_0}\!\cap\!B_R^*$.
This implies that the map~\eref{SimplMap3_e} and its extension over the closure
of $h_u(U_0)$ in~$B_R'$ are continuous and injective.
Since the closure of $h_u(U_0)$ is compact and~$\C$ is Hausdorff,
we conclude that~\eref{SimplMap3_e} is a homeomorphism onto an open subset 
of~$\C$.
\end{proof}

\begin{lmm}\label{SimplMap_lmm4}
Suppose $(X,J)$, $(\Si,\fj)$, and~$u$ are as in Proposition~\ref{SimplMap_prp}
and $(x,y)\!\in\!\cR_u$.
For every neighborhood~$U_x$ of~$x$ in~$\Si$, the image of the projection
$$\cR_u\cap (U_x\!\times\!X)\lra X$$
to the second component contains a neighborhood~$U_y$ of~$y$ in~$\Si$.
\end{lmm}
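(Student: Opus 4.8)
The plan is to reduce the statement (whose projection should land in~$\Si$, not~$X$) to a local one and then to produce a single holomorphic map $\si\!:U_y\!\lra\!\Si$ on a neighborhood of~$y$ with $\si(y)\!=\!x$ and $u\!\circ\!\si\!=\!u|_{U_y}$; the lemma follows quickly from such a~$\si$. First I would use the continuity of~$u$ to identify a neighborhood of~$x$ and one of~$y$ with the unit disk $B_1$ carrying its standard complex structure and reduce to $X\!=\!\C^n$, $u(x)\!=\!u(y)\!=\!0$, $J_0\!=\!J_{\C^n}$. Now suppose~$\si$ has been constructed. By Corollary~\ref{FHS_crl2} the set $\Si_u^*$ is the complement of a discrete subset of~$\Si$, hence open and dense; for $y'\!\in\!U_y\!\cap\!\Si_u^*$ the value $u(y')$ is a regular value of~$u$, so $\si(y')$ is a regular point and lies in~$\Si_u^*$, and differentiating $u\!\circ\!\si\!=\!u$ at~$y'$ gives $\nd_{y'}\si\!\neq\!0$; thus $\si$ is a local biholomorphism near~$y'$ and $(\si(y'),y')\!\in\!\cR_u^*$. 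After shrinking~$U_y$ so that $\si(U_y)$ lies in the (open) neighborhood~$U_x$, the image of $\cR_u\!\cap\!(U_x\!\times\!\Si)$ under the second projection contains $U_y\!\cap\!\Si_u^*$, and since $\cR_u$ is closed and~$\si$ continuous it contains all of~$U_y$, as required.

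To build~$\si$, write $(x,y)$ as a limit of pairs $(z_i,z_i')\!\in\!\cR_u^*$; recall $z_i,z_i'\!\in\!\Si_u^*$ and $u(z_i)\!=\!u(z_i')$, and denote by~$\vph_i$ the defining local biholomorphisms, on neighborhoods~$V_i$ of~$z_i$, sending~$z_i$ to~$z_i'$. If $x\!=\!y$, take $\si\!=\!\id$. If $\nd_xu\!\neq\!0$: since $u^{-1}(u(x))$ is discrete by Corollary~\ref{FHS_crl2}, the degenerate possibility that $z_i\!=\!x$ for all large~$i$ forces $z_i'\!=\!y$ for large~$i$, whence $(x,y)\!\in\!\cR_u^*$ and the inverse of its defining biholomorphism is the desired~$\si$; otherwise, after passing to a subsequence with $z_i\!\neq\!x$ and, when $z_i'\!=\!y$ infinitely often, replacing the pairs by $\big(\vph_i^{-1}(w_i),w_i\big)$ for suitable $w_i\!\to\!y$ with $w_i\!\neq\!y$, one obtains sequences converging to~$x$ and to~$y$ from outside with matching $u$-values and applies Corollary~\ref{FHS_crl2b} at whichever of~$x,y$ is a regular point. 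This yields a holomorphic map near that point carrying it to the other and intertwining~$u$: when based at~$y$ it is~$\si$, and when based at~$x$ the hypothesis $\nd_xu\!\neq\!0$ makes it invertible near~$x$, so~$\si$ is its inverse.

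The remaining case $\nd_xu\!=\!0$ is the main obstacle, since Corollary~\ref{FHS_crl2b} is then unavailable at~$x$. I would use the simplicity of~$u$: a simple $u$ is locally injective near every point in the sense that a generic value near~$u(z_0)$ has a single preimage near~$z_0$, for otherwise the local monodromy would supply a nontrivial finite-order germ of biholomorphism fixing~$z_0$ and intertwining~$u$, which by Unique Continuation (Corollary~\ref{FHS_crl2a}) would extend to a global automorphism of~$\Si$ through which~$u$ factors, contradicting simplicity. Applying this at~$x$: if $\nd_yu\!\neq\!0$, Corollary~\ref{FHS_crl2b} at~$y$ would produce a holomorphic map near~$x$ with $u$-preimages of multiplicity $\ord_xu\!\ge\!2$, contradicting local injectivity, so $\nd_yu\!=\!0$ as well. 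Now by Corollaries~\ref{FHS_crl} and~\ref{FHS_crl1} the germ of~$u$ at~$x$ factors as $u\!=\!\wh{u}_x\!\circ\!h_x$, with $\wh{u}_x$ a somewhere-injective $J$-holomorphic germ and $h_x$ a holomorphic branched cover of disks — obtained by writing $u\!=\!\Phi_x\si_x$ as in Corollary~\ref{FHS_crl}, taking~$h_x$ to be the classical normalization of the analytic curve germ $\si_x(B_\de)$, noting that $\Phi_x$ descends to the normalization since it is invariant under the deck transformations of~$h_x$, and that $\wh{u}_x$ is $J$-holomorphic by Proposition~\ref{JHolomreg_prp} — and local injectivity forces $\deg h_x\!=\!1$, so $h_x$ is a biholomorphism; similarly $u\!=\!\wh{u}_y\!\circ\!h_y$ with $h_y$ a biholomorphism. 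The image germs $u(B_\de(x))$ and $u(B_\de(y))$ at~$0$ share the $J$-holomorphic disks $u(V_i)\!=\!u(\vph_i(V_i))$ accumulating at~$0$, hence coincide by the identity principle for $J$-holomorphic curves (via Corollaries~\ref{FHS_crl2a} and~\ref{FHS_crl2b}); so $\wh{u}_x$ and $\wh{u}_y$ are two somewhere-injective parametrizations of the same germ, giving $\wh{u}_y\!=\!\wh{u}_x\!\circ\!\th$ for a biholomorphism~$\th$ of pointed disks, and $\si\!:=\!h_x^{-1}\!\circ\!\th\!\circ\!h_y$ satisfies $\si(y)\!=\!x$ and $u\!\circ\!\si\!=\!\wh{u}_x\!\circ\!\th\!\circ\!h_y\!=\!\wh{u}_y\!\circ\!h_y\!=\!u$. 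The delicate points — which I expect to be where the real work lies — are making the local normalization and the descent of~$\Phi_x$ precise in the almost complex category, and the two propagation steps (the global-automorphism contradiction and the identity principle), both of which ultimately rest on the Carleman Similarity Principle (Proposition~\ref{FHS_prp}) and unique continuation.
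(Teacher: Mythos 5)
Your reduction of the lemma to producing a reparametrization germ $\si$ at $y$, and your treatment of the case $\nd_xu\!\neq\!0$ via Corollary~\ref{FHS_crl2b}, are fine (and you are right that the projection in the statement should land in $\Si$). The gap is in the critical case, and it is structural as well as technical. The lemma is used to establish transitivity of $\cR_u$ and the quotient construction $(\Si',\fj',h_u)$ for a $J$-holomorphic map that is \emph{not} assumed simple, and the paper's own proof never invokes simplicity; for a non-simple map the germ you want need not exist. Concretely, let $u\!=\!v\!\circ\!h$ with $v$ an embedding near a point $p\!\in\!\Si'$ and $h$ a branched cover with $h(x)\!=\!h(y)\!=\!p$ and ramification orders $2$ at~$x$ and $3$ at~$y$: then $(x,y)\!\in\!\cR_u$ and the conclusion of the lemma holds, but $u\!\circ\!\si\!=\!u$ with $\si(y)\!=\!x$ would force $h\!\circ\!\si\!=\!h$, i.e.\ $\si(w)^2\!=\!w^3$ in local coordinates, which no holomorphic (or even continuous) germ satisfies. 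So your argument must lean on simplicity, and that is where circularity and an error enter: the implication ``simple $\Rightarrow$ a generic value near $u(z_0)$ has a single preimage near $z_0$'' is, in this paper's development, a consequence of the factorization $u\!=\!u'\!\circ\!h_u$ of Proposition~\ref{SimplMap_prp}, whose proof rests on this very lemma; and your independent proof of it fails because Corollary~\ref{FHS_crl2a} compares two globally defined $J$-holomorphic maps into~$X$ and cannot analytically continue a locally defined self-map of~$\Si$ to a global automorphism --- for higher-genus $\Si$ the local deck germ typically extends to no automorphism at all, so no contradiction with simplicity is obtained.

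The local ingredients you use at the critical point are also not delivered by the cited results. In Corollary~\ref{FHS_crl} the factor $\Phi_x$ is \emph{not} invariant under the deck transformations of~$h_x$: already for $J\!=\!J_{\C^2}$ and the cusp $u(z)\!=\!(z^2,z^3)$ one gets $\si_x(z)\!=\!(z^2,0)$ and $h_x(z)\!=\!z^2$, while $u(-z)\!\neq\!u(z)$, so no descent to the normalization is possible; the multiplicity relevant to a local factorization is the generic local fiber cardinality of~$u$, not $\ord_xu$. Similarly, the ``identity principle'' you invoke for the two image germs is not covered by Corollaries~\ref{FHS_crl2a} and~\ref{FHS_crl2b}, both of which require an immersed base point, whereas here both $x$ and $y$ are critical. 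The paper's proof sidesteps all of this: it takes the finite set $S\!=\!u^{-1}(u(x))$, applies Lemma~\ref{SimplMap_lmm2} to obtain the punctured covering structure of the local identifications near each $s\!\in\!S$ with degrees~$m_s$, and then argues by connectedness that, for $y'$ ranging over a connected punctured neighborhood of~$y$ inside $\Si_u^*$, the collection of points of~$S$ whose local branches contain points related to~$y'$ is constant (the corresponding subsets are open and partition the punctured neighborhood); since $(x,y)\!\in\!\cR_u$, the point~$x$ lies in that collection, which is precisely the claim. If you want to repair your argument, it is this counting/connectedness mechanism that has to be reproduced; a single holomorphic section~$\si$ is neither available in general nor needed.
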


\begin{proof}
By Corollary~\ref{FHS_crl2}, the last set in~\eref{Siudfn_e} is finite.
By the same reasoning as in the last part of the proof of  Lemma~\ref{SimplMap_lmm2},
\BE{SimplMap4_e1}h_u\!:\Si_u^*\lra h_u(\Si_u^*)\subset \Si'\EE
is a local homeomorphism. 
Since $u(z)\!=\!u(z')$ for all $(z,z')\!\in\!\cR_u^*$, 
the definition of~$\Si_u^*$ thus implies that~\eref{SimplMap4_e1} 
is a finite-degree covering projection over each topological component of~$h_u(\Si_u^*)$.
Since the complement of finitely many points in a connected Riemann surface is connected,
the degree of this covering over a point $h_u(z)$ depends only on the topological component
of~$\Si$ containing~$z$.
For any point $z\!\in\!\Si$, not necessarily in~$\Si_u^*$, we denote this degree by~$d(z)$.\\

\noindent
By Corollary~\ref{FHS_crl2}, the set 
$$S\equiv u^{-1}\big(u(x)\big)\subset \Si$$
is finite. 
Let $W\!\subset\!X$ be a neighborhood of $u(x)$ such that the topological components~$\Si_s$
of~$u^{-1}(W)$ containing the points $s\!\in\!S$ are pairwise disjoint
(if $U$ is a union of disjoint balls around the points of~$S$, then
$$W \equiv X-u(\Si\!-\!U)$$
works).
By Lemma~\ref{SimplMap_lmm2}, for each $s\!\in\!S$ there exists a neighborhood~$U_s'$
of~$s$ in~$\Si_s$ such~that 
$$h_u\!: U_s'\!-\!\{s\}\lra h_u\big(U_s'\!-\!\{s\}\big)\subset \Si'$$
is a covering projection of some degree $m_s\!\in\!\Z^+$;
we can assume that $U_x'\!\subset\!U_x$.
Along with  the compactness of~$\Si$, the former implies that
\BE{SimplMap4_e3}\begin{aligned}
&\big|h_u^{-1}\big(h_u(y')\big)\cap U_s'\big|\in \big\{0,m_s\big\}
&\qquad &\forall\,y'\!\in\!U_{s'}'\!\cap\!\Si_u^*,~s,s'\!\in\!S, \\
&\sum_{s\in S}\big|h_u^{-1}\big(h_u(y')\big)\cap U_s'\big|=d(s')
&\qquad
&\forall~y'\!\in\!U_{s'}'\!\cap\!\Si_u^*,~s'\!\in\!S.
\end{aligned}\EE
Define
$$\cP_y(S)=\big\{S'\!\subset\!S\!:\sum_{s\in S'}\!m_s\!=\!d(y)\big\}.$$

\vspace{.2in}

\noindent
Let $U_y''\!\subset\!U_y'$ be a connected neighborhood of $y$.
For each $S'\!\in\!\cP_y(S)$, define
$$U_{y;S'}''=\big\{y'\!\in\!U_y''\!\cap\!\Si_u^*\!:
\{s\!\in\!S\!:h_u^{-1}(h_u(y'))\!\cap\!U_s'\!\neq\!\eset\}\!=\!S'\big\}.$$
By~\eref{SimplMap4_e3}, these sets partition $U_y''\!\cap\!\Si_u^*$.
Since each set
$$\big\{y'\!\in\!U_y''\!\cap\!\Si_u^*\!:h_u^{-1}(h_u(y'))\!\cap\!U_s'\!\neq\!\eset\big\}$$
is open, \eref{SimplMap4_e3} also implies that each set $U_{y;S'}''$ is open.
Since the set $U_y''\!\cap\!\Si_u^*$ is connected, it follows that 
$U_y''\!\cap\!\Si_u^*\!=\!U_{y;S_y}''$ for some $S_y\!\in\!\cP_y(S)$.
Since $(x,y)\!\in\!\cR_u$, $x\!\in\!S_y$.
Thus, the image of the projection
$$\cR_u\cap (U_x'\!\times\!X)\lra X$$
to the second component contains the neighborhood~$U_y''$ of~$y$ in~$\Si$.
\end{proof}

\begin{crl}\label{SimplMap_crl}
Suppose $(X,J)$, $(\Si,\fj)$, and~$u$ are as in Proposition~\ref{SimplMap_prp}.
The quotient map~$h_u$ in~\eref{SimplMap_e2} is open and closed.
\end{crl}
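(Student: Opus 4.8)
The plan is to prove openness and closedness of $h_u$ separately, drawing on the local descriptions established in Lemmas~\ref{SimplMap_lmm2}--\ref{SimplMap_lmm4}. For \emph{openness}, I would argue as follows. Let $V\!\subset\!\Si$ be open; I must show $h_u(V)$ is open in $\Si'$, equivalently that $h_u^{-1}(h_u(V))$ is open in $\Si$. So fix $z\!\in\!h_u^{-1}(h_u(V))$; there is $z_0\!\in\!V$ with $z\!\sim_u\!z_0$, i.e.\ $(z,z_0)\!\in\!\cR_u$. Apply Lemma~\ref{SimplMap_lmm4} with $x\!=\!z_0$ and $y\!=\!z$: taking $U_x\!=\!V$, the image of the projection $\cR_u\cap(V\!\times\!X)\lra X$ (to the second, $\Si$-valued component --- note $\cR_u\!\subset\!\Si\!\times\!\Si$) contains a neighborhood $U_z$ of~$z$ in~$\Si$. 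Every point $z'\!\in\!U_z$ thus satisfies $(z_0',z')\!\in\!\cR_u$ for some $z_0'\!\in\!V$, so $z'\!\sim_u\!z_0'$ and hence $z'\!\in\!h_u^{-1}(h_u(V))$. Therefore $U_z\!\subset\!h_u^{-1}(h_u(V))$, proving this set is open, hence $h_u(V)$ is open by definition of the quotient topology.

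For \emph{closedness}, I would use the compactness of~$\Si$. Let $F\!\subset\!\Si$ be closed, hence compact. Then $h_u(F)$ is a compact subset of~$\Si'$. It remains to observe that $\Si'$ is Hausdorff, for then compact subsets are closed. Hausdorffness of $\Si'$ follows from the local homeomorphism and covering-projection structure assembled in the preceding lemmas: around each point of~$\Si$ one has the model of Lemma~\ref{SimplMap_lmm3} (near critical/branch points, where $h_u$ looks locally like $z\!\mapsto\!\prod z'$, a homeomorphism onto an open neighborhood of $0$ in~$\C$) and of Lemma~\ref{SimplMap_lmm2}/\eqref{SimplMap4_e1} (near regular points, where $h_u$ is a local homeomorphism); in either case a point of $\Si'$ has a neighborhood homeomorphic to an open subset of~$\C$. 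Two distinct points $h_u(z_1)\!\neq\!h_u(z_2)$ either have $u(z_1)\!\neq\!u(z_2)$, in which case disjoint neighborhoods in~$X$ pull back to separate them, or $u(z_1)\!=\!u(z_2)$ but $z_1\!\not\sim_u\!z_2$; in the latter case the finiteness of $u^{-1}(u(z_1))$ (Corollary~\ref{FHS_crl2}) together with the disjoint-neighborhood construction in the proof of Lemma~\ref{SimplMap_lmm4} (the sets $\Si_s$, and the partition of $U_y''\!\cap\!\Si_u^*$ into the open sets $U_{y;S'}''$) yields disjoint saturated neighborhoods of $z_1$ and~$z_2$, whose $h_u$-images are disjoint open sets separating $h_u(z_1)$ and $h_u(z_2)$.

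The main obstacle I expect is the Hausdorff verification, specifically the case $u(z_1)\!=\!u(z_2)$ with $z_1\!\not\sim_u\!z_2$: one must show that the equivalence relation $\sim_u$ does not "re-glue" points arbitrarily close to the fiber $u^{-1}(u(z_1))$ in a way that destroys separation. This is exactly what the monodromy/partition argument in Lemma~\ref{SimplMap_lmm4} controls --- the key point being that for $y'$ near $z_i$ in $\Si_u^*$, the set $\{s\!\in\!S\!:h_u^{-1}(h_u(y'))\!\cap\!U_s'\!\neq\!\eset\}$ is \emph{locally constant} (equation~\eqref{SimplMap4_e3} and the subsequent connectedness argument), so nearby points of distinct components $\Si_{s_1},\Si_{s_2}$ are identified only if $s_1$ and $s_2$ already lie in the same element $S_y$ of the partition. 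Assembling this into genuinely disjoint saturated open neighborhoods of $z_1,z_2$ is the one place requiring care; the openness half and the compactness-gives-closedness half are then routine.
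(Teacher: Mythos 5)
Your openness argument coincides with the paper's: it is exactly the application of Lemma~\ref{SimplMap_lmm4} that the paper declares ``immediate,'' and it is fine. The closedness half, however, has a genuine gap. You reduce it to Hausdorffness of~$\Si'$ (closed subset of compact~$\Si$ gives compact image, which is closed \emph{provided} $\Si'$ is Hausdorff), but you never establish Hausdorffness: in the crucial case $u(z_1)\!=\!u(z_2)$, $z_1\!\not\sim_u\!z_2$ you only gesture at ``disjoint saturated neighborhoods'' and yourself flag their construction as unfinished. This cannot be taken for granted here, because in the paper's logical order Hausdorffness of~$\Si'$ is a \emph{consequence} of this corollary: it is deduced in the proof of Proposition~\ref{SimplMap_prp} from the closedness of~$h_u$ via \cite[Lemma~73.3]{Mu}. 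So your route needs an independent proof of Hausdorffness, and the repair you sketch through the partition argument inside the proof of Lemma~\ref{SimplMap_lmm4} requires a further limiting step you do not supply (e.g.\ that if fibers through points arbitrarily close to~$z_2$ meet every neighborhood of~$z_1$, then $(z_1,z_2)\!\in\!\cR_u$, using that $S$ is finite and the components~$\Si_s$ are disjoint).

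The missing ingredient is the simple observation that $\sim_u$, i.e.~$\cR_u$, is by construction the \emph{closure} of~$\cR_u^*$ and hence a closed subset of $\Si\!\times\!\Si$. With it there are two quick completions. (i) Keep your route: for an open quotient map the quotient is Hausdorff as soon as the relation is closed; given $(z_1,z_2)\!\notin\!\cR_u$, pick $U_1\!\times\!U_2$ disjoint from~$\cR_u$, and then $h_u(U_1)$ and $h_u(U_2)$ are disjoint open sets separating $h_u(z_1)$ and~$h_u(z_2)$ --- no monodromy analysis needed. (ii) Do what the paper does and bypass Hausdorffness entirely: for $A\!\subset\!\Si$ closed, show $h_u^{-1}(h_u(A))$ is closed by taking $y_i\!\lra\!y$ with $h_u(y_i)\!=\!h_u(x_i)$, $x_i\!\in\!A$, extracting a convergent subsequence $x_i\!\lra\!x\!\in\!A$ by compactness of~$\Si$, and concluding $(x,y)\!\in\!\cR_u$ because $\cR_u$ is closed; Hausdorffness of~$\Si'$ is then a corollary of closedness rather than an input. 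Either fix is a few lines; as written, your closedness claim is unsupported.
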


\begin{proof}
The openness of $h_u$ is immediate from Lemma~\ref{SimplMap_lmm4}.
Suppose $A\!\subset\!\Si$ is a closed subset and $y_i\!\in\!h_u^{-1}(h_u(A))$
is a sequence converging to some $y\!\in\!\Si$.
Let $x_i\!\in\!A$ be such that \hbox{$h_u(x_i)\!=\!h_u(y_i)$}.
Passing to a subsequence, we can assume that the sequence $x_i$ converges to some $x\!\in\!A$.
Since $\Si\!-\!\Si_u^*$ consists of isolated points, we can also assume that 
$y_i\!\in\!\Si_u^*$ and so $(x_i,y_i)\!\in\!\cR_u^*$.
Thus, $(x,y)\!\in\!\cR_u$ and so $y\!\in\!h_u^{-1}(h_u(A))$.
We conclude that $h_u$ is a closed~map.
\end{proof}

\begin{proof}[{\bf\emph{Proof of Proposition~\ref{SimplMap_prp}}}]
Let $\Si'$, $h_u$, and $u'$ be as in~\eref{SimplMap_e2}.
By the second statement in Corollary~\ref{SimplMap_crl} and \cite[Lemma~73.3]{Mu},
$\Si'$ is a Hausdorff topological space.
Fix a Riemannian metric~$g$ on~$X$.\\

\noindent
For $(z,z')\!\in\!\cR_u^*$ with $z\!\neq\!z'$, 
the neighborhoods $U_z$ and~$U_{z'}$ as in~\eref{vphcond_e}
can be chosen so that they are disjoint and $u|_{U_z}$ is an embedding.
Since $u$ is  $J$-holomorphic, $\vph_{z'z}$ is then a biholomorphic map 
and $h_u|_{U_z}$ is a homeomorphism onto $h_u(U_z)\!\subset\!\Si'$.
Thus, the Riemann surface structure~$\fj$ on~$\Si$ determines a Riemann surface structure~$\fj'$
on~$h_u(\Si_u^*)$ so that $h_u|_{\Si_u^*}$ is a holomorphic covering projection
of~$h_u(\Si_u^*)$ and $u'|_{h_u(\Si_u^*)}$ is a $J$-holomorphic map with
\BE{SimplMap_e5}E_g\big(u';h_u(\Si_u^*)\big)\le E_g(u).\EE
By Corollary~\ref{FHS_crl2}, $\Si_u'\!-\!h_u(\Si_u^*)$ consists of finitely many points.
By the first statement in Corollary~\ref{SimplMap_crl} and by Lemma~\ref{SimplMap_lmm3},  
$\fj'$ extends over these points to a Riemann surface structure
on~$\Si'$; we denote the extension also by~$\fj'$.
Since the continuous map $h_u$ is $\fj'$-holomorphic outside of the finitely many points
of $\Si\!-\!\Si_u^*$, it is holomorphic everywhere.
Since the continuous map~$u'$ is $J$-holomorphic on~$h_u(\Si_u^*)$,
\eref{SimplMap_e5} and Proposition~\ref{JHolomreg_prp} imply that it is $J$-holomorphic everywhere.\\

\noindent
Suppose $z\!\in\!\Si$ and $z_i,z_i'\!\in\!\Si$ with $i\!\in\!\Z^+$ are such that 
$$\nd_zu\neq0, \qquad z_i\neq z_i',~u(z_i)=u(z_i')~~\forall\,i, \qquad \lim_{i\lra\i}z_i=z.$$
Passing to a subsequence, we can assume that the sequence $z_i'$ converges
to some point \hbox{$z'\!\in\!\Si$} with $u(z')\!=\!u(z)$.
Since the restriction of~$u$ to a neighborhood of~$z$ is an embedding,
$z'\!\neq\!z$.
By Corollary~\ref{FHS_crl2b}, there exists a diffeomorphism~$\vph_{z'z}$
as in~\eref{vphcond_e}.
Thus, $h_u(z)\!=\!h_u(z')$, the map $h_u$ is not injective, and $u$ is not simple.
\end{proof}

\subsection{Energy bound on long cylinders}
\label{CylEner_subs}

\noindent
Proposition~\ref{CylEner_prp} and Corollary~\ref{CylEner_crl} below 
concern $J$-holomorphic maps from long cylinders.
Their substance is that most of the energy and variation of such maps
are concentrated near the~ends.
These technical statements are used to obtain important geometric conclusions
in Sections~\ref{Conver_subs1} and~\ref{Conver_subs2}.

\begin{prp}\label{CylEner_prp}
If $(X,J)$ is an almost complex manifold and $g$ is a Riemannian metric on~$X$, then 
there exist continuous functions $\de_{J,g},\hb_{J,g},C_{J,g}\!:X\!\lra\!\R^+$
with the following properties.
If \hbox{$u\!:[-R,R]\!\times\!S^1\!\lra\!X$} is a \hbox{$J$-holomorphic} map such that
$\Im\,u\subset B_{\de_{J,g}(u(0,1))}^g(u(0,1))$, then 
\BE{CylEner_e}
E_g\big(u;[-R\!+\!T,R\!-\!T]\!\times\!S^1\big) \le 
C_{J,g}\big(u(1,0)\big)\ne^{-T}E_g(u) \qquad\forall~T\ge0\,.\EE
If in addition $E_g(u)<\hb_{J,g}\big(u(0,1)\big)$, then
\BE{CylEner_e2}
\diam_g\big(u([-R\!+\!T,R\!-\!T]\!\times\!S^1)\big)
\le C_{J,g}\big(u(1,0)\big) \ne^{-T/2}\sqrt{E_g(u)} \quad\forall~T\ge1\,.\EE
\end{prp}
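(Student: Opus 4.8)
The plan is to first establish the energy decay estimate \eref{CylEner_e} and then deduce the diameter estimate \eref{CylEner_e2} as a fairly direct consequence.

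For \eref{CylEner_e}, the key object is the function $\ve(\tau) = E_g(u;[-R+\tau,R-\tau]\times S^1)$ on $[0,R]$. Writing coordinates $(\tau,\th)$ on the cylinder, one has $-\ve'(\tau) = \int_{\{\tau\}\times S^1}|u_\th|_g^2\,\nd\th + \int_{\{-\tau\}\times S^1}|u_\th|_g^2\,\nd\th$ (using $|u_s|_g=|u_\th|_g$ by $J$-holomorphicity and compatibility), so the goal is a differential inequality of the form $\ve(\tau)\le C(-\ve'(\tau))$, which integrates to exponential decay. To get this, I would run the same isoperimetric-inequality argument as in Lemma~\ref{JHolomreg_lmm}: for $\tau\in(0,R)$ the loop $\ga_\tau(\ne^{\fI\th})=u(\tau,\th)$ bounds a small disk $f_{\ga_\tau}$ via geodesic coning from $x_0\equiv u(0,1)$, and similarly for $\ga_{-\tau}$; gluing $u|_{[-(R-\tau),R-\tau]\times S^1}$ to these two caps gives a sphere that is null-homotopic, so by closedness of a symplectic form $\om_{x_0}$ taming $J$ near $x_0$ (as in the proof of Proposition~\ref{MonotLmm_prp}) the energy of the middle piece is bounded by $\ell_g(\ga_\tau)^2 + \ell_g(\ga_{-\tau})^2$ up to a constant depending only on $x_0$. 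By Cauchy--Schwarz $\ell_g(\ga_{\pm\tau})^2 \le 2\pi \int_{\{\pm\tau\}\times S^1}|u_\th|_g^2\,\nd\th$, hence $\ve(\tau)\le C\cdot(-\ve'(\tau))$. This yields $\ve(\tau)\le \ne^{-\tau/C}\ve(0)\le \ne^{-\tau/C}E_g(u)$; after rescaling the cylinder (replacing $R$ by $R/C$, equivalently absorbing $C$ into the definition of the coordinate, which is harmless since the statement allows an arbitrary constant $C_{J,g}$), this becomes \eref{CylEner_e}. The smallness hypothesis $\Im\,u\subset B_{\de_{J,g}(x_0)}^g(x_0)$ is exactly what is needed to make the geodesic caps well-defined and the metric comparison constants uniform, so $\de_{J,g}$ can be taken to be the function $\de_g$ from Proposition~\ref{MonotLmm_prp}.

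For \eref{CylEner_e2}, the point is to control the $C^0$-oscillation by the energy on slightly larger sub-cylinders via the Mean Value Inequality. Given $(\tau,\th)$ and $(\tau',\th')$ in $[-R+T,R-T]\times S^1$ with $T\ge 1$, each point sits in a coordinate ball $B_{1/2}$ inside $[-R+T-1,R-T+1]\times S^1$, so Proposition~\ref{PtBnd_prp} (applicable once $\hb_{J,g}$ is chosen small enough that $E_g(u)<\hb_{J,g}(x_0)$ forces the energy on each such ball below the threshold there, which also needs $\Im\,u$ inside a fixed ball — already assumed) gives $|\nd u|_g \le C\sqrt{E_g(u;[-R+T-1,R-T+1]\times S^1)}$ pointwise on $[-R+T,R-T]\times S^1$. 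Integrating this gradient bound along a path of bounded length joining the two points (length at most $\pi + 2R$ a priori, but one only ever needs it along the compact middle segment, and the exponential factor comes from the energy, not the path length) and then invoking \eref{CylEner_e} with $T-1$ in place of $T$ to bound that energy by $C\ne^{-(T-1)}E_g(u)$ gives $d_g(u(\tau,\th),u(\tau',\th')) \le C\ne^{-T/2}\sqrt{E_g(u)}$, whence the diameter bound. One must be slightly careful that the path-length along the cylinder between two slices $\{\tau\}$ and $\{\tau'\}$ can be as large as $2R$; this is handled by instead chaining the estimate: oscillation across one unit-length sub-cylinder $[\tau,\tau+1]\times S^1$ is $\le C\ne^{-\mathrm{dist}(\tau,\prt)/2}\sqrt{E_g(u)}$, and summing the geometric series over all such unit blocks between the two points still gives a bound of the form $C\ne^{-T/2}\sqrt{E_g(u)}$.

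The main obstacle I anticipate is making the isoperimetric/coning argument genuinely uniform: the constant in the inequality $\ve(\tau)\le C(-\ve'(\tau))$ must depend only on $u(0,1)$ (equivalently $u(1,0)$ up to adjusting $\de_{J,g}$), which requires that the geodesic caps $f_{\ga_{\pm\tau}}$ land in a region where the metric, the taming form $\om_{x_0}$, and the comparison between $g$ and the Euclidean metric are all controlled by $x_0$ alone — this is precisely why the hypothesis is phrased as a ball around $u(0,1)$ rather than a uniform statement, and it mirrors the bookkeeping already done in Lemma~\ref{JHolomreg_lmm} and Proposition~\ref{MonotLmm_prp}, so it should go through. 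A secondary technical point is confirming that the three functions $\de_{J,g},\hb_{J,g},C_{J,g}$ can be taken continuous in $x_0$; this follows because each is built from continuous data ($\de_g$, $\hb_{J,g}$ from Proposition~\ref{PtBnd_prp}, and the curvature/metric-comparison constants), taking minima and maxima over the relevant compact sets.
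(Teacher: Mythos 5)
Your strategy for \eref{CylEner_e} does not deliver the rate that the proposition actually asserts, and the proposed repair is not legitimate. The statement fixes the exponential rate $\ne^{-T}$ and only allows the multiplicative constant $C_{J,g}$ to depend on the point; your isoperimetric argument produces a differential inequality $\ve(\tau)\le C(-\ve'(\tau))$ with a constant $C$ coming from the coning construction, hence only $\ve(T)\le\ne^{-T/C}E_g(u)$. If $C>1$ this is strictly weaker than \eref{CylEner_e}, and no adjustment of $C_{J,g}$ recovers it; moreover the ``rescaling of the cylinder'' you invoke is not available, since $[-R,R]\!\times\!S^1$ has the circle factor of fixed circumference (the modulus of the cylinder is a conformal invariant), so $T$ has an absolute meaning and cannot be absorbed into a coordinate change. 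Quantitatively: after Cauchy--Schwarz, $\ell_g(\ga_{\pm})^2\le 2\pi\!\int|u_\th|^2$ and $-\ve'\approx\int_{+}|u_\th|^2+\int_{-}|u_\th|^2$, so you need an isoperimetric inequality with constant at most $\tfrac1{2\pi}$ to get $C\le1$. Coning from a point \emph{on} the boundary loop gives only about $\tfrac14$ (rate $\ne^{-2T/\pi}$, weaker than $\ne^{-T}$), and coning from $x_0=u(0,1)$, as you literally propose, is worse: $d_g(x_0,\ga_{\pm})$ is controlled only by $\de_{J,g}$, not by $\ell_g(\ga_\pm)$, so the cap bound is linear rather than quadratic in the length ($|\!\int\!f_\ga^*\om_0|\lesssim\de\,\ell_g(\ga)$), which yields only algebraic decay in $T$. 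The near-sharp constant $\tfrac1{4\pi}$ is exactly what the Wirtinger/Poincar\'e inequality provides, and that is the paper's mechanism: work in a chart with $J(x_0)=J_{\C^n}$, write $|\nd u|^2=4|\bar\prt u|^2+2\,\nd(f\!\cdot\!\nd g)$ with $|\bar\prt u|\le C\de|\nd u|$, apply Stokes on the sub-cylinder, and estimate the boundary terms by $\tfrac12\!\int|u_\th|^2$ via H\"older and Lemma~\ref{poin_lmm}; this gives $\ve(T)\le\tfrac23(1\!+\!O(\de^2))(-\ve'(T))$, hence the stated $\ne^{-T}$ (and, as remarked after Corollary~\ref{CylEner_crl}, rates arbitrarily close to $\ne^{-2T}$). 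So your route can be salvaged only by replacing the crude coning bound with this sharp action--length inequality, at which point it becomes the paper's proof.

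Your treatment of \eref{CylEner_e2} is essentially the paper's: the Mean Value Inequality (Proposition~\ref{PtBnd_prp}, after reducing to a compatible metric near $u(0,1)$ as in the proof of Proposition~\ref{MonotLmm_prp}) gives $|\nd_{(t,\th)}u|_g\lesssim\ne^{(1+|t|-R)/2}\sqrt{E_g(u)}$ on the middle of the cylinder, and integrating this along the $t$-direction (the exponential weight makes the integral a convergent geometric sum, so the a priori path length $2R$ is harmless) gives the diameter bound. This part is fine, but note that the $\ne^{-T/2}$ it produces is inherited from the rate in \eref{CylEner_e}; with your weaker rate from the first part you would again fall short of the stated estimate.
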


\begin{crl}\label{CylEner_crl}
If $(X,J)$ is a compact almost complex manifold and $g$ is a Riemannian metric on~$X$, 
there exist $\hb_{J,g},C_{J,g}\!\in\!\R^+$ with the following property.
If $u\!:[-R,R]\!\times\!S^1\!\lra\!X$ is a \hbox{$J$-holomorphic} map such that 
$E_g(u)\!<\!\hb_{J,g}$, then 
\begin{alignat*}{2}
E_g\big(u;[-R\!+\!T,R\!-\!T]\!\times\!S^1\big) &\le C_{J,g}\ne^{-T}E_g(u)
&\qquad&\forall~T\ge1,\\
\diam_g\big(u([-R\!+\!T,R\!-\!T]\!\times\!S^1)\big)
&\le C_{J,g}\ne^{-T/2}\sqrt{E_g(u)} &\quad&\forall~T\ge2\,.
\end{alignat*}
\end{crl}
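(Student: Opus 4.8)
The plan is to reduce the corollary to Proposition~\ref{CylEner_prp} by a compactness argument; the only real work is to arrange that proposition's hypothesis $\Im u\!\subset\!B_{\de_{J,g}(u(0,1))}^g(u(0,1))$ after removing a collar of bounded length from each end of the cylinder. Since $X$ is compact and $J$-holomorphicity does not involve the metric, it suffices to prove the statement for a fixed metric~$g$ compatible with~$J$ (any two metrics on a compact manifold are uniformly equivalent, which only affects the constants $\hb_{J,g},C_{J,g}$). With $\de_{J,g},\hb_{J,g},C_{J,g}$ as in Proposition~\ref{CylEner_prp}, the numbers $\de_0\!\equiv\!\min_X\de_{J,g}$ and $\hb_0\!\equiv\!\min_X\hb_{J,g}$ are then positive, while $C_0\!\equiv\!\max_X C_{J,g}$ and $C_M\!\equiv\!\max_X C_{g,J}$ (the latter from Proposition~\ref{MonotLmm_prp}) are finite. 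I would also fix $\rho_0\!\in\!(0,1)$ small enough that a Euclidean $\rho_0$-disk around any point of $[-R\!+\!\rho_0,R\!-\!\rho_0]\!\times\!S^1$ embeds in $[-R,R]\!\times\!S^1$; since the asserted estimates are vacuous unless $R\!\ge\!1$, I assume $R\!\ge\!1$, so that $[-R\!+\!\rho_0,R\!-\!\rho_0]\!\times\!S^1$ is a genuine compact sub-cylinder with two boundary circles.

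First I would use the Mean Value Inequality. Taking $r\!=\!\diam_g(X)$ in Proposition~\ref{PtBnd_prp}, so that its image hypothesis is automatic, and setting $\hbar_1\!=\!\min_{x\in X}\hb_{J,g}(x,\diam_g X)\!>\!0$, one obtains: if $E_g(u)\!<\!\hbar_1$, then applying the inequality to the restriction of~$u$ to $B_{\rho_0}(z)$ gives $|\nd_zu|_g^2\!<\!\tfrac{16}{\pi\rho_0^2}E_g(u)$ for every $z\!\in\![-R\!+\!\rho_0,R\!-\!\rho_0]\!\times\!S^1$. In particular, the image under~$u$ of each of the two circles $\{\pm(R\!-\!\rho_0)\}\!\times\!S^1$ has diameter at most $L_0\sqrt{E_g(u)}$, where $L_0$ depends only on $\rho_0$ and the length of~$S^1$.

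The heart of the argument --- and the step I expect to be the main obstacle --- is to confine the image of the trimmed cylinder $v\!\equiv\!u|_{[-R+\rho_0,R-\rho_0]\times S^1}$ to a small ball: the gradient bound above only controls lengths of paths in the domain, and a path across the sub-cylinder has length comparable to~$R$, so it does not bound $\diam_g(\Im v)$ independently of~$R$. To get around this I would apply the Monotonicity Lemma, Proposition~\ref{MonotLmm_prp}, to the $J$-holomorphic map~$v$ from a compact Riemann surface with boundary (if $v$ is constant there is nothing to prove). Fix $\de_1\!>\!0$ and then $\hbar\!\in\!(0,\min(\hbar_1,\hb_0)]$ with $\hbar\!<\!\pi\de_1^2/(1\!+\!C_M\de_1)^4$. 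For $E_g(u)\!<\!\hbar$, no point~$x$ whose ball $B_{\de_1}^g(x)$ is disjoint from the image of the boundary circles of~$v$ can lie in $\Im v$: otherwise $\ord_xv\!\ge\!1$ and Proposition~\ref{MonotLmm_prp} would force $E_g(v)\!\ge\!\pi\de_1^2/(1\!+\!C_M\de_1)^4\!>\!\hbar\!>\!E_g(v)$. Hence $\Im v$ lies in the $\de_1$-neighborhood of the union of the two boundary-circle images, each of diameter $\le\!L_0\sqrt{E_g(u)}$; since $\Im v$ is connected, this gives $\diam_g(\Im v)\!\le\!4\de_1\!+\!4L_0\sqrt{E_g(u)}$. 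Shrinking $\de_1$ and~$\hbar$ so that, in addition, $4\de_1\!+\!4L_0\sqrt{\hbar}\!<\!\de_0$, I conclude $\Im v\!\subset\!B_{\de_0}^g(v(0,1))\!\subset\!B_{\de_{J,g}(v(0,1))}^g(v(0,1))$.

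Finally I would regard~$v$ as a $J$-holomorphic map on $[-R',R']\!\times\!S^1$ with $R'\!=\!R\!-\!\rho_0$: its image lies in the required ball about $v(0,1)\!=\!u(0,1)$ and $E_g(v)\!\le\!E_g(u)\!<\!\hb_0$. For $T\!\ge\!1$ put $T'\!=\!T\!-\!\rho_0\!\ge\!0$; then $[-R'\!+\!T',R'\!-\!T']\!=\![-R\!+\!T,R\!-\!T]$, and \eref{CylEner_e} gives $E_g(u;[-R\!+\!T,R\!-\!T]\!\times\!S^1)\!\le\!C_0\ne^{-(T-\rho_0)}E_g(u)\!=\!(C_0\ne^{\rho_0})\ne^{-T}E_g(u)$. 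For $T\!\ge\!2$ we have $T'\!\ge\!1$, so \eref{CylEner_e2} gives $\diam_g(u([-R\!+\!T,R\!-\!T]\!\times\!S^1))\!\le\!C_0\ne^{-(T-\rho_0)/2}\sqrt{E_g(u)}\!=\!(C_0\ne^{\rho_0/2})\ne^{-T/2}\sqrt{E_g(u)}$. Taking $\hb_{J,g}\!=\!\hbar$ and $C_{J,g}\!=\!C_0\ne^{\rho_0}$, and then readjusting the constants for the initial passage to a compatible metric, finishes the proof.
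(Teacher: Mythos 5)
Your argument is correct and follows essentially the paper's route: the confinement step you carry out by hand—trimming a bounded collar off each end and combining the Mean Value Inequality (Proposition~\ref{PtBnd_prp}) with the Monotonicity Lemma (Proposition~\ref{MonotLmm_prp}) to force the image of the trimmed cylinder into a ball of radius below $\min_X\de_{J,g}$—is precisely the content of Lemma~\ref{CylCoord_lmm}, which the paper invokes instead of reproving. After that, applying Proposition~\ref{CylEner_prp} to the trimmed cylinder and absorbing the shift in $T$ into the constant is exactly what the paper does.
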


\vspace{.1in}

\noindent
As an example, the energy of the injective map
$$[-R,R]\times S^1\lra \C, \qquad (s,\th)\lra s\ne^{\fI\th}\,,$$
is the area of its image, i.e.~$\pi(\ne^{2R}\!-\!\ne^{-2R}\big)$.
Thus, the exponent $\ne^{-T}$ in~\eref{CylEner_e} can be replaced by~$\ne^{-2T}$ 
in this case.
The proof of Proposition~\ref{CylEner_prp} shows that in general the exponent
can be taken to be~$\ne^{-\mu T}$ with $\mu$ arbitrarily close to~2, 
but at the cost of increasing~$C_{J,g}$ and reducing~$\de_{J,g}$.

\begin{lmm}[Poincare Inequality]\label{poin_lmm}
If $f\!: S^1\!\lra\!\R^n$ is a smooth function such that
$\int_0^{2\pi}\!f(\th)\nd\th\!=\!0$, then
$$\int_0^{2\pi}\!\!|f(\th)|^2\nd\th\le\int_0^{2\pi}\!\!|f'(\th)|^2\nd\th.$$
\end{lmm}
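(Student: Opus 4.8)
The plan is to expand $f$ in a Fourier series and invoke Parseval's identity. Since $|f|^2$ and $|f'|^2$ are the sums of the squares of the $n$ coordinate functions of~$f$, it suffices to treat a single scalar smooth function $f\!:S^1\!\lra\!\R$ satisfying $\int_0^{2\pi}f(\th)\nd\th\!=\!0$. For such $f$ I would set $c_k\!=\!\frac{1}{2\pi}\int_0^{2\pi}f(\th)\ne^{-\fI k\th}\nd\th$ for $k\!\in\!\Z$, so that $f(\th)\!=\!\sum_{k\in\Z}c_k\ne^{\fI k\th}$; the smoothness of $f$ forces the $c_k$ to decay faster than any power of~$|k|$, so this series and the differentiated series $f'(\th)\!=\!\sum_{k\in\Z}\fI k\,c_k\ne^{\fI k\th}$ converge uniformly.

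The hypothesis $\int_0^{2\pi}f(\th)\nd\th\!=\!0$ says precisely that $c_0\!=\!0$. Parseval's identity, applied to $f$ and then to~$f'$, gives
\[
\int_0^{2\pi}\!\!\big|f(\th)\big|^2\nd\th=2\pi\!\sum_{k\in\Z}|c_k|^2=2\pi\!\sum_{k\neq0}|c_k|^2,\qquad
\int_0^{2\pi}\!\!\big|f'(\th)\big|^2\nd\th=2\pi\!\sum_{k\in\Z}k^2|c_k|^2=2\pi\!\sum_{k\neq0}k^2|c_k|^2.
\]
Since $k^2\!\ge\!1$ for every nonzero integer~$k$, the right-hand side of the second equality dominates that of the first term by term, which is the asserted inequality for scalar~$f$. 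Summing over the $n$ coordinates recovers the full statement.

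This argument has no genuine obstacle: the only points requiring (routine) care are the validity of Parseval's identity and of term-by-term differentiation of the Fourier series, both immediate from the smoothness of~$f$ (in fact $f\!\in\!C^1$ would do). One could instead give a Fourier-free proof via integration by parts, but the computation above has the incidental benefit of exhibiting the equality case — equality holds exactly when $c_k\!=\!0$ for all $|k|\!\ge\!2$, i.e.~when $f(\th)\!=\!a\cos\th+b\sin\th$ for constant vectors $a,b\!\in\!\R^n$ — though this will not be needed later.
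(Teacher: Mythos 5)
Your proof is correct and follows essentially the same route as the paper: expand $f$ in a Fourier series, use the hypothesis to kill the constant mode, and compare $\sum|c_k|^2$ with $\sum k^2|c_k|^2$ termwise via $k^2\ge1$. The extra care you take with Parseval, uniform convergence, and the reduction to scalar components is fine but not a genuine departure.
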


\noindent
{\it Proof:} We can write 
$f(\th)=\sum\limits_{k>-\i}\limits^{k<\i}\!a_k\ne^{\fI k\th}$.
Since $\int_0^{2\pi}\!f(\th)\nd\th\!=\!0$, $a_0\!=\!0$. Thus,
$$\int_0^{2\pi}\!\!|f(\th)|^2\nd\th=
\sum_{k>-\i}^{k<\i}\!|a_k|^2\le 
\sum_{k>-\i}^{k<\i}\!|ka_k|^2=
\int_0^{2\pi}\!\!|f'(\th)|^2\nd\th.$$

\begin{proof}[{\bf\emph{Proof of Proposition~\ref{CylEner_prp}}}]
It is sufficient to establish the first statement under the assumption that 
$(X,g)$ is $\C^n$ with the standard Riemannian metric,
$J$ agrees with the standard complex structure~$J_{\C^n}$ at $0\!\in\!\C^n$,
and $u(0,1)\!=\!0$.
Let 
$$\bar\prt u=\frac12\big(u_t+J_{\C^n}u_{\th}\big)\,.$$
By our assumptions, there exist $\de',C\!>\!0$ (dependent on~$u(0,1)$) such that 
\BE{CylEnerPf_e1} \big|\bar\prt_z u\big|\le C\de \big|\nd_z u\big|
\qquad\forall~z\in u^{-1}\big(B_{\de}(0)\big),~\de\le\de'\,.\EE
Write $u\!=\!f\!+\!\fI g$, with $f,g$ taking values in~$\R^n$
and assume that $\Im\,u\!\subset\!B_{\de}(0)$.
By~\eref{omEner_e}, 
$$|\nd u|^2=4\big|\bar\prt u\big|^2+2\nd(f\!\cdot\!\nd g).$$
Combining this with~\eref{CylEnerPf_e1} and Stokes' Theorem, we obtain
\BE{CylEnerPf_e3}
\int_{[-t,t]\times S^1}\!\!|\nd u|^2
\le 4C^2\de^2\int_{[-t,t]\times S^1}\!\!|\nd u|^2
+2\int_{\{t\}\times S^1}f\!\cdot\!g_{\th}\,\nd\th
-2\int_{\{-t\}\times S^1}f\!\cdot\!g_{\th}\,\nd\th\,.\EE
Let $\ti{f}\!=\!f\!-\!\frac1{2\pi}\int_0^{2\pi}\!f\nd\th$.
By H\"older's inequality and Lemma~\ref{poin_lmm}, 
\BE{CylEnerPf_e5}\begin{split}
\bigg|\int_{\{\pm t\}\times S^1}\!f\!\cdot\!g_{\th}\,\nd\th\bigg|
&=\bigg|\int_{\{\pm t\}\times S^1}\!\ti{f}\!\cdot\!g_{\th}\,\nd\th\bigg|
\le \bigg(\int_{\{\pm t\}\times S^1}\!|\ti{f}|^2\nd\th \bigg)^{\!\frac12}
\bigg(\int_{\{\pm t\}\times S^1}\!|g_{\th}|^2\nd\th \bigg)^{\!\frac12}\\
&\le \bigg(\int_{\{\pm t\}\times S^1}\!|\ti{f}_{\th}|^2\nd\th \bigg)^{\!\frac12}
\bigg(\int_{\{\pm t\}\times S^1}\!|g_{\th}|^2\nd\th \bigg)^{\!\frac12}
\le \frac12 \int_{\{\pm t\}\times S^1}\!|u_{\th}|^2\nd\th\,.
\end{split}\EE
Since 
$$3|u_{\th}|^2=2|u_{\th}|^2+\big|u_t-2\bar\prt u\big|^2
\le 2|\nd u|^2 + 8\big|\bar\prt u\big|^2\,, $$
the inequalities \eref{CylEnerPf_e1}-\eref{CylEnerPf_e5} give
$$ \big(1\!-\!4C^2\de^2\big)\!\!\int_{[-t,t]\times S^1}\!\!|\nd u|^2
\le \frac{2}{3}\big(1\!+\!4C^2\de^2\big) 
\bigg(\int_{\{t\}\times S^1}|\nd u|^2\nd\th
+\int_{\{-t\}\times S^1}|\nd u|^2\nd\th\bigg).$$
Thus, the function 
$$\ve(T)\equiv E_g\big(u;[-R\!+\!T,R\!-\!T]\big)
\equiv \frac12\int_{[-R+T,R-T]\times S^1}\!|\nd u|^2\nd\th\nd t$$
satisfies $\ve(T)\!\le\!-\ve'(T)$ for all $T\!\in\![-R,R]$,
if $\de$ is sufficiently small (depending on~$C$).
This implies~\eref{CylEner_e}.\\

\noindent
Let $h_{J,g}(x)\!=\!(x,\de_{J,g}(x))$,
with $h_{J,g}(\cdot,\cdot)$ as in Proposition~\ref{PtBnd_prp}
and $\de_{J,g}(\cdot)$ as provided by the previous paragraph.
Suppose $u$ also satisfies the last condition in Proposition~\ref{CylEner_prp}.
By Proposition~\ref{PtBnd_prp} and~\eref{CylEner_e},
$$|\nd_{(t,\th)}u|\le 3\sqrt{E_g(u;[-|t|\!-\!1,|t|\!+\!1]\!\times\!S^1)}
\le 3\sqrt{C_{J,g}(u(0,1))}\ne^{(1+|t|-R)/2}\sqrt{E_g(u)}$$
for all $t\in[-R\!+\!1,R\!-\!1]$ and $\th\!\in\!S^1$.
Thus, for all $t_1,t_2\!\in\![-R\!+\!T,R\!-\!T]$ with $T\!\ge\!1$ and 
$\th_1,\th_2\!\in\!S^1$,
\begin{equation*}\begin{split}
d_g\big(u(t_1,\th_1),u(t_2,\th_2)\big)
&\le 3\sqrt{C_{J,g}(u(0,1))E_g(u)}
\bigg(\pi\ne^{(1+|t_1|-R)/2}+\int_{t_1}^{t_2}\!\ne^{(1+|t|-R)/2}\nd t\bigg)\\
&\le \big(3\pi\!+\!12\big) \sqrt{C_{J,g}(u(0,1))}\,\ne^{(1-T)/2}\sqrt{E_g(u)}\,.
\end{split}\end{equation*}
This establishes~\eref{CylEner_e2}. 
\end{proof}

\begin{lmm}\label{CylCoord_lmm}
If $(X,J)$ is a compact almost complex manifold and $g$ is a Riemannian metric on~$X$,
there exists a continuous function $\ep_{J,g}\!:\R^+\!\lra\!\R^+$ 
with the following property.
If  $\de\!\in\!\R^+$  and  \hbox{$u\!:(-R,R)\!\times\!S^1\!\lra\!X$} is a $J$-holomorphic map
with $E_g(u)\!<\ep_{J,g}(\de)$, then
$$\diam_g\big(u\big([-R\!+\!1,R\!-\!1]\!\times\!S^1\big)\big)\le \de\,.$$
\end{lmm}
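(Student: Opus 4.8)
The plan is to prove the quantitative estimate $\diam_g\big(u([-R\!+\!1,R\!-\!1]\times S^1)\big)\le K\sqrt{E_g(u)}$, with $K$ a constant depending only on $(X,J,g)$, valid whenever $E_g(u)$ lies below a fixed threshold $\hb_1\!>\!0$, and then to set $\ep_{J,g}(\de)=\min\{\hb_1,(\de/K)^2\}$. If $R\!<\!1$ the set $[-R\!+\!1,R\!-\!1]\times S^1$ is empty and there is nothing to prove, so I assume $R\!\ge\!1$; then $[-R\!+\!1,R\!-\!1]\subset(-R,R)$, and since $-R\!<\!s\!-\!\frac12$ and $s\!+\!\frac12\!<\!R$ for every $s$ in this interval, the Euclidean disk $B_{1/2}(s,\th)$ (of radius $\frac12\!<\!\pi$) embeds into the cylinder $(-R,R)\times S^1$ for each such $(s,\th)$. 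By the compactness of $X$ I may assume, as in the proof of Corollary~\ref{LowEner_crl}, that $g$ is compatible with $J$, so that Proposition~\ref{PtBnd_prp} is available.

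First I would establish a uniform pointwise bound on $\nd u$ over the central region. Put $r_0=\diam_g(X)+1$, so that $B_{r_0}^g(x)=X$ for every $x\!\in\!X$, and let $\hb_0=\min_{x\in X}\hb_{J,g}(x,r_0)\!>\!0$, a minimum of a positive continuous function over a compact space. If $E_g(u)\!<\!\hb_0$, then for each $(s,\th)$ with $s\!\in\![-R\!+\!1,R\!-\!1]$ the restriction $u|_{B_{1/2}(s,\th)}$, viewed as a $J$-holomorphic map on $B_{1/2}\subset\C$, satisfies the hypotheses of Proposition~\ref{PtBnd_prp} with center $u(s,\th)$ and radius $r_0$ (the image-containment hypothesis being automatic), whence $|\nd_{(s,\th)}u|_g^2<\frac{64}{\pi}E_g\big(u;B_{1/2}(s,\th)\big)\le\frac{64}{\pi}E_g(u)$. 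It follows that whenever $A\subset[-R\!+\!1,R\!-\!1]\times S^1$ has the property that any two of its points are joined inside $A$ by a path of length at most $L$, then $\diam_g(u(A))\le\frac{8}{\sqrt\pi}L\sqrt{E_g(u)}$.

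I would then treat two regimes. If $1\!\le\!R\!\le\!3$, any two points of $[-R\!+\!1,R\!-\!1]\times S^1$ are joined inside it by a path of length at most $(2R\!-\!2)+2\pi\le4+2\pi$, so the previous paragraph already gives $\diam_g\big(u([-R\!+\!1,R\!-\!1]\times S^1)\big)\le\frac{8}{\sqrt\pi}(4\!+\!2\pi)\sqrt{E_g(u)}$. If $R\!>\!3$, I decompose $[-R\!+\!1,R\!-\!1]\times S^1$ as the union of the middle cylinder $[-R\!+\!2,R\!-\!2]\times S^1$ and the two collars $[R\!-\!2,R\!-\!1]\times S^1$ and $[-R\!+\!1,-R\!+\!2]\times S^1$, each of $s$-width $1$ and each contained in the central region; by the previous paragraph the image of each collar has $g$-diameter at most $\frac{8}{\sqrt\pi}(1\!+\!2\pi)\sqrt{E_g(u)}$. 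For the middle cylinder I invoke Corollary~\ref{CylEner_crl}: with $\hb_{\mathrm{Cor}},C_{\mathrm{Cor}}$ the constants it supplies and assuming also $E_g(u)\!<\!\hb_{\mathrm{Cor}}$, applying its diameter estimate with $T\!=\!2$ to the restrictions $u|_{[-R',R']\times S^1}$ for $R'\!<\!R$ and letting $R'\to R$ (using the continuity of $u$ to pass to the closed middle cylinder) gives $\diam_g\big(u([-R\!+\!2,R\!-\!2]\times S^1)\big)\le C_{\mathrm{Cor}}\ne^{-1}\sqrt{E_g(u)}$ — it is the exponential decay in Corollary~\ref{CylEner_crl} that keeps this bound independent of $R$. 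Since the middle cylinder meets each collar along a circle and the three pieces cover $[-R\!+\!1,R\!-\!1]\times S^1$, the diameter of $u([-R\!+\!1,R\!-\!1]\times S^1)$ is at most the sum of the three image-diameters, again of the form $(\text{constant})\cdot\sqrt{E_g(u)}$.

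Taking $K$ to be the larger of the two resulting constants, $\hb_1=\min\{\hb_0,\hb_{\mathrm{Cor}}\}$, and $\ep_{J,g}(\de)=\min\{\hb_1,(\de/K)^2\}$ — which is positive and continuous in $\de$ — one obtains: if $E_g(u)\!<\!\ep_{J,g}(\de)$, then either $R\!<\!1$ (trivial) or, since $E_g(u)\!<\!\hb_1$ validates the estimates above and $\sqrt{E_g(u)}\!<\!\de/K$, $\diam_g\big(u([-R\!+\!1,R\!-\!1]\times S^1)\big)\le K\sqrt{E_g(u)}<\de$. I expect the main obstacle to be precisely that Corollary~\ref{CylEner_crl} only controls $u([-R\!+\!T,R\!-\!T]\times S^1)$ for $T\!\ge\!2$, so it does not by itself bound the central slab $[-R\!+\!1,R\!-\!1]\times S^1$; closing the gap — the two width-$1$ collars together with the range $R\!\le\!3$ — is what forces the direct appeal to the Mean Value Inequality, and the minor device there is to take $r_0\!>\!\diam_g(X)$ so that the image-containment hypothesis of Proposition~\ref{PtBnd_prp} holds trivially.
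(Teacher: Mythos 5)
Your first step (reducing to a $J$-compatible metric, then using the Mean Value Inequality on disks of radius $\tfrac12$ centered in the middle slab to get $|\nd u|_g\le \tfrac{8}{\sqrt\pi}\sqrt{E_g(u)}$ there, hence control of the image of any short path) is sound and matches the opening of the paper's argument, which derives the analogous bound $\diam_g(u(\{r\}\!\times\!S^1))\le 8\sqrt{E_g(u)}$. The genuine gap is your treatment of the long middle cylinder: you invoke Corollary~\ref{CylEner_crl}, but in this paper that corollary is \emph{deduced from} Lemma~\ref{CylCoord_lmm}. Its proof takes $\hb_{J,g}$ smaller than $\ep_{J,g}(\de)$ precisely so that Lemma~\ref{CylCoord_lmm} forces the image of $[-R\!+\!1,R\!-\!1]\!\times\!S^1$ into a ball of radius $\de_{J,g}$, which is what makes Proposition~\ref{CylEner_prp} applicable. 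So your argument is circular. Nor can you substitute Proposition~\ref{CylEner_prp} directly: its hypothesis $\Im u\subset B_{\de_{J,g}(u(0,1))}^g(u(0,1))$ is exactly the small-image statement the lemma is meant to supply, and small energy alone does not give it. Your own diagnosis of the difficulty is right — the pointwise gradient bound integrated along a path of length $\sim 2R$ gives only $R\sqrt{E_g(u)}$ — but the tool you chose to close it is downstream of the statement being proved.

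The paper closes this gap with the Monotonicity Lemma (Proposition~\ref{MonotLmm_prp}) instead: if $\de_u\equiv\diam_g(u([-R\!+\!1,R\!-\!1]\!\times\!S^1))>32\sqrt{E_g(u)}$, then, since each circle $u(\{r\}\!\times\!S^1)$ has diameter at most $8\sqrt{E_g(u)}$, one finds a sub-cylinder $[r_-,r_+]\!\times\!S^1$ whose boundary image avoids the ball of radius $\tfrac14\de_u$ about a suitable interior image point; the Monotonicity Lemma then forces $E_g(u)\ge \tfrac{c_{J,g}}{16}\de_u^2$, and $\ep_{J,g}(\de)=\min(\hb_{J,g},\de^2/32^2,c_{J,g}\de^2/16)$ works. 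If you want to salvage your outline, replace the appeal to Corollary~\ref{CylEner_crl} by an argument of this kind (or some other $R$-independent mechanism not relying on Section~\ref{CylEner_subs}'s corollary); as written, the proof does not stand.
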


\begin{proof}
By Proposition~\ref{MonotLmm_prp} and the compactness of~$X$, 
there exists $c_{J,g}\!\in\!\R^+$ with the following property.
If $(\Si,\fj)$ is a compact connected Riemann surface with boundary,
\hbox{$u\!:\Si\!\lra\!X$} is a non-constant $J$-holomorphic map, 
$x\!\in\!X$, and $\de\!\in\!\R^+$ are such~that 
$u(\prt\Si)\!\cap\!B_{\de}^g(x)\!=\!\eset$, then 
\BE{CylCoord_e3} E_g(u)\ge c_{J,g}\de^2\,.\EE
Let $\hbar_{J,g}\!>\!0$ be the minimal value of the function~$\hbar_{J,g}$
in the statement of Proposition~\ref{PtBnd_prp} on the compact space 
$X\!\times\![0,\diam_g(X)]$.\\

\noindent
Suppose $u\!:(-R,R)\!\times\!S^1\!\lra\!X$ is a $J$-holomorphic map with 
$E_g(u)\!<\!\hb_{J,g}$ and
$$\de_u\equiv\diam_g\big(u([-R\!+\!1,R\!-\!1]\!\times\!S^1)\big)>32\sqrt{E_g(u)}.$$ 
By the first condition on~$u$,
\begin{gather}
\notag
\big|\nd_zu\big|_g^2\le \frac{16}{\pi}
E_g(u) \qquad\forall~z\!\in\![-R\!+\!1,R\!-\!1]\!\times\!S^1,\\
\label{CylCoord_e7}
\diam_g\big(u(r\!\times\!S^1)\big)\le 8\sqrt{E_g(u)}
\quad\forall~r\!\in\![-R\!+\!1,R\!-\!1].
\end{gather}
Let $r_-,r_0,r_+\!\in\![-R\!+\!1,R\!-\!1]$ and $\th_-,\th_0,\th_+\!\in\!S^1$ be such that 
$$r_-<r_0<r_+,\quad d_g\big(u(r_0,\th_0),u(r_{\pm},\th_{\pm})\big)\ge 
\frac12\de_u\,.$$
By~\eref{CylCoord_e7}, we can apply~\eref{CylCoord_e3} with 
$$\Si=[r_-,r_+]\!\times\!S^1,\qquad x=u(r_0,\th_0), \qquad \de=\frac14\de_u,$$
and $u$ replaced by its restriction to~$\Si$.
We conclude~that 
$$E_g(u)\ge  \frac{c_{J,g}}{16}\de_u^2\,.$$
It follows that the function
$$\ep_{J,g}\!:\R^+\lra\R^+, \qquad
\ep_{J,g}(\de)=\min\bigg(\hb_{J,g},\frac{\de^2}{32^2},
\frac{c_{J,g}}{16}\de^2\bigg),$$
has the desired property.
\end{proof}

\begin{proof}[{\bf\emph{Proof of Corollary~\ref{CylEner_crl}}}]
Let $\de\!\in\!\R^+$ be the minimum of the function~$\de_{J,g}$
in Proposition~\ref{CylEner_prp}
and $\ve_{J,g}(\cdot)$ be as in Lemma~\ref{CylCoord_lmm}.
Take $C_{J,g}$ to be the maximum of the function~$C_{J,g}$ in Proposition~\ref{CylEner_prp} 
times~$\ne$ and 
$\hb_{J,g}\!\in\!\R^+$ to be smaller than the minimum of 
the function~$\hb_{J,g}$ in Proposition~\ref{CylEner_prp}
and the number~$\ve_{J,g}(\de)$.
\end{proof}

\section{Limiting behavior of $J$-holomorphic maps}
\label{GlobProp_sec}

\noindent
This section studies the limiting behavior of sequences of $J$-holomorphic maps 
from Riemann surfaces into a compact almost complex manifold~$(X,J)$.
The compactness of~$X$ plays an essential role in the statements below,
in contrast to nearly all statements in Sections~\ref{LocProp_sec} and~\ref{EBnd_sec},

\subsection{Removal of Singularity}
\label{RemSing_subs}

\noindent
By Cauchy's Integral Formula, a bounded holomorphic map $u\!:B_{\R}^*\!\lra\!\C^n$ 
extends over the origin.
By Proposition~\ref{RemSing_prp} below, the same is the case
for a $J$-holomorphic map $u\!:B_{\R}^*\!\lra\!X$ of bounded energy
if $X$ is compact.

\begin{prp}[Removal of Singularity]\label{RemSing_prp}
Let $(X,J)$ be a compact almost complex manifold
and $u\!:B_R^*\!\lra\!X$ be a $J$-holomorphic map.
If the energy $E_g(u)$ of~$u$, with respect to any metric~$g$ on~$X$, 
is finite, then $u$ extends to a $J$-holomorphic map 
$\ti{u}\!:B_R\!\lra\!X$.
\end{prp}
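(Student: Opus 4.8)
The plan is to reduce the statement to the regularity result of Proposition~\ref{JHolomreg_prp}: once $u$ is shown to extend \emph{continuously} over the origin to a map $\ti u\!:B_R\lra X$, that proposition applies directly, since $\ti u$ is $J$-holomorphic on $B_R^*$ with $E_g(\ti u;B_R^*)\!=\!E_g(u)\!<\!\i$, and it yields that $\ti u$ is smooth and $J$-holomorphic on all of $B_R$. Because $X$ is compact, the energy densities of any two Riemannian metrics on $X$ are uniformly comparable, so $E_g(u)\!<\!\i$ holds for one metric precisely when it holds for all; I may therefore work with whichever $g$ is convenient. Next I would pass to cylindrical coordinates: the biholomorphism $(s,\th)\mapsto R\ne^{-(s+\fI\th)}$ identifies the half-cylinder $(0,\i)\times S^1$, with its product complex structure, with $B_R^*$, the puncture corresponding to $s\!\to\!\i$, and pulling $u$ back along it produces a $J$-holomorphic map $v\!:(0,\i)\times S^1\lra X$ with $E_g(v)\!=\!E_g(u)$ by the conformal invariance of the energy (cf.\ the remark after~\eref{Egfdfn2_e}). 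It then suffices to show that $v(s,\cdot)$ converges uniformly, as $s\!\to\!\i$, to a single point of $X$.

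This convergence is extracted from the exponential decay estimate of Corollary~\ref{CylEner_crl}. Since the total energy $E_g(v)$ is finite, there is $s_0\!>\!0$ with $E_g\big(v;(s_0,\i)\times S^1\big)\!<\!\hb_{J,g}$, the threshold constant of that corollary. Fix $T\!\ge\!2$. For every $M\!\ge\!s_0\!+\!2T$ I would apply Corollary~\ref{CylEner_crl} to the restriction of $v$ to $[s_0,M]\times S^1$ — whose energy is $\le\!E_g(v)\!<\!\hb_{J,g}$ — viewing $[s_0,M]\times S^1$ as $[-L,L]\times S^1$ with $L\!=\!(M\!-\!s_0)/2\!\ge\!T$; this gives
$$\diam_g\big(v([s_0\!+\!T,M\!-\!T]\times S^1)\big)\le C_{J,g}\ne^{-T/2}\sqrt{E_g(v)}\,.$$
Holding $T$ fixed and letting $M\!\to\!\i$, so that $[s_0\!+\!T,M\!-\!T]$ exhausts $[s_0\!+\!T,\i)$, upgrades this to $\diam_g\big(v([s_0\!+\!T,\i)\times S^1)\big)\le C_{J,g}\ne^{-T/2}\sqrt{E_g(v)}$, i.e.~$d_g\big(v(s,\th),v(s',\th')\big)\le C_{J,g}\ne^{-T/2}\sqrt{E_g(v)}$ for all $s,s'\!\ge\!s_0\!+\!T$ and all $\th,\th'\!\in\!S^1$. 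Since $T\!\ge\!2$ was arbitrary, letting $T\!\to\!\i$ turns this into a uniform Cauchy condition; using that $X$ is complete, $v(s,\cdot)$ then converges uniformly to a point $x_0\!\in\!X$. Defining $\ti u(0)\!=\!x_0$ and $\ti u|_{B_R^*}\!=\!u$ produces the required continuous extension, and Proposition~\ref{JHolomreg_prp} finishes the proof.

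The argument is short, and I expect the only genuine care to lie in the order of the two limits: one must first let $M\!\to\!\i$ with $T$ held fixed in order to promote the finite-cylinder bound of Corollary~\ref{CylEner_crl} to a bound on the diameter of every tail $v([s_0\!+\!T,\i)\times S^1)$, and only afterwards let $T\!\to\!\i$ to conclude that these tails collapse to a point $x_0$ independent of the angular variable. It is also worth recording that Corollary~\ref{CylEner_crl}, hence this whole proof, rests on the Monotonicity Lemma (Proposition~\ref{MonotLmm_prp}) through Lemma~\ref{CylCoord_lmm} — this is the precise sense in which monotonicity underlies the continuity step — and that compactness of $X$ enters exactly twice: once to make the choice of metric immaterial and to furnish the uniform constants $\hb_{J,g},C_{J,g}$, and once to guarantee the existence of the limit point $x_0$.
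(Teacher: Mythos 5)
Your proof is correct, and it reaches the crucial continuity step by a genuinely different route than the paper. The paper makes the same two reductions you do (continuity at the puncture suffices by Proposition~\ref{JHolomreg_prp}, and one passes to cylindrical coordinates), but then argues with the raw ingredients: the Mean Value Inequality (Proposition~\ref{PtBnd_prp}) shows that the circles $v(\{r\}\!\times\!S^1)$ have diameter tending to~$0$, and the existence of $\lim v(r,1)$ is proved by contradiction using the Monotonicity-type bound $E_g\!\ge\!c_{J,g}\de^2$ from the proof of Lemma~\ref{CylCoord_lmm} --- two distinct accumulation points would force the map to ``completely get out'' of a fixed ball infinitely often, each time costing a fixed amount of energy, contradicting $E_g(u)\!<\!\i$. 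You instead apply Corollary~\ref{CylEner_crl} to longer and longer compact cylinders, letting first the length $M\!\lra\!\i$ and only then $T\!\lra\!\i$; this packages the same two inputs (Mean Value via Proposition~\ref{CylEner_prp}, Monotonicity via Lemma~\ref{CylCoord_lmm}) into a single estimate, and it buys you a stronger quantitative conclusion --- exponential convergence of $v(s,\cdot)$ to $x_0$ --- at the cost of invoking the heavier cylinder machinery; there is no circularity, since Corollary~\ref{CylEner_crl} is established in Section~4.4 independently of the Removal of Singularity. One slip to fix: when you apply the corollary to $v|_{[s_0,M]\times S^1}$ you assert its energy is ``$\le E_g(v)\!<\!\hb_{J,g}$'', but the total energy $E_g(v)$ need not be below the threshold; what you need, and what your choice of $s_0$ provides, is $E_g\big(v;[s_0,M]\!\times\!S^1\big)\le E_g\big(v;(s_0,\i)\!\times\!S^1\big)<\hb_{J,g}$ (keeping $\sqrt{E_g(v)}$ in the resulting diameter bound is harmless, as it only enlarges the right-hand side). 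With that wording corrected, the argument is complete.
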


\noindent
A basic example of a holomorphic function $u\!:\C^*\!\lra\!\C$ that does not extend
over the origin $0\!\in\!\C$ is $z\!\lra\!1/z$.
The energy of $u|_{B_R^*}$ with respect to the standard metric on~$\C$ is 
given~by 
$$E\big(u;B_R^*\big)
=\frac12\int_{B_R}|\nd u|^2
=\int_{B_R}\frac{1}{|z|^2} 
=\int_0^{2\pi}\!\!\!\!\int_0^R\!\! r^{-1}\nd r\nd\th \not<\i.$$
The above integral would have been finite if $|\nd u|^2$ were replaced by 
$|\nd u|^{2-\ep}$ for {\it any} $\ep\!>\!0$.
This observation illustrates the crucial role played by 
the energy in the theory of $J$-holomorphic maps.\\

\noindent
By Cauchy's Integral Formula, the conclusion of Proposition~\ref{RemSing_prp} holds
if $J$ is an integrable almost complex structure  and $u(B_{\de}^*)$ 
is contained in a complex coordinate chart for some $\de\!\in\!(0,R)$.
We will use the Monotonicity Lemma to show that the latter is the case
if the energy of~$u$ is finite;
the integrability  of~$J$ turns out to be irrelevant here.

\begin{proof}[{\bf\emph{Proof of Proposition~\ref{RemSing_prp}}}]
In light of Proposition~\ref{JHolomreg_prp},  
it is to sufficient to show that $u$ extends continuously over the origin.  
Let $c_{J,g},\hbar_{J,g}\!\in\!\R^+$ be as in the proof of Lemma~\ref{CylCoord_lmm}.
We can assume that $R\!=\!1$ and $u$ is non-constant.
Define
$$v\!: \R^-\!\times\!S^1\lra X, \qquad v\big(r,\ne^{\fI\th}\big)=u\big(\ne^{r+\fI\th}).$$
This map is $J$-holomorphic and satisfies $E_g(v)\!=\!E_g(u)\!<\!\i$.\\

\noindent
Since $E_g(u)\!<\!\i$, 
\BE{RemSing_e3}\lim_{r\lra-\i}\!\!E_g\big(v;(-\i,r)\!\times\!S^1\big)
=\lim_{r\lra-\i}\!\!E_g\big(u;B_{e^r}^*\big)=0.\EE
In particular, there exists $R\!\in\!\R^-$ such that 
$$E_g\big(v;(-\i,r)\!\times\!S^1\big)<\hbar_{J,g} \qquad\forall~r\!<\!R.$$
By Proposition~\ref{PtBnd_prp} and our choice of $\hbar_{J,g}$, this implies that 
\begin{gather*}
\big|\nd_zv\big|_g^2\le \frac{16}{\pi}E_g\big(v;(-\i,r\!+\!1)\!\times\!S^1\big) 
\qquad\forall~z\!\in\!(-\i,r)\!\times\!S^1,~r\!<\!R\!-\!1,\\
\diam_g\big(v(\{r\}\!\times\!S^1)\big)\le4\sqrt{\pi}
\sqrt{E_g(v;(-\i,r\!+\!1)\!\times\!S^1)}
\quad\forall~r\!<\!R\!-\!1.
\end{gather*}
Combining the last bound with~\eref{RemSing_e3}, we obtain
$$\lim_{r\lra-\i}\diam_g\big(v(\{r\}\!\times\!S^1)\big)=0.$$
Thus, it remains to show that $\lim\limits_{r\lra-\i}\!v(r,1)$ exists.\\

\noindent 
Since $X$ is compact, every sequence in~$X$ has a convergent subsequence.
Suppose there exist
\begin{gather*}
\de\in\R^+,~~x,y\in X,~~i_k,r_k\in\R^- \qquad\hbox{s.t.}\\
d_g(x,y)>3\de,~~
r_{k+1}<i_k<r_k,~~ v\big(\{i_k\}\!\times\!S^1\big)\subset B_{\de}(x),
~~ v\big(\{r_k\}\!\times\!S^1\big)\subset B_{\de}(y).
\end{gather*}
We thus can apply~\eref{CylCoord_e3} with $\Si$, $x$, and $u$ replaced by  
$$\Si_k\equiv[r_{k+1},r_k]\!\times\!S^1,\qquad x_k\equiv u(i_k,1),
\quad\hbox{and}\quad v_k\equiv v|_{\Si_k}\,,$$
respectively.
We conclude~that
$$E_g(v)\ge \sum_kE_g\big(v;\Si_k\big) = \sum_kE_g(v_k)
\ge \sum_k c_{J,g}\de^2=\i\,.$$
However, this contradicts the assumption that $E_g(v)\!=\!E_g(u)\!<\!\i$.
\end{proof}

\begin{figure}
\begin{pspicture}(3,-1)(9,2.7)
\psset{unit=.3cm}
\psline[linewidth=.06](56,7)(20,7)
\psline[linewidth=.06](56,1)(20,1)
\psarc[linewidth=.04](59,4){4.24}{135}{225}
\psarc[linewidth=.04,linestyle=dotted](53,4){4.24}{-45}{45}
\psarc[linewidth=.04](44,4){4.24}{135}{225}
\psarc[linewidth=.04,linestyle=dotted](38,4){4.24}{-45}{45}
\psarc[linewidth=.04](34,4){4.24}{135}{225}
\psarc[linewidth=.04,linestyle=dotted](28,4){4.24}{-45}{45}
\pscircle*(56,7){.2}\rput(56,7.8){1}
\pscircle*(41,7){.2}\rput(41.2,7.8){$r_k$}
\pscircle*(31,7){.2}\rput(31.7,7.8){$r_{k+1}$}
\pscircle*(36,7){.2}\rput(36.2,7.8){$i_k$}
\rput(41,-3){$B_{\de}(y)$}\rput(31,-3){$B_{\de}(y)$}\rput(36,-3){$B_{\de}(x)$}
\rput(36.2,3.5){$\Si_k$}
\psline[linewidth=.04]{->}(41,.5)(41,-2)\psline[linewidth=.04]{->}(36,.5)(36,-2)
\psline[linewidth=.04]{->}(31,.5)(31,-2)
\rput(41.6,-.75){$v$}\rput(31.6,-.75){$v$}\rput(36.6,-.75){$v$}
\end{pspicture}
\caption{Setup for the proof of Proposition~\ref{RemSing_prp}}
\label{RemSing_fig}
\end{figure}
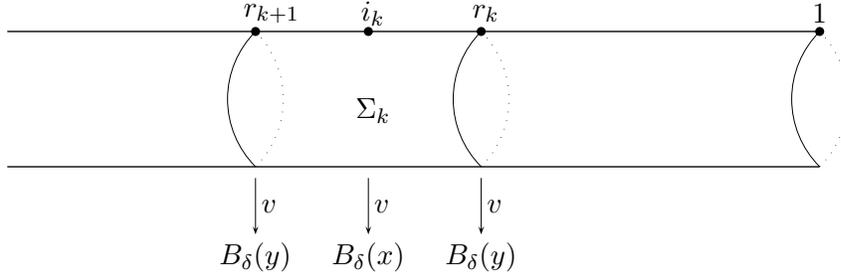

\subsection{Bubbling}
\label{Conver_subs1}

\noindent
The next three statements are used in Section~\ref{Conver_subs1} to show that 
no energy is lost under Gromov's convergence procedure,
the resulting bubbles connect, and their number is finite.

\begin{lmm}\label{EnerPres1_lmm}
Suppose $(X,J)$ is an almost complex manifold with a Riemannian metric~$g$
and \hbox{$u_i\!:B_1\!\lra\!X$} is a sequence of $J$-holomorphic maps converging
uniformly in the $C^{\i}$-topology on compact subsets of~$B_1^*$
to a $J$-holomorphic map $u\!:B_1\!\lra\!X$ such that the limit
\BE{fmdfn_e}\fm\equiv\lim_{\de\lra0}\lim_{i\lra\i}E_g(u_i;B_{\de})\EE
exists and is nonzero. 
\begin{enumerate}[label=(\arabic*),leftmargin=*]

\item\label{deEner_it} The limit $\fm(\de)\equiv\lim\limits_{i\lra\i}E_g(u_i;B_{\de})$ 
exists and is a continuous, non-decreasing function of~$\de$.

\item\label{SeqEner_it} For every sequence $z_i\!\in\!B_1$ converging to~0,
$\lim\limits_{i\lra\i}E_g(u_i;B_{\de}(z_i))\!=\!\fm(\de)$.

\item\label{dela_it} For every sequence $z_i\!\in\!B_1$ converging to~0,
$\mu\!\in\!(0,\fm)$, and $i\!\in\!\Z^+$ sufficiently large,
there exists a unique $\de_i(\mu)\!\in\!(0,1\!-\!|z_i|)$ such that 
$E_g(u_i;B_{\de_i(\mu)}(z_i))\!=\!\mu$.
Furthermore, 
\BE{EnerPres1Elim_e}
\lim_{R\lra\i}\lim_{\de\lra0}\lim_{i\lra\i}E_g\big(u_i;B_{R\de}(z_i)\!-\!B_{\de_i(\mu)}(z_i)\big)
=\fm\!-\!\mu.\EE

\end{enumerate}
\end{lmm}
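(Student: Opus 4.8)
The plan is to handle the three parts in order, exploiting that $u_i \to u$ in $C^\infty_{\mathrm{loc}}$ on $B_1^*$ so that only the energy concentrating at $0$ is not accounted for by $u$. First, for \ref{deEner_it}, fix $\delta \in (0,1)$ and a slightly smaller radius $\delta' < \delta$. On the annulus $\overline{B_\delta} - B_{\delta'}$ the maps $u_i$ converge uniformly in $C^\infty$, so $E_g(u_i; B_\delta - B_{\delta'}) \to E_g(u; B_\delta - B_{\delta'})$. Combined with the definition \eref{fmdfn_e} of $\fm$ as an iterated limit that is assumed to exist, this forces $\lim_{i\to\infty} E_g(u_i; B_\delta)$ to exist for every $\delta$: write $E_g(u_i; B_\delta) = E_g(u_i; B_{\delta'}) + E_g(u_i; B_\delta - B_{\delta'})$ and take $\delta' \to 0$ after $i \to \infty$, using that $E_g(u; B_{\delta'}) \to 0$ as $\delta' \to 0$ by finiteness of $E_g(u;B_\delta)$. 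This gives $\fm(\delta) = \fm + E_g(u; B_\delta)$, so $\fm(\delta)$ is manifestly continuous and non-decreasing in $\delta$, with $\fm(\delta) \to \fm$ as $\delta \to 0$.

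For \ref{SeqEner_it}, given $z_i \to 0$, I would compare $B_\delta(z_i)$ with $B_\delta$ and $B_{2\delta}$: for $i$ large, $B_{\delta - |z_i|} \subset B_\delta(z_i) \subset B_{\delta + |z_i|}$, and since $|z_i| \to 0$, the energies $E_g(u_i; B_\delta(z_i))$ are squeezed between $E_g(u_i; B_{\delta - |z_i|})$ and $E_g(u_i; B_{\delta + |z_i|})$. Taking $i \to \infty$ and using part \ref{deEner_it} together with continuity of $\fm(\cdot)$ yields $\lim_i E_g(u_i; B_\delta(z_i)) = \fm(\delta)$. (One must also note that the portion of these nested balls lying in $B_1^*$ but outside any fixed small ball around $0$ contributes via $C^\infty_{\mathrm{loc}}$ convergence, which is why the comparison closes up.)

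For \ref{dela_it}, first establish existence and uniqueness of $\delta_i(\mu)$. The function $\delta \mapsto E_g(u_i; B_\delta(z_i))$ is continuous and non-decreasing on $(0, 1-|z_i|)$, equals $0$ in the limit $\delta \to 0$, and — by part \ref{SeqEner_it} applied with a fixed small $\delta_0$ — exceeds $\mu$ for $i$ large once we know $\fm(\delta_0) > \mu$ for some small $\delta_0$; this last fact follows since $\fm(\delta) \to \fm > \mu$. So by the intermediate value theorem a value $\delta_i(\mu)$ with $E_g(u_i; B_{\delta_i(\mu)}(z_i)) = \mu$ exists; uniqueness needs the map to be \emph{strictly} increasing near that level, which holds because $u_i$ is $J$-holomorphic and non-constant there — more carefully, if the energy were locally constant on an annulus then $u_i$ would be constant on it by \eref{Egexp_e} (the integrand $g(\nd u_i \otimes_\fj \nd u_i)$ is a nonnegative density whose vanishing forces $\nd u_i = 0$), contradicting $\fm > 0$; I would phrase uniqueness as picking $\delta_i(\mu)$ to be, say, the infimum of radii achieving the value $\mu$, which suffices for downstream use. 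Then \eref{EnerPres1Elim_e}: by \ref{SeqEner_it}, $\lim_i E_g(u_i; B_{R\delta}(z_i)) = \fm(R\delta)$ for each fixed $R,\delta$ with $R\delta$ small, and $\lim_i E_g(u_i; B_{\delta_i(\mu)}(z_i)) = \mu$ by construction. Since the two balls are nested for $i$ large (as $\delta_i(\mu) \to 0$, which itself follows because $E_g(u_i;B_\delta(z_i)) \to \fm(\delta) > \mu$ for fixed small $\delta$, forcing $\delta_i(\mu) < \delta$ eventually), the difference of energies is $\fm(R\delta) - \mu$. Now let $\delta \to 0$: $\fm(R\delta) \to \fm$ by continuity of $\fm(\cdot)$; then let $R \to \infty$, which changes nothing, giving $\fm - \mu$.

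The main obstacle I anticipate is the uniqueness assertion in \ref{dela_it} and, relatedly, pinning down that $\delta_i(\mu) \to 0$: both require ruling out the energy of $u_i$ plateauing at level $\mu$ over a non-degenerate range of radii, uniformly in $i$. The clean argument is that plateauing means $\nd u_i \equiv 0$ on an annulus, hence (unique continuation, Corollary~\ref{FHS_crl2a}, or just connectedness) $u_i$ constant — but one should be slightly careful since $u_i$ is only assumed $J$-holomorphic on $B_1$, not shown non-constant a priori; however, for $i$ large $E_g(u_i; B_\delta) \approx \fm > 0$, so $u_i$ is non-constant, and the plateau argument applies. Everything else is elementary squeezing with nested balls plus the already-established continuity of $\fm(\cdot)$.
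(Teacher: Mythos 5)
Your proof is correct and follows essentially the same route as the paper's: the annulus decomposition giving $\fm(\de)=\fm+E_g(u;B_{\de})$ for part (1), the nested-ball squeeze with continuity of $\fm(\cdot)$ for part (2), and the intermediate-value/monotonicity argument together with the iterated-limit computation $\lim_R\lim_{\de}\lim_i E_g(u_i;B_{R\de}(z_i))=\fm$ for part (3). If anything, you are more careful than the paper on the uniqueness of $\de_i(\mu)$ (and on $\de_i(\mu)<\de$ eventually), where the paper simply asserts that $\de\mapsto E_g(u_i;B_{\de}(z_i))$ is increasing while you justify strictness via unique continuation and the non-constancy of $u_i$ for large $i$.
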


\begin{proof} 
\ref{deEner_it} Since $\nd u_i$ converges uniformly to $\nd u$ on compact subsets of~$B_1^*$,
\begin{equation*}\begin{split}
\fm(\de)\equiv\lim_{i\lra\i}E_g\big(u_i;B_{\de}\big)
&=\lim_{\de'\lra0}\lim_{i\lra\i}E_g\big(u_i;B_{\de'}\big)+
\lim_{\de'\lra0}\lim_{i\lra\i}E_g\big(u_i;B_{\de}\!-\!B_{\de'}\big)\\
&=\fm+\lim_{\de'\lra0}E_g\big(u;B_{\de}\!-\!B_{\de'}\big)
=\fm+E_g(u;B_{\de}).
\end{split}\end{equation*}
Since $E_g(u;B_{\de})$ is a continuous, non-decreasing function of~$\de$, 
so is~$\fm(\de)$.\\

\noindent
\ref{SeqEner_it} For all $\de,\de'\!\in\!\R^+$ and $z_i\!\in\!B_{\de'}$,
$B_{\de-\de'}\!\subset\!B_{\de}(z_i)\!\subset\!B_{\de+\de'}$.
Thus,
$$E_g\big(u_i;B_{\de-\de'}\big)\le E_g\big(u_i;B_{\de}(z_i)\big)
\le E_g\big(u_i;B_{\de+\de'}\big)$$
for all $i\!\in\!\Z^+$ sufficiently large and
$$\lim_{\de'\lra0}\fm(\de\!-\!\de')\le  
\lim_{\de'\lra0}\lim_{i\lra\i}E_g\big(u_i;B_{\de}(z_i)\big)
\le \lim_{\de'\lra0}\fm(\de\!+\!\de')
\qquad\forall~\de'\in\R^+\,.$$
The claim now follows from~\ref{deEner_it}.\\

\noindent
\ref{dela_it} By~\ref{SeqEner_it}, \ref{deEner_it}, and~\eref{fmdfn_e},
$$\lim_{i\lra\i}E_g\big(u_i;B_{\de}(z_i)\big)=\fm(\de)\ge\fm\,.$$
Thus, there exists $i(\mu)\!\in\!\Z^+$ such that 
$$E_g(u_i;B_{\de}(z_i))>\mu \qquad\forall~i\ge i(\mu).$$ 
Since $E_g(u_i;B_{\de}(z_i))$ is a continuous, increasing function of~$\de$
which vanishes at~$\de\!=\!0$,
for every $i\!\ge\!i(\mu)$
 there exists a unique $\de_i(\mu)\!\in\!(0,\de)$
such that $E_g(u_i;B_{\de_i(\mu)}(z_i))\!=\!\mu$.\\

\noindent
By~\ref{SeqEner_it}, \ref{deEner_it}, and~\eref{fmdfn_e},
\begin{equation*}\begin{split}
\lim_{R\lra\i}\lim_{\de\lra0}\lim_{i\lra\i}E_g\big(u_i;B_{R\de}(z_i)\big)
=\lim_{R\lra\i}\lim_{\de\lra0}\fm(R\de)=\lim_{R\lra\i}\fm=\fm.
\end{split}\end{equation*}
Combining this with the definition of~$\de_i(\mu)$, we obtain~\eref{EnerPres1Elim_e}.
\end{proof}

\begin{crl}\label{EnerPres2_crl}
If $(X,J)$ is a compact almost complex manifold with a Riemannian metric~$g$, 
then there exists $\hbar_{J,g}\!\in\!\R^+$ with the following properties.
If $u_i\!:B_1\!\lra\!X$ is a sequence of $J$-holomorphic maps converging
uniformly in the $C^{\i}$-topology on compact subsets of~$B_1^*$
to a $J$-holomorphic map $u\!:B_1\!\lra\!X$ such that 
$$\lim_{i\lra\i}\max_{\ov{B_{1/2}}}\big|\nd u_i\big|_g=\i$$
and the limit~\eref{fmdfn_e} exists, then 
\begin{enumerate}[label=(\arabic*),leftmargin=*]

\item\label{fmLowBnd_it} $\fm\ge\hb_{J,g}$;

\item\label{noLoss_it} for every sequence $z_i\!\in\!B_{\de}$ converging to~0 and
$\mu\!\in\!(\fm\!-\!\hb_{J,g},\fm)$, 
the numbers \hbox{$\de_i(\mu)\!\in\!(0,1\!-\!|z_i|)$} of Lemma~\ref{EnerPres1_lmm}\ref{dela_it} 
satisfy
\begin{gather}
\label{noEloss_e1} 
\lim_{R\lra\i}\lim_{i\lra\i}E_g(u_i;B_{R\de_i(\mu)}(z_i))=\fm,\\
\label{noEloss_e2} 
\lim_{R\lra\i}\lim_{\de\lra0}
\lim_{i\lra\i} \diam_g\big(u_i(B_{\de}(z_i)\!-\!B_{R\de_i(\mu)}(z_i))\big)=0.
\end{gather}

\end{enumerate}
\end{crl}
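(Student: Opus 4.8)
The plan is to prove part~\ref{fmLowBnd_it} first, since it supplies $\fm\!\neq\!0$, which is needed to invoke Lemma~\ref{EnerPres1_lmm} in part~\ref{noLoss_it}. For the constant I would take $\hb_{J,g}\!\in\!\R^+$ strictly below three $(X,J,g)$-dependent positive thresholds: the constant $\hb_{J,g}$ of Corollary~\ref{CylEner_crl}; the number $\inf_{x\in X}\hb_{J,g}(x,2\rho_0)$, with $\hb_{J,g}(\cdot,\cdot)$ as in Proposition~\ref{PtBnd_prp} and $\rho_0\!\in\!\R^+$ a fixed small radius; and $c_0\!\equiv\!\inf_{x\in X}\pi\rho_0^2/(1\!+\!C_{g,J}(x)\rho_0)^4$, coming from Proposition~\ref{MonotLmm_prp}. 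All three infima are positive by the compactness of~$X$, and I may assume $g$ is $J$-compatible, since replacing $g$ by a compatible metric only rescales the relevant constants.

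For part~\ref{fmLowBnd_it} I would argue by contradiction, assuming $\fm\!<\!\hb_{J,g}$. Since $u_i\!\lra\!u$ in $C^1$ on compact subsets of~$B_1^*$, the sequence $\max_{\ov{B_{1/2}}}|\nd u_i|_g$ can blow up only through an arbitrarily small disk $B_{\de/4}$, so it is enough to bound $|\nd u_i|_g$ uniformly on a fixed such disk. Fix $\de\!\in\!(0,1/2)$ small enough that $\fm(\de)\!=\!\fm\!+\!E_g(u;B_{\de})\!<\!\hb_{J,g}$ (Lemma~\ref{EnerPres1_lmm}\ref{deEner_it}) and $d_g(u(w),u(0))\!<\!\rho_0/2$ for $|w|\!\le\!3\de/4$. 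For $i$ large, $E_g(u_i;B_{\de})\!<\!\hb_{J,g}$ and $u_i(\prt B_{3\de/4})\!\subset\!B_{\rho_0}^g(u(0))$. If some $z^0\!\in\!B_{3\de/4}$ had $u_i(z^0)\!\notin\!B_{2\rho_0}^g(u(0))$, the triangle inequality would give $u_i(\prt B_{3\de/4})\!\cap\!B_{\rho_0}^g(u_i(z^0))\!=\!\eset$; since $u_i$ is non-constant and $u_i(z^0)\!\in\!u_i(\ov{B_{3\de/4}})$, Proposition~\ref{MonotLmm_prp} applied to $u_i|_{\ov{B_{3\de/4}}}$ with the ball $B_{\rho_0}^g(u_i(z^0))$ would force $E_g(u_i;B_{3\de/4})\!\ge\!c_0\!>\!E_g(u_i;B_{\de})$, a contradiction. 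Hence $u_i(\ov{B_{\de/2}})\!\subset\!B_{2\rho_0}^g(u(0))$ for $i$ large, and Proposition~\ref{PtBnd_prp}, applied to $u_i$ on the disks $B_{\de/4}(z)\!\subset\!B_{\de/2}$ for $z\!\in\!\ov{B_{\de/4}}$, bounds $|\nd_zu_i|_g^2$ by $(256/\pi\de^2)\hb_{J,g}$ — the desired uniform bound, contradicting the blow-up.

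For part~\ref{noLoss_it} I would fix $\mu\!\in\!(\fm\!-\!\hb_{J,g},\fm)$ and $z_i\!\lra\!0$, and let $\de_i(\mu)$ be the radii of Lemma~\ref{EnerPres1_lmm}\ref{dela_it}; here $\de_i(\mu)\!\lra\!0$, for otherwise $\lim_iE_g(u_i;B_{\de_0}(z_i))\!\le\!\mu$ for a fixed $\de_0\!>\!0$, against Lemma~\ref{EnerPres1_lmm}\ref{deEner_it}--\ref{SeqEner_it}. Choosing $\mu$ within $\hb_{J,g}$ of~$\fm$ is exactly what makes the whole neck carry little energy: picking $\de\!\in\!(0,1/2)$ with $\fm(\de)\!-\!\mu\!<\!\hb_{J,g}$, the energy of~$u_i$ on $B_{\de}(z_i)\!-\!\ov{B_{\de_i(\mu)}(z_i)}$, namely $E_g(u_i;B_{\de}(z_i))\!-\!\mu$, is $<\!\hb_{J,g}$ for $i$ large. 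This annulus is conformally the cylinder $(-L_i,L_i)\!\times\!S^1$ with $2L_i\!=\!\log(\de/\de_i(\mu))\!\lra\!\i$, so Corollary~\ref{CylEner_crl} applies; translated back, it gives, for every $T\!\ge\!2$ and all $i$ large,
\BE{neck_e}E_g\big(u_i;B_{\de\ne^{-T}}(z_i)\!-\!B_{\ne^T\de_i(\mu)}(z_i)\big)\le C_{J,g}\hb_{J,g}\ne^{-T},\quad
\diam_g\big(u_i(B_{\de\ne^{-T}}(z_i)\!-\!B_{\ne^T\de_i(\mu)}(z_i))\big)\le C_{J,g}\sqrt{\hb_{J,g}}\,\ne^{-T/2}.\EE
On the complementary outer annulus $B_{\de}(z_i)\!-\!B_{\de\ne^{-T}}(z_i)$, which sits in a fixed compact subset of~$B_1^*$ for $i$ large, $u_i\!\lra\!u$ in $C^1$, so its energy tends as $i\!\lra\!\i$ to $E_g(u;B_{\de}\!-\!B_{\de\ne^{-T}})\!\le\!E_g(u;B_{\de})$, and the diameter of its image is eventually at most $\diam_g(u(\ov{B_{2\de}}))$. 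Combining these with the additivity identity $E_g(u_i;B_{\ne^T\de_i(\mu)}(z_i))\!=\!E_g(u_i;B_{\de}(z_i))\!-\!E_g(u_i;B_{\de}(z_i)\!-\!B_{\de\ne^{-T}}(z_i))\!-\!E_g(u_i;B_{\de\ne^{-T}}(z_i)\!-\!B_{\ne^T\de_i(\mu)}(z_i))$ and Lemma~\ref{EnerPres1_lmm}\ref{SeqEner_it}, I would get
$$\fm-C_{J,g}\hb_{J,g}\ne^{-T}\le\liminf_{i\lra\i}E_g\big(u_i;B_{\ne^T\de_i(\mu)}(z_i)\big)\le\limsup_{i\lra\i}E_g\big(u_i;B_{\ne^T\de_i(\mu)}(z_i)\big)\le\fm,$$
the last inequality because $E_g(u_i;B_{\ne^T\de_i(\mu)}(z_i))\!\le\!E_g(u_i;B_{\de}(z_i))$, whose limit $\fm(\de)$ tends to~$\fm$ as $\de\!\lra\!0$; letting $T\!\lra\!\i$ and using monotonicity in the radius yields \eref{noEloss_e1}. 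Writing $B_{\de}(z_i)\!-\!B_{\ne^T\de_i(\mu)}(z_i)$ as the union of the outer annulus and the middle cylinder of \eref{neck_e}, which share a circle, bounds $\diam_g(u_i(B_{\de}(z_i)\!-\!B_{\ne^T\de_i(\mu)}(z_i)))$ by $\diam_g(u(\ov{B_{2\de}}))\!+\!C_{J,g}\sqrt{\hb_{J,g}}\,\ne^{-T/2}$ for $i$ large; sending $\de\!\lra\!0$ and then $T\!\lra\!\i$ (which settles $R\!=\!\ne^T$, general~$R$ following by monotonicity) yields \eref{noEloss_e2}.

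The main obstacle I expect is the three-scale bookkeeping in part~\ref{noLoss_it}, keeping straight the core $B_{\de_i(\mu)}(z_i)$, the enlarged core $B_{\ne^T\de_i(\mu)}(z_i)$, and the outer scales $B_{\de\ne^{-T}}(z_i)\!\subset\!B_{\de}(z_i)$, rather than any single estimate. The idea that makes everything close is that forcing $\mu$ within $\hb_{J,g}$ of~$\fm$ drops the total neck energy below the threshold of Corollary~\ref{CylEner_crl}, so that the exponential decay on the long cylinder, the $C^1$-convergence on~$B_1^*$ at the outer end, and Lemma~\ref{EnerPres1_lmm} at the core together leave no room for lost energy or for the image to spread in the neck. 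Part~\ref{fmLowBnd_it} is routine by comparison, once one observes that a derivative blow-up at the origin would, via the Monotonicity Lemma, trap at least $c_0$ units of energy near~$0$, incompatible with $\fm\!<\!\hb_{J,g}$.
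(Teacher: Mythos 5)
Your argument is correct, and it splits naturally: part~\ref{noLoss_it} follows essentially the paper's route, while part~\ref{fmLowBnd_it} is genuinely different. For~\ref{fmLowBnd_it}, the paper does not argue via a priori derivative bounds: it rescales $u_i$ at points where $|\nd u_i|_g$ is maximal, uses ellipticity to extract a non-constant $J$-holomorphic map $\C\!\lra\!X$ of energy at most~$\fm$, closes it up with Removal of Singularity (Proposition~\ref{RemSing_prp}), and quotes the Lower Energy Bound for spheres (Corollary~\ref{LowEner_crl}); you instead trap $u_i(\ov{B_{\de/2}})$ in a fixed metric ball via the Monotonicity Lemma and then contradict the blow-up with the Mean Value Inequality. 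Your route avoids Removal of Singularity and the bubble construction entirely, at the cost of the compatible-metric reduction and a third threshold in the definition of~$\hb_{J,g}$; the paper's route produces the bubble, which is exactly the object reused in Proposition~\ref{EnerPres_prp}. For~\ref{noLoss_it}, both proofs rest on the point you isolate: $\mu\!\in\!(\fm\!-\!\hb_{J,g},\fm)$ forces the neck annulus outside $B_{\de_i(\mu)}(z_i)$ to carry energy below the threshold of Corollary~\ref{CylEner_crl}, whose exponential decay of energy and diameter then yields \eref{noEloss_e1} and~\eref{noEloss_e2}; the paper runs the cylinder estimate out to an auxiliary sequence $\de_i\!\lra\!0$ with $E_g(u_i;B_{\de_i}(z_i))\!\lra\!\fm$ (after passing to a subsequence, and using \eref{EnerPres1Elim_e} for the diameter statement), whereas you keep a fixed outer radius~$\de$ and peel off an outer annulus on which $C^{\i}$-convergence to~$u$ controls both energy and diameter; the two decompositions are interchangeable. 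Two small points to tidy: in~\ref{fmLowBnd_it} you invoke Lemma~\ref{EnerPres1_lmm} under the contradiction hypothesis $\fm\!<\!\hb_{J,g}$, where a priori $\fm$ could vanish and the lemma's hypothesis fails, but the only fact needed there ($E_g(u_i;B_{\de})\!<\!\hb_{J,g}$ for some small~$\de$ and all large~$i$) follows directly from the existence of the limit~\eref{fmdfn_e} together with the convergence on compact subsets of~$B_1^*$; and when you pass to a $J$-compatible metric, note explicitly that $\fm$, the interval $(\fm\!-\!\hb_{J,g},\fm)$, and the radii $\de_i(\mu)$ remain defined with respect to the original~$g$, so the compatible metric is used only inside the proof of~\ref{fmLowBnd_it}, with the resulting threshold rescaled by the (compactness-supplied) equivalence constant between the two metrics.
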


\begin{proof}
Let $\hb_{J,g}$ be the smaller of the constants~$\hb_{J,g}$ 
in Corollaries~\ref{LowEner_crl} and~\ref{CylEner_crl}.
Let $u_i$, $u$, and~$\fm$ be as in the statement of the corollary.\\

\noindent
\ref{fmLowBnd_it} For each $i\!\in\!\Z^+$, let 
$$M_i=\max_{\ov{B_{1/2}}}\big|\nd_zu_i\big|_g\in\R^+$$
and $z_i\!\in\!\ov{B_{1/2}}$ be such that $|\nd_{z_i}u_i|_g\!=\!M_i$.
Since $M_i\!\lra\!\i$ as $i\!\lra\!\i$ and
$u_i$ converges uniformly in the $C^{\i}$-topology on compact subsets of~$B_1^*$
to~$u$, $z_i\!\lra\!0$.
For $i\!\in\!\Z^+$ such that $|z_i|\!+\!1/\sqrt{M_i}\!<\!1/2$, define
$$v_i\!:B_{\sqrt{M_i}}\lra X, \qquad v_i(z)=u_i\big(z_i\!+\!z/M_i\big).$$
Thus, $v_i$ is a $J$-holomorphic map with 
\BE{EnerPres2_e3}\sup\big|\nd v_i\big|_g=\big|\nd_0v_i\big|_g=1, \qquad 
E_g(v_i)=E_g\big(u_i;B_{1/\sqrt{M_i}}(z_i)\big)\le 
E_g\big(u_i;B_{|z_i|+1/\sqrt{M_i}}\big)\,.\EE
By the first statement in~\eref{EnerPres2_e3} and the ellipticity of the $\dbar$-operator, 
a subsequence of~$v_i$ converges  uniformly in the $C^{\i}$-topology on compact subsets of~$\C$
to a non-constant $J$-holomorphic map \hbox{$v\!:\C\!\lra\!X$}.
By the second statement in~\eref{EnerPres2_e3} and Lemma~\ref{EnerPres1_lmm}\ref{deEner_it},
\BE{EnerPres2_e5} E_g(v)
\le \limsup_{i\lra\i}E_g\big(u_i;B_{1/\sqrt{M_i}}(z_i)\big)
\le \lim_{\de\lra0}\lim_{i\lra\i}E_g\big(u_i;B_{\de}\big)=\fm.\EE
By Proposition~\ref{RemSing_prp}, $v$ thus extends to a $J$-holomorphic map 
$\wt{v}\!:\P^1\!\lra\!X$.
By Corollary~\ref{LowEner_crl}, 
$$E_g(v)=E_g(\wt{v})\ge \hb_{J,g}\,.$$
Combining this with~\eref{EnerPres2_e5}, we obtain the first claim.\\

\noindent
\ref{noLoss_it} By the first two statements in Lemma~\ref{EnerPres1_lmm}
and~\eref{fmdfn_e},
\BE{noLoss_e2} \lim_{\de\lra0}\lim_{i\lra\i}E_g\big(u_i;B_{\de}(z_i)\big)=
\lim_{\de\lra0}\fm(\de)=\fm.\EE
After passing to a subsequence of~$u_i$, we can thus assume that there exists
a sequence $\de_i\!\lra\!0$ such~that
\BE{noLoss_e3} \lim_{i\lra\i}E_g(u_i;B_{\de_i}(z_i))=\fm.\EE
Since $\de_i\!\lra\!0$, \eref{noLoss_e2} and \eref{noLoss_e3} imply that 
\BE{noLoss_e5}\lim_{R\lra\i}\lim_{i\lra\i}E_g(u_i;B_{R\de_i}(z_i))=\fm.\EE

\vspace{.1in}

\noindent
Suppose $\mu\!\in\!(\fm\!-\!\hb_{J,g},\fm)$.
By \eref{noLoss_e5} and the definition of~$\de_i(\mu)$, 
$$ \lim_{R\lra\i}\lim_{i\lra\i}E_g\big(u;B_{R\de_i}(z_i)\!-\!B_{\de_i(\mu)}(z_i)\big)
=\fm\!-\!\mu<\hb_{J,g}\,.$$
Thus, Corollary~\ref{CylEner_crl} applies with $(R,T)$ replaced by 
$(\frac12\ln(R\de_i/\de_i(\mu)),\ln R)$
and $u$ replaced by the $J$-holomorphic~map
$$v\!:  (-R,R)\!\times\!S^1\lra X,\qquad
v\big(r,\ne^{\fI\th}\big)=u\big(z_i\!+\!\sqrt{R\de_i\de_i(\mu)}\,\ne^{r+\fI\th}\big).$$
By the first statement of Corollary~\ref{CylEner_crl},
$$E_g\big(u;B_{\de_i}(z_i)\big)-E_g\big(u;B_{R\de_i(\mu)}(z_i)\big)=
E_g\big(u;B_{\de_i}(z_i)\!-\!B_{R\de_i(\mu)}(z_i)\big)\le \frac{C_{J,g}}{R}E_g(u)$$
for all $i$ sufficiently large (depending on~$R$);
see Figure~\ref{CylEner_fig}.
Combining this with \eref{noLoss_e3}, we obtain~\eref{noEloss_e1}.\\

\begin{figure}
\begin{pspicture}(3,-.3)(9,2.7)
\psset{unit=.3cm}
\psline[linewidth=.06](56,7)(20,7)
\psline[linewidth=.06](56,1)(20,1)
\psarc[linewidth=.04](59,4){4.24}{135}{225}
\psarc[linewidth=.04,linestyle=dotted](53,4){4.24}{-45}{45}
\psarc[linewidth=.04](48,4){4.24}{135}{225}
\psarc[linewidth=.04,linestyle=dotted](42,4){4.24}{-45}{45}
\psarc[linewidth=.04](34,4){4.24}{135}{225}
\psarc[linewidth=.04,linestyle=dotted](28,4){4.24}{-45}{45}
\psarc[linewidth=.04](23,4){4.24}{135}{225}
\psarc[linewidth=.04,linestyle=dotted](17,4){4.24}{-45}{45}
\rput(56,7.8){$\fm$}\rput(56,0){\sm{$\ln\de_i\!+\!\ln R$}}
\rput(45.2,7.8){$\fm$}\rput(45,0){\sm{$\ln\de_i$}}
\rput(20.2,7.8){$\mu$}\rput(20,0){\sm{$\ln\de_i(\mu)$}}
\rput(31.3,8){$\mu^*$}\rput(31,0){\sm{$\de_i(\mu)\!+\!\ln R$}}
\rput(38,4.5){\small{disappearing}}\rput(38,3){\small{energy}}
\end{pspicture}
\caption{Illustration for the proof of~\eref{noEloss_e1}}
\label{CylEner_fig}
\end{figure}

\noindent
It remains to establish~\eref{noEloss_e2}.
By~\eref{EnerPres1Elim_e}, for all $R\!>\!0$ and sufficiently small $\de\!>\!0$  (depending on~$R$)
there exists $i(R,\de)\!\in\!\Z^+$ such~that
$$E_g\big(u_i;B_{R\de}(z_i)\!-\!B_{\de_i(\mu)}(z_i)\big)<\hb_{J,g}
\qquad\forall~i>i(R,\de).$$
Thus, Corollary~\ref{CylEner_crl} applies with $(R,T)$ replaced by 
$(\frac12\ln(R\de/\de_i(\mu)),\ln R)$
and $u$ replaced by the $J$-holomorphic~map
$$v\!:  (-R,R)\!\times\!S^1\lra X,\qquad
v\big(r,\ne^{\fI\th}\big)=u\big(z_i\!+\!\sqrt{R\de\de_i(\mu)}\,\ne^{r+\fI\th}\big).$$
By the second statement of Corollary~\ref{CylEner_crl}, 
$$\diam_g\big(u_i(B_{\de}(z_i)\!-\!B_{R\de_i(\mu)}(z_i))\big)
\le \frac{C_{J,g}}{\sqrt{R}}\hb_{J,g} \qquad\forall~i>i(R,\de).$$
This gives~\eref{noEloss_e2}.
\end{proof}

\begin{lmm}\label{EnerPres3_lmm}
If $(X,J)$ is a compact almost complex manifold with a Riemannian metric~$g$,
then there exists a function $N\!:\R\!\lra\!\Z$ with the following property.
If $(\Si,\fj)$ is compact Riemann surface, $S_0\!\subset\!\Si$ is a finite subset,
 and $u_i\!:U_i\!\lra\!X$ is a sequence of $J$-holomorphic maps 
from open subsets of~$\Si$ with 
\BE{EnerPres3_e}U_i\subset U_{i+1}, \qquad \Si\!-\!S_0=\bigcup_{i=1}^{\i}\!U_i,
\quad\hbox{and}\quad E\!\equiv\!\liminf_{i\lra\i}E_g(u_i)<\i,\EE
then there exist a subset $S\!\subset\!\Si$ with $|S|\!\le\!N(E)\!+\!|S_0|$ 
and a subsequence of $u_i$ converging  uniformly in the $C^{\i}$-topology 
on compact subsets of $\Si\!-\!S$
to a $J$-holomorphic map $u\!:\Si\!\lra\!X$.
\end{lmm}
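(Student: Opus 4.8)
The plan is to combine a standard $C^1_{\mathrm{loc}}$-compactness dichotomy for $J$-holomorphic maps with the no-energy-loss analysis of Corollary~\ref{EnerPres2_crl}, iterated finitely many times. First I would fix an exhaustion of $\Si\!-\!S_0$ by compact sets $K_1\!\subset\!K_2\!\subset\!\cdots$, and on each $K_m$ consider the quantity $\sup_{K_m}|\nd u_i|_g$. There are two cases. If, after passing to a subsequence, $\sup_{K_m}|\nd u_i|_g$ stays bounded for every~$m$, then by elliptic regularity for the $\dbar_J$-operator (as in the proof of Proposition~\ref{JHolomreg_prp} and Corollary~\ref{EnerPres2_crl}, using interior $L^p_1$-estimates followed by bootstrapping) a diagonal subsequence of $u_i$ converges in $C^\i_{\mathrm{loc}}(\Si\!-\!S_0)$ to a $J$-holomorphic map on $\Si\!-\!S_0$; since the energy is bounded by~$E$, Proposition~\ref{RemSing_prp} extends it over each point of $S_0$, so we may take $S\!=\!S_0$ (or a subset) and $N(E)\!=\!0$ suffices in this branch. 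Otherwise, there is a point $z^{(1)}\!\in\!\Si\!-\!S_0$ and a subsequence along which $|\nd u_i|_g\!\to\!\i$ near~$z^{(1)}$; this is where bubbling occurs.

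The heart of the argument is the bubbling step at such a~$z^{(1)}$. Working in a coordinate disk $B_1$ centered at $z^{(1)}$ containing no other concentration point and no point of~$S_0$, and after passing to a subsequence so that $u_i\!\to\!u$ in $C^\i_{\mathrm{loc}}(B_1^*)$ and the limit $\fm\!\equiv\!\lim_{\de\to0}\lim_i E_g(u_i;B_\de)$ exists (both arranged by the previous case applied on $B_1^*$ together with monotonicity of $\de\mapsto\lim_i E_g(u_i;B_\de)$), Corollary~\ref{EnerPres2_crl}\ref{fmLowBnd_it} gives $\fm\!\ge\!\hb_{J,g}$: a definite amount of energy $\hb_{J,g}$ is absorbed at~$z^{(1)}$. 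Moreover Corollary~\ref{EnerPres2_crl}\ref{noLoss_it} (the no-loss and connecting estimates~\eref{noEloss_e1}, \eref{noEloss_e2}) guarantees that, after rescaling, a bubble $\wt v\!:\P^1\!\lra\!X$ is produced and that no energy escapes into the neck. The key bookkeeping point: after removing a small disk around $z^{(1)}$, the sequence restricted to $\Si\!-\!B_\de(z^{(1)})$ (together with the original $S_0$) again satisfies the hypotheses of the lemma, but with $\liminf E_g$ reduced by at least $\fm\!\ge\!\hb_{J,g}$ (this uses~\eref{noEloss_e1} to see the energy near $z^{(1)}$ is genuinely accounted for by the bubble and not merely temporarily present). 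So I would set $N(E)\!=\!\lfloor E/\hb_{J,g}\rfloor$ and argue that at most $N(E)$ such blow-up points can occur: each consumes at least $\hb_{J,g}$ of the total budget~$E$.

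Formally I would phrase this as an induction on $\lfloor E/\hb_{J,g}\rfloor$, or equivalently extract the blow-up set $S^*\!=\!\{z\!\in\!\Si\!-\!S_0 : \limsup_i \sup_{B_\de(z)}|\nd u_i|_g\!=\!\i \text{ for all }\de>0\}$ directly: by the energy-concentration lower bound just described, $|S^*|\!\le\!E/\hb_{J,g}$, hence $S^*$ is finite. Set $S\!=\!S_0\!\cup\!S^*$, so $|S|\!\le\!|S_0|\!+\!N(E)$ with $N(E)\!=\!\lfloor E/\hb_{J,g}\rfloor$. On $\Si\!-\!S$ the differentials $|\nd u_i|_g$ are locally bounded along a suitable subsequence (here one needs a covering/diagonalization argument: cover $\Si\!-\!S$ by countably many coordinate disks, on each of which a subsequence has bounded gradient by definition of $S$, then diagonalize), so by elliptic regularity a further diagonal subsequence converges in $C^\i_{\mathrm{loc}}(\Si\!-\!S)$ to a $J$-holomorphic map $u$; its energy on $\Si\!-\!S$ is at most $E\!<\!\i$, so Proposition~\ref{RemSing_prp} extends $u$ to a $J$-holomorphic map on all of~$\Si$, as required.

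The main obstacle I anticipate is making the bookkeeping in the induction rigorous: one must show that after splitting off a bubble at $z^{(1)}$, the residual energy $\liminf$ on $\Si$ with a small disk around $z^{(1)}$ deleted drops by a full $\fm$, not less — i.e.\ that the energy which concentrates at $z^{(1)}$ is entirely captured and does not reappear as a smaller concentration at the same point after the first rescaling. This is exactly the content of~\eref{noEloss_e1} in Corollary~\ref{EnerPres2_crl}, applied carefully to a sequence of shrinking disks, and the delicate part is choosing the order of limits ($i\!\to\!\i$ before $\de\!\to\!0$ before $R\!\to\!\i$, versus the rescaled picture) consistently so that the residual sequence is a genuine instance of the lemma's hypotheses with strictly smaller energy budget. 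Everything else — the $C^\i_{\mathrm{loc}}$-compactness, the removal of singularities, the finiteness count — is routine given the results already established in the excerpt.
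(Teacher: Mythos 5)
Your overall architecture is the right one and is essentially the paper's: a finite concentration set $S$ of size controlled by $E/\hb_{J,g}$, locally uniform gradient bounds away from $S$, elliptic bootstrapping plus a diagonal argument to get $C^{\i}_{\textnormal{loc}}$ convergence on $\Si\!-\!S$, and Proposition~\ref{RemSing_prp} to extend the limit over~$S$. The paper, however, gets the key energy quantization at concentration points directly and only from the Mean Value Inequality (Proposition~\ref{PtBnd_prp}): it chooses, for each scale $1/j$ and each~$i$, at most $N(E)\!+\!|S_0|$ points so that every ball $B_{1/j}(z)$ away from them carries energy $<\hb_{J,g}$ (hence $|\nd u_i|\le Cj^2$ there), and then lets the bad points converge as $i\!\lra\!\i$ and $j\!\lra\!\i$. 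No bubbling input — neither Lemma~\ref{EnerPres1_lmm} nor Corollary~\ref{EnerPres2_crl} — is needed, and indeed in the paper this lemma is what feeds Proposition~\ref{EnerPres_prp}, not the other way around.

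The genuine gap in your write-up is the way you justify the lower bound $\hb_{J,g}$ at a blow-up point. You invoke Corollary~\ref{EnerPres2_crl}\ref{fmLowBnd_it}, but its hypotheses — that $u_i$ already converges in $C^{\i}$ on compact subsets of $B_1^*$ to a $J$-holomorphic map defined on all of $B_1$, that the limit $\fm$ exists, and implicitly that the chosen disk contains no other concentration point — are exactly what you have not yet established at that stage: a priori the set of concentration points could fail to be discrete, so "the previous case applied on $B_1^*$" is not available. The fix is to prove quantization directly: if for some $\de$ a (sub)sequence satisfies $E_g(u_i;B_{\de}(z))\!<\!\hb_{J,g}$, then Proposition~\ref{PtBnd_prp} bounds $|\nd u_i|$ on $B_{\de/2}(z)$, so every gradient blow-up point carries at least $\hb_{J,g}$ of energy in every small ball; disjointness of small balls then gives $|S^*|\le E/\hb_{J,g}$. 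Two further points: (i) your induction on $\lfloor E/\hb_{J,g}\rfloor$ with bubble-splitting is unnecessary for this lemma (no bubble needs to be produced, and no energy accounting is claimed), and the "residual" sequence on $\Si$ minus a disk is not literally an instance of the lemma's hypotheses, so the worry you flag about the energy dropping by exactly $\fm$ is moot here; (ii) since $S^*$ is defined via a $\limsup$ over the whole sequence, you must fix a subsequence (e.g.\ one along which the energies converge and the bad-ball configurations stabilize, as in the paper's $z_{ij}^k$ bookkeeping) before the counting argument, or points of $S^*$ may not persist under the later subsequence extractions.
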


\begin{proof}
Let $\hb_{J,g}$ be the minimal value of the function provided by Proposition~\ref{PtBnd_prp}.
For $E\!\in\!\R^+$, let $N(E)\!\in\!\Z^{\ge0}$ be the smallest integer
such that $E\!\le\!N(E)\hb_{J,g}$.\\

\noindent
Let $\Si$, $S_0$, $u_i$, and $E$ be as in the statement of the lemma and
$N\!=\!N(E)\!+\!|S_0|$.
Fix a Riemannian metric~$g_{\Si}$ on~$\Si$.
For $z\!\in\!\Si$ and $\de\!\in\!\Si$, 
let $B_{\de}(z)\!\subset\!\Si$ denote the ball of radius~$\de$  around~$z$.
By Proposition~\ref{PtBnd_prp}, there exists $C\!\in\!\R^+$ with the following property.
If $u\!:\Si\!\lra\!X$ is a $J$-holomorphic map, $z\!\in\!\Si$, and $\de\!\in\!\R^+$,
then
\BE{EnerPres3_e3}  E_g\big(u;B_{\de}(z)\big)<\hb_{J,g} \qquad\Lra\qquad
\big|\nd_zu\big|_g\le C/\de^2\,.\EE 

\vspace{.1in}

\noindent
For every pair $i,j\!\in\!\Z^+$, 
let $\{z_{ij}^k\}_{k=1}^N$ be a subset of points of~$\Si$ containing~$S_0$ 
such~that
\BE{EnerPres3_e5} z\in \Si_{ij}^*\equiv\Si\!-\!\bigcup_{k=1}^N\!B_{2/j}\big(z_{ij}^k\big)
 \qquad\Lra\qquad
E_g\big(u_i;B_{1/j}(z)\!\cap\!U_i\big)<\hb_{J,g}\,.\EE
By~\eref{EnerPres3_e3} and~\eref{EnerPres3_e5},
\BE{EnerPres3_e7} 
\big|\nd_zu_i\big|_g\le Cj^2 \qquad\forall\,z\!\in\!\Si_{ij}^*
~\hbox{s.t.}~B_{1/j}(z)\!\subset\!U_i\,.\EE

\vspace{.1in}

\noindent
After passing to a subsequence of~$\{u_i\}$, we can assume that 
the sequence $E_g(u_i)$ converges to~$E$ and
that the sequence 
$\{z_{ij}^k\}_{i\in\Z^+}$ converges to some $z_j^k\!\in\!\Si$ for every 
$k\!=\!1,\ldots,N$ and $j\!\in\!\Z^+$.
Along with~\eref{EnerPres3_e7} and 
the first two assumptions in~\eref{EnerPres3_e}, this implies that 
\BE{EnerPres3_e9} \limsup_{i\lra\i}\big|\nd_zu_i\big|_g\le Cj^2 
\qquad\forall\,z\in\Si_{ij}^*\,.\EE
After passing to another subsequence of~$\{u_i\}$, we can assume that the sequence 
$\{z_j^k\}_{j\in\Z^+}$ converges to some $z^k\!\in\!\Si$ for every 
$k\!=\!1,\ldots,N$.
\\

\noindent
By~\eref{EnerPres3_e9} and the ellipticity of the $\dbar$-operator, 
a subsequence of~$u_i$ converges  uniformly in the $C^{\i}$-topology on compact subsets of~$\Si_1^*$
to a $J$-holomorphic map \hbox{$v_1\!:\Si_1^*\!\lra\!X$}.
By~\eref{EnerPres3_e9} and the ellipticity of the $\dbar$-operator,
a subsequence of this subsequence in turn converges  uniformly 
in the $C^{\i}$-topology on compact subsets of~$\Si_2^*$
to a $J$-holomorphic map \hbox{$v_2\!:\Si_2^*\!\lra\!X$}.
Continuing in this way,  we obtain a subsequence of~$u_i$ 
converging  uniformly  in the $C^{\i}$-topology on compact subsets of~$\Si_j^*$
to a $J$-holomorphic map \hbox{$v_j\!:\Si_j^*\!\lra\!X$} for every $j\!\in\!\Z^+$.
The limiting maps satisfy
$$v_j|_{\Si_j\cap\Si_{j'}^*}=v_{j'}|_{\Si_j^*\cap\Si_{j'}^*}
\qquad\forall~j,j'\!\in\!\Z^+\,.$$ 
Thus, the~map
$$u\!:\Si^*\!\equiv\!\Si^*\!-\!\big\{z^k\big\}\lra X, \qquad
u(z)=v_j(z)~~\forall\,z\!\in\!\Si_j^*,
$$
is well-defined and $J$-holomorphic.\\

\noindent
By construction, the final subsequence of $u_i$ converges  uniformly 
in the $C^{\i}$-topology on compact subsets of~$\Si^*$ to~$u$.
This implies~that
$$E_g(u)\le \liminf_{i\lra\i}E_g(u_i)=E\,.$$
By Proposition~\ref{RemSing_prp}, $u$ thus extends to a $J$-holomorphic map $\Si\!\lra\!X$.
\end{proof}

\subsection{Gromov's convergence}
\label{Conver_subs2}

\noindent
We next show that a sequence of maps as in Corollary~\ref{EnerPres2_crl} gives rise 
to a continuous map from a \textsf{tree of spheres} attached at $0\!\in\!B_1$, 
i.e.~a connected union of spheres that have a distinguished, base component and
no loops;
the distinguished component will be attached at $\i\!\in\!S^2$ to $0\!\in\!B_1$.
The combinatorial structure of such a tree is described by 
a finite \textsf{rooted linearly ordered set}, i.e.~a partially ordered set~$(I,\prec)$ 
such~that 
\begin{enumerate}[label=(RS\arabic*),leftmargin=*]

\item\label{minelem_it} there is a minimal element (\textsf{root}) $i_0\!\in\!I$,
i.e.~$i_0\!\prec\!h$ for every $h\!\in\!I\!-\!\{i_0\}$, and 

\item\label{smalelem_it} 
for all $h_1,h_2,i\!\in\!I$ with $h_1,h_2\!\prec\!i$,   
either $h_1\!=\!h_2$, or $h_1\!\prec\!h_2$, or $h_2\!\prec\!h_1$.

\end{enumerate}
For each $i\!\in\!I\!-\!\{i_0\}$, let $p(i)\!\in\!I$
denote \textsf{the immediate predecessor of~$i$},
i.e.~$p(i)\!\in\!I$ such that $h\!\prec\!p(i)\!\prec\!i$ for all 
$h\!\in\!I\!-\!\{p(i)\}$ such that $h\!\prec\!i$.
Such $p(i)\!\in\!I$ exists by~\ref{minelem_it} and is unique by~\ref{smalelem_it}.
In the first diagram in Figure~\ref{glos_fig},
the vertices (dots) represent the elements of a rooted linearly ordered set~$(I,\prec)$
and the edges run from $i\!\in\!I\!-\!\{i_0\}$ down to~$p(i)$.\\

\noindent
Given a finite rooted linearly ordered set~$(I,\prec)$ with minimal element~$i_0$ and a function
\BE{zcond_e}z\!:\,I\!-\!\{i_0\}\lra\C, \quad i\lra z_i,
\quad\hbox{s.t.}\quad \big(p(i_1),z_{i_1}\big)\neq \big(p(i_2),z_{i_2}\big)
~~\forall~i_1,i_2\in I\!-\!\{i_0\},~i_1\!\neq\!i_2,\EE
let
$$\Si=\bigg(\bigsqcup_{i\in I}\{i\}\!\times\!S^2\!\bigg)\!\Big/\!\!\sim, \qquad
(i,\i)\sim\big(p(i),z_i)~~~\forall~i\!\in\!I\!-\!\{i_0\};$$
see the second diagram in Figure~\ref{glos_fig}.
Thus, the tree of spheres~$\Si$ is obtained by attaching~$\i$ in the sphere indexed by~$i$
to~$z_i$ in the sphere indexed by~$p(i)$.
The last condition in~\eref{zcond_e} insures that $\Si$ is a \textsf{nodal} Riemann surface,
i.e.~each non-smooth point (\textsf{node}) has only two local branches 
(pieces homeomorphic to~$\C$).

\begin{figure}
\begin{pspicture}(-.5,-2)(10,2.5)
\psset{unit=.4cm}
\pscircle*(10,-3){.25}\pscircle*(7,-2.5){.25}\pscircle*(13,-2.5){.25}
\psline[linewidth=.09](7,-2.5)(10,-3)\psline[linewidth=.09](13,-2.5)(10,-3)
\pscircle*(11,-.5){.25}\pscircle*(15,-.5){.25}
\psline[linewidth=.09](13,-2.5)(11,-.5)\psline[linewidth=.09](13,-2.5)(15,-.5)
\rput(10,-4){\sm{$i_0$}}\rput(13,-3.5){\sm{$p(i)$}}\rput(10.5,-.5){\sm{$i$}}
\pscircle[linewidth=.1](25,-3){2}\pscircle*(25,-5){.25}\rput(25,-4.3){\sm{$\i$}}
\pscircle*(23.59,-1.59){.25}\pscircle*(26.41,-1.59){.25}
\pscircle[linewidth=.1](22.18,-.18){2}\pscircle[linewidth=.1](27.82,-.18){2}
\pscircle*(27.82,1.82){.25}\pscircle*(29.82,-.18){.25}
\pscircle[linewidth=.1](27.82,3.82){2}\pscircle[linewidth=.1](31.82,-.18){2}
\rput(27.82,4.2){\sm{$i$}}\rput(27.82,2.5){\sm{$\i$}}
\rput(27.82,-.5){\sm{$p(i)$}}\rput(27.82,1.1){\sm{$z_i$}}
\end{pspicture}
\caption{A rooted linearly ordered set and an associated tree of spheres}
\label{glos_fig}
\end{figure}
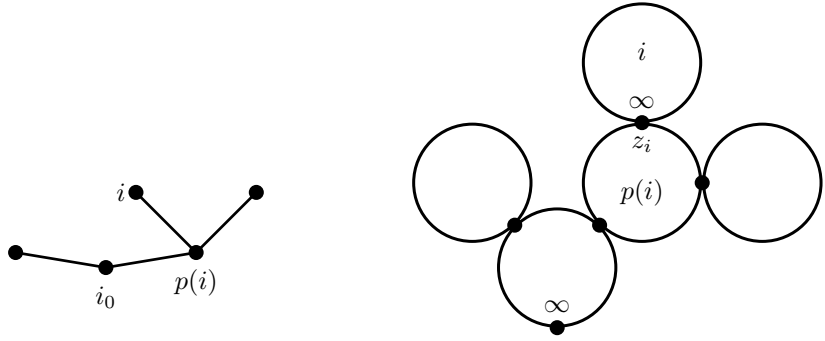

\begin{prp}\label{EnerPres_prp}
Let $(X,J)$ be a compact almost complex manifold with a Riemannian metric~$g$
and $u_i\!:B_1\!\lra\!X$ be a sequence of $J$-holomorphic maps converging
uniformly in the $C^{\i}$-topology on compact subsets of~$B_1^*$
to a $J$-holomorphic map $u\!:B_1\!\lra\!X$. If the limit
\BE{fmdfn_e2}\fm\equiv\lim_{\de\lra0}\lim_{i\lra\i}E_g(u_i;B_{\de})\EE
exists and is nonzero, then there exist
\begin{enumerate}[ref=\arabic*,label=(\arabic*),leftmargin=*]

\item a nodal Riemann surface $(\Si_{\i},\fj_{\i})$ consisting of $B_1$ with 
a tree of Riemann spheres~$\P^1$ attached at $0\!\in\!B_1$,

\item a $J$-holomorphic map $u_{\i}\!:\Si_{\i}\!\lra\!X$,
 
\item a subsequence of $\{u_i\}$ still denoted by $\{u_i\}$, and

\item\label{psi_it} a biholomorphic map $\psi_i\!:U_i\!\lra\!B_{1/2}$, 
where $U_i\!\subset\!\C$ is an open subset,  

\end{enumerate}
such that 
\begin{enumerate}[label=(\ref{psi_it}\alph*),leftmargin=*]

\item\label{Ccov_it} $E_g(u_{\i};\Si_{\i}\!-\!B_1)=\fm$, 
$U_i\!\subset\!U_{i+1}$, and $\C=\bigcup_{i=1}^{\i}U_i$,

\item\label{vcon_it} $u_i\!\circ\!\psi_i$ converges to~$u_{\i}$ uniformly 
in the $C^{\i}$-topology  on compact subsets of the complement of the nodes 
$\i,w_1^*,\ldots,w_k^*$  in the sphere~$\P_0^1$ attached at $0\!\in\!B_1$,

\item\label{stab_it}  if $u_{\i}|_{\P^1_0}$ is constant, 
$\P^1_0$ contains at least three nodes of~$\Si_{\i}$;

\item \eref{psi_it} applies with $B_1$, $(\{u_i\},0)$,  and~$\fm$
replaced by  a neighborhood of~$w_r^*$ in~$\C$, $(\{u_i\!\circ\!\psi_i\},w_r^*)$, and 
\BE{fmdfn_e3}\fm_r'\equiv\lim_{\de\lra0}\lim_{i\lra\i}
E_g\big(u_i\!\circ\!\psi_i;B_{\de}(w_r^*)\big),\EE
respectively, for each $r\!=\!1,\ldots,k$.

\end{enumerate}
\end{prp}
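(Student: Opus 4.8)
The plan is to peel off the principal bubble component $\P_0^1$ by one affine rescaling, to read off its remaining concentration points as the secondary nodes $w_1^*,\dots,w_k^*$, and then to build the whole tree by induction on the least integer~$N$ with $\fm\!\le\!N\hb_{J,g}$.

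\emph{Rescaling.} First, $\fm\!\neq\!0$ forces $\lim_{i\to\i}\max_{\ov{B_{1/2}}}|\nd u_i|_g\!=\!\i$; otherwise elliptic bootstrapping would produce a subsequence converging in~$C^{\i}$ on~$\ov{B_{1/2}}$ and hence~$\fm\!=\!0$. Pick $z_i\!\in\!\ov{B_{1/2}}$ with $|\nd_{z_i}u_i|_g$ maximal; since $u_i\!\lra\!u$ away from~$0$, $z_i\!\lra\!0$, so Lemma~\ref{EnerPres1_lmm} and Corollary~\ref{EnerPres2_crl} are available and in particular $\fm\!\ge\!\hb_{J,g}$. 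Fix $\mu\!\in\!(\fm\!-\!\hb_{J,g},\fm)$ and let $\de_i\!\equiv\!\de_i(\mu)$ be as in Lemma~\ref{EnerPres1_lmm}\ref{dela_it}, so $E_g(u_i;B_{\de_i}(z_i))\!=\!\mu$; one checks $\de_i\!\lra\!0$, since otherwise along a subsequence $\mu\!=\!E_g(u_i;B_{\de_i}(z_i))\!\ge\!E_g(u_i;B_{\de_0/2}(z_i))$, whose limit exceeds~$\fm$ by Lemma~\ref{EnerPres1_lmm}. Set $\psi_i(z)\!=\!z_i\!+\!\de_i z$ and $U_i\!=\!\psi_i^{-1}(B_{1/2})$; because $z_i,\de_i\!\lra\!0$ each~$U_i$ contains a ball about~$0$ of radius $(1/2\!-\!|z_i|)/\de_i\!\lra\!\i$, so after passing to a subsequence we may assume $U_i\!\subset\!U_{i+1}$ and $\C\!=\!\bigcup_iU_i$. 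Put $w_i\!=\!u_i\!\circ\!\psi_i$; then $E_g(w_i;B_1)\!=\!\mu$ for all~$i$ and, by~\eref{noEloss_e1}, $\lim_{R\to\i}\lim_{i\to\i}E_g(w_i;B_R)\!=\!\fm$.

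\emph{Principal component and energy.} Applying Lemma~\ref{EnerPres3_lmm} with $\Si\!=\!\P^1\!=\!\C\!\cup\!\{\i\}$, $S_0\!=\!\{\i\}$, and the maps $w_i\!:U_i\!\lra\!X$, a subsequence converges in the $C^{\i}$-topology on compact subsets of $\P^1\!-\!S$, with $S$ finite and $\i\!\in\!S$, to a $J$-holomorphic map $\ti v\!:\P^1\!\lra\!X$ (the extension over~$S$ being by Proposition~\ref{RemSing_prp}). Discarding from~$S$ the points with $\lim_{\de\to0}\lim_iE_g(w_i;B_{\de}(\cdot))\!=\!0$, at which the convergence is in fact $C^{\i}$ by Proposition~\ref{PtBnd_prp}, leaves $S\!-\!\{\i\}\!=\!\{w_1^*,\dots,w_k^*\}$ with the masses $\fm_r'$ of~\eref{fmdfn_e3} defined (Lemma~\ref{EnerPres1_lmm}\ref{deEner_it}) and positive; these are the secondary nodes and $\P_0^1\!\equiv\!\P^1$, with $\i$ glued to $0\!\in\!B_1$ and $w_1^*,\dots,w_k^*$ marked, is the principal sphere, which gives~\ref{vcon_it}. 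By~\eref{noEloss_e2}, $u_i$ has vanishing oscillation over the neck $B_{\de}(z_i)\!-\!B_{R\de_i}(z_i)$, forcing $\ti v(\i)\!=\!u(0)$; hence the map~$u_{\i}$ equal to~$u$ on~$B_1$ and~$\ti v$ on~$\P_0^1$ is continuous, so $J$-holomorphic on $B_1\!\cup_0\!\P_0^1$. Finally, splitting $E_g(w_i;B_R)$ into the energy over $B_R\!-\!\bigsqcup_rB_{\rho}(w_r^*)$ plus the energies over the $B_{\rho}(w_r^*)$, letting $i\!\lra\!\i$ (uniform convergence on the first piece, Lemma~\ref{EnerPres1_lmm}\ref{deEner_it} on the rest), then $\rho\!\lra\!0$ and $R\!\lra\!\i$, and invoking $\lim_R\lim_iE_g(w_i;B_R)\!=\!\fm$, yields the identity $\fm\!=\!E_g(\ti v;\P_0^1)\!+\!\sum_{r=1}^k\fm_r'$.

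\emph{Stability and recursion.} If $\ti v$ is constant then $k\!\ge\!2$: $k\!=\!0$ is impossible since $E_g(w_i;B_1)\!=\!\mu\!>\!0$ while $w_i$ converges to a constant on~$\ov{B_1}$, and $k\!=\!1$ is impossible because then $\fm_1'\!=\!\fm$ by the displayed identity while, choosing the test-ball radius so that $w_1^*$ avoids its boundary, $w_1^*\!\in\!\ov{B_1}$ forces $\mu\!=\!\lim_iE_g(w_i;B_1)\!\ge\!\fm_1'\!=\!\fm$, contrary to $\mu\!<\!\fm$; this is~\ref{stab_it}. Moreover each $\fm_r'\!\ge\!\hb_{J,g}$ (by Corollary~\ref{EnerPres2_crl}\ref{fmLowBnd_it} applied at~$w_r^*$) and each $\fm_r'\!\le\!\fm\!-\!\hb_{J,g}$, since either $\ti v$ is non-constant with $E_g(\ti v;\P_0^1)\!\ge\!\hb_{J,g}$ by Corollary~\ref{LowEner_crl}, or $\ti v$ is constant and the remaining $k\!-\!1\!\ge\!1$ masses contribute at least~$\hb_{J,g}$. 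I would then induct on the least $N\!\in\!\Z^+$ with $\fm\!\le\!N\hb_{J,g}$: when $N\!=\!1$ the dichotomy forces $k\!=\!0$, so $\Si_{\i}\!=\!B_1\!\cup_0\!\P_0^1$ with $u_{\i}$ as above already satisfies~\ref{Ccov_it}--\ref{stab_it}; when $N\!>\!1$, apply the inductive hypothesis to the sequence $w_i$ in a small disk around each~$w_r^*$ with concentration mass $\fm_r'\!\le\!(N\!-\!1)\hb_{J,g}$, obtain a finite tree of spheres carrying a $J$-holomorphic map of total energy $\fm_r'$, glue it to $\P_0^1$ at~$w_r^*$, and take the union. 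This produces a finite tree of spheres attached to $\P_0^1$, an extended $J$-holomorphic map~$u_{\i}$ with $E_g(u_{\i};\Si_{\i}\!-\!B_1)\!=\!E_g(\ti v;\P_0^1)\!+\!\sum_r\fm_r'\!=\!\fm$, and the recursion clause~\ref{psi_it} at each~$w_r^*$ by construction.

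\emph{Main difficulty.} The real content is that a single rescaling loses no energy in the neck and leaves a stable configuration, but both facts are packaged into Corollary~\ref{EnerPres2_crl} (ultimately Corollary~\ref{CylEner_crl}), so the heaviest analysis precedes this proof. Within it the delicate steps are the continuity $\ti v(\i)\!=\!u(0)$ at the new node and the exclusion of $k\!=\!1$ when $\ti v$ is constant --- i.e.~that the scale $\de_i(\mu)$ is genuinely ``used'' --- together with the minor bookkeeping that keeps the secondary nodes off the boundary of the test ball; the remaining energy computations are routine given Lemma~\ref{EnerPres1_lmm} and~\eref{noEloss_e1}--\eref{noEloss_e2}.
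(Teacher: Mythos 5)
Your proposal follows the paper's proof almost step for step: rescaling at the gradient--maximizing point $z_i$ at the scale $\de_i(\mu)$ fixed by an energy normalization, Lemma~\ref{EnerPres3_lmm} to produce the principal sphere, the energy identity $\fm=E_g(u_\i;\P^1_0)+\sum_r\fm_r'$ via~\eref{noEloss_e1}, connectedness of the bubble via~\eref{noEloss_e2}, and induction on the number of energy quanta~$\hb_{J,g}$; all of that is fine (the paper simply takes $\mu=\fm-\hb_{J,g}/2$). The one genuine gap is in the exclusion of $k\!=\!1$ when $u_\i|_{\P^1_0}$ is constant, i.e.\ in the proof of~\ref{stab_it}. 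You assert that $w_1^*\!\in\!\ov{B_1}$ forces $\mu=\lim_iE_g(u_i\!\circ\!\psi_i;B_1)\ge\fm_1'$, but this comparison only works if $B_{\de}(w_1^*)\!\subset\!B_1$ for small~$\de$, i.e.\ if $|w_1^*|\!<\!1$. Your own estimates eliminate $|w_1^*|\!<\!1$ (it would give $\mu\ge\fm$) and $|w_1^*|\!>\!1$ (it would give $\mu=0$), so in the hypothetical $k\!=\!1$ scenario the concentration point is forced onto the unit circle, which is exactly where your inequality fails: $\fm_1'$ then also collects energy entering from the annulus outside~$B_1$, and pure energy bookkeeping does not exclude the configuration ``constant limit, single concentration point on $\prt B_1$ absorbing $\mu$ from inside $B_1$ and $\fm\!-\!\mu$ from outside.'' The parenthetical ``choosing the test-ball radius so that $w_1^*$ avoids its boundary'' does not repair this, because the radius-$1$ ball is pinned by the normalization $E_g(u_i;B_{\de_i(\mu)}(z_i))\!=\!\mu$, while for any other radius $\rho$ you only know $\lim_iE_g(u_i\!\circ\!\psi_i;B_\rho)=0$ for $\rho\!<\!1$ and $=\fm$ for $\rho\!>\!1$, neither of which yields a contradiction.

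The missing ingredient is the one feature of your setup that you never use in this step: $z_i$ maximizes $|\nd u_i|_g$ on $\ov{B_{1/2}}$, so $|\nd_0(u_i\!\circ\!\psi_i)|_g\ge|\nd_w(u_i\!\circ\!\psi_i)|_g$ for every $w$ in the domain of $u_i\!\circ\!\psi_i$. Since $k\ge1$ forces the gradients to blow up near some $w_r^*$, they blow up at $w\!=\!0$ as well, so $0$ is itself a concentration point; its mass is at most $\lim_iE_g(u_i\!\circ\!\psi_i;B_1)=\mu<\fm$, so the identity $\fm=\sum_r\fm_r'$ (with $E_g(u_\i;\P^1_0)=0$) forces a second concentration point and hence $k\ge2$. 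This is exactly how the paper argues; with this substitution your stability step closes, and since your upper bound $\fm_r'\le\fm-\hb_{J,g}$ (needed to run the induction) also relies on $k\ge2$ in the constant case, the rest of your argument then goes through as written.
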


\begin{proof}
Let $\hb_{J,g}$ be the smaller of the numbers $\hb_{J,g}$
in Corollaries~\ref{LowEner_crl} and~\ref{EnerPres2_crl}.
In particular, $\fm\!\ge\!\hb_{J,g}$.\\

\noindent
For each $i\!\in\!\Z^+$ sufficiently large, choose $z_i\!\in\!\ov{B_{1/2}}$ so~that
\BE{maxdui_e} \max_{z\in \ov{B_{1/2}}}\big|\nd u_i\big|_g=\big|\nd_{z_i}u_i\big|_g.\EE
Since $u_i$ converges
uniformly in the $C^{\i}$-topology on compact subsets of~$B_1^*$ to~$u$,
$z_i\!\lra\!0$ as $i\!\lra\!\i$.
Thus,  $B_{1/2}(z_i)\!\subset\!B_1$
for all $i\!\in\!\Z^+$ sufficiently large.
By Lemma~\ref{EnerPres1_lmm}\ref{dela_it},
for all $i\!\in\!\Z^+$ sufficiently large there exists $\de_i\!\in\!(0,1/2)$
such~that 
\BE{dejidfn_e} E_g\big(u_i;B_{\de_i}(z_i)\big)=\fm-\frac{\hb_{J,g}}{2}\,.\EE
Define
$$\psi_i\!:U_i\!\equiv\!\big\{w\!\in\!\C\!: z_i\!+\!\de_iw\!\in\!B_{1/2}\big\}
\lra B_{1/2}
\qquad\hbox{by}\qquad \psi_i(w)=z_i\!+\!\de_iw\,.$$
Since $\de_i\!\lra\!0$, the second property in \ref{Ccov_it} holds.\\

\noindent
For each $i\!\in\!\Z^+$ sufficiently large, let
$$v_i=u_i\!\circ\!\psi_i\!:  U_i\lra X.$$
Since $u_i$ is $J$-holomorphic and $\psi_i$ is biholomorphic onto its image,
$v_i$ is a $J$-holomorphic map with \hbox{$E_g(v_i)=E_g(u_i;B_{1/2})$}.
Along with Lemma~\ref{EnerPres1_lmm}\ref{SeqEner_it}, this implies that
$$\lim_{i\lra\i}E_g(v_i)=\fm(1/2)<\i\,.$$
By Lemma~\ref{EnerPres3_lmm}, there thus exist a finite collection 
$w_1^*,\ldots,w_k^*\!\in\!\C$ of distinct points 
and a subsequence of~$\{u_i\}$, still denoted by~$\{u_i\}$, 
such that $v_i$ converges uniformly in the $C^{\i}$-topology
on compact subsets  of $\P^1\!-\!\{\i,w_1^*,\ldots,w_{\ell}^*\}$
to a $J$-holomorphic map $u\!:\P^1\!\lra\!X$.
In particular, \ref{vcon_it} holds and
$|\nd v_i|_g$ is uniformly bounded on compact subsets 
of $\P^1\!-\!\{\i,w_1^*,\ldots,w_{\ell}^*\}$.
We can also assume that the limit~\eref{fmdfn_e3}
exists for every $r\!=\!1,\ldots,k$.
We note~that
\BE{EnerBub_e}\begin{split}
E_g(v)+\!\sum_{r=1}^k\!\fm_r' 
&=\lim_{R\lra\i}\lim_{\de\lra0}\lim_{i\lra\i}\!\!
E_g\!\big(v_i,B_R\!-\!\bigcup_{r=1}^k\! B_{\de}(w_r^*)\big)
+\sum_{r=1}^k\lim_{\de\lra0}\lim_{i\lra\i}\!\!E_g\big(v_i;B_{\de}(w_r^*)\big)\\
&=\lim_{R\lra\i}\lim_{i\lra\i}\!\!E_g\big(v_i,B_R\big)
=\lim_{R\lra\i}\lim_{i\lra\i}\!\!E_g\big(u_i,B_{R\de_i}(z_i)\big)=\fm\,;
\end{split}\EE
the last equality holds by~\eref{noEloss_e1}.\\

\noindent
Let $\de_0\!\in\!\R^+$ be such that the balls $B_{\de_0}(w_r^*)$ are pairwise disjoint.
If 
$$\limsup_{i\lra\i}\max_{\ov{B_{\de_0}(w_r^*)}}\big|\nd v_i\big|<\i$$
for some~$r$, then $\{v_i\}$ converges  uniformly in the $C^{\i}$-topology
on $\ov{B_{\de_0}(w_r^*)}$ to~$v$ by the ellipticity of the $\dbar$-operator.
Thus, we can assume~that 
$$\lim_{i\lra\i}\sup_{\ov{B_{\de_0}(w_r^*)}}\big|\nd v_i\big|=\i$$
for every $r\!=\!1,\ldots,k$.
In light of  Corollary~\ref{EnerPres2_crl}\ref{fmLowBnd_it}, $\fm_r'\!\ge\!\hb_{J,g}$.\\

\begin{figure}
\begin{pspicture}(-.5,-2.5)(10,3.5)
\psset{unit=.4cm}
\pscircle[linewidth=.1](18,1){3}\pscircle*(21,1){.2}\rput(21.5,1){\sm{$1$}}
\pscircle[linewidth=.07,linestyle=dotted](18,1){7}\pscircle*(25,1){.2}
\rput(26.5,1){\sm{$\de_0/\de_i$}}
\rput(18,1){\sm{$\fm-\frac{\hb}{2}$}}\rput(18,-4){\sm{$\frac{\hb}{2}-\ep_i$}}
\end{pspicture}
\caption{The energy distribution of the rescaled map~$v_i$ in the proof of 
Proposition~\ref{EnerPres_prp}}
\label{EnerDist_fig}
\end{figure}
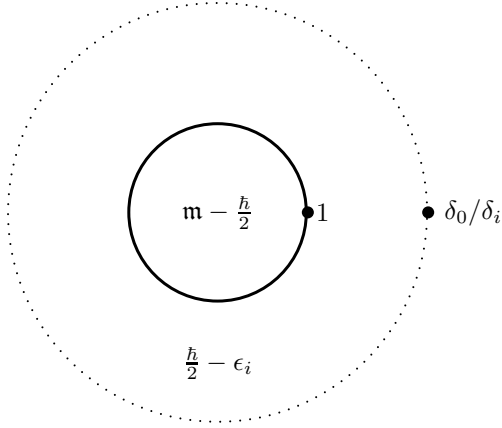

\noindent
We next show that $u(0)\!=\!v(\i)$, i.e.~that the bubble $(\P_0^1,v)$ 
connects to $(B_1,u)$ at~$z\!=\!0$.
Note~that
\begin{equation*}\begin{split}
d_g\big(u(0),v(\i)\big)
&=\lim_{R\lra\i}\lim_{\de\lra0}d_g\big(u(\de),v(R)\big)
=\lim_{R\lra\i}\lim_{\de\lra0}\lim_{i\lra\i}d_g\big(u_i(z_i\!+\!\de),v_i(R)\big)\\
&=\lim_{R\lra\i}\lim_{\de\lra0}\lim_{i\lra\i}
d_g\big(u_i(z_i\!+\!\de),u_i(z_i\!+\!R\de_i)\big)\\
&\le \lim_{R\lra\i}\lim_{\de\lra0}\lim_{i\lra\i}
\diam_g\big(u_i(B_{\de}(z_i)\!-\!B_{R\de_i}(z_i))\big).
\end{split}\end{equation*}
Along with~\eref{noEloss_e2}, this implies that $u(0)\!=\!v(\i)$.\\

\noindent
Suppose $v\!:\P^1\!\lra\!X$ is a constant map.
By~\eref{EnerBub_e}, $k\!\ge\!1$ and so there exists $w^*\!\in\!\C$ such that 
$|\nd_{w^*}v_i|\!\lra\!\i$ as $i\!\lra\!\i$.
By~\eref{maxdui_e} and the definition of~$\psi_i$, 
$|\nd_0v_i|\!\ge\!|\nd_wv_i|$ for all $w\!\in\!\C$ contained in the domain of~$v_i$
and so $|\nd_0v_i|\!\lra\!\i$ as $i\!\lra\!\i$.
By~\eref{fmdfn_e3} and~\eref{dejidfn_e}, 
$$\fm_0'\equiv\lim_{\de\lra0}\lim_{i\lra\i}
E_g(v_i)
\le \lim_{i\lra\i} E_g\big(v_i;B_1\big)
=\lim_{i\lra\i} E_g\big(u_i;B_{\de_i}(z_i)\big)
= \fm-\frac{\hb}{2}<\fm,$$
and so $k\!\ge\!2$, as claimed in~\ref{stab_it}.
Since the amount of energy of $v_i$ contained in $\C\!-\!B_1$ approaches $\hb_{J,g}/2$,
as illustrated in Figure~\ref{EnerDist_fig},
there must be in particular a bubble point~$w_r^*$ with $|w_r^*|\!=\!1$,
though this is not material.\\

\noindent
The above establishes Proposition~\ref{EnerPres_prp} whenever $k\!=\!0$ by
taking 
$$u_{\i}\big|_{B^1}=u \qquad\hbox{and}\qquad u_{\i}\big|_{\P^1_0}=v.$$
Since $\fm_r'\!\ge\!\hb_{J,g}$ for every~$r$, $k\!=\!0$ if $\fm\!<\!2\hb_{J,g}$.
If $k\!\ge\!1$, $\fm_r'\!\le\!\fm\!-\!\hb_{J,g}$ by~\eref{EnerBub_e}
because 
\hbox{$E_g(v)\!\ge\!\hb_{J,g}$} if $v$ is not constant by Corollary~\ref{LowEner_crl}
and $k\!\ge\!2$ otherwise by the above.
Thus, by induction on $[\fm/\hb_{J,g}]\!\in\!\Z^+$, we can assume that 
Proposition~\ref{EnerPres_prp} holds when applied to~$\{v_i\}$ on 
$B_{\de_0}(w_r^*)\!\subset\!\C$ with $r\!=\!1,\ldots,k$.
This yields a tree~$\Si_r$ of Riemann spheres~$\P^1$ with a distinguished smooth point~$\i$
and a $J$-holomorphic map $v_r\!:\Si_r\!\lra\!X$ such $v_r(\i)\!=\!v(w_r^*)$ and
$E_g(v_r)\!=\!\fm_r'$.
Combining the last equality with~\eref{EnerBub_e}, we obtain
$$E_g(v)+\!\sum_{r=1}^k\!E_g(v_r)=\fm\,.$$
Identifying $\i$ in the base sphere of each~$\Si_r$ with $w_r^*\!\in\!\P_0^1$,
which has been already attached to $0\!\in\!B_1^*$, we obtain
a $J$-holomorphic map $u_{\i}\!:\Si_{\i}\!\lra\!X$ with the desired properties;
see Figure~\ref{GrLim_fig2}.
\end{proof}

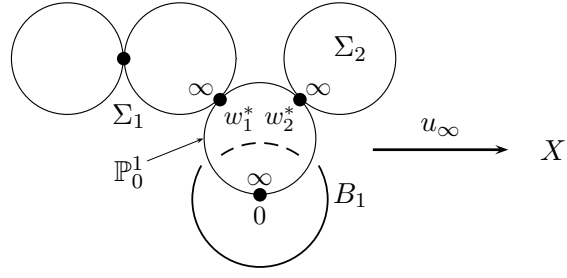
\begin{figure}
\begin{pspicture}(.5,0.5)(9,4)
\psset{unit=.3cm}
\psarc[linewidth=.08](25,3.8){3}{150}{30}
\psarc[linewidth=.08,linestyle=dashed](25,3.3){3}{55}{125}
\pscircle[linewidth=.05](25,6.5){2.5}
\pscircle[linewidth=.05](28.54,10.04){2.5}
\pscircle[linewidth=.05](21.46,10.04){2.5}
\pscircle[linewidth=.05](16.46,10.04){2.5}
\pscircle*(25,4){.3}\pscircle*(18.96,10.04){.3}
\pscircle*(26.77,8.22){.3}\pscircle*(23.23,8.22){.3}
\rput(27.6,8.7){\sm{$\i$}}\rput(22.4,8.7){\sm{$\i$}}
\rput(25,4.7){\sm{$\i$}}\rput(25,3.1){\sm{$0$}}
\rput(25.9,7.4){\sm{$w_2^*$}}\rput(24.1,7.4){\sm{$w_1^*$}}
\rput(29,4){$B_1$}\rput(29,10.5){$\Si_2$}\rput(19.2,7.5){$\Si_1$}
\rput(19.2,5){$\P^1_0$}
\psline[linewidth=.12]{->}(30,6)(36,6)\rput(38,6){$X$}
\psline[linewidth=.03]{->}(20,5.5)(22.5,6.5)
\rput(33,7){$u_{\i}$}
\end{pspicture}
\caption{Gromov's limit of a sequence of $J$-holomorphic maps $u_i\!:B_1\!\lra\!X$}
\label{GrLim_fig2}
\end{figure}

\begin{proof}[{\bf\emph{Proof of Theorem~\ref{GromConv_thm1}}}]
Fix a Riemannian metric~$g_{\Si}$ on~$\Si$.
For $z\!\in\!\Si$ and $\de\!\in\!\Si$, 
let $B_{\de}(z)\!\subset\!\Si$ denote the ball of radius~$\de$  around~$z$.\\

\noindent
By Lemma~\ref{EnerPres3_lmm}, there exist a finite collection 
$z_1^*,\ldots,z_{\ell}^*\!\in\!\Si$ of distinct points 
and a subsequence of~$\{u_i\}$, still denoted by~$\{u_i\}$, 
such that $u_i$ converges uniformly in the $C^{\i}$-topology
on compact subsets  of $\Si\!-\!\{z_1^*,\ldots,z_{\ell}^*\}$
to a $J$-holomorphic map $u\!:\Si\!\lra\!X$.
In particular, $|\nd u_i|_g$ is uniformly bounded on compact subsets 
of $\Si\!-\!\{z_1^*,\ldots,z_{\ell}^*\}$.
We can also assume that  the~limit
$$\fm_j\equiv\lim_{\de\lra0}\lim_{i\lra\i}E_g\big(u_i;B_{\de}(z_j^*)\big)$$
exists for every $j\!=\!1,\ldots,\ell$.
We note that 
\BE{Egtot_e}\begin{split} 
E_g(u)+\sum_{j=1}^{\ell}\fm_j&=
\lim_{\de\lra0}\lim_{i\lra\i}E_g\big(u;\Si\!-\!\bigcup_{j=1}^{\ell}B_{\de}(z_j^*)\big)
+\sum_{j=1}^{\ell}\lim_{\de\lra0}\lim_{i\lra\i}E_g\big(u_i;B_{\de}(z_j^*)\big)\\
&=\lim_{\de\lra0}\lim_{i\lra\i}E_g(u_i)=\lim_{i\lra\i}E_g(u_i).\\
\end{split}\EE

\vspace{.1in}

\noindent
Let $\de_0\!\in\!\R^+$ be such that the balls $B_{\de_0}(z_i^*)$ are pairwise disjoint.
If 
$$\limsup_{i\lra\i}\max_{\ov{B_{\de_0}(z_j^*)}}\big|\nd u_i\big|<\i$$
for some~$j$, then $\{u_i\}$ converges  uniformly in the $C^{\i}$-topology
on $\ov{B_{\de_0}(z_j^*)}$ to~$u$ by the ellipticity of the $\dbar$-operator.
Thus, we can assume~that 
$$\lim_{i\lra\i}\sup_{\ov{B_{\de_0}(z_i^*)}}\big|\nd u_i\big|=\i$$
for every $j\!=\!1,\ldots,\ell$.\\

\noindent
For each $j\!=\!1,\ldots,\ell$, Proposition~\ref{EnerPres_prp} provides
a tree~$\Si_j$ of Riemann spheres~$\P^1$ with a distinguished smooth point~$\i$
and a $J$-holomorphic map $v_j\!:\Si_j\!\lra\!X$ such $v_j(\i)\!=\!v(w_r^*)$ and
$E_g(v_j)\!=\!\fm_j$.
Combining the last equality with~\eref{Egtot_e}, we obtain
$$E_g(v)+\!\sum_{j=1}^{\ell}\!E_g(v_j)=\lim_{i\lra\i}E_g(u_i)\,.$$
Identifying the distinguished point~$\i$ of each~$\Si_j$ with $z_j^*\!\in\!\Si$,
we obtain a Riemann surface $(\Si_{\i},\fj_{\i})$ and 
a $J$-holomorphic map $u_{\i}\!:\Si_{\i}\!\lra\!X$ with the desired properties.\\

\noindent
If $\Si\!=\!\P^1$ and the limit map~$u$ above is constant,
then $\ell\!\ge\!1$ by~\eref{Egtot_e}.
Suppose $\ell\!\in\!\{1,2\}$. 
Let
$$M_i=\sup_{\ov{B_{\de_0}(z_1^*)}}\big|\nd u_i\big|$$
and parametrize~$\P^1$ so that $z_1^*\!=\!0$.
Define 
$$h_i\!: \P^1\lra\P^1, \qquad  h_i(z)=z_i\!+\!z/M_i,$$
and apply the preceding argument with $u_i$ replaced by~$u_i\!\circ\!h_i$.
By the proof of Corollary~\ref{EnerPres2_crl}\ref{fmLowBnd_it}, the limiting map $u|_{\Si}$
is then non-constant and $(\Si_{\i},\fj_{\i},u)$ is a stable $J$-holomorphic map.
\end{proof}

\subsection{An example}
\label{PnEg_subs}

\noindent
We now give an example illustrating Gromov's convergence 
in a classical setting.\\

\noindent
Let $n\!\in\!\Z^+$, with $n\!\ge\!2$, and $\P^{n-1}\!=\!\C\P^{n-1}$.
Denote by $\ell$ the positive generator of $H_2(\P^{n-1};\Z)\!\approx\!\Z$,
i.e.~the homology class represented by the standard $\P^1\!\subset\!\P^{n-1}$.
A \textsf{degree~$d$ map} $f\!:\P^1\!\lra\!\P^{n-1}$ is a continuous map
such that $f_*[\P^1]\!=\!d\ell$.
A holomorphic degree~$d$ map $f\!:\P^1\!\lra\!\P^{n-1}$ is given~by
$$[u,v]\lra\big[R_1(u,v),\ldots,R_n(u,v)\big]$$ 
for some degree~$d$ homogeneous polynomials $R_1,\ldots,R_d$ on~$\C^2$
without a common linear factor.
Since the tuple $(\la R_1,\ldots,\la R_n)$ determines the same map
as $(R_1,\ldots,R_n)$ for any $\la\!\in\!\C^*$, 
the space of degree~$d$ holomorphic maps $f\!:\P^1\!\lra\!\P^{n-1}$ is
a dense open subset of
$$\fX_{n,d}\equiv \big((\Sym^d\C^2)^n-\{0\}\big)\big/\C^* 
\approx\P^{(d+1)n-1}\,.$$

\vspace{.1in}

\noindent
Suppose $f_k\!: \P^1\!\lra\!\P^{n-1}$ is a sequence of holomorphic degree $d\!\ge\!1$ maps
and
$$\bR_k=\big[R_{k;1},\ldots,R_{k;n}\big] \in\fX_{n,d}$$
are the associated equivalence classes of $n$-tuples of 
homogeneous polynomials  without a common linear factor.
Passing to a subsequence, we can assume that $[\bR_k]$ converges to some 
\BE{bRdfn_e}\bR\equiv\big[(v_1u\!-\!u_1v)^{d_1}\ldots(v_mu\!-\!u_mv)^{d_m}S_1,
\ldots,(v_1u\!-\!u_1v)^{d_1}\ldots(v_mu\!-\!u_mv)^{d_m}S_n\big]\in\fX_{n,d}\,,\EE
with $d_1,\ldots,d_m\!\in\!\Z^+$ and homogeneous polynomials
$$\bS\equiv [S_1,\ldots,S_n]\in \fX_{n,d_0}$$
without a common linear factor and with $d_0\!\in\!\Z^{\ge0}$.
By~\eref{bRdfn_e},
$$d_0+d_1+\ldots+d_m=d.$$
Rescaling $(R_{k;1},\ldots,R_{k;n})$, we can assume that 
\BE{Pneg_e3}
\lim_{k\lra\i}R_{k;i}=(v_1u\!-\!u_1v)^{d_1}\ldots(v_mu\!-\!u_mv)^{d_m}S_i
\qquad\forall\,i\!=\!1,\ldots,n.\EE

\vspace{.1in}

\noindent
Suppose $z_0\!\in\!\C\!-\!\{u_1/v_1,\ldots,u_m/v_m\}$.
Since the polynomials $S_1,\ldots,S_n$ do not have a common linear factor,
$S_{i_0}(z_0,1)\!\neq\!0$ for some $i_0\!=\!1,\ldots,n$.
This implies that $R_{k;i_0}(z_0,1)\!\neq\!0$ for all $k$ large enough and so
$$\lim_{k\lra\i}\frac{R_{k;i}(z,1)}{R_{k;i_0}(z,1)}
=\frac{\lim\limits_{k\lra\i}R_{k;i}(z,1)}{\lim\limits_{k\lra\i}R_{k;i_0}(z,1)}
=\frac{(v_1z\!-\!u_1)^{d_1}\ldots(v_mz\!-\!u_m)^{d_m}S_i(z,1)}
{(v_1z\!-\!u_1)^{d_1}\ldots(v_mz\!-\!u_m)^{d_m}S_{i_0}(z,1)}
=\frac{S_i(z,1)}{S_{i_0}(z,1)}\,$$
for all $i\!=\!1,\ldots,n$ and $z$ close to~$z_0$.
Furthermore, the convergence is uniform on a neighborhood of~$z_0$.
Thus, the sequence $f_k$ $C^{\i}$-converges on compact subsets of 
$\P^1\!-\!\{[u_1,v_1],\ldots,[u_m,v_m]\}$ to 
the holomorphic degree~$d_0$ map $g\!:\P^1\!\lra\!\P^{n-1}$ determined by~$\bS$.\\

\noindent
Let $\om$ be the Fubini-Study symplectic form on~$\P^{n-1}$ normalized so that $\lr{\om,\ell}\!=\!1$
and $E(\cdot)$ be the energy of maps into~$\P^{n-1}$ with respect to the associated
Riemannian metric.
For each $\de\!>\!0$ and $j\!=\!1,\ldots,m$, 
denote by $B_{\de}([u_j,v_j])$ the ball of radius~$\de$ around $[u_j,v_j]$
in~$\P^1$ and let
$$\P^1_{\de}=\P^1-\bigcup_{j=1}^mB_{\de}([u_j,v_j])\,.$$
For each $j\!=\!1,\ldots,m$, let
$$\fm_{[u_j,v_j]}\big(\{f_k\}\big)=\lim_{\de\lra0}\lim_{k\lra\i}\!
E\big(f_k;B_{\de}([u_j,v_j])\big)\in\R^{\ge0}$$
be the energy sinking into the bubble point $[u_j,v_j]$.
By Theorem~\ref{GromConv_thm1}, the number $\fm_{[u_j,v_j]}(\{f_k\})$ is the value of~$\om$
on some element of~$H_2(\P^{n-1};\Z)$, i.e.~an integer. 
Below we show that $\fm_{[u_j,v_j]}(\{f_k\})\!=\!d_j$.\\

\noindent
Since the sequence $f_k$ $C^{\i}$-converges 
to the degree~$d_0$ map $g\!:\P^1\!\lra\!\P^{n-1}$  on compact subsets of 
$\P^1\!-\!\{[u_1,v_1],\ldots,[u_m,v_m]\}$,
$$d_0=\lr{\om,d_0\ell}=E(g)
=\lim_{\de\lra0}E_g\big(g;\P^1_{\de}\big)
=\lim_{\de\lra0}\lim_{k\lra\i}E\big(f_k;\P^1_{\de}\big).$$
Thus,
\begin{equation*}\begin{split}
\sum_{j=1}^m \fm_{[u_j,v_j]}\big(\{f_k\}\big)
&=\sum_{j=1}^m\lim_{\de\lra0}\lim_{k\lra\i}\!
E\big(f_k;B_{\de}([u_j,v_j])\big)
=\lim_{\de\lra0}\lim_{k\lra\i}
E\big(f_k;\bigcup_{j=1}^m\!B_{\de}([u_j,v_j])\big)\\
&=\lim_{\de\lra0}\lim_{k\lra\i\!}
\big(E_g(f_k)\!-\!E_g\big(f_k;\P^1_{\de}\big)\big)
=d-d_0=d_1+\ldots+d_m\,.
\end{split}\end{equation*}
In particular, $\fm_{[u_j,v_j]}(\{f_k\})\!=\!d_j$ if $m\!=\!1$, no matter what 
the ``residual" tuple of polynomials $\bS$~is.
We use this below to establish this energy identity for $m\!>\!1$ as well.\\

\noindent
By~\eref{Pneg_e3}, for all $k\!\in\!\Z^+$ sufficiently large
there exist $\la_{k;i;j;p}\!\in\!\C$ with  $i\!=\!1,\ldots,n$, $j\!=\!1,\ldots,m$, 
and $p\!=\!1,\ldots,d_j$ and tuples
$$\bS_k\equiv \big[S_{k;1},\ldots,S_{k;n}\big]\in \fX_{n;d_0}$$
of polynomials without a common linear factor such that 
\begin{gather*}
\lim_{k\lra\i}\bS_k=\bS, \qquad \lim_{k\lra\i}\la_{k;i;j;p}=1~~\forall\,i,j,p,\\
R_{k;i}(u,v)=\prod_{j=1}^m\prod_{p=1}^{d_j}(v_ju\!-\!\la_{k;i;j;p}u_jv) 
\cdot S_{k;i}(u,v)\quad\forall~k,i\,.
\end{gather*}
For each $j_0\!=\!1,\ldots,m$, let 
$$\bT_{j_0}\equiv 
\big[T_{j_0;1},\ldots,T_{j_0;n}\big]\in \fX_{n;d-d_{j_0}}$$
be a tuple of polynomials without a common linear factor.
If in addition, $i\!=\!1,\ldots,n$, $\ep\!\in\!\R$, and $k\!\in\!\Z^+$, let
\begin{alignat*}{2}
S_{i;j_0;\ep}(u,v)&\equiv \prod_{j\neq j_0}^m(v_ju\!-\!u_jv)^{d_j} 
\cdot S_i(u,v)+
\ep T_{j_0;i}(u,v)\,, &\qquad &i=1,\ldots,n,\\
R_{k;i;j_0;\ep}(u,v)&\equiv R_{k;i}(u,v)+
\ep\prod_{p=1}^{d_{j_0}}(v_{j_0}u\!-\!\la_{k;i;j_0;p}u_{j_0}v) 
\cdot T_{j_0;i}(u,v), &\qquad &i=1,\ldots,n.
\end{alignat*}

\vspace{.1in}

\noindent
The polynomials within each tuple $(S_{i;j_0;\ep})_{i=1,\ldots,n}$ and 
$(R_{k;i;j_0;\ep})_{i=1,\ldots,n}$ have no common linear factor for 
all $\ep\!\in\!\R^+$ sufficiently small and $k$ sufficiently large 
(with the conditions on~$\ep$ and~$k$ mutually independent).
We denote by 
$$f_{k;j_0;\ep}\!:\P^1\!\lra\!\P^{n-1}$$ 
the holomorphic degree~$d$ map determined by the~tuple
$$\bR_{k;j_0;\ep}\equiv\big[R_{k;1;j_0;\ep},\ldots,R_{k;n;j_0;\ep}\big]\,.$$
Since 
$$\lim_{k\lra\i}\bR_{k;j_0;\ep}
=\big[(v_1u\!-\!u_1v)^{d_{j_0}}S_{1;j_0;\ep},\ldots, (v_1u\!-\!u_1v)^{d_{j_0}}S_{n;j_0;\ep}\big]
\in\fX_{n;d}$$
and the polynomials $S_{1;j_0;\ep},\ldots,S_{n;j_0;\ep}$ have no linear factor in common,
\BE{Pneg_e5}
\lim_{\de\lra0}\lim_{k\lra\i}\!E\big(f_{k;j_0;\ep};B_{\de}([u_{j_0},v_{j_0}])\big)
\equiv \fm_{[u_{j_0},v_{j_0}]}\big(\{f_{k;j_0;\ep}\}\big) =d_{j_0}\EE
by the $m\!=\!1$ case established above.\\

\noindent
For $\de\!\in\!\R^+$ sufficiently small, 
$\ep\!\in\!\R^+$ sufficiently small, and $k$ sufficiently large,
$$\prod_{j\neq j_0}^m\prod\limits_{p=1}^{d_j}(v_ju\!-\!\la_{k;i;j;p}u_jv) 
\cdot S_{k;i}(u,v)\neq0 \qquad\forall~[u,v]\!\in\!B_{2\de}\big([u_{j_0},v_{j_0}]\big).$$
Thus, the ratios
$$\frac{R_{k;i;j_0;\ep}(u,v)}{R_{k;i}(u,v)}
=1+\ep\frac{T_{j_0;i}(u,v)}
{\prod\limits_{j\neq j_0}^m\prod\limits_{p=1}^{d_j}(v_ju\!-\!\la_{k;i;j;p}u_jv) 
\cdot S_{k;i}(u,v)}$$
converge uniformly to~1 on $B_{\de}([u_{j_0},v_{j_0}])$ as $\ep\!\lra\!0$.
Thus, there exists $k^*\!\in\!\Z^+$ such~that 
$$\lim_{\ep\lra0}\sup_{k\ge k^*}\sup_{z\in B_{\de}([u_{j_0},v_{j_0}])}
\bigg|\frac{|\nd_zf_{k;j_0;\ep}|}{|\nd_zf_k|}-1\bigg|=0.$$
It follows that
\begin{equation*}\begin{split}
\fm_{[u_{j_0},v_{j_0}]}\big(\{f_k\}\big)
&\equiv \lim_{\de\lra0}\lim_{k\lra\i}E\big(f_k;B_{\de}([u_{j_0},v_{j_0}])\big)
= \lim_{\de\lra0}\lim_{k\lra\i}\lim_{\ep\lra0}E\big(f_{k;j_0;\ep};B_{\de}([u_{j_0},v_{j_0}])\big)\\ 
&=\lim_{\ep\lra0}\lim_{\de\lra0}\lim_{k\lra\i}\!
E\big(f_{k;j_0;\ep};B_{\de}([u_{j_0},v_{j_0}])\big)
=\lim_{\ep\lra0}d_{j_0}=d_{j_0};
\end{split}\end{equation*}
the second-to-last equality above holds by~\eref{Pneg_e5}.\\

\noindent
Suppose that either $d_0\!\ge\!1$ or $m\!\ge\!3$.
Otherwise, the maps~$f_k$ can be reparametrized so that $d_0\!\neq\!0$;
see the last paragraph of the proof of Theorem~\ref{GromConv_thm1}
at the end of Section~\ref{Conver_subs2}.
By Theorem~\ref{GromConv_thm1} and the above, a subsequence of $\{f_k\}$ converges
to the equivalence class of a holomorphic degree~$d_0$ map $f\!:\Si\!\lra\!\P^{n-1}$,
where $\Si$ is a nodal Riemann surface consisting of the component~$\Si_0\!=\!\P^1$
corresponding to the original~$\P^1$ and finitely many trees of $\P^1$'s coming off 
from~$\Si_0$.
The maps on the components in the trees are defined only up reparametrization of
the domain.
By the above, $f|_{\Si_0}$ is the map~$g$ determined by the ``relatively prime part"
$\bS$ of the limit~$\bR$ of the tuples of polynomials.
The trees are attached at the roots~$[u_j,v_j]$ of the common linear factors 
$v_ju\!-\!u_jv$ of  the polynomials in~$\bR$;
the degree of the restriction of~$f$ to each tree is the power 
of the multiplicity~$d_j$ of the corresponding common linear factor.\\

\noindent
The same reasoning as above applies to the sequence of maps 
$$\big(\id_{\P^1},f_k\big)\!:\P^1\lra\P^1\!\times\!\P^{n-1}\,,$$
but the condition that either $d_0\!\ge\!1$ or $m\!\ge\!3$ is no longer
necessary for the analogue of the conclusion in the previous paragraph.
This implies~that the~map
$$\fM_{0,0}\big(\P^1\!\times\!\P^{n-1},(1,d)\big)\lra\fX_{n,d},
\qquad [f,g]\lra\big[g\!\circ\!f^{-1}\big],$$
from the subspace of $\ov\fM_{0,0}(\P^1\!\times\!\P^{n-1},(1,d))$ corresponding
to maps from~$\P^1$ extends to a continuous surjective map
\BE{LinSig_e}\ov\fM_{0,0}\big(\P^1\!\times\!\P^{n-1},(1,d)\big)\lra \fX_{n,d}\,.\EE
In particular, Gromov's moduli spaces refine classical compactifications 
of spaces of holomorphic maps $\P^1\!\lra\!\P^{n-1}$.
On the other hand, the former are defined for arbitrary almost Kahler manifolds,
which makes them naturally suited for applying topological methods.
The right-hand side of~\eref{LinSig_e} is known as the \textsf{linear sigma model}
in the Mirror Symmetry literature.
The morphism~\eref{LinSig_e} plays a prominent role in the proof of 
mirror symmetry for the genus~0 Gromov-Witten invariants in~\cite{Gi} and~\cite{LLY};
see \cite[Section~30.2]{MirSym}.

\end{document}